   \def\MR#1{}
\long\def\@savemarbox#1#2{\global\setbox#1\vtop{\hsize\marginparwidth 
  \@parboxrestore\tiny\raggedright #2}}
\renewcommand*{\backref}[1]{}
\renewcommand*{\backrefalt}[4]{
  \ifcase #1
  [No citations.]
  \or [#2]
  \else [#2]
  \fi }
\numberwithin{equation}{section}
\theoremstyle{plain}
\newtheorem{theorem}[equation]{Theorem}
\newtheorem{corollary}[equation]{Corollary}
\newtheorem{lemma}[equation]{Lemma}
\newtheorem{conjecture}[equation]{Conjecture}
\newtheorem*{namedtheorem}{\theoremname}
\newcommand{\theoremname}{testing}
\theoremstyle{definition}
\newtheorem{definition}[equation]{Definition}
\newcommand{\cut}{\backslash\backslash}
\def\rdots{\rotatebox[origin=l]{29}{$\scriptscriptstyle\ldots\mathstrut$}}
\title{Algorithms in 3-manifold theory}
\author{Marc Lackenby}
\address{Mathematical Institute, University of Oxford, \newline Woodstock Road, Oxford OX2 6GG, United Kingdom}
\begin{document}


\maketitle
\tableofcontents

\section{Introduction}\label{Sec:Intro}

One of the revolutions in twentieth century mathematics was the discovery by Church \cite{Church} and Turing \cite{Turing} that there are fundamental limits to our understanding of various mathematical objects. An important example of this phenomenon was the theorem of Adyan \cite{Adyan} that one cannot decide whether a given finitely presented group is the trivial group. In some sense, this is a very negative result, because it suggests that there will never be a full and satisfactory theory of groups. The same is true of manifolds in dimensions 4 and above, by work of Markov \cite{Markov}. However, one of the main themes of low-dimensional topology is that compact 3-manifolds are tractable objects. In particular, they can be classified, since the homeomorphism problem for them is solvable. So the sort of wildness that one encounters in group theory and higher-dimensional manifold theory is not present in dimension 3. In fact, 3-manifolds are well-behaved objects, in the sense that many algorithmic questions about them are solvable and many optimistic conjectures about them have been shown to be true.

However, our understanding of 3-manifolds is far from complete. The status of the homeomorphism problem is a good example of this. Although we can reliably decide whether two compact 3-manifolds are homeomorphic \cite{KuperbergAlgorithmic, ScottShort}, the known algorithms for achieving this might take a ridiculously long time. If the 3-manifolds are presented to us by two triangulations, then the best known running time for deciding whether they are homeomorphic is, as a function of the number of tetrahedra in each triangulation, a tower of exponentials \cite{KuperbergAlgorithmic}. This highlights a general rule: although 3-manifolds are tractable objects, they are only just so. It is, in general, not at all straightforward to probe their properties and typically quite sophisticated tools are required. But there is an emerging set of techniques that do lead to more efficient algorithms. For example, the problem of recognising the unknot is now known \cite{HassLagariasPippenger, LackenbyEfficientCertification} to be in the complexity classes NP and co-NP. Thus, there are ways of certifying in polynomial time whether a knot diagram represents the unknot or whether it represents a non-trivial knot. But whether this problem is solvable using a deterministic polynomial-time algorithm remains unknown.

My goal in this article is to present some of the known algorithms in 3-manifold theory. I will highlight their apparent limitations, but I will also present some of the new techniques which lead to improvements in their efficiency. My focus is on the theoretical aspects of algorithms about 3-manifolds, rather than their practical implementation. However, it would be remiss of me not to mention here the various important programs in the field, including Snappea \cite{Weeks, CullerDunfield} and Regina \cite{BurtonRegina}.

Needless to say, this survey is far from complete. I apologise to any researchers whose work has been omitted. Inevitably, there is space only to give sketches of proofs, rather than complete arguments. The original papers are usually the best places to look up these details. However, Matveev's book \cite{Matveev} is also an excellent resource, particularly for the material on normal surfaces in Sections \ref{Sec:NormalSurfaces}, \ref{Sec:MatchingFundamental} and \ref{Sec:Hierarchies}. In addition, there are some other excellent surveys highlighting various aspects of algorithmic 3-manifold theory, by Hass \cite{HassSurvey}, Dynnikov \cite{DynnikovSurvey} and Burton \cite{BurtonSurvey}.

I would like to thank the referee and Mehdi Yazdi for their very careful reading of the paper and for their many helpful suggestions.

\section{Algorithms and complexity}

\subsection{Generalities about algorithms} 

This is not the place to give a detailed and rigorous introduction to the theory of algorithms. However, there are some aspects to the theory that are perhaps not so obvious to the uninitiated.

An algorithm is basically just a computer program. The computer takes, as	 its input, a finite string of letters, typically 0's and 1's. It then starts a deterministic process, which involves passing through various states and reading the string of letters that it is given. It is allowed to write to its own internal memory, which is of unbounded size. At some point, it may or may not reach a specified terminating state, when it declares an output. In many cases, this is just a single 0 or a single 1, which is to be interpreted as a `no' or `yes'.

Algorithms are supposed to solve practical problems called decision problems. These require a `yes' or `no' answer to a specific question, and the algorithm is said to solve the decision problem if it reliably halts with the correct answer.

One can make all this formal. For example, it is usual to define an algorithm using Turing machines. This is for two reasons. Firstly, it is intellectually rigorous to declare at the outset what form of computer one is using, in order that the notion of an algorithm is well-defined. Secondly, the simple nature of Turing machines makes it possible to prove various non-existence results for algorithms.
However, in the world of low-dimensional topology, these matters do not really concern us. Certainly, we will not describe any of our algorithms using Turing machines. But we hope that it will be evident that all the algorithms that we describe could be processed by a computer, given a sufficiently diligent and patient programmer.

However, this informal approach can hide some important points, including the following:

(1) Decision problems are just functions from a subset of the set of all finite strings of 0's and 1's to the set $\{ 0, 1 \}$. In other words, they provide a yes/no answer to certain inputs. So, a question such as `what is the genus of a given knot?' is not a decision problem. One can turn it into a decision problem by asking, for instance, `is the genus of a given knot equal to $g$?' but this changes things. In particular, a fast solution to the second problem does not automatically lead to a fast solution to the first problem, since one would need run through different values of $g$ until one had found the correct answer.

(2) Typically, we would like to provide inputs to our programs that are not strings of 0's and 1's. For example, some of our inputs may be positive integers, in which case it would be usual to enter them in binary form. However, in low-dimensional topology, the inputs are usually more complicated than that. For example, one might be given a simplicial complex with underlying space that is a compact 3-manifold. Alternatively, one might be given a knot diagram. Clearly, these could be turned into 0's and 1's in some specific way, so that they can be fed into our hypothesised computer. 

(3) However, when discussing algorithms, it is very important to specify what form the input data takes. Although it is easy to encode knot diagrams as 0's and 1's, and it is easy to encode triangulations using 0's and 1's, it is not at all obvious that one can easily convert between these two forms of data. For example, suppose that you are given a triangulation of a 3-manifold and that you are told that it is the exterior of some knot. How would you go about drawing a diagram of this knot? It turns out that it is possible, but it is still unknown how complex this problem is.

(4) Later we will be discussing the `complexity' of algorithms, which is typically defined to be their longest possible running time as a function of the length of their input. Again, the encoding of our input data is important here. For example, what is the `size' of a natural number $n$? In fact, it is usual to say that the `size' of a positive integer $n$ is its number of digits in binary. We will always follow this convention in this article. On the other hand, the `size' of a triangulation $\mathcal{T}$ of a 3-manifold is normally its number of tetrahedra,
which we will denote by $|\mathcal{T}|$.

(5) Algorithms and decision problems are only interesting when the set of possible inputs is infinite. If our decision problem has only a finite number of inputs, then it is always soluble. For instance, suppose that our problem is `does this specific knot diagram represent the unknot?'. Then there is a very short algorithm that gives the right answer. It is either the algorithm that gives the output `yes' or the algorithm that gives the output `no'. This highlights the rather banal point that, in some sense, we do not care how the computer works, as long as it gives the right answer. But of course, we do care in practice, because the only way to be sure that it is giving the right answer is by checking how it works.

\subsection{Complexity classes}
\label{Sec:ComplexityClasses}

\begin{definition}
\begin{enumerate}
\item A decision problem lies in $\mathrm{P}$, or runs in \emph{polynomial time}, if there is an algorithm to solve it with running time that is bounded above by a polynomial function of the size of the input.
\item A decision problem lies in $\mathrm{EXP}$, or runs in \emph{exponential time}, if there is a constant $c > 0$ and an algorithm to solve the problem with running time that is bounded above by $2^{n^c}$, where $n$ is the size of the input.
\item A decision problem lies in $\mathrm{E}$ if there is a constant $c >1$ and an algorithm to solve the problem with running time that is bounded above by $c^n$, where $n$ is the size of the input.
\end{enumerate}
\end{definition}

Out of $\mathrm{EXP}$ and $\mathrm{E}$, the latter seems to be more natural at first sight. However, it is less commonly used, for good reason. Typically, we allow ourselves to change the way that the input data is encoded. Alternatively, we may wish to use the solution to one decision problem as a tool for solving another. This may increase (or decrease) the size of the input data by some polynomial function. We therefore would prefer to use complexity classes that are unchanged when $n$ is replaced by a polynomial function of $n$. Obviously $\mathrm{EXP}$ has this nice property whereas $\mathrm{E}$ does not. This is more than just a theoretical issue. For example, there is an algorithm to compute the HOMFLY-PT polynomial of a knot with $n$ crossings in time at most $k^{(\log n) \sqrt{n}}$ for some constant $k$ \cite{BurtonHOMFLY}. This function grows more slowly than $c^n$, for any constant $c > 1$. However, it is widely believed (for very good reason \cite{JaegerVertiganWelsh}) that there is no algorithm to do this in sub-exponential time.

\begin{definition}
There is a \emph{polynomial-time reduction} (or \emph{Karp reduction}) from one decision problem $A$ to another decision problem $B$ if there is a polynomial-time algorithm that translates any given input data for problem $A$ into input data for problem $B$. This translation should have the property that the decision problem $A$ has a positive solution if and only if its translation into problem $B$ has a positive solution.
\end{definition}

A major theme in the field of computational complexity is the use of non-deterministic algorithms. The most important type of such algorithm is as follows.

\begin{definition}
A decision problem lies in $\mathrm{NP}$ (\emph{non-deterministic polynomial time}) if there is an algorithm with the following property. The decision problem has a positive answer if and only if there is a certificate, in other words some extra piece of data, such that when the algorithm is run on the given input data and this certificate, it provides the answer `yes'. This is required to run in time that is bounded above by a polynomial function of the size of the initial input data.
\end{definition}

The phrase `non-deterministic' refers to the fact that the algorithm might only complete its task if it is provided with extra information that must be supplied by some unspecified source. One may wonder why non-deterministic algorithms are of any use at all. But there are several reasons to be interested in them. 

First of all, $\mathrm{NP}$ captures the notion of problems where a positive answer can be verified quickly. For example, is a given positive integer $n$ composite? If the answer is `yes', then one can verify the positive answer by giving two integers greater than one and multiplying them together to get $n$. Once one is given these integers, this multiplication can be achieved in polynomial time as a function of the number of digits of $n$. Hence, this problem lies in NP.

Secondly, problems in $\mathrm{NP}$ \emph{can} be solved deterministically, but with a potentially longer running time. This is because $\mathrm{NP}$ problems lie in $\mathrm{EXP}$, for the following reason. If a problem lies in $\mathrm{NP}$, then there is an algorithm to verify a certificate that runs in time $n^c$, where $n$ is the size of the input data and $c$ is some constant. Since each step of the non-deterministic algorithm can only move the tape of the Turing machine at most one place, the algorithm can only read at most the first $n^c$ digits in the certificate. Thus, we could discard any part of the certificate beyond this without affecting its verifiability. Therefore, we may assume that the certificate has size at most $n^c$. There are only $2^{n^c+1}$ possible strings of 0's and 1's of this length or less. Thus, a deterministic algorithm proceeds by running through all these strings and seeing whether any of them is a certificate that can be successfully verified. If one of these strings is such a certificate, then the algorithm terminates with a `yes'; otherwise it terminates with a `no'. Clearly, this algorithm runs in exponential time. A concrete example highlighting that $\mathrm{NP} \subseteq \mathrm{EXP}$ is again the question of whether a given positive integer $n$ is composite. A certificate for being composite is two smaller positive integers that multiply together to give $n$. So a (rather inefficient) deterministic algorithm simply runs through all possible pairs of integers between $2$ and $n-1$, multiplies them together and checks whether the answer is $n$.

Thirdly, there is the following notion, which is an extremely useful one.

\begin{definition}
A decision problem is $\mathrm{NP}$\emph{-hard} if there is a polynomial-time reduction from any $\mathrm{NP}$ problem to it. If a problem both is in $\mathrm{NP}$ and is $\mathrm{NP}$-hard, then it is termed $\mathrm{NP}$-\emph{complete}.
\end{definition}

It is surprising how many NP-complete problems there are \cite{GareyJohnson}. The most fundamental of these is SAT. This takes as its input a collection of sentences that involve Boolean variables and the connectives AND, OR and NOT, and it asks whether there is an assignment of TRUE or FALSE to each of the variables that makes each of the sentences true. This is clearly a fundamental and universal problem, and so it is perhaps not so surprising that it is NP-complete. But what is striking is that there are so many other problems, spread throughout mathematics, that are NP-complete. As we will see, these include some natural decision problems in topology.

The following famous conjecture is very widely believed.

\begin{conjecture} 
$\mathrm{P} \not= \mathrm{NP}$. Equivalently, any problem that is $\mathrm{NP}$-complete cannot be solved in polynomial time.
\end{conjecture}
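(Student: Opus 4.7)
The plan would be to isolate some specific $\mathrm{NP}$-complete problem (SAT is the canonical choice, but $3$-SAT, CLIQUE, or one of the topological $\mathrm{NP}$-hard problems mentioned later in the paper would work equally well by the definition of $\mathrm{NP}$-completeness) and exhibit a super-polynomial lower bound on any deterministic algorithm that decides it. The equivalence claimed in the statement then follows immediately: if any $\mathrm{NP}$-complete problem lies in $\mathrm{P}$, then by composing the polynomial-time Karp reduction with the polynomial-time decision procedure one gets polynomial-time algorithms for every problem in $\mathrm{NP}$; conversely, an $\mathrm{NP}$-complete problem not in $\mathrm{P}$ immediately witnesses $\mathrm{P} \neq \mathrm{NP}$.

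The first approach I would try is diagonalization, modelled on the proof of the deterministic time hierarchy theorem. One would attempt to construct, by a universal simulation, a language $L \in \mathrm{NP}$ that differs from the $i$-th polynomial-time machine on some input of length polynomial in $i$. The hard part, and indeed the reason this elementary strategy cannot succeed as stated, is relativization: Baker, Gill and Solovay exhibited oracles $A$ and $B$ with $\mathrm{P}^A = \mathrm{NP}^A$ and $\mathrm{P}^B \neq \mathrm{NP}^B$, so any argument that carries over verbatim when all machines are given access to an arbitrary oracle cannot decide the question. A proof must exploit some feature of polynomial-time computation that is destroyed by a generic oracle.

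The second approach would be non-uniform: prove that SAT requires Boolean circuits of super-polynomial size, which would yield $\mathrm{P} \neq \mathrm{NP}$ since $\mathrm{P} \subseteq \mathrm{P/poly}$. Here one would try to extend the lower bound techniques that are known to work for restricted circuit classes (the switching lemma for $\mathrm{AC}^0$, the approximation method for monotone circuits) to unrestricted fanin-two circuits. The obstruction is the natural proofs barrier of Razborov and Rudich: under widely believed cryptographic assumptions, any lower bound argument whose hardness property is both constructive and useful against a large fraction of functions cannot separate $\mathrm{P}$ from $\mathrm{NP}$. Aaronson and Wigderson's algebrization barrier further constrains combined algebraic-diagonalization techniques.

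The main obstacle, in truth, is that we possess no proof technique at present that simultaneously evades relativization, naturalization, and algebrization while yielding super-polynomial lower bounds on a standard computational model. A reasonable long-term plan is therefore to pursue geometric complexity theory along the lines of Mulmuley and Sohoni, using representation-theoretic obstructions to embeddings of orbit closures of $\mathrm{NP}$-complete polynomials into those of $\mathrm{P}$-computable ones; but after several decades of concentrated effort this program has not yet produced even separations for much weaker complexity classes. I would accordingly label this statement a conjecture rather than a theorem and regard a complete proof as well beyond the scope of present techniques.
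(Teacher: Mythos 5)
This statement is the famous $\mathrm{P}$ versus $\mathrm{NP}$ problem, which the paper presents purely as a widely believed conjecture and does not (and cannot) prove, so there is no argument in the paper to compare yours against. You correctly establish the only provable content — the equivalence of the two formulations, by composing a Karp reduction with a hypothetical polynomial-time algorithm for an $\mathrm{NP}$-complete problem — and you rightly conclude, after surveying the relativization, natural proofs and algebrization barriers, that the separation itself is open; this is entirely consistent with the paper's treatment of the statement as a conjecture.
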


Thus when a problem is NP-complete, this provides strong evidence that this problem is difficult. Indeed, in the field of computational complexity, where there are so many fundamental unsolved conjectures, typically the only way to establish any interesting lower bound on a problem's complexity is to prove it conditionally on some widely believed conjecture.

One of the peculiar features of the definition of $\mathrm{NP}$ is that it treats the status of `yes' and `no' answers quite differently. Of course, one could reverse the roles of `yes' and `no' and so one is led to the following definition.

\begin{definition}
A decision problem is in co-NP if its negation is in NP.
\end{definition}

The following conjecture, like the famous $\mathrm{P} \not= \mathrm{NP}$, is also widely believed.

\begin{conjecture} 
\label{Con:NPCoNP}
$\mathrm{NP} \not= \mathrm{co}\textrm{-}\mathrm{NP}$. Hence, if a problem is $\mathrm{NP}$-complete, it does not lie in $\mathrm{co}\textrm{-}\mathrm{NP}$.
\end{conjecture}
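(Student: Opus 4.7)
The statement is the $\mathrm{NP}$ vs $\mathrm{co}\textrm{-}\mathrm{NP}$ conjecture, one of the central open problems in computational complexity, together with its standard consequence. No unconditional proof is known, so what follows is the research program one would have to carry out in order to settle the main inequality, together with the immediate derivation of the corollary.

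For the main assertion, the natural framework is propositional proof complexity. Let $\mathrm{TAUT}$ denote the set of propositional tautologies, a $\mathrm{co}\textrm{-}\mathrm{NP}$-complete language. The Cook--Reckhow theorem states that $\mathrm{NP} = \mathrm{co}\textrm{-}\mathrm{NP}$ if and only if there is a propositional proof system $P$ (a polynomial-time verifier which, on input $(\varphi,\pi)$, accepts iff $\pi$ is a valid $P$-proof of the tautology $\varphi$) such that every $\varphi \in \mathrm{TAUT}$ admits a $P$-proof of size polynomial in $|\varphi|$. Hence, to prove $\mathrm{NP} \not= \mathrm{co}\textrm{-}\mathrm{NP}$ it suffices to exhibit, for each propositional proof system $P$, an infinite family of tautologies whose shortest $P$-proofs have super-polynomial length.

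The plan then has three steps: (i) fix a candidate hard family of tautologies, e.g.\ the propositional pigeonhole principle $\mathrm{PHP}_n$ or the Tseitin formulas on expander graphs; (ii) develop a uniform lower-bound technique that applies to every $P$, not merely to syntactically restricted fragments; (iii) combine these to obtain the required super-polynomial lower bounds. Step (ii) is the essential obstacle, and the hard part by a very wide margin. Super-polynomial lower bounds are known for resolution (Haken's bound for $\mathrm{PHP}_n$), cutting planes, and bounded-depth Frege, but no super-polynomial lower bound is known even for ordinary Frege systems, let alone for an arbitrary system. Moreover, meta-obstacles --- relativization (Baker--Gill--Solovay), natural proofs (Razborov--Rudich), and algebrization (Aaronson--Wigderson) --- show that essentially no technique currently in the standard complexity-theoretic toolbox can succeed. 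A proof is therefore expected to require fundamentally new combinatorial, algebraic, or logical methods, and this is where the entire program has stalled.

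Granted the main assertion, the corollary is routine. Suppose for contradiction that some $\mathrm{NP}$-complete language $A$ lies in $\mathrm{co}\textrm{-}\mathrm{NP}$. For any $L \in \mathrm{NP}$ there is a polynomial-time Karp reduction $f$ with $x \in L \iff f(x) \in A$; composing $f$ with a $\mathrm{co}\textrm{-}\mathrm{NP}$-verifier for $A$ yields a $\mathrm{co}\textrm{-}\mathrm{NP}$-verifier for $L$, so $\mathrm{NP} \subseteq \mathrm{co}\textrm{-}\mathrm{NP}$. Taking complements gives $\mathrm{co}\textrm{-}\mathrm{NP} \subseteq \mathrm{NP}$, hence equality, contradicting $\mathrm{NP} \not= \mathrm{co}\textrm{-}\mathrm{NP}$.
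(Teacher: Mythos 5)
Your treatment matches the paper's: the main inequality is an open conjecture that neither you nor the paper proves, and the only provable content is the derivation of the second sentence from the first, which you carry out exactly as the paper does (assume an $\mathrm{NP}$-complete problem lies in $\mathrm{co}\textrm{-}\mathrm{NP}$, push $\mathrm{co}\textrm{-}\mathrm{NP}$ certificates back through Karp reductions to get $\mathrm{NP} \subseteq \mathrm{co}\textrm{-}\mathrm{NP}$, then use the symmetry of the definitions to conclude equality). The additional Cook--Reckhow/proof-complexity discussion is accurate context but not needed for the statement.
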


One rationale for this conjecture is simply that problems in NP are those where a positive solution can be easily verified. In practice, verifying a positive solution seems quite different from verifying a negative solution. For example, to check a positive answer to an instance of SAT, one need only plug in the given truth values to the Boolean variables and check whether the sentences are all true. However, to verify a negative answer seems, in general, to require that one try out all possible truth assignments of the variables, which is obviously a much lengthier task. Of course, for some instances of SAT, there may be shortcuts, but there does not seem to be a general method that one can apply to verify a negative answer to SAT in polynomial time.

The second part of the above conjecture is a consequence of the first part. For suppose that there were some NP-complete problem $D$ that lies in co-NP. Since any NP problem $D'$ can be reduced to $D$, we would therefore be able to use a certificate for a negative answer to $D$ to provide a certificate for a negative answer to $D'$. Hence, $D'$ would also lie in co-NP.  As $D'$ was an arbitrary NP problem, this would imply that $\mathrm{NP} \subseteq \mathrm{co}\textrm{-}\mathrm{NP}$. This then implies that $\mathrm{co}\textrm{-}\mathrm{NP} \subseteq \mathrm{NP}$ because of the symmetry in the definitions. Hence $\mathrm{NP} = \mathrm{co}\textrm{-}\mathrm{NP}$, contrary to the first part of the conjecture.

It is worth highlighting the following result, due to Ladner \cite{Ladner}. 

\begin{theorem}
If $\mathrm{P} \not= \mathrm{NP}$, then there are decision problems that are in $\mathrm{NP}$, but that are neither in $\mathrm{P}$ nor $\mathrm{NP}$-complete.
\end{theorem}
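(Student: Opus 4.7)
The plan is to use Ladner's classical \emph{delayed diagonalization} argument. I would construct a single language $L\in\mathrm{NP}$ that is simultaneously diagonalized against all polynomial-time deciders (so that $L\notin\mathrm{P}$) and against all polynomial-time reductions from $\mathrm{SAT}$ (so that $L$ is not $\mathrm{NP}$-complete). Neither diagonalization by itself is compatible with $L\in\mathrm{NP}$, so the two must be interleaved on increasingly long disjoint ranges of input lengths, with the hypothesis $\mathrm{P}\neq\mathrm{NP}$ supplying the contradictions that make each stage work.

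Concretely, fix enumerations $M_1,M_2,\ldots$ of polynomial-time Turing machines with explicit polynomial clocks, and $R_1,R_2,\ldots$ of polynomial-time many-one reductions. I would define a slowly growing function $H\from\mathbb{N}\to\mathbb{N}$ and then set
\[
L \;=\; \{\,x \in \mathrm{SAT} : H(|x|) \text{ is even}\,\}.
\]
The value $H(n)$ is computed by simulating a bounded diagonalization process for only about $\log n$ steps, making $H$ polynomial-time computable. The rule is that if the current value of $H(n)$ is $2i$, the simulation searches for an input on which $M_i$ disagrees with $L$; if such a witness is found within the budget, $H$ advances to $2i+1$. If the current value is $2i+1$, the simulation instead searches for an input on which $R_i$ fails to reduce $\mathrm{SAT}$ to $L$; if such a witness is found, $H$ advances to $2i+2$. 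This design ensures $L\in\mathrm{NP}$: on input $x$, compute $H(|x|)$, reject if it is odd, and otherwise guess and verify a satisfying assignment for $x$.

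Verifying the two separating properties uses $\mathrm{P}\neq\mathrm{NP}$ twice in dual fashion. If $L\in\mathrm{P}$, say $L=L(M_j)$ in polynomial time, then the diagonalization stage targeting $M_j$ never finds a disagreement and $H$ stabilizes at some even value from some length onward. But then $L$ coincides with $\mathrm{SAT}$ on all but finitely many inputs, and combining this with $L\in\mathrm{P}$ gives $\mathrm{SAT}\in\mathrm{P}$, contradicting the hypothesis. Conversely, if some polynomial-time $R_k$ reduces $\mathrm{SAT}$ to $L$, then the stage targeting $R_k$ never finds a failure and $H$ stabilizes at an odd value, so $L$ is finite and hence in $\mathrm{P}$; composing with $R_k$ yields $\mathrm{SAT}\in\mathrm{P}$, again a contradiction. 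Thus $L$ is in $\mathrm{NP}\setminus\mathrm{P}$ and is not $\mathrm{NP}$-complete.

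The main obstacle is the coordination of the three constraints on $H$: it must be computable in time polynomial in $n$ (to keep $L$ in $\mathrm{NP}$); it must advance slowly enough that the simulation budget $\log n$ always succeeds in producing a witness when one genuinely exists (so that the failure of diagonalization really does force $H$ to stabilize, rather than merely take a long time to find the next witness); and the stabilization of $H$ at an even or odd value must reduce cleanly to $\mathrm{SAT}\in\mathrm{P}$ rather than to some weaker statement. Handling this requires careful bookkeeping of the simulation, and it is here that the hypothesis $\mathrm{P}\neq\mathrm{NP}$ is indispensable, since without it, $L$ might simply be both in $\mathrm{P}$ and $\mathrm{NP}$-complete simultaneously, and the whole separation collapses.
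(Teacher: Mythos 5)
Your proposal is correct and is essentially the classical delayed-diagonalization argument of Ladner, which is exactly the proof the paper defers to by citation (it states the theorem without proof, referring to Ladner's paper). The only point left implicit in your sketch is the well-definedness of the self-referential function $H$, which is resolved precisely because the $\log n$ simulation budget only ever queries $L$ (and hence $H$) on inputs of length at most about $\log n$, so the recursion is well-founded and membership of such short strings in $\mathrm{SAT}$ can be decided by brute force in time polynomial in $n$.
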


A problem that is in NP but that is neither in P nor NP-complete is called NP\emph{-intermediate}. There are no naturally-occurring decision problems that are known to be NP-intermediate. Problems that are in NP $\cap$ co-NP but that are not known to be in P are good candidates for being NP-intermediate. As we shall see, there are several decision problems in 3-manifold theory that are of this form. However, for a given problem, it is extremely challenging to provide good evidence for its intermediate status, as there \emph{might} be a polynomial time algorithm to solve it that has not yet been found.

\section{Some highlights}

\subsection{The homeomorphism problem}

This is the most important decision problem in 3-manifold theory. 

\begin{theorem}
\label{Thm:HomeoProblem}
The problem of deciding whether two compact orientable 3-manifolds are homeomorphic is solvable.
\end{theorem}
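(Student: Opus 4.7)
The plan is to reduce the homeomorphism problem to comparing the atomic pieces produced by two canonical decompositions, and then to recognize and compare those pieces individually. Given two triangulations, one first performs the prime (connected sum) decomposition of each manifold, then the JSJ decomposition of each prime summand, and finally compares the resulting geometric pieces, invoking the geometrization theorem to ensure that each piece has a well-understood model.

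For the prime decomposition, the key input is that an essential $2$-sphere (or essential disk, if the manifold has boundary) may be taken to be a fundamental or vertex normal surface in the triangulation. There are only finitely many such surfaces and they can be enumerated, so an essential sphere can be found, or shown not to exist. Splitting along such a sphere, capping off with balls, and iterating yields the prime decomposition; by the Kneser--Milnor uniqueness theorem, two manifolds are homeomorphic if and only if their prime decompositions agree as multisets of homeomorphism types.

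For each prime piece, I would similarly use normal surface theory to locate a maximal essential system of tori (and annuli in the bounded case), producing the JSJ decomposition. By geometrization, each resulting piece is either Seifert fibered or admits a complete finite-volume hyperbolic metric on its interior. Whether a piece is Seifert fibered can be tested algorithmically, and if so its Seifert invariants can be computed; two Seifert fibered pieces are homeomorphic iff their invariants agree up to a known list of normalization ambiguities. For hyperbolic pieces, Mostow rigidity identifies homeomorphism with isometry, so a canonical geometric invariant --- for instance the Epstein--Penner ideal cell decomposition in the cusped case, or a canonical fundamental domain combined with the solution of the isomorphism problem for the fundamental group --- can be extracted and compared.

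Finally, one compares the JSJ graphs as labelled graphs, with vertices labelled by the homeomorphism type of each piece and edges labelled by the gluing data across the decomposing tori; the two manifolds are homeomorphic iff their labelled JSJ graphs are isomorphic. The main obstacle throughout is this last step for hyperbolic pieces: certifying a hyperbolic structure, and then comparing two hyperbolic $3$-manifolds, requires a delicate mixture of geometry and algorithmic group theory, and this is the principal source of the enormous, tower-of-exponentials running time for the overall algorithm that was flagged in the introduction.
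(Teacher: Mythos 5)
Your proposal is correct and follows the same overall strategy that the paper attributes to Scott--Short and Kuperberg and sketches across Sections 5--7 and 12: construct the prime decomposition and the JSJ decomposition algorithmically via fundamental normal surfaces, invoke Geometrisation so that each piece is Seifert fibred or hyperbolic, compare the pieces, and then compare the labelled decomposition graphs. The one place you genuinely diverge from the paper's presentation is the comparison of hyperbolic pieces: the paper (following Kuperberg, Section 12) enumerates geodesic triangulations with algebraic vertex coordinates, converts the given triangulations to geodesic ones by Pachner moves, and then uses a computable bound on the number of Pachner moves between two geodesic triangulations; you instead propose Mostow rigidity together with a canonical geometric invariant (Epstein--Penner in the cusped case) or a group-theoretic comparison, which is closer in spirit to the Scott--Short route. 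Both are legitimate; the paper's route is what produces the explicit (tower-of-exponentials) complexity bound quoted in the introduction, whereas yours is cleaner conceptually but needs the cusped/closed cases treated separately and a solution to the relevant isomorphism problem in the closed case. One point to state more carefully: ``the two manifolds are homeomorphic iff their labelled JSJ graphs are isomorphic'' is only true if the edge labels (gluing maps on the tori) are compared up to the action of self-homeomorphisms of the pieces on their boundary tori; this is where the finiteness and computability of isometry groups of hyperbolic pieces and the known mapping class groups of Seifert pieces (including the non-uniqueness of fibrations on small pieces) must be fed in, and it is also the step that forces the fibre-free caveat in Haken's original hierarchy-based algorithm.
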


The problem of deciding whether two links in the 3-sphere are equivalent is nearly a special case of the above result. One must check whether there is a homeomorphism between the link exteriors taking meridians to meridians. This is also possible, and hence we have the following result \cite[Corollary 6.1.4]{Matveev}.

\begin{theorem}
\label{Thm:LinkEquivalence}
The problem of deciding whether two link diagrams represent equivalent links in the $3$-sphere is solvable.
\end{theorem}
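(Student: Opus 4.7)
The plan is to reduce link equivalence to the homeomorphism problem of \refthm{HomeoProblem}. Given a diagram $D$ of a link $L \subset S^3$, one first algorithmically produces a triangulation $\calT_L$ of the exterior $X_L = S^3 \setminus \interior N(L)$, using $O(|D|)$ tetrahedra, in which the meridian $m_i$ of each component $L_i$ appears as a distinguished simple closed curve on the corresponding boundary torus $T_i \subset \bdy X_L$. This is a routine combinatorial construction: each crossing of $D$ contributes a fixed local gadget of tetrahedra, and the meridians are read off directly from the diagram.

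Next one observes that $L$ and $L'$ are equivalent iff there is a homeomorphism $h \from X_L \to X_{L'}$ carrying the meridian system $\{m_i\}$ to the meridian system $\{m'_j\}$, that is, a bijection $\sigma$ between the sets of boundary components such that $h(T_i) = T'_{\sigma(i)}$ and $h(m_i)$ is isotopic to $m'_{\sigma(i)}$. The forward direction is immediate; the reverse direction holds because any such $h$ extends across the tubular neighborhoods $N(L)$ and $N(L')$ by sending meridian disks to meridian disks, producing a homeomorphism of pairs $(S^3, L) \to (S^3, L')$. The decision problem thus becomes a decorated homeomorphism problem, asking whether two compact orientable $3$-manifolds with distinguished curves on their boundary are equivalent under a homeomorphism preserving those curves.

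Finally, one reduces the decorated problem to a plain application of \refthm{HomeoProblem}. The simplest device is to attach a rigid \emph{marker} to each boundary torus $T_i$: fix a family of pairwise non-homeomorphic compact orientable hyperbolic $3$-manifolds $N_1, N_2, \ldots$, each with a single toroidal cusp and with isometry group fixing a distinguished slope on its boundary, and glue $N_i$ to $T_i$ via an identification sending that slope to $m_i$. The resulting closed $3$-manifold $M(L)$ depends only on $L$ up to homeomorphism. Because the $N_i$ are hyperbolic, pairwise distinct, and rigid on slopes, the Jaco--Shalen--Johannson decomposition of $M(L)$ exhibits them as canonical pieces whose attaching tori and attaching slopes can be recovered intrinsically, so $M(L) \isom M(L')$ iff $L$ is equivalent to $L'$. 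The homeomorphism of the closed manifolds is then decidable by \refthm{HomeoProblem}. The main obstacle is producing explicit markers with the required rigidity, and handling the small number of exceptional exteriors (such as a solid torus for the unknot) for which the JSJ argument needs a separate inspection; this is classical, and can alternatively be sidestepped by strengthening the proof of \refthm{HomeoProblem} directly to track a system of curves on the boundary, as is done in Matveev's treatment.
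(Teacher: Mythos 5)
Your first two steps coincide with the paper's reduction: triangulate the exteriors with the meridians marked, and observe that $L$ and $L'$ are equivalent if and only if there is a homeomorphism of exteriors carrying the meridian system to the meridian system. Where you diverge is in how this decorated homeomorphism problem is solved. The paper (following Haken--Hemion as presented by Matveev) simply records the meridians as a boundary pattern and invokes the homeomorphism algorithm for Haken manifolds with boundary patterns (Theorem~\ref{Thm:AlgorithmBoundaryPattern}); your alternative is to reduce to the undecorated problem of Theorem~\ref{Thm:HomeoProblem} by gluing hyperbolic markers along the meridians and appealing to JSJ rigidity. That device is legitimate in principle, but as written it has genuine gaps beyond the ``exceptional exteriors'' you flag.

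First, gluing $N_i$ to $T_i$ ``via an identification sending that slope to $m_i$'' does not determine the gluing map: identifications of a torus agreeing on one slope differ by powers of a Dehn twist along it, and since the mapping class groups of a hyperbolic piece and of a generic link exterior act on the boundary torus through finite groups, these choices generally produce non-homeomorphic closed manifolds. So your claim that $M(L)$ depends only on $L$ is false as stated, and the forward direction of the reduction breaks; the fix is to pin down the gluing on a full basis, e.g.\ by also sending a chosen dual curve of $\partial N_i$ to the canonical longitude $\lambda_i$, which an equivalence of links does preserve. Second, using pairwise non-homeomorphic markers $N_i$ attached according to the (arbitrary) labelling of components forces any homeomorphism $M(L) \cong M(L')$ to match component $i$ with component $i$, so you would be deciding \emph{ordered} link equivalence; you must either use a single marker type or run over all bijections $\sigma$ of the components. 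Third, the ``intrinsic recovery'' of the markers from the JSJ of $M(L')$ requires that no marker be homeomorphic to a JSJ piece of the other link exterior (a satellite component could have a companion exterior equal to your marker), so the markers must be chosen after computing the JSJ pieces of both exteriors (Theorem~\ref{Thm:ConstructJSJ}) and checked against them. Finally, the cases where some $T_i$ fails to be a canonical torus of $M(L)$ (split unknotted components giving solid-torus pieces, or exteriors such as $T^2 \times I$ for the Hopf link) need an actual argument, not just an acknowledgement. All of these can be repaired, but they are exactly the bookkeeping that the boundary-pattern route -- the one the paper takes, and which you mention in your last sentence -- avoids entirely.
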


There are now several known methods \cite{ScottShort, KuperbergAlgorithmic} for proving Theorem \ref{Thm:HomeoProblem}, but they all use the solution to the Geometrisation Conjecture due to Perelman \cite{Perelman1, Perelman2, Perelman3}. However, the complexity of the problem is a long way from being understood. The best known upper bound is due to Kuperberg \cite{KuperbergAlgorithmic}, who showed that the running time is at most 
$$ 2^{2^{2^{2^{\rdots^t}}}} $$
where $t$ is the sum of the number of tetrahedra in the given triangulations, and the height of the tower is some universal, but currently unknown, constant. We will review some of the ideas that go into this in Section \ref{Sec:HyperbolicStructures}.

The known lower bounds on the complexity of this problem are also very poor. It was proved by the author \cite{LackenbyConditionallyHard} that the homeomorphism problem for compact orientable 3-manifolds is at least as hard as the problem of deciding whether two finite graphs are isomorphic. In a recent breakthrough by Babai \cite{Babai}, graph isomorphism was shown to be solvable in quasi-polynomial time (that is, in time $2^{({\log n})^c}$ for some constant $c$, where $n$ is the sum of the number of vertices in the two graphs). It is not known whether it is solvable in polynomial time, but it is believed by many that it is NP-intermediate.

Given the limitations in our understanding of this important problem, it is natural to ask whether there are any decision problems about 3-manifolds for which we can pin down their complexity. Perhaps unsurprisingly, there are very few decision problems in 3-manifold theory that are known to lie in P. But there are some problems that are known to be NP-complete.

\subsection{Some NP-complete problems}

The following striking result was proved by Agol, Hass and Thurston \cite{AgolHassThurston}.

\begin{theorem}
\label{Thm:3ManifoldKnotGenus}
The problem of deciding whether a knot in a compact orientable $3$-manifold bounds a compact orientable surface with genus $g$ is $\mathrm{NP}$-complete.
\end{theorem}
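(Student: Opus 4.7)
The plan is to establish the two halves separately: that knot genus is in NP and that it is NP-hard.

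\textbf{NP-membership.} The natural certificate for a ``yes'' instance is a normal surface $S$ in the given triangulation $\calT$ with $\bdy S = K$, orientable, connected, and of genus $g$, presented by its normal coordinate vector $v$ of length $7|\calT|$. Classical normal surface theory, refined to control the bit-complexity of coordinates, guarantees that if such a spanning surface exists then one can be chosen whose coordinate entries have bit-lengths polynomial in $|\calT|$ (even though the entries themselves may be exponentially large), so $v$ is a polynomial-size certificate. One then verifies in polynomial time that (i) $v$ satisfies the matching equations; (ii) the linear combination of coordinates computing $\chi(S)$ evaluates to $1-2g$; (iii) the boundary curves produced by $v$ really do match $K$; (iv) $S$ is two-sided, hence orientable; and (v) $S$ is connected. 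Checks (i)--(iv) are linear-algebraic operations on $v$ and are routine.

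\textbf{NP-hardness.} The plan is a polynomial-time reduction from a known NP-complete problem such as a restricted variant of $3$-SAT. Given a Boolean formula $\varphi$, I would build a triangulated pair $(M_\varphi, K_\varphi)$ and a threshold $g_\varphi$ by assembling modular gadgets glued along tori: one gadget per Boolean variable, admitting two minimal-genus spanning alternatives encoding TRUE and FALSE, and one gadget per clause whose incident pieces admit a low-genus completion exactly when the clause is satisfied. The Euler-characteristic arithmetic is arranged so that $K_\varphi$ bounds an orientable surface of genus at most $g_\varphi$ if and only if $\varphi$ is satisfiable, and the whole construction is computable in polynomial time in the size of $\varphi$.

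\textbf{Main obstacles.} The principal difficulty on the NP side is step (v): the surface $S$ corresponding to $v$ may contain exponentially many normal disks, and hence potentially exponentially many connected components, so naive enumeration is too slow. I would encode the gluings of normal disks across tetrahedron faces as a system of interval identifications, and process them with a Euclidean-algorithm-style reduction that merges long arithmetic progressions of parallel arcs all at once, with a careful amortised analysis keeping the running time polynomial in the bit-length of $v$ rather than its magnitude; this is the technical core of the proof. On the NP-hardness side, the hard step is showing that \emph{every} orientable spanning surface of $K_\varphi$ has genus controlled by some Boolean assignment---not merely those built from the intended gadget decomposition---which requires a Haken-style cut-and-paste argument inside each gadget to exclude parasitic low-genus surfaces that do not respect the logical structure.
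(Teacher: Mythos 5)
Your NP-membership half is essentially the paper's route: the certificate is the normal coordinate vector of a spanning surface of controlled size (in the paper this comes from taking a least-weight normal spanning surface and showing a fundamental summand is again a spanning surface of no larger genus, so Theorem \ref{Thm:WeightFundamental} bounds its weight), and connectedness of an exponentially large normal surface is checked by exactly the interval-pairing, Euclidean-style algorithm you describe, i.e.\ the Agol--Hass--Thurston algorithm of Section \ref{Sec:AHT}. One small inaccuracy: orientability (your check (iv)) is not a ``routine linear-algebraic operation'' on the vector; the paper handles it by running AHT again on the doubled vector $2(S)$ and comparing component counts. Also, the existence of a small certificate is the nontrivial content you compress into ``classical normal surface theory, refined'' --- it needs the argument that a fundamental summand of a least-weight spanning surface is itself an orientable spanning surface, which uses that $\partial S$ meets each edge at most once.

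The genuine gap is in the NP-hardness half. What you give is a plan (``one gadget per variable, one per clause, glued along tori, with the Euler-characteristic arithmetic arranged so that\dots''), and you yourself defer the only hard point: proving that \emph{every} orientable spanning surface, not just those respecting the gadget decomposition, has genus governed by a Boolean assignment. No gadget is specified and no mechanism is offered for this lower bound, so the reduction is not actually constructed. The paper (following Agol--Hass--Thurston, and the variant in Section \ref{Sec:NPHard}) resolves precisely this point by a concrete device: reduce from 1-in-3-SAT, make the relevant components parallel copies of a fixed knot $K$ of large genus joined by (possibly half-twisted) bands, and observe that truth assignments correspond to orientations; if no ``balanced'' orientation exists, then for every orientation the link is a satellite of $K$ with non-zero winding number, which forces any spanning surface to have genus at least that of $K$, while a balanced orientation yields an explicit low-genus surface from discs, annuli and bands. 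This satellite/winding-number argument is what replaces the ``Haken-style cut-and-paste inside each gadget'' that you leave unresolved, and without something playing that role your reduction does not go through.
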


The ideas that go into this are important, and we will devote much of Sections \ref{Sec:AHT} and \ref{Sec:NPHard} to them.

Recently, some new results have been announced, establishing that some other natural topological problems are NP-complete.  De Mesmay, Rieck,
Sedgwick and Tancer \cite{DRST} proved the following.

\begin{theorem} 
The problem of deciding whether a diagram of the unknot can be reduced to the trivial diagram using at most $k$ Reidemeister moves is $\mathrm{NP}$-complete.
\end{theorem}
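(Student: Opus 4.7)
The plan is to establish both membership in $\mathrm{NP}$ and $\mathrm{NP}$-hardness; the former is routine, the latter is where the real work lies.

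For membership in $\mathrm{NP}$, assume the input consists of a knot diagram $D$ with $n$ crossings together with the parameter $k$ presented in unary. A certificate is simply a sequence of Reidemeister moves $m_1,\ldots,m_\ell$ with $\ell\le k$. Because each Reidemeister move of type R1, R2, or R3 changes the crossing number by at most $2$, every intermediate diagram encountered along the sequence has at most $n+2k$ crossings. Each such diagram can be stored in $O((n+k)\log(n+k))$ bits, and each individual move can be specified by its type together with the arcs or crossings on which it acts, in $O(\log(n+k))$ bits. Hence the certificate has size polynomial in the input length. The verifier applies the moves in order, checks that each is legal in the current diagram, and then checks that the final diagram is the standard zero-crossing diagram of the unknot. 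Each step is clearly polynomial time.

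For $\mathrm{NP}$-hardness, I would reduce from an $\mathrm{NP}$-hard combinatorial problem with a planar flavour, such as planar $3$-SAT. To a formula $\varphi$ I would associate, in polynomial time, a diagram $D_\varphi$ of the unknot together with a threshold $k_\varphi$ such that $D_\varphi$ can be reduced to the trivial diagram in at most $k_\varphi$ moves precisely when $\varphi$ is satisfiable. The construction would be gadget-based: each variable $x_i$ is replaced by a local tangle admitting two distinct cheap unknotting sequences of equal length, interpreted as the two truth values; each clause is replaced by a tangle that can be untangled cheaply only if at least one of its input wires carries the correct value; and connecting ``wire'' gadgets propagate the chosen value while incurring a fixed, known cost. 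The budget $k_\varphi$ is tuned to the total cost of untangling every gadget under some consistent assignment. One must also arrange that gadgets are rigidly separated so that a single move cannot simultaneously affect two of them in a way that saves work.

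The main obstacle, and the delicate heart of the proof, is obtaining sharp \emph{lower bounds} for the Reidemeister distance of each gadget. Upper bounds are easy—simply exhibit a sequence—but lower bounds are notoriously difficult, since in principle a short sequence could reorder moves globally in unforeseen ways. I expect the argument relies on a carefully chosen monovariant or potential function, invariant (or controlled) under moves that do not interact with a given gadget, so that the analysis can be localised and each gadget can be shown to consume its budgeted number of moves independently. Once such rigid gadgets are available, the assembly into a reduction from planar $3$-SAT, and the verification of the ``satisfiable iff short sequence exists'' equivalence, becomes a more or less routine bookkeeping exercise.
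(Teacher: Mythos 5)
This theorem is stated in the survey without proof, as a result of de~Mesmay, Rieck, Sedgwick and Tancer \cite{DRST}, so your proposal has to be measured against their argument. Your NP-membership half is essentially right, but note one encoding subtlety: you assume $k$ is given in unary. If $k$ is given in binary, a sequence of $k$ moves need not be a polynomial-size certificate; the fix is to invoke the polynomial bound of Theorem~\ref{Thm:PolyUnknot} (any $c$-crossing unknot diagram can be trivialised in at most $(236c)^{11}$ moves), so the answer is automatically `yes' once $k$ exceeds this bound and otherwise the certificate has length at most $\min\{k,(236c)^{11}\}$, which is polynomial in $c$.

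The genuine gap is in the NP-hardness half, and it is exactly the point you flag and then defer: you have no mechanism for proving lower bounds on the number of Reidemeister moves, and the mechanism you gesture at does not exist in the form you describe. A sequence of Reidemeister moves is a global object; there is no way to ``rigidly separate'' gadgets so that moves cannot interact across them, and no known localisation principle that lets you charge each gadget independently for its budgeted moves. Lower bounds on Reidemeister distance are precisely the notoriously hard ingredient here, and the actual proof in \cite{DRST} does not proceed by a planar-3-SAT circuit layout with per-gadget potentials. Instead it reduces from 3-SAT using diagrams assembled (essentially by connected sum) from twist-region gadgets encoding literals and clauses, and the crucial lower bound comes from a \emph{global} diagram invariant in the style of Hass and Nowik, defined by summing linking numbers of the two-component links obtained by smoothing each crossing. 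This invariant changes by a uniformly bounded amount under every Reidemeister move, so the total change forced by untangling the constructed diagram yields a lower bound on the number of moves, and the construction is arranged so that this forced change is small exactly when the formula is satisfiable. Without this (or some comparable) quantitative tool, your reduction plan is an outline whose central step — ``each gadget can be shown to consume its budgeted number of moves'' — is unsupported, so the hardness direction is not established.
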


\subsection{Some possibly intermediate problems}

There are very few algorithms in 3-manifold theory that run in polynomial time. However, there are some decision problems that are likely to be NP-intermediate or possibly in P. Recall from Section \ref{Sec:ComplexityClasses} that if a problem lies in NP and co-NP, then it is very likely \emph{not} to be NP-complete. 

\begin{theorem}
\label{Thm:UnknotNPCoNP}
The problem of recognising the unknot lies in $\mathrm{NP}$ and $\mathrm{co}$-$\mathrm{NP}$.
\end{theorem}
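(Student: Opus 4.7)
The plan is to prove the two containments by quite different means. For NP the certificate is a normal spanning disk; for co-NP the certificate is a Seifert surface packaged with a taut sutured manifold hierarchy.

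First, for the NP direction (the Hass--Lagarias--Pippenger argument): given a knot diagram with $n$ crossings, I would build a triangulation $\calT$ of the knot exterior with $O(n)$ tetrahedra. The knot is unknotted if and only if this exterior is a solid torus, which is equivalent to the existence of a properly embedded compressing disk. By Haken's normal surface theory, if a compressing disk exists then there is one that is normal with respect to $\calT$. The key quantitative input is that one can choose such a normal disk whose vector of normal coordinates has entries bounded by $2^{c|\calT|}$ for some constant $c$, via the standard bound on vertex or fundamental normal surfaces. Such a vector has bit length polynomial in $n$, so it is a legitimate certificate. Verification reduces to checking the matching equations, checking that the surface is connected with Euler characteristic $1$ (so is a disk), and checking that its boundary is essential on $\bdy \calT$. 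All of these checks can be carried out in polynomial time directly from the binary encoding of the coordinate vector.

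For the co-NP direction (due to Lackenby), I need a polynomial-size certificate that $K$ is \emph{knotted}. A knot $K$ is nontrivial if and only if its Seifert genus $g(K)$ is positive, so it suffices to certify some integer $g \geq 1$ which is simultaneously an upper and a lower bound for $g(K)$. The upper bound direction is the content of \refthm{3ManifoldKnotGenus} applied to the knot exterior: I exhibit a normal Seifert surface $S$ of genus $g$, whose coordinate vector is of polynomial bit length and can be verified in polynomial time. The lower bound direction is the delicate part. I would certify that $S$ is genus-minimizing by producing a taut sutured manifold hierarchy of the sutured manifold obtained by cutting the knot exterior along $S$; Gabai's theorem then implies that $S$ minimizes genus in its homology class, in particular $g(K) \geq g$.

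The main obstacle is showing that such a hierarchy can be stored and verified with polynomial total complexity. A naive hierarchy can have length exponential in $|\calT|$, and each decomposing surface can itself be exponentially complex as a simplicial object. The plan to circumvent this is to work throughout with normal surfaces represented by their coordinate vectors (which may have exponentially large entries but polynomial bit length), to use structural results bounding the number of steps in a carefully chosen hierarchy by a polynomial in $|\calT|$, and to control the growth of the triangulation under the decompositions so that each step is polynomially bounded. At each step one must verify in polynomial time that the decomposition along the given normal surface yields a still-taut sutured manifold; this is essentially a local check on Euler characteristics and on the sutures, compatible with the compressed encoding. Combining these ingredients provides a polynomial certificate of knottedness, establishing co-NP membership.
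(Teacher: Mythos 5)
Your overall route coincides with the paper's: the NP half is the Hass--Lagarias--Pippenger certificate (a fundamental/vertex normal compressing disc of at most exponential weight, hence polynomial bit-length), and the co-NP half is the certification of knot genus via a taut sutured manifold hierarchy, as in \refthm{ThurstonNormNP}. However, the co-NP half as you describe it has a genuine gap in its verification logic. You propose to check at each stage that the decomposition ``yields a still-taut sutured manifold'' by ``a local check on Euler characteristics and on the sutures''. Tautness of an intermediate sutured manifold is not locally checkable: it requires that $R_-$ and $R_+$ be Thurston-norm-minimizing in their homology classes, which is precisely the kind of global statement the certificate is supposed to establish, so this step as written is circular. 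The actual argument never verifies tautness of the intermediate stages directly. One verifies only the cheap hypotheses (1) and (2) of \refthm{TautnessPullsBack} at each decomposition, verifies that the final manifold is a collection of taut balls (each with at most one suture), and then invokes \refthm{TautnessPullsBack} to propagate tautness backwards to the first surface; Gabai's theory then gives the genus lower bound.

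The second weak point is that the complexity control, which is the real content of the co-NP proof, is only asserted in your plan. Cutting along a fundamental normal surface can increase the number of tetrahedra by an exponential factor, so there is no generic way to ``control the growth of the triangulation under the decompositions'', nor a generic structural bound on hierarchy length. The mechanism in the paper is specific: all but at most $6|\mathcal{T}|$ pieces of $M \cut S$ lie in the parallelity $I$-bundle of $S$, whose topology is computable in polynomial time by the Agol--Hass--Thurston algorithm, and---crucially---sutured manifold theory (unlike Haken's boundary-pattern hierarchies) permits decomposing along the vertical annuli and vertical discs of that bundle, which is what collapses the exponential complexity at each stage and lets the whole hierarchy be encoded and verified in polynomial time. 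Without this (or some substitute), your plan does not go through. A smaller remark on the NP half: verifying that the certified normal surface is \emph{connected} from its exponentially large coordinate vector is itself the nontrivial step; it requires the AHT algorithm or the special structure of vertex solutions, not a direct computation, though this is a presentational rather than a structural issue.
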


We will discuss this result in Sections \ref{Sec:AHT} and \ref{Sec:ThurstonNorm}. The proof that unknot recognition lies in NP is due to Hass, Lagarias and Pippenger \cite{HassLagariasPippenger}. The fact that unknot recognition lies in co-NP was first proved by Kuperberg \cite{KuperbergKnottedness}, but assuming the Generalised Riemann Hypothesis. It has now been proved unconditionally by the author \cite{LackenbyEfficientCertification} using a method that was outlined by Agol \cite{AgolCoNP}. It is remarkable that the Generalised Riemann Hypothesis should have relevance in this area of 3-manifold theory. In fact, it remains an assumption in the following theorem of Zentner \cite{Zentner}, Schleimer \cite{Schleimer} and Ivanov \cite{Ivanov}, which builds on work of Rubinstein \cite{Rubinstein} and Thompson \cite{Thompson}.

\begin{theorem}
The problem of deciding whether a 3-manifold is the 3-sphere lies in $\mathrm{NP}$ and, assuming the Generalised Riemann Hypothesis, it also lies in $\mathrm{co}$-$\mathrm{NP}$.
\end{theorem}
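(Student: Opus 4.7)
The plan has two parts: showing that $3$-sphere recognition lies unconditionally in $\mathrm{NP}$, and, assuming GRH, showing it lies in $\mathrm{co}$-$\mathrm{NP}$.

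For the $\mathrm{NP}$ direction, I would follow the almost-normal surface approach initiated by Rubinstein and Thompson. The certificate that a triangulation $\mathcal{T}$ with $t$ tetrahedra represents $S^3$ is a hierarchy of (almost) normal $2$-spheres $S_1, \ldots, S_k$ living in the successive triangulations obtained by cutting and retriangulating, each encoded by its vector of normal coordinates, together with $3$-ball certificates (in the sense of Hass--Lagarias--Pippenger) for the terminal pieces. The verifier would check, in polynomial time, that each $S_i$ is a genuine (almost) normal $2$-sphere in the current triangulation, that cutting along it produces the next manifold in the list, and that the terminal pieces are $3$-balls. The main combinatorial work, carried out by Schleimer and Ivanov, is to bound the bit-size of each normal-coordinate vector, the number $k$ of spheres, and the complexity of the intermediate triangulations, all polynomially in $t$. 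Naive cut-and-retriangulate blows up the triangulation, so one must instead use crushing moves in the Jaco--Rubinstein style, together with thin-position arguments to bound the number of steps.

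For the $\mathrm{co}$-$\mathrm{NP}$ direction, I would use the Poincar\'e conjecture to re-cast the problem: $M \not\cong S^3$ iff $\pi_1(M)$ is non-trivial. The triangulation $\mathcal{T}$ yields a presentation of $\pi_1(M)$ of size polynomial in $t$. A natural certificate of non-triviality is then a non-trivial homomorphism $\rho \from \pi_1(M) \to \mathrm{PSL}(2, \mathbb{F}_p)$ for a suitable prime $p$, specified by giving $p$ in binary together with a $2 \times 2$ matrix over $\mathbb{F}_p$ for each generator; the verifier checks that all relations hold and that some generator is not sent to the identity, both of which are polynomial-time tasks once $p$ is polynomial-sized. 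Existence of a non-trivial complex representation $\pi_1(M) \to \mathrm{SL}(2, \mathbb{C})$ for every closed orientable $M \not\cong S^3$ is precisely Zentner's theorem, which combines the Kronheimer--Mrowka instanton-theoretic work behind Property P with geometrisation.

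The hard part, and the reason GRH enters, is the arithmetic reduction. Zentner supplies only existence of a representation into $\mathrm{SL}(2,\mathbb{C})$, with no a priori control on the height of the matrix entries. Under GRH, effective Chebotarev density theorems produce a prime $p$ of size polynomial in $t$ such that the representation survives reduction modulo $p$ as a non-trivial representation into $\mathrm{PSL}(2, \mathbb{F}_p)$ with polynomial bit-length entries. This is the same mechanism that Kuperberg used in his GRH-conditional proof that unknot recognition lies in $\mathrm{co}$-$\mathrm{NP}$, and it transfers to the $3$-sphere setting essentially verbatim. Removing the GRH assumption would require either an unconditional effective Chebotarev theorem or a substantively different certificate, as was found by the author for the unknot, and neither is currently available for $S^3$ recognition.
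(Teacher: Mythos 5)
Your overall strategy is the one the paper follows: the $\mathrm{NP}$ half via the Rubinstein--Thompson almost normal sphere machinery as quantified by Schleimer and Ivanov (crushing away normal spheres and certifying an almost normal sphere), and the $\mathrm{co}$-$\mathrm{NP}$ half via an $\mathrm{SL}(2,\mathbb{C})$-representation result fed into Kuperberg's GRH-conditional descent to a small finite quotient.

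There is, however, one concrete misstep in the $\mathrm{co}$-$\mathrm{NP}$ half. Zentner's theorem is not the statement that every closed orientable $M \not\cong S^3$ admits a non-trivial representation to $\mathrm{SL}(2,\mathbb{C})$; it is the statement that every \emph{homology} $3$-sphere other than $S^3$ admits a representation with \emph{non-abelian} image. Both qualifications matter for your argument. The descent result that GRH makes effective (Kuperberg's theorem, stated in the paper as Theorem \ref{Thm:AffineAlgebraicQuotient}) takes as input a homomorphism to $G(\mathbb{C})$ with non-abelian image and produces a homomorphism to $G(\mathbb{Z}/p)$ with non-abelian image for a prime $p$ of polynomial bit-size; ``non-abelian image'' is the condition that is expressed algebraically on the representation variety and preserved by the reduction, and no such representation exists at all for manifolds such as lens spaces, whose fundamental groups are abelian. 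So, as written, your certificate scheme has no completeness argument for manifolds with non-trivial first homology, and you cannot literally ``reduce the representation mod $p$'' anyway, since its entries need not be algebraic---one first passes to an algebraic point of the representation variety (Koiran) and only then reduces. The standard repair, which is what the actual proof does, is a case split: if $H_1(M) \neq 0$ this is detected deterministically in polynomial time by Smith normal form, so no certificate (and no GRH) is needed; only when $M$ is a homology sphere other than $S^3$ does one invoke Zentner's theorem and the GRH descent, exactly as in Kuperberg's unknottedness argument. With that adjustment your proposal coincides with the paper's proof.
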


In Sections \ref{Sec:AlmostNormal} and \ref{Sec:Homomorphisms}, we explain how this result is proved.

\subsection{Some NP-hard problems}

Much of the progress in algorithmic 3-manifold theory has been to show that certain decision problems are solvable and, in many circumstances, an upper bound on their complexity is given. The task of finding lower bounds on their complexity is more difficult in general. However, there are some interesting problems that have been shown to be NP-hard, even though the problems themselves are not known to be algorithmically solvable.

The \emph{unlinking number} of a link is the minimal number of crossing changes that can be made to the link that turn it into the unlink. It is not known to be algorithmically computable. However, the following result was proved independently by De Mesmay, Rieck, Sedgwick and Tancer \cite{DRST} and by Koenig and Tsvietkova \cite{KoenigTsvietkova}.

\begin{theorem}
The problem of deciding whether the unlinking number of a link is some given integer is $\mathrm{NP}$-hard.
\end{theorem}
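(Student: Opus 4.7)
The plan is to exhibit a polynomial-time Karp reduction from a known NP-hard combinatorial problem to the decision problem "is the unlinking number of a given link diagram equal to $k$?" (or its natural variant, "is it at most $k$?"). A natural source problem is 3-SAT, SET-COVER, or some problem about weighted graphs whose combinatorial structure can be encoded using linking data. Because the unlinking number itself is not yet known to be computable, we need only the reduction; we never have to evaluate $u(L)$ for the constructed links.

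The first step is to design \emph{gadgets}. For each atomic piece of the source instance (e.g., each variable of a 3-SAT formula or each element of a SET-COVER universe), I would construct a small sublink or a local tangle whose behaviour under crossing changes encodes a local choice (e.g., a truth value, or inclusion in a cover). For each constraint (clause, required coverage), I would add extra components or crossings so that the constraint being satisfied corresponds to being able to resolve the associated tangle using a prescribed number of crossing changes drawn consistently from the local choices. The whole diagram must be built in time polynomial in the size of the source instance.

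The second step is the correctness argument, which splits into an upper bound and a lower bound on the unlinking number $u(L)$ of the constructed link $L$. For the upper bound, a satisfying assignment (or optimal cover) yields an explicit sequence of $k$ crossing changes that unlinks $L$, obtained by resolving each gadget in the manner dictated by the assignment. The lower bound is the crucial direction and relies on the standard observation that a single crossing change between two distinct components $K_i$ and $K_j$ alters the linking number $\ell_{ij}$ by exactly $\pm 1$ and fixes every other entry of the linking matrix, while a crossing change at a self-crossing of a single component leaves every linking number fixed. Consequently $u(L) \ge \sum_{i < j} |\ell_{ij}(L)|$, and by engineering the linking matrix of each gadget so that it realises the combinatorial cost of the source problem, one forces this obstruction to match the upper bound.

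The hardest part will be calibrating the gadgets so that the lower and upper bounds on $u(L)$ coincide \emph{exactly}, and so that they coincide at the value prescribed by the source problem. One must rule out economical crossing-change sequences that might inadvertently resolve more than one constraint at a time, which typically requires a rigidity property: every efficient unlinking sequence is forced, component by component, to conform to the intended structure. In some constructions the linking-matrix obstruction alone is insufficient because self-crossing changes can also reduce the complexity of individual components; handling this cleanly usually requires supplementary invariants such as the Seifert genus of each component, Levine--Tristram signatures, or Ozsv\'ath--Szab\'o concordance invariants, chosen to match the structure of the gadget. Once these obstructions align, the reduction produces a YES-instance of unlinking number $\le k$ if and only if the source instance was a YES-instance, completing the NP-hardness proof.
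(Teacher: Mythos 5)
Your proposal is a plan for a proof rather than a proof: it names the right general strategy but defers all of the mathematical content. Note first that the survey does not prove this theorem; it cites De Mesmay--Rieck--Sedgwick--Tancer and Koenig--Tsvietkova, and the genuinely worked example it gives of an NP-hardness argument is the link-genus problem (Theorem~\ref{Thm:LinkGenus}), where a concrete reduction from 1-in-3-SAT is exhibited: explicit variable circles, batches of four parallel copies of a fixed large-genus satellite companion $K$, bands with half-twists encoding negations, and a precise equivalence between balanced orientations and genus at most $2m$. That level of concreteness is exactly what is missing from your write-up. You never fix the source problem, never construct a single gadget, never compute a linking matrix, and never prove either direction of the equivalence; instead you describe in general terms what a gadget ``would'' do and acknowledge that the calibration and rigidity step --- which is the entire difficulty --- ``usually requires'' further invariants such as Levine--Tristram signatures or Heegaard Floer concordance invariants. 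A reduction that has not been specified cannot be checked for polynomial-time constructibility, and an upper/lower bound pair that has not been computed cannot be shown to coincide, so as it stands nothing has been established.

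To your credit, the obstruction you single out, namely that a crossing change between distinct components $K_i$ and $K_j$ changes $\mathrm{lk}(K_i,K_j)$ by $\pm 1$ and leaves all other pairwise linking numbers fixed, so that the unlinking number is at least $\sum_{i<j}\abs{\mathrm{lk}(K_i,K_j)}$, is indeed the elementary lower bound on which the published proofs are built, and the published constructions are engineered precisely so that this bound (applied pairwise, together with a counting argument over an assignment-dependent family of pairs) suffices, with no need for gauge-theoretic or Floer-theoretic input. The gap in your proposal is therefore not a wrong idea but an absent one: the specific link $L(\varphi)$ associated to a formula $\varphi$, the verification that $u(L(\varphi))$ equals the target integer exactly when $\varphi$ is satisfiable and is strictly larger otherwise (which is also what lets you answer the ``equal to $k$'' form of the question, not just ``at most $k$''), and the argument excluding unintended economies such as self-crossing changes or crossing changes that serve two clauses at once. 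Until those are supplied, the proposal does not yet constitute a proof of NP-hardness.
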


The first set of the above authors also considered the following decision problem, which also is not known to be solvable.

\begin{theorem}
The problem of deciding whether a link in $\mathbb{R}^3$ bounds a smoothly embedded orientable surface with zero Euler characteristic in $\mathbb{R}^4_+$ is $\mathrm{NP}$-hard.
\end{theorem}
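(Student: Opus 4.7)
The plan is to give a polynomial-time Karp reduction from the unlinking number decision problem, which the preceding theorem asserts is NP-hard. The topological bridge is that every crossing change can be realized inside $\mathbb{R}^3 \times [0,1] \subset \mathbb{R}^4_+$ by a smoothly embedded orientable cobordism of Euler characteristic $-2$: one resolves the crossing by a band surgery (one saddle) and then re-introduces the opposite crossing by a second band. Stacking $n$ such cobordisms turns a $k$-component link $L$ into the $k$-component unlink at the cost of Euler characteristic $-2n$, and capping off the resulting unlink at the top with $k$ disks produces a smoothly embedded orientable surface in $\mathbb{R}^4_+$ bounded by $L$ of Euler characteristic exactly $k - 2n$.

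Given an instance $(L, n)$ of the unlinking-number problem, the reduction would output a link $L'$ formed as a split union of $L$ with a calibrated collection of standard auxiliary components, chosen so that $L'$ admits a $\chi = 0$ orientable surface in $\mathbb{R}^4_+$ precisely when the $L$-part admits a surface of Euler characteristic $k - 2n$. With such a choice, the easy direction of the reduction is immediate: if $u(L) \le n$, assemble the crossing-change cobordism for $L$, cap off the resulting unlink with $k$ disks, and take the disjoint union with the standard surfaces bounding the auxiliary pieces to exhibit the required $\chi = 0$ surface for $L'$.

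The main obstacle is the converse. From the mere existence of some smoothly embedded orientable $\Sigma \subset \mathbb{R}^4_+$ with $\partial \Sigma = L'$ and $\chi(\Sigma) = 0$, one has to extract a genuine unlinking sequence for $L$ of length at most $n$. The difficulty is fundamental: $\Sigma$ need not respect the split structure of $L'$, and in general the 4-dimensional quantity $k - \chi$ is much smaller than twice the unlinking number, since the slice genus is typically strictly less than the unknotting number. The strategy is to design the auxiliary components rigidly enough --- using, for example, linking-number constraints, double-branched-cover signature obstructions, or concordance invariants arising from Heegaard Floer or Khovanov homology --- so that any $\chi = 0$ surface bounded by $L'$ is forced, up to isotopy, to decompose into a standard surface for the auxiliaries plus a surface bounded by $L$ of Euler characteristic at least $k - 2n$. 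A Morse-theoretic analysis of the $L$-piece, reading off band surgeries at the saddle critical points of a generic height function, then yields at most $n$ crossing changes unlinking $L$. Making this rigidity argument both effective and polynomial-size in the input is the heart of the proof and the step I would expect to be most delicate, as it is exactly here that the slice genus has to be certified to coincide with a known combinatorial invariant for the specific family of links produced by the reduction.
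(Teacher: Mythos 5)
Your forward direction is standard and fine (each crossing change costs $\chi = -2$ on an embedded cobordism in $\mathbb{R}^3 \times [0,1]$, and capping the unlink gives $\chi = k-2n$), but the soundness of the reduction is exactly where the argument fails, and the failure is not a deferrable technicality: from a smooth surface $\Sigma \subset \mathbb{R}^4_+$ with $\partial \Sigma = L$ and $\chi(\Sigma) = k - 2n$ there is in general no way to extract $n$ crossing changes unlinking $L$. A generic height function on $\Sigma$ produces births, deaths and band (saddle) moves, and band moves are not crossing changes; the inequality between unlinking number and minimal 4-dimensional Euler characteristic goes only one way, and the two quantities genuinely differ (this is exactly the slice genus versus unknotting number discrepancy you yourself flag). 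Your proposed repair --- choosing the auxiliary components so rigidly, via linking numbers or concordance invariants, that any $\chi = 0$ surface for $L'$ splits off a surface for $L$ with $\chi \geq k - 2n$ --- would, even if carried out, only recover a 4-dimensional lower bound for the $L$-part; it still does not convert that surface back into an unlinking sequence of length $n$. Worse, because you use the NP-hard unlinking-number instances as a black box, you would additionally need that for those specific links the minimal Euler characteristic of a surface in $\mathbb{R}^4_+$ is exactly $k - 2u(L)$; nothing in the hypothesis supplies this, and for arbitrary links it is false. So the reduction, as architected, cannot be closed.

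The cited proof of de Mesmay, Rieck, Sedgwick and Tancer does not pass through the unlinking number at all. Both of their hardness results come from a single direct reduction from a SAT-type problem, in which the link is assembled from gadgets built out of Hopf-linked chains, and the crucial feature is that all the lower-bound arguments use only pairwise linking numbers. Linking numbers are obstructions that survive into dimension four: components with nonzero linking number cannot bound disjoint embedded surfaces in $\mathbb{R}^4_+$, so the same gadget analysis that forces many crossing changes also forces any smooth embedded surface bounded by the constructed link to have small Euler characteristic. The 4-dimensional theorem therefore comes from the same construction, with no need to relate slice-type and unknotting-type invariants on someone else's hard instances. That is the missing idea in your plan: rather than certifying after the fact that two inequivalent invariants coincide on a given NP-hard family, one builds the family so that the obstruction driving the hardness is already concordance-robust.
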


\section{Pachner moves and Reidemeister moves}
\label{Sec:PachnerReidemeister}

Several decision problems, such as the homeomorphism problem for compact 3-manifolds, may be reinterpreted using Pachner moves.

\begin{definition}
A \emph{Pachner move} is the following modification to a triangulation $\mathcal{T}$ of a closed $n$-manifold: remove a non-empty subcomplex of $\mathcal{T}$ that is isomorphic to the union $F$ of some of but not all of the $n$-dimensional faces of an $(n+1)$-simplex $\Delta^{n+1}$, and then insert the remainder of $\partial \Delta^{n+1} \cut F$. (See Figure \ref{Fig:Pachner}.) For a triangulation $\mathcal{T}$ of an $n$-manifold $M$ with boundary, we also allow the following modification: attach onto its boundary an $n$-simplex $\Delta^n$, by identifying a non-empty subcomplex of $\partial M$ with a subcomplex of $\partial \Delta^n$ consisting of a union of some but not all of the $(n-1)$-dimensional faces.
\end{definition}

\begin{figure}
  \includegraphics[width=4in]{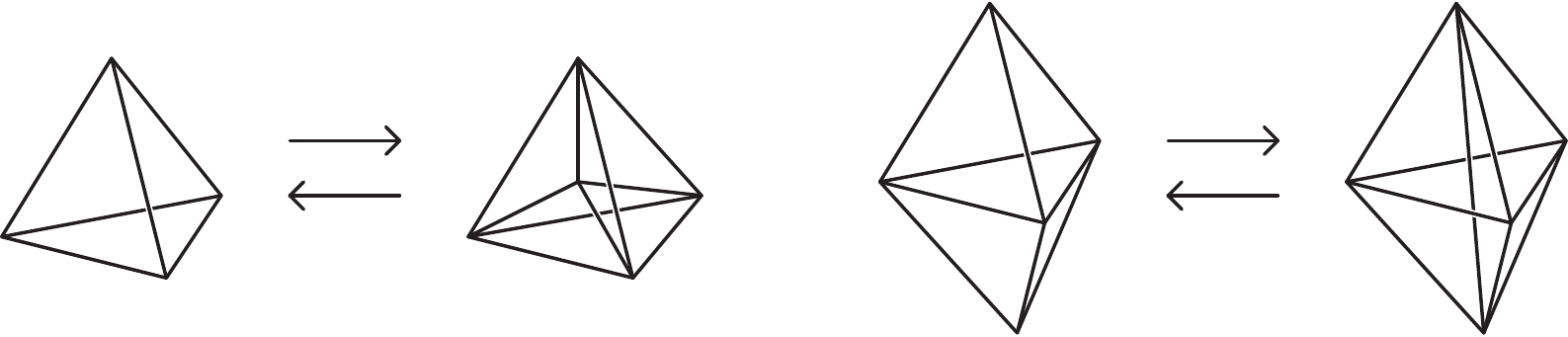}
  \caption{The Pachner moves for a closed $3$-manifold}
  \label{Fig:Pachner}
\end{figure}

The following was proved by Pachner \cite{Pachner}.

\begin{theorem}
\label{Thm:Pachner}
Any two triangulations of a compact PL $n$-dimensional manifold differ by a finite sequence of Pachner moves, followed by a simplicial isomorphism.
\end{theorem}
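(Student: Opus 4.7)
The plan is to reduce Pachner's theorem to a statement about subdivisions. Any two PL triangulations $\mathcal{T}_1, \mathcal{T}_2$ of the same PL manifold admit a common subdivision $\mathcal{T}'$, by the standard uniqueness theory for PL structures. Moreover, by a classical result of Alexander and Newman, $\mathcal{T}'$ can be reached from each $\mathcal{T}_i$ by a finite sequence of \emph{stellar subdivisions}, each of which inserts a new vertex in the interior of a simplex $\sigma$ and cones this vertex off to $\partial\sigma$ and to $\mathrm{link}(\sigma)$. If I can show that every stellar subdivision is realizable as a composition of Pachner moves, then the theorem follows, since the inverse of a Pachner move is again a Pachner move, so the two sequences from $\mathcal{T}_1$ and $\mathcal{T}_2$ to $\mathcal{T}'$ can be concatenated (one run backwards) into a Pachner sequence from $\mathcal{T}_1$ to $\mathcal{T}_2$, up to simplicial isomorphism.

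The key technical lemma is therefore: in a triangulation of a PL $n$-manifold, the stellar subdivision of any simplex $\sigma$ can be achieved by finitely many Pachner moves. I would prove this by induction on the codimension $n - \dim(\sigma)$. The base case, $\dim(\sigma) = n$, is exactly the $1 \to (n+1)$ Pachner move (coning an interior vertex of an $n$-simplex to its boundary). For the inductive step, the crucial observation is that since $M$ is a manifold, $\mathrm{link}(\sigma)$ is a PL sphere of dimension $n - \dim(\sigma) - 1$. Invoking Pachner's theorem in lower dimensions applied to this sphere, one can transform $\mathrm{link}(\sigma)$ into the boundary of a single $(n - \dim(\sigma))$-simplex using Pachner moves performed inside the link. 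Each such move of the link corresponds to a genuine Pachner move of the ambient triangulation, carried out on the star of $\sigma$: a bistellar move on the link of $\sigma$ joined with $\sigma$ yields a bistellar move on the corresponding portion of the ambient triangulation. Once the link of $\sigma$ has been reduced to the standard simplex-boundary form, a single Pachner move in the ambient triangulation accomplishes the desired stellar subdivision of $\sigma$.

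The main obstacle is this inductive step: one must verify carefully that Pachner moves performed on $\mathrm{link}(\sigma)$ lift to Pachner moves in the ambient triangulation, and that this lifting is coherent with the coning construction defining a stellar subdivision. The manifold hypothesis is essential here, because it guarantees that all links are spheres, so that the inductive invocation of the theorem makes sense; without this, there is no reason to expect the link-level simplification to be possible. Setting up the induction requires some bookkeeping to ensure the simultaneous induction on the dimension $n$ of the ambient manifold and the codimension of $\sigma$ is not circular, so I would organise the argument as a single induction on $n$ with the codimension induction nested inside. For the bounded case, I would argue by a relative version of the interior proof, permitting the additional boundary Pachner move of attaching an $n$-simplex along part of $\partial M$; alternatively, one could reduce to the closed case by doubling $M$ along $\partial M$ and tracking moves equivariantly. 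The former approach is more elementary but requires essentially redoing the inductive argument in a relative setting in order to handle stellar subdivisions of boundary simplices.
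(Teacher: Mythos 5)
The paper offers no proof of this theorem --- it is quoted directly from Pachner \cite{Pachner} --- so your sketch has to be measured against the standard arguments (Pachner's original one and Lickorish's well-known exposition), and at the level of strategy it does follow them: reduce to stellar subdivision theory, then show that starrings in a combinatorial manifold can be realised by bistellar moves, inducting on dimension via the fact that links are PL spheres. One correction to the reduction, though: the classical Alexander--Newman theorem gives stellar \emph{equivalence}, i.e.\ a zig-zag of stellar subdivisions and inverse stellar subdivisions up to simplicial isomorphism; it does not say that a chosen common subdivision $\mathcal{T}'$ is reachable from each $\mathcal{T}_i$ by stellar subdivisions alone, which is a much stronger factorisation-type statement that you should not invoke. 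This is harmless for your plan, precisely because Pachner moves are invertible, but it needs restating.

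The genuine gaps are in the key lemma. First, the lifting claim is false as literally stated: if a bistellar move on $L=\mathrm{lk}(\sigma)$ exchanges $B * \partial C$ for $\partial B * C$, then ``joining with $\sigma$'' would replace $\sigma * B * \partial C$ by $\sigma * \partial B * C$, and these two balls do not even have the same boundary sphere, so this is not a move performed inside the manifold at all. The correct statement is that the \emph{ambient} Pachner move on the simplex $\sigma * B$ (whose ambient link is $\mathrm{lk}_L(B)=\partial C$) induces the given move on $\mathrm{lk}(\sigma)$; this needs to be proved, and even then legality is not automatic: the link-level move only requires $C\notin L$, whereas the ambient move requires that the inserted simplices are not already present in the whole complex --- for a vertex of $\partial\Delta^3$ the $3\to 1$ move suggested by its link is illegal because the opposite $2$-simplex is already there. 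Arranging the bistellar sequence on the link so that every step lifts legally is where much of the real work in the actual proofs lies. Second, the endgame is also wrong as stated: once $\mathrm{lk}(\sigma)=\partial D$, a single Pachner move does not effect the starring of $\sigma$ unless $\sigma$ is top-dimensional. Already for an edge in a surface, starring the edge creates a new vertex of degree $4$, while the only vertex-creating move ($1\to 3$) produces a vertex of degree $3$; one needs a $1\to 3$ followed by a $2\to 2$, and in higher codimension this finishing step requires its own induction. Finally, for the bounded case, doubling is not a quick fix, since Pachner moves on the double cannot in general be carried out equivariantly and symmetric modifications do not decompose into the allowed boundary move; the standard treatment is the relative one via elementary shellings, which is exactly what the boundary move in the paper's definition encodes. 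So the outline is the right one, but the two assertions carrying all the load are incorrect as written, and filling them in is essentially the content of Pachner's theorem.
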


A \emph{simplicial isomorphism} between simplicial complexes is a simplicial map that is a homeomorphism and hence that has a simplicial inverse. In fact, the final simplicial isomorphism in Theorem \ref{Thm:Pachner} may be replaced by an isotopy, at least when $n =3$. (See the discussion before Theorem 1.1 in \cite{RubinsteinSegermanTillmann} for more details.)

This has an important algorithmic consequence: if one is given two triangulations of compact $n$-dimensional manifolds, and the manifolds are PL-homeomorphic, then one will always be able to prove that they are PL-homeomorphic. This is because one can start with one of the triangulations. One then applies all possible Pachner moves to this triangulation, thereby creating a list of triangulations. Then one applies all possible Pachner moves to each of these, and so on. By Pachner's theorem, the second triangulation will eventually be formed, and hence this gives a proof that the manifolds are PL-homeomorphic. 

Of course, this does not give an algorithm to determine whether two manifolds are (PL-)homeomorphic, because if we are given two triangulations of distinct manifolds, the above procedure does not terminate. However, if one knew in advance how many moves are required, then one would know when to stop. Thus, \emph{if} there were a computable upper bound on the number of moves that are required, then we would have a solution to the PL-homeomorphism problem for compact $n$-manifolds. In fact, in dimension three, the existence of such a bound is \emph{equivalent} to the fact that the homeomorphism problem is solvable. Hence, as a consequence of Theorem \ref{Thm:HomeoProblem}, we have the following.

\begin{theorem}
There is a computable function $P \colon \mathbb{N} \times \mathbb{N} \rightarrow \mathbb{N}$ such that if $\mathcal{T}_1$ and $\mathcal{T}_2$ are triangulations of a compact orientable 3-manifold, then they differ by a sequence of at most $P(|\mathcal{T}_1|, |\mathcal{T}_2|)$ Pachner moves followed by a simplicial isomorphism.
\end{theorem}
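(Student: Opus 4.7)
The plan is to derive the existence of a computable bound directly from the solvability of the homeomorphism problem (Theorem~\ref{Thm:HomeoProblem}), using Pachner's theorem (Theorem~\ref{Thm:Pachner}) to guarantee that, for each pair of triangulations representing the same 3-manifold, the Pachner distance is \emph{some} finite integer, and then collecting the maximum of these integers over a finite set of pairs. Concretely, I would define $P(n_1,n_2)$ by exhibiting an algorithm that computes it.

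First I would enumerate the finite set $L(n)$ of all abstract simplicial complexes on at most $n$ tetrahedra that are triangulations of a compact 3-manifold. This is a routine finite enumeration: there are only finitely many gluings of at most $n$ tetrahedra along their faces, and the condition that the underlying space is a 3-manifold is decidable by checking that each vertex link is a disc or sphere. Next, for every pair $(\calT, \calT') \in L(n_1) \times L(n_2)$, I would run the algorithm of Theorem~\ref{Thm:HomeoProblem} to decide whether the underlying 3-manifolds are homeomorphic, producing a finite sublist $S \subseteq L(n_1) \times L(n_2)$ of homeomorphic pairs.

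For each pair $(\calT, \calT') \in S$, I would then compute the Pachner distance $d(\calT, \calT')$, defined as the minimum number of Pachner moves needed to transform $\calT$ into a triangulation simplicially isomorphic to $\calT'$. This can be done by a breadth-first search in the Pachner graph starting at $\calT$: at each stage, list all triangulations reachable by one additional Pachner move, and test each one for simplicial isomorphism with $\calT'$ (a decidable finite check). By Theorem~\ref{Thm:Pachner}, some finite sequence of moves from $\calT$ to a triangulation simplicially isomorphic to $\calT'$ exists, so the BFS is guaranteed to terminate. I would then set
\[
P(n_1,n_2) \;=\; \max \bigl\{ d(\calT,\calT') : (\calT,\calT') \in S \bigr\},
\]
with the convention that the maximum of the empty set is $0$. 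Since $S$ is finite and each $d(\calT,\calT')$ is finite, $P(n_1,n_2)$ is a well-defined nonnegative integer. The whole procedure above is an algorithm taking $(n_1,n_2)$ as input, so $P$ is computable, and by construction any two triangulations of a common compact orientable 3-manifold with at most $n_1$ and $n_2$ tetrahedra are related by at most $P(n_1,n_2)$ Pachner moves followed by a simplicial isomorphism.

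The main obstacle, and the only non-elementary ingredient, is the appeal to Theorem~\ref{Thm:HomeoProblem}: without a decision procedure for the homeomorphism problem there is no way to know which pairs $(\calT, \calT')$ ought to be connected by a finite sequence of Pachner moves, and hence no way to know when to stop the BFS for pairs representing non-homeomorphic manifolds. Everything else — enumerating bounded triangulations, running BFS through the Pachner graph, testing simplicial isomorphism of finite complexes — is routine, and Pachner's theorem supplies the termination guarantee for the BFS on the relevant (homeomorphic) pairs.
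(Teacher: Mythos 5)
Your proposal is correct and follows essentially the same route as the paper: enumerate all triangulations built from at most $n_1$ and $n_2$ tetrahedra, use the solution to the homeomorphism problem to identify the homeomorphic pairs, and then search for Pachner sequences (terminating by Pachner's theorem) to extract the bound. The only cosmetic difference is that the paper also discards non-orientable manifolds before invoking the homeomorphism algorithm, since that algorithm is stated for compact orientable 3-manifolds.
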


\begin{proof} We need to give an algorithm to compute $P(n_1, n_2)$ for positive integers $n_1$ and $n_2$. To do this, we construct all simplicial complexes that are obtained from $n_1$ tetrahedra by identifying some of their faces in pairs. We discard all the spaces that are not manifolds, which is possible since one can detect whether the link of each vertex is a 2-sphere or 2-disc. We also discard all the manifolds that are not orientable. We do the same for simplicial complexes obtained from $n_2$ tetrahedra. Then we use the solution to the homeomorphism problem for compact orientable 3-manifolds to determine which of these manifolds are homeomorphic. Then for each pair of triangulations in our collection that represent the same manifold, we start to search for sequences of Pachner moves relating them. By Pachner's theorem, such a sequence will eventually be found. Thus, $P(n_1, n_2)$ is computable.
\end{proof}

Essentially the same argument gives the following result, using Theorem \ref{Thm:LinkEquivalence}.

\begin{theorem}
There is a computable function $R \colon \mathbb{N} \times \mathbb{N} \rightarrow \mathbb{N}$ such that if $D_1$ and $D_2$ are connected diagrams of a link with $c_1$ and $c_2$ crossings, then they differ by a sequence of at most $R(c_1, c_2)$ Reidemeister moves.
\end{theorem}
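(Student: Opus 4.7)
The plan is to mimic almost verbatim the Pachner-move argument just given, substituting Reidemeister's theorem for Pachner's theorem and Theorem \ref{Thm:LinkEquivalence} for Theorem \ref{Thm:HomeoProblem}. Recall that Reidemeister's theorem states that two link diagrams represent equivalent links in $S^3$ if and only if they differ by a finite sequence of Reidemeister moves (and planar isotopies). This is the analogue of Pachner's theorem and plays exactly the same role here.

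To compute $R(c_1,c_2)$, I would first enumerate all connected link diagrams with $c_1$ crossings and all connected link diagrams with $c_2$ crossings. A link diagram is combinatorially a connected $4$-valent plane graph decorated with over/under information at each vertex, and there are only finitely many such diagrams on a given number of vertices (up to combinatorial equivalence), so this enumeration is finite and can be carried out explicitly. Call the resulting two finite lists $\mathcal{L}_1$ and $\mathcal{L}_2$.

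Next, for every pair $(D,D')\in \mathcal{L}_1\times \mathcal{L}_2$, I invoke Theorem \ref{Thm:LinkEquivalence} to decide whether $D$ and $D'$ represent equivalent links. Discard the non-equivalent pairs. For each remaining pair, I start an exhaustive breadth-first search through all finite sequences of Reidemeister moves applied to $D$, halting as soon as a diagram combinatorially equivalent to $D'$ is produced. By Reidemeister's theorem, such a sequence exists for each equivalent pair, so each of these finitely many searches terminates. Let $R(c_1,c_2)$ be the maximum length of any such sequence encountered, taken over all equivalent pairs; this is a computable function of $(c_1,c_2)$, and by construction it bounds the number of Reidemeister moves needed to pass between any two equivalent connected diagrams with $c_1$ and $c_2$ crossings, respectively.

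The one subtle point — the main obstacle in the argument — is exactly the same as in the Pachner case: the breadth-first search over Reidemeister sequences only terminates \emph{because we have already certified} that the two diagrams are equivalent. Without first filtering via Theorem \ref{Thm:LinkEquivalence}, we would be searching forever whenever we tried a non-equivalent pair, and the function $R$ would not be computable. Thus the solvability of the link equivalence problem is doing the essential work; everything else is routine bookkeeping.
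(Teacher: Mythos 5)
Your proposal is correct and follows essentially the same route as the paper, which simply notes that "essentially the same argument" as the Pachner-move case works, using Theorem \ref{Thm:LinkEquivalence} in place of the homeomorphism problem: enumerate the finitely many diagrams with the given crossing numbers, filter the equivalent pairs using the solvability of link equivalence, and then search for Reidemeister sequences, which terminate by Reidemeister's theorem. Your identification of the link-equivalence algorithm as the step that guarantees the searches are only run on pairs for which they terminate is exactly the point the paper's argument relies on.
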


Although the functions $P$ and $R$ are computable, it would be interesting to have explicit upper bounds on the number of moves. This is useful even for specific manifolds, such as the 3-sphere, or for specific knots such as the unknot. The smaller the bound one has, the more efficient the resulting algorithm is. In some cases, a polynomial bound can be established, for example, in the following result of the author \cite{LackenbyPolyUnknot}.

\begin{theorem}
\label{Thm:PolyUnknot}
Any diagram for the unknot with $c$ crossings can be converted to the diagram with no crossings using at most $(236 c)^{11}$ Reidemeister moves.
\end{theorem}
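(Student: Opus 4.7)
The plan is to route the problem through arc presentations (grid diagrams) and exploit Dynnikov's monotonic simplification theorem for arc presentations of the unknot. The attraction of this route is that Dynnikov's theorem provides a polynomial-length sequence of elementary simplifying moves in which the complexity never increases, which converts a global question about Reidemeister moves into a local one that can be controlled move by move.

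First I would convert the given unknot diagram $D$ with $c$ crossings into an arc presentation $\alpha$ whose number of arcs is at most a linear function of $c$. A standard construction, passing through a braid or bridge form, accomplishes this; the conversion itself can be realised by $O(c)$ Reidemeister moves applied to diagrams whose complexity stays $O(c)$. Next I would invoke Dynnikov's theorem: since $\alpha$ represents the unknot, there is a sequence of elementary arc-presentation moves (exchanges and destabilisations, with stabilisations forbidden) taking $\alpha$ to the trivial two-arc presentation, and the number of arcs never increases along this sequence. In particular every intermediate arc presentation has $O(c)$ arcs, and the total number of elementary moves is polynomial in $c$, say $O(c^{k})$ for an explicit small $k$.

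Next I would translate each elementary arc-presentation move back into a sequence of Reidemeister moves on the corresponding planar diagram. An exchange move is realised by sliding a strand past another; in the worst case this traverses a polynomial number of crossings, so costs a polynomial-in-$c$ number of Reidemeister moves. A destabilisation becomes a bounded local pattern of Reidemeister moves once the necessary sliding has been done. Composing these uniform bounds across the $O(c^k)$ elementary arc moves, and adding the cost of the initial conversion, yields a polynomial total, of which $(236c)^{11}$ is an explicit realisation after constant tracking.

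The hard part will be the constant- and exponent-tracking. Each of the three ingredients --- the conversion from a crossing diagram to an arc presentation, Dynnikov's monotonic simplification, and the translation of each elementary arc move back into Reidemeister moves --- contributes a polynomial factor, and a naive concatenation would easily produce a much larger exponent than $11$. To hit the stated bound one needs either sharp estimates at each individual step (for example a tight linear bound on the initial conversion and a quadratic or cubic bound on Dynnikov's simplification) or a more unified argument performed directly at the level of diagrams that borrows the monotonicity phenomenon from the grid-diagram side. A secondary obstacle is verifying that intermediate diagrams really do stay of complexity $O(c)$ throughout, so that the per-step cost is controlled by $c$ and not by some growing quantity; this is where the monotonicity of Dynnikov's simplification is essential.
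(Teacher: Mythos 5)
Your overall route --- pass to an arc presentation, simplify it monotonically in Dynnikov's sense, and translate the elementary grid moves back into Reidemeister moves --- is in fact the route of the cited proof (the survey states this theorem without proof, referring to \cite{LackenbyPolyUnknot}). But there is a genuine gap at the central step. Dynnikov's monotonic simplification theorem asserts only that an arc presentation of the unknot can be taken to the trivial one by exchange moves and destabilisations during which the arc number never increases; it gives \emph{no} polynomial bound on the number of exchange moves performed between destabilisations. The only bound that monotonicity alone yields is exponential: every intermediate presentation has $O(c)$ arcs, there are at most exponentially many such presentations, and one may avoid repeating any of them --- nothing better. So your sentence ``the total number of elementary moves is polynomial in $c$, say $O(c^k)$ for an explicit small $k$'' is not something you can ``invoke''; it is precisely the main theorem that has to be proved, and it cannot be recovered by constant- and exponent-tracking afterwards.

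Closing that gap is the substance of the actual argument: one takes a disc spanning the unknot, studies the pattern (foliation) induced on it by the pages of the open book underlying the arc presentation, and refines Dynnikov's qualitative argument into a quantitative one, showing that a suitable complexity of this pattern controls, polynomially in the arc number, how many exchange moves are needed before the next destabilisation becomes available; summing over the at most $O(c)$ destabilisations gives the polynomial count of elementary moves. The two outer ingredients you describe --- converting a $c$-crossing diagram into an arc presentation with $O(c)$ arcs, and realising each exchange or destabilisation by polynomially many Reidemeister moves on diagrams whose size stays $O(c)$ --- are correct and are indeed how the explicit bound $(236c)^{11}$ is finally assembled, but without a quantitative replacement for Dynnikov's theorem the proposed proof does not go through.
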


This result provides an alternative proof that unknot recognition is NP (one half of Theorem \ref{Thm:UnknotNPCoNP}), which was first proved by Hass, Lagarias and Pippenger \cite{HassLagariasPippenger}. The certificate is simple: just a sequence of Reidemeister moves with length at most $(236 c)^{11}$ taking the given diagram with $c$ crossings to the trivial diagram.

In recent work of the author \cite{LackenbyAllKnotTypes}, this has been generalised to every knot type.

\begin{theorem}
\label{Thm:PolyEveryKnot}
Let $K$ be any link in the 3-sphere. Then there is a polynomial $p_K$ with the following property. Any two diagrams $D_1$ and $D_2$ for $K$ with $c_1$ and $c_2$ crossings can be related by a sequence of at most $p_K(c_1) + p_K(c_2)$ Reidemeister moves.
\end{theorem}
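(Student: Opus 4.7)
The natural plan is to reduce the two-sided statement to a one-sided one. Fix, once and for all, some reference diagram $D_K$ of $K$. Since Reidemeister moves are reversible, it suffices to produce a polynomial $q_K$ such that any diagram $D$ of $K$ with $c$ crossings can be transformed to $D_K$ using at most $q_K(c)$ Reidemeister moves. Then $D_1 \to D_K \to D_2$ gives at most $q_K(c_1)+q_K(c_2)$ moves, and we set $p_K=q_K$.

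To establish the one-sided bound, I would follow the overall strategy used for \refthm{PolyUnknot}, where the essential input was a polynomial-size spanning disc whose controlled isotopy into a standard position drove the diagram simplification. For general $K$, the replacement is a fixed Seifert surface (or a spanning surface realising the Thurston norm) in the exterior $E(K)$. The steps would be: (i) convert $D$ into a triangulation $\mathcal{T}_D$ of $E(K)$ of size $O(c)$ in a standard way, recording the meridional framing on $\partial E(K)$; (ii) produce, inside $\mathcal{T}_D$, a normal representative of a Seifert surface for $K$ whose weight is polynomial in $c$, using the matching equations of \refsec{MatchingFundamental}; (iii) compare this normal surface with a canonical reference surface inside a fixed triangulation $\mathcal{T}_K$ of $E(K)$, and use the comparison to build an isotopy of $E(K)$ taking one to the other; (iv) realise this isotopy by a polynomial-length sequence of local combinatorial moves on $\mathcal{T}_D$, which then translate back to Reidemeister moves on $D$. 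The dependence of the polynomial on $K$ enters through the complexity of $D_K$, $\mathcal{T}_K$, the reference surface, and the JSJ/geometric decomposition of $E(K)$, each of which is a constant once $K$ is fixed.

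The hardest step will be (iv), the translation between combinatorial simplifications of the exterior triangulation and actual Reidemeister moves on the diagram. Pachner moves on $\mathcal{T}_D$ are far more flexible than Reidemeister moves: a local rearrangement deep inside the exterior need not correspond to any bounded-size change of the projection onto the diagram plane. The strategy I would pursue is to keep the diagram embedded inside a thickened neighbourhood of the projection plane throughout the process and to bound, at each Pachner step, the number of Reidemeister moves required to realise the induced isotopy of the diagram; whenever this isotopy involves sweeping an unknotted arc across part of the diagram, I would invoke \refthm{PolyUnknot} as a black box to pay only a polynomial-in-$c$ cost for the rearrangement. Making this bookkeeping uniform across the finitely many combinatorial types of local moves, while matching the polynomial growth of the normal-surface bound from step (ii), is where the technical work concentrates.
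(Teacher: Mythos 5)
Your opening reduction (fix a reference diagram $D_K$ and bound the one-sided cost $D \to D_K$ by a polynomial $q_K(c)$) is fine, and indeed the shape $p_K(c_1)+p_K(c_2)$ in the statement comes from exactly this observation. Note, though, that this survey does not prove the theorem at all; it is quoted from \cite{LackenbyAllKnotTypes}, so the real content is your one-sided argument, and that is where the proposal breaks down.

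The most concrete gap is step (ii): you cannot, in general, produce a normal Seifert surface of weight polynomial in $c$ inside a triangulation of the exterior built from the diagram. The survey's own Section~\ref{Sec:Sec:ExponentialNormal} (\refthm{HassSnoeyinkThurston}) gives unknot diagrams with $O(n)$ crossings, and triangulations of their exteriors with $O(n)$ tetrahedra, in which \emph{every} PL spanning disc needs at least $2^{n-1}$ flat triangles; so even in the base case $K=$ unknot the surface you want to normalise is forced to have exponential weight, and \refthm{PolyUnknot} is emphatically not proved by exhibiting a polynomial-weight normal disc. A second, independent gap is that a single Seifert surface does not control the diagram the way the disc does for the unknot: cutting the exterior of the unknot along the disc gives a ball, but cutting the exterior of a general $K$ along a minimal-genus Seifert surface leaves a complicated (sutured) manifold, so matching your normal surface to a reference surface (steps (iii)--(iv)) still leaves the two embeddings free to differ by arbitrary re-embeddings of the complementary pieces; one needs an entire hierarchy or the full geometric/JSJ structure of the exterior, and controlling complexity growth along a hierarchy is precisely the obstruction that produces the tower-of-exponentials bounds of \refthm{UpperBoundRM} discussed at the end of \refsec{Hierarchies}. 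Finally, invoking \refthm{PolyUnknot} as a black box in step (iv) to sweep arcs across the diagram is not licensed: that theorem bounds moves on closed diagrams of the unknot, not the cost of isotoping an unknotted arc in the presence of the rest of $K$. So as it stands the proposal would at best reproduce the known tower-of-exponentials bound, not a polynomial one; the cited proof has to confront exponentially complicated surfaces head-on (e.g.\ via the parallelity-bundle ideas mentioned in Section~\ref{Sec:ThurstonNorm}) rather than assume them away.
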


Hence, we have the following corollary.

\begin{corollary}
\label{Cor:KnotTypeRecognitionNP}
For each knot type $K$, the problem of deciding whether a given knot diagram is of type $K$ lies in $\mathrm{NP}$.
\end{corollary}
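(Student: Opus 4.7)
The plan is to apply \refthm{PolyEveryKnot} directly: the certificate witnessing that a diagram represents the knot type $K$ will be a sequence of Reidemeister moves turning it into a fixed reference diagram of $K$.

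Since $K$ is fixed once and for all, hard-code into the algorithm a specific diagram $D_K$ for $K$, having some constant number $c_K$ of crossings. Let $p_K$ be the polynomial furnished by \refthm{PolyEveryKnot}. Given an input diagram $D$ with $c$ crossings, set $N := p_K(c) + p_K(c_K)$; because $p_K(c_K)$ is a constant depending only on $K$, the number $N$ is bounded by a polynomial in $c$. By \refthm{PolyEveryKnot}, if $D$ represents $K$ then there is a sequence of at most $N$ Reidemeister moves taking $D$ to $D_K$. Take such a sequence as the certificate.

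The verifier reads the moves one at a time, checks that each is a legitimate Reidemeister move at some specified location of the current diagram, and applies it. After the last move, it checks that the resulting diagram coincides with $D_K$ as a planar $4$-valent graph with crossing information. Since a Reidemeister move changes the crossing number by at most two, every intermediate diagram has at most $c + 2N$ crossings, which is polynomial in $c$; each move can therefore be checked and applied in time polynomial in $c$. The total verification runs in time polynomial in $c$, so the problem lies in $\mathrm{NP}$.

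The main difficulty of the argument has been absorbed into \refthm{PolyEveryKnot}, whose proof would be the real challenge. The only mild points to keep in mind are that the polynomial bound depends on $K$ (which is acceptable, since $K$ is not part of the input), and that one must specify a reasonable data structure for diagrams so that applying a single Reidemeister move and comparing with the constant-size diagram $D_K$ are clearly polynomial-time operations.
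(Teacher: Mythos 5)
Your proposal is correct and is essentially the paper's own argument: the paper derives the corollary from Theorem~\ref{Thm:PolyEveryKnot} in exactly this way, with the certificate being a polynomially bounded sequence of Reidemeister moves to a fixed diagram of $K$ (mirroring the unknot case after Theorem~\ref{Thm:PolyUnknot}). Your additional remarks on intermediate diagram sizes and verification time just make explicit what the paper leaves implicit.
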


However, if the knot type is allowed to vary, then the best known explicit upper bound on Reidemeister moves is vast. This is a result of Coward and the author \cite{CowardLackenby}.

\begin{theorem}
\label{Thm:UpperBoundRM}
If $D_1$ and $D_2$ are connected diagrams of the same link, with $c_1$ and $c_2$ crossings, then they are related by a sequence of at most
$$ 2^{2^{2^{2^{\rdots^{c_1+c_2}}}}} $$
Reidemeister moves, where the height of the tower of exponentials is $k^{c_1 + c_2}$. Here, $k = 10^{1000000}$.
\end{theorem}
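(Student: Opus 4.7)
The plan is to convert each diagram into a triangulation of the link exterior, apply a quantitative version of Pachner's theorem to connect the two triangulations by a bounded number of Pachner moves, and then translate this Pachner sequence back into Reidemeister moves. In the first step I would build, from a connected diagram $D_i$ with $c_i$ crossings, a triangulation $\calT_i$ of the link exterior $E(K)$ with $O(c_i)$ tetrahedra, and do so with enough extra combinatorial structure (for instance, a fixed triangulated meridian-longitude system on each boundary torus and a chosen projection disc) that the passage between diagrams and triangulations is reversible. The important feature of this correspondence is bi-directional: every Reidemeister move on $D_i$ induces $O(1)$ Pachner moves on $\calT_i$, and, conversely, every Pachner move on a triangulation equipped with such extra data can be realised on an associated diagram by $O(1)$ Reidemeister moves, possibly after some controlled preparatory subdivision.

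The heart of the argument is to bound the number of Pachner moves needed to connect $\calT_1$ and $\calT_2$, which are now two triangulations of the same compact orientable $3$-manifold $E(K)$. I would construct a hierarchy for $E(K)$ adapted to each $\calT_i$, using normal surface theory. Starting from $\calT_i$, one locates an essential surface in fundamental-normal position; by the matching equations its weight is at most exponential in $|\calT_i|$, so cutting along it and re-triangulating the resulting manifold yields a new triangulation whose size is exponential in the previous one. Iterating this hierarchy construction produces, after finitely many stages whose number is linear in a complexity associated to $E(K)$ (itself linear in $c_1+c_2$), a canonical triangulation of a disjoint union of balls. Running this simplification on $\calT_1$ and on $\calT_2$, matching up the two canonical endpoints by a simplicial isomorphism, and concatenating with the reverse of the second simplification produces a Pachner sequence relating $\calT_1$ to $\calT_2$. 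The length of this sequence is a tower of exponentials whose height is bounded by the hierarchy depth, giving a bound of the shape claimed with the leading constant absorbed into $k$.

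The final step is bookkeeping: translating each Pachner move in the resulting sequence into $O(1)$ Reidemeister moves using the dictionary from the first step, while maintaining a planar diagram alongside the evolving triangulation. The main obstacle is the middle step, namely making Pachner's theorem \emph{effective} in dimension three with an explicit tower bound. One must control, at each stage of the hierarchy, both the exponential blow-up in the number of tetrahedra arising from fundamental normal surface solutions and the combinatorial cost of re-triangulating a cut-open manifold so that subsequent Pachner simplification can proceed. It is this iterated normalisation, rather than anything about diagrams \emph{per se}, that forces the tower of height $k^{c_1+c_2}$ and that accounts for the astronomical constant $k=10^{1000000}$.
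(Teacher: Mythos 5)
Your overall skeleton is the same as the paper's: Theorem~\ref{Thm:UpperBoundRM} is the Coward--Lackenby result, and it is indeed proved by triangulating the link exterior with a number of tetrahedra linear in the crossing numbers, invoking an explicit tower-of-exponentials bound on Pachner moves for triangulations of link exteriors (Mijatovi\'c's Theorem~\ref{Thm:PachnerKnot}, itself obtained by the hierarchy/normal-surface blow-up you describe), and then converting back to Reidemeister moves. So the middle step you flag as ``the heart of the argument'' is essentially the cited input, not the new difficulty.

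The genuine gap is in your last step, where you assert a bidirectional dictionary in which ``every Pachner move on a triangulation equipped with such extra data can be realised on an associated diagram by $O(1)$ Reidemeister moves.'' No such dictionary exists in any straightforward sense, and the paper explicitly warns that ``going from a bound on Pachner moves to a bound on Reidemeister moves was not a straightforward task.'' The intermediate triangulations in the Pachner sequence carry no canonical planar diagram: extracting a diagram from a triangulation of a link exterior (even with marked meridian--longitude data) is itself a nontrivial problem whose complexity is unknown, as the paper points out in Section~2, item~(3). In the actual argument one must keep the link (or the meridian data) tracked through every move, handle the terminal simplicial isomorphism (which need not be an isotopy and need not respect any projection), and the cost of realising one combinatorial move diagrammatically depends on the size of the current triangulation, which is itself tower-sized rather than $O(1)$; the final bound survives only because one re-does the bookkeeping with these sizes, not because each move costs a constant. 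A secondary soft spot: your hierarchy sketch quietly assumes the Haken--Matveev machinery applies to $E(K)$, but that machinery requires the fibre-free hypothesis, and link exteriors may fibre; removing that hypothesis for knot and link exteriors is precisely the extra content of Mijatovi\'c's theorem that the paper cites, so it cannot be waved through as routine.
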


This was proved using work of Mijatovi\'c \cite{MijatovicKnot}, who provided upper bounds on the number of Pachner moves for triangulations of many 3-manifolds. For the 3-sphere, he obtained the following bound \cite{Mijatovic3Sphere} (see also King \cite{King}).

\begin{theorem}
\label{Thm:PachnerS3}
Any triangulation $\mathcal{T}$ of the 3-sphere may be converted to the standard triangulation, which is the double of a 3-simplex, using at most
$$6 \cdot 10^6 t^2 2^{20000 \, t^2}$$
Pachner moves, where $t$ is the number of tetrahedra of $\mathcal{T}$.
\end{theorem}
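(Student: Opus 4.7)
The plan is to reduce any triangulation $\mathcal{T}$ of $S^3$ to the standard triangulation (the double of a 3-simplex) by a sequence of Pachner moves, using normal surface theory to produce one simplification step at a time. The broad strategy is: find a normal 2-sphere $S$ of controlled complexity in $\mathcal{T}$, arrange $S$ as a subcomplex through Pachner moves, split along $S$ into two triangulated 3-balls (since $S^3$ is irreducible, $S$ necessarily bounds a ball on each side), and iterate until every piece is a single tetrahedron.

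First, I would apply Haken's normal surface theory to $\mathcal{T}$. The projective solution space of the matching equations is a finite polytope whose vertices correspond to fundamental normal surfaces; Cramer's rule bounds the coordinates of any vertex solution by a singly-exponential function of $t = |\mathcal{T}|$. Combined with Kneser--Haken finiteness and the fact that every 2-sphere in $S^3$ separates, this yields a non-trivial normal 2-sphere $S$ in $\mathcal{T}$ of weight at most $2^{c_1 t}$ for an explicit constant $c_1$, and $S$ bounds a 3-ball on each side.

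Second, I would modify $\mathcal{T}$ by Pachner moves so that $S$ becomes a subcomplex. The natural approach is to perform a sequence of local subdivisions, one per normal disc of $S$, each realised by a bounded number of Pachner moves. The total number of moves, and the number of tetrahedra in the new triangulation, are both at most a constant times the weight of $S$, hence at most $2^{c_2 t}$. Splitting along $S$ then yields two triangulated 3-balls $B_1, B_2$ whose combined size is still $2^{c_2 t}$, and one iterates the same procedure on each piece (using the boundary version of Pachner's moves, or capping off by a single tetrahedron).

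The main obstacle is the quantitative control of the blow-up across iterations. A loose implementation in which each subdivision multiplicatively inflates the triangulation would produce a tower-of-exponentials bound, not the stated single-exponential-in-$t^2$ bound. The crucial technical point, made precise in Mijatovi\'c's argument, is that the subdivision along each normal disc can be carried out by a constant number of canonical local Pachner moves, so that the per-iteration growth is at most a single-exponential-in-$t$ factor and the complexity of the remaining pieces strictly decreases. After $O(t)$ iterations the procedure terminates at the standard triangulation, and summing the per-iteration cost yields a single-exponential bound in $t^2$; tightening the constants in Haken's bound on vertex normal surfaces and in the local subdivision lemma then produces the explicit estimate $6 \cdot 10^6 t^2 \cdot 2^{20000 t^2}$.
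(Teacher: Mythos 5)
There is a genuine gap at the very first step. Your construction needs a non-trivial (non-vertex-linking) normal $2$-sphere in the given triangulation of $S^3$, but no such sphere need exist: since $S^3$ is irreducible, the normalisation theorems produce normal spheres only from essential ones, and in a $0$-efficient (e.g.\ one-vertex) triangulation the only normal $2$-spheres are vertex-linking. Cutting along a vertex-linking sphere does not simplify anything, since one side is a tiny ball around the vertex and the other is essentially the whole triangulation. This is exactly why the actual proof does not proceed by cutting along normal spheres at all: as the paper indicates, Mijatovi\'c's bound rests on the Rubinstein--Thompson machinery, in which thin position of the $1$-skeleton guarantees an \emph{almost normal} $2$-sphere, and the weight-reducing isotopies emanating from its octagon or tube sweep the sphere across the whole manifold, exhibiting $S^3$ as a product region capped off by balls. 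The Pachner-move count is then controlled by the weight of this almost normal sphere (bounded singly exponentially via the vertex/fundamental surface estimates), not by an iterated splitting.

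Even setting that aside, the quantitative heart of your argument is missing. You correctly identify that na\"ive iteration gives a tower of exponentials: after making a sphere of weight $2^{c_1 t}$ simplicial, the pieces have exponentially many tetrahedra, so the next iteration's sphere has weight doubly exponential in $t$, and so on. Your resolution is to assert that ``the per-iteration growth is at most a single-exponential-in-$t$ factor and the complexity of the remaining pieces strictly decreases,'' deferring to Mijatovi\'c --- but that assertion is precisely the theorem to be proved, and no mechanism is offered for why later iterations should be controlled by the original $t$ rather than by the size of the current (already exponentially large) triangulation. In the genuine argument there is no such cascade of cuttings: the single almost normal sphere and its sweepout organise the entire simplification, which is what makes a bound of the shape $t^2 2^{O(t^2)}$ possible.
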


This was proved using the machinery that Rubinstein \cite{Rubinstein} and Thompson \cite{Thompson} developed for recognising the 3-sphere. We will discuss this in Section \ref{Sec:AlmostNormal}.

Mijatovi\'c then went on to analyse most Seifert fibre spaces \cite{MijatovicSeifertFibred} and then Haken $3$-manifolds \cite{MijatovicFibreFree} satisfying the following condition. (For simplicity of exposition, we focus on manifolds that are closed or have toral boundary in this definition.)

\begin{definition}
\label{Def:FibreFree}
A compact orientable 3-manifold with (possibly empty) toral boundary is \emph{fibre-free} if when an open regular neighbourhood of its JSJ tori is removed, no component of the resulting 3-manifold fibres over the circle or is the union of two twisted $I$-bundles glued along their horizontal boundary, unless that component is Seifert fibred.
\end{definition}

\begin{theorem}
\label{Thm:PachnerFibreFree}
Let $M$ be a fibre-free Haken 3-manifold with (possibly empty) toral boundary. 
Let $\mathcal{T}_1$ and $\mathcal{T}_2$ be triangulations of $M$, with $t_1$ and $t_2$ tetrahedra. Then they differ by a sequence of at most
$$ 2^{2^{2^{2^{\rdots^{t_1}}}}} +  2^{2^{2^{2^{\rdots^{t_2}}}}} $$
Pachner moves, where the heights of the towers are $c^{t_1}$ and $c^{t_2}$ respectively, possibly followed by a simplicial isomorphism. Here, $c = 2^{200}$.
\end{theorem}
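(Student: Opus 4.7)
The plan is to reduce each triangulation $\mathcal{T}_i$ to a canonical triangulation of $M$ using at most a tower of exponentials of height $c^{t_i}$ Pachner moves. Once this is done, the two canonical outputs will agree up to simplicial isomorphism, and concatenating one reduction with the reverse of the other yields the claimed bound. The reduction proceeds by realising the JSJ decomposition and then a hierarchy inside each atoroidal piece as normal surfaces, and iteratively cutting along them.

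First I would isotope the JSJ tori into normal form with respect to $\mathcal{T}_i$. Each JSJ torus can be chosen to be a fundamental (in fact vertex) normal surface, so its weight is at most singly exponential in $t_i$, and the subdivision needed to display it as a normal surface can be achieved by a tower of Pachner moves of bounded height, as in Theorem~\ref{Thm:PachnerS3}. I would then cut $\mathcal{T}_i$ along these normal tori, obtaining triangulations of the JSJ pieces whose complexity is at most singly exponential in $t_i$.

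Each JSJ piece falls into one of two families. Seifert fibred pieces are handled directly by Mijatovi\'c's earlier theorem for Seifert fibre spaces, which gives a tower bound of the required shape. For each atoroidal piece $P$, the fibre-free hypothesis guarantees that $P$ does not fibre over $S^1$ and is not the union of two twisted $I$-bundles glued along their horizontal boundary, so $P$ admits a (suitably canonical) hierarchy terminating in balls. I would realise each hierarchy surface as a fundamental normal surface in the current triangulation, cut along it, and iterate. At each level the Pachner cost is bounded by a tower of fixed height applied to the current complexity, and since the hierarchy has length at most polynomial in $t_i$, composing these bounds through all layers produces the overall tower of height $c^{t_i}$. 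Once the hierarchy terminates in balls, Theorem~\ref{Thm:PachnerS3} normalises each to the standard triangulation of $B^3$, and reassembling the pieces in the canonical order recovers a canonical triangulation of $M$.

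The main obstacle is ensuring that the intermediate objects really are canonical, so that $\mathcal{T}_1$ and $\mathcal{T}_2$ reduce to simplicially isomorphic triangulations rather than merely to triangulations of the same manifold. This is exactly where the fibre-free hypothesis enters: if some piece fibred over $S^1$, or decomposed as two twisted $I$-bundles glued along their horizontal boundary, there would be infinitely many isotopy classes of incompressible surfaces available for the first hierarchy cut, and no canonical way to choose the gluing along the JSJ tori. Verifying under the fibre-free assumption that, at every stage, both the hierarchy surface and the subsequent gluings are determined up to isotopy and bounded Pachner modification, and that each layer of the recursion contributes only a bounded increase to the tower height, is the delicate part of the argument and is precisely the content of Mijatovi\'c's analysis in \cite{MijatovicFibreFree}.
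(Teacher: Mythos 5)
Your proposal follows essentially the same route that the paper sketches and attributes to Mijatovi\'c \cite{MijatovicFibreFree}: reduce to a (semi-)canonical JSJ-plus-hierarchy built from fundamental normal surfaces, with the tower of exponentials coming from the blow-up in triangulation size at each cut and the fibre-free hypothesis guaranteeing that the hierarchy can be chosen canonically (the survey itself states the theorem without proof and explains at the end of Section~\ref{Sec:Hierarchies} that the tower height is the number of surfaces in the hierarchy). One small caveat: your claim that the hierarchy has length polynomial in $t_i$ is unjustified and at odds with the stated tower height $c^{t_i}$, which reflects an exponential bound on the number of hierarchy surfaces, but since any bound up to $c^{t_i}$ suffices this does not affect the conclusion.
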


Manifolds that are not fibre-free were also excluded by Haken \cite{HakenHomeomorphism} in his solution to the homeomorphism problem. However, Mijatovi\'c was able to remove the fibre-free hypothesis in the case of knot and link exteriors \cite{MijatovicKnot}, and was thereby able to prove the following result.

\begin{theorem}
\label{Thm:PachnerKnot}
Let $\mathcal{T}_1$ and $\mathcal{T}_2$ be triangulations of the exterior of a knot in the 3-sphere, with $t_1$ and $t_2$ tetrahedra. Then there is a sequence of Pachner moves, followed by a simplicial isomorphism, taking $\mathcal{T}_1$ to $\mathcal{T}_2$ with length at most the bound given in Theorem \ref{Thm:PachnerFibreFree}.
\end{theorem}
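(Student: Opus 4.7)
The plan is to adapt the proof of Theorem \ref{Thm:PachnerFibreFree} so as to remove the fibre-free hypothesis in the special case of knot exteriors, by using the canonical framing data coming from the meridian on the boundary torus. This is the strategy that Mijatovi\'c carries out in \cite{MijatovicKnot}.

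First, I would locate the JSJ tori in each of $\mathcal{T}_1$ and $\mathcal{T}_2$. Using the normal surface methods from Sections \ref{Sec:NormalSurfaces}--\ref{Sec:Hierarchies} (together with the quantitative bounds Mijatovi\'c develops en route to Theorem \ref{Thm:PachnerFibreFree}), a system of normal tori realising the JSJ decomposition can be found in each $\mathcal{T}_i$ whose weight is bounded by a tower of exponentials in $t_i$. After a controlled sequence of Pachner moves — fitting inside the tower bound of Theorem \ref{Thm:PachnerFibreFree} — we may assume that the JSJ tori appear as a subcomplex of each $\mathcal{T}_i$, inducing triangulations of every JSJ piece.

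Second, for any JSJ piece that is already fibre-free (including the Seifert fibred pieces, which are treated separately in \cite{MijatovicSeifertFibred}), the existing machinery of Theorem \ref{Thm:PachnerFibreFree} applies verbatim to bring the induced triangulations into a canonical form for that piece, using at most the tower bound of the statement. For a JSJ piece $N$ that is \emph{not} fibre-free — so either $N$ fibres over the circle, or $N$ is the union of two twisted $I$-bundles glued along their horizontal boundary, and $N$ is not itself Seifert fibred — one uses the key fact that we sit inside a knot exterior in $S^3$. The meridian gives a canonical simple closed curve on the boundary torus of the exterior, and together with the canonical (homological) longitude this fixes a preferred basis on $\partial M$. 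By pushing this framing data inward along the JSJ graph one obtains preferred slopes on every JSJ torus, and in particular on $\partial N$. For a fibring piece, the preferred slope determines the Thurston norm-minimising fibre up to the finite monodromy ambiguity; for the twisted $I$-bundle case, it pins down the horizontal boundary. Either way, $N$ now admits a canonical isotopy class of triangulations whose complexity is polynomial in $|\mathcal{T}_i|$, and we can Pachner-reduce the induced triangulation to this canonical form within the tower-of-exponentials bound.

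Third, once every JSJ piece of $\mathcal{T}_1$ (and separately of $\mathcal{T}_2$) has been brought into canonical form, the resulting canonical triangulations depend only on the manifold $M$ (and the meridian data), so they differ by at most a simplicial isomorphism, which can be absorbed into the final simplicial isomorphism allowed in the statement.

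The main obstacle, and the place where \cite{MijatovicKnot} has to do real work, is the third step of the second paragraph: showing that the meridian framing does in fact propagate through possibly many levels of JSJ pieces in a way that gives a \emph{canonical} combinatorial normal form for each non-fibre-free piece, and that the Pachner cost of reaching this normal form still fits inside the tower of exponentials of Theorem \ref{Thm:PachnerFibreFree}. Concretely, one needs quantitative control on the mapping-class-group ambiguity of a fibred piece once its boundary framing is fixed, and one needs to ensure that the recursive propagation of framing data from $\partial M$ inwards does not blow up the bound beyond the allowed tower height $c^{t_i}$ with $c = 2^{200}$. Given this, the remaining verification is an inductive bookkeeping exercise paralleling the proof of Theorem \ref{Thm:PachnerFibreFree}.
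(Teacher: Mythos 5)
The paper does not prove this statement --- it quotes it from Mijatovi\'c --- but the survey's own discussion at the end of Section \ref{Sec:Hierarchies} pinpoints exactly where the fibre-free hypothesis enters, and that is where your sketch has a genuine gap. The difficulty with a fibred piece $N$ is not that its fibre fails to be canonical (for a piece with first Betti number one the fibre is unique up to isotopy in any case); it is that after cutting along the fibre one is left with a product $S \times I$ carrying only the vertical pattern $\partial S \times \partial I$, and none of the steps of the semi-canonical hierarchy applies: the monodromy, an element of an infinite mapping class group, is left completely unconstrained, which is precisely the conjugacy-problem obstruction. Your proposed mechanism --- a preferred slope on $\partial N$ obtained by ``pushing the meridian--longitude framing inward along the JSJ graph'', which is then claimed to determine the fibre ``up to finite monodromy ambiguity'' and hence a canonical isotopy class of triangulations of polynomial complexity --- does not meet this difficulty: a boundary slope carries no information about the monodromy, and the assertion that a fibred piece then has a canonical small triangulation is essentially the conclusion you are trying to prove. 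In addition, an interior JSJ piece that fibres (for instance the exterior of a fibred companion knot of a satellite) does not meet $\partial M$ at all, and after the JSJ tori are cut it carries empty pattern, so the meridian framing does not reach it by any isotopy-invariant ``pushing'' without a substantial further argument.

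The route indicated by the survey (and followed by Mijatovi\'c) is via boundary patterns rather than preferred slopes: one equips the knot exterior with the meridian as boundary pattern, and the key point is that the boundary of any fibre is a longitude, so the fibre must cross the meridian; consequently, when one cuts along a fibre the resulting product inherits a pattern that is not purely vertical (an arc recording how the two copies of the fibre are matched), and the hierarchy machinery for manifolds with boundary patterns, as in Theorems \ref{Thm:AlgorithmBoundaryPattern} and \ref{Thm:SemiCanonical}, can continue; making this quantitative, and arranging that pattern created by earlier cutting surfaces also rescues the interior non-fibre-free pieces, is the real content. Your final paragraph candidly concedes that this is where the work lies, but the sketch replaces that work with a mechanism that would not function rather than deferring a routine verification. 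A smaller point: both triangulations are not brought to a single canonical form ``depending only on the manifold''; the hierarchy method produces a finite computable list of forms which are then compared pairwise, though this part of your outline is easily repaired.
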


This was the main input into the proof of Theorem \ref{Thm:UpperBoundRM}. However, going from a bound on Pachner moves to a bound on Reidemeister moves was not a straightforward task.

The bounds on Pachner and Reidemeister moves presented in this section are an attractive measure of the complexity of the homeomorphism problem for $3$-manifolds and the recognition problem for certain links and manifolds. However, it is worth emphasising that even good bounds on Reidemeister and Pachner moves cannot lead to really efficient algorithms. For example, the polynomial upper bound on Reidemeister moves for the unknot given in Theorem \ref{Thm:PolyUnknot} only establishes that unknot recognition is in $\mathrm{NP}$ and $\mathrm{EXP}$. This is because, without further information, a blind search through polynomially many Reidemeister moves could not do any better than exponential time. Therefore, if we are to find any algorithms in $3$-manifold theory and knot theory that run in sub-exponential time, other methods will be required.

\section{Normal surfaces}
\label{Sec:NormalSurfaces}

Many, but not all, algorithms in 3-manifold theory rely on normal surface theory. Normal surfaces were introduced by Kneser \cite{Kneser} and then were developed extensively by Haken \cite{HakenNormal, HakenHomeomorphism} and many others. In the next three sections, we will give an overview of their theory.

\begin{definition}
An arc properly embedded in a 2-simplex is \emph{normal} if its endpoints are in the interior of distinct edges.
\end{definition}

\begin{definition}
A disc properly embedded in a tetrahedron is a \emph{triangle} if its boundary is three normal arcs. It is a \emph{square} if its boundary is four normal arcs. A \emph{normal disc} is either a triangle or a square.
\end{definition}

\begin{figure}
  \includegraphics[width=3.5in]{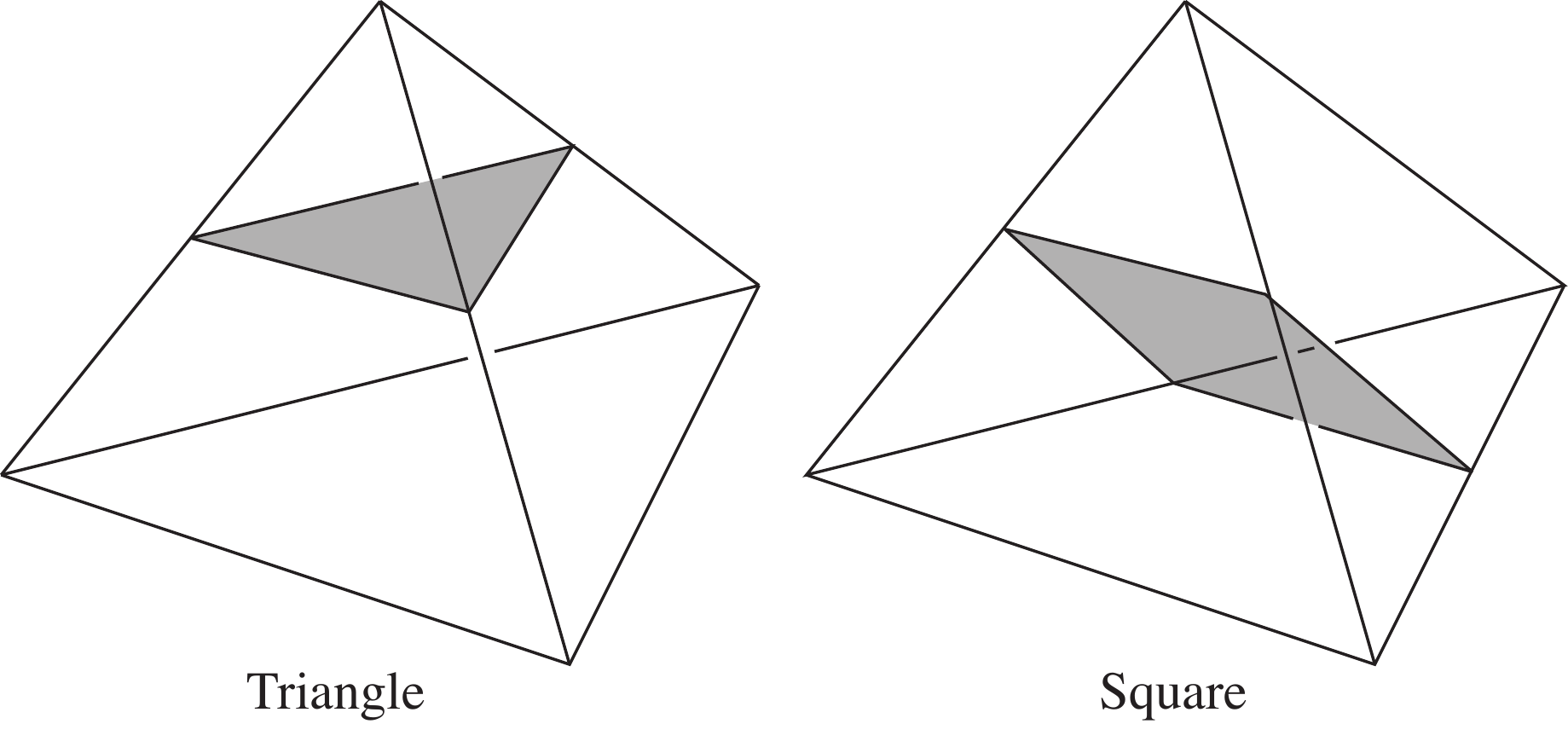}
  \caption{Normal discs}
  \label{Fig:TrianglesSquares}
\end{figure}

\begin{definition}
Let $M$ be a compact 3-manifold with a triangulation $\mathcal{T}$. A surface properly embedded in $M$ is \emph{normal} if its intersection with each tetrahedron of $\mathcal{T}$ is a union of disjoint normal discs.
\end{definition}

\begin{definition} A \emph{normal isotopy} of a triangulated $3$-manifold is an isotopy that preserves each simplex throughout.
\end{definition}

Most interesting surfaces in a 3-manifold can be placed either into normal form or some variant of normal form. For example, we have the following results (see \cite[Proposition 3.3.24, Corollary 3.3.25]{Matveev}).

\begin{theorem}
\label{Thm:NormalDisc}
Let $M$ be a compact orientable 3-manifold that has compressible boundary. Let $\mathcal{T}$ be a triangulation of $M$. Then some compression disc for $\partial M$ is in normal form with respect to $\mathcal{T}$.
\end{theorem}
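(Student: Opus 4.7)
The plan is to take a compression disc of minimum weight and show that it must already be normal, in the classical style pioneered by Kneser and Haken. Since $\partial M$ is compressible, let $D_0$ be any compression disc for $\partial M$. After a small ambient isotopy I may assume $D_0$ is in general position with respect to $\mathcal{T}$: $D_0 \cap \mathcal{T}^{(2)}$ is a properly embedded $1$-submanifold of $D_0$, $D_0 \cap \mathcal{T}^{(1)}$ is a finite set, and $D_0$ is disjoint from $\mathcal{T}^{(0)}$. Define
\[
w(D) \;=\; \bigl(\, |D \cap \mathcal{T}^{(1)}|,\ |\pi_0(D \cap \mathcal{T}^{(2)})| \,\bigr),
\]
ordered lexicographically, and, among all compression discs in general position, pick one of minimum weight, call it $D$.

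The claim is that $D$ is normal. Suppose not. Then at least one of the following obstructions occurs: (a) some component of $D \cap \mathcal{T}^{(2)}$ is a simple closed curve; (b) some arc of $D \cap F$, for some face $F$, has both endpoints on the same edge of $F$; (c) some component of $D \cap T$, for some tetrahedron $T$, is a disc whose boundary consists of more than four normal arcs. In each case I will produce an ambient isotopy of $D$ fixing $\partial M$ pointwise that strictly decreases $w$; since such an isotopy does not move $\partial D$, the result is still a compression disc, and this contradicts minimality.

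For (a), pick $C \subset D \cap F$ innermost in $F$. Then $C$ bounds a subdisc $E \subset F$ with $\interior E \cap D = \emptyset$, and it also bounds a subdisc $D' \subset D$. The $2$-sphere $D' \cup E$ sits inside one of the two tetrahedra adjacent to $F$, hence bounds a $3$-ball; I push $D'$ across this ball and slightly past $F$. Since $E \subset \interior F$ is disjoint from $\mathcal{T}^{(1)}$, the first coordinate of $w$ is not increased, and the number of components of $D \cap \mathcal{T}^{(2)}$ strictly drops. For (b), among return arcs $\alpha$ with associated bigons $E \subset F$ cobounded by $\alpha$ and a subarc $\gamma \subset e$ of an edge $e$, pick one whose bigon is innermost. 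Any arc of $D \cap F$ meeting $\interior E$ would be disjoint from $\alpha$ and hence trapped in $E$, with both endpoints on $\gamma \subset e$, contradicting the innermost choice; so $\interior E \cap D = \emptyset$, and the ambient isotopy across $E$ removes the two intersections of $D$ with $e$ at the endpoints of $\alpha$, strictly reducing the first coordinate of $w$ by $2$. Case (c) reduces to (b) via a standard enumeration on $\partial T \cong S^2$: absent (a) and (b), each component of $D \cap T$ is a properly embedded disc whose boundary is a simple closed curve on $\partial T$ meeting each face in normal arcs joining distinct edges, and an analysis according to how that curve separates the four vertices of $T$ forces it to be either a triangle or a square.

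The crux will be case (b): one has to choose the bigon $E$ so that $\interior E$ is disjoint from $D$, since otherwise the move across $E$ is not an ambient isotopy. The innermost argument accomplishes exactly this, working because disjoint arc components of $D \cap F$ cannot cross $\alpha$ and so any hypothetical inner arc would again be a return arc on $e$. Once this combinatorial delicacy is handled, the moves for (a) and (b) are genuine ambient isotopies that leave $\partial D$ untouched, so essentiality of $\partial D$ on $\partial M$ is automatic, the new disc is still a compression disc of strictly smaller weight, and minimality is contradicted.
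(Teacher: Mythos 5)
Your overall strategy (minimise a lexicographic weight, then kill non-normal features by local moves) is the right one and matches the paper's, but two of your three cases contain genuine errors. In case (a) you assert that the sphere $D'\cup E$ ``sits inside one of the two tetrahedra adjacent to $F$''. This is false: $C$ is innermost in the face $F$, so $E\subset F$ is clean, but $D'$ is the subdisc of the \emph{whole} compression disc $D$ bounded by $C$, and it may wander through many tetrahedra and meet the $1$-- and $2$--skeleton arbitrarily. Worse, $M$ is not assumed irreducible in this theorem, so $D'\cup E$ need not bound a ball at all; and even when it does, an ambient isotopy across that ball can drag other sheets of $D$ lying inside it across $F$, destroying your weight count. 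The fix is to abandon the ambient-isotopy requirement, which is not needed here: simply perform the cut-and-paste that replaces $D'$ by a push-off of $E$ into an adjacent tetrahedron. Since $C$ bounds the subdisc $D'$ in $D$, the result is again an embedded disc with the same boundary $\partial D$, hence still a compression disc, and its weight strictly decreases. This surgery (``even when $M$ is reducible, we may remove $D'$ and replace it by $E$'') is exactly how the standard normalisation, and the paper, handle this step.

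Case (c) is also not right as stated. First, ruling out (a) and (b) does not make every component of $D\cap T$ a disc: a piece of $D$ can be, say, an annulus (a tube) in $T$ whose boundary consists of normal curves, and eliminating such pieces requires compressions \emph{inside the tetrahedron}, again using the fact that any circle on $D$ bounds a subdisc of $D$ and performing the same surgery as above. Second, your enumeration of normal curves on $\partial T$ is wrong: normal curves of length $8$ (and longer) exist, so a disc piece with normal boundary need not be a triangle or square, and this case does not reduce to the face-bigon move (b); one instead uses an edge compression disc inside $T$ (boundary equal to an arc of $D\cap T$ together with a subarc of an edge), which is legitimate precisely because such a long normal curve must meet some edge twice. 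Finally, when the relevant edge lies in $\partial M$, the points being removed are points of $\partial D$, so no isotopy fixing $\partial M$ pointwise can do the job; there one must boundary-compress, splitting $D$ into two discs $D_1,D_2$, and argue that since $\partial D$ is essential in $\partial M$ at least one of $\partial D_1,\partial D_2$ is essential, giving a compression disc of smaller weight. Your proposal does not address either the non-disc pieces, the long normal curves, or the boundary-edge case, and these are exactly the steps where the paper's argument does real work.
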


\begin{theorem}
\label{Thm:EssentialNormal}
Let $M$ be a compact orientable 3-manifold that is irreducible and has incompressible boundary. Let $S$ be a surface properly embedded in $M$ that is \break incompressible and boundary-incompressible, and is neither a sphere nor a boundary-parallel disc. Then $S$ may be isotoped into normal form.
\end{theorem}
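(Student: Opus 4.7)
The plan is to run the classical Haken normalisation procedure: isotope $S$ into general position with respect to $\mathcal{T}$, so that $S \cap \mathcal{T}^{(2)}$ is a $1$-manifold of arcs and simple closed curves in the faces, and then progressively reduce a combinatorial complexity until $S$ is normal. I would use the lexicographic complexity $(w(S), c(S))$, where the \emph{weight} $w(S) = |S \cap \mathcal{T}^{(1)}|$ and $c(S)$ counts the components of $S \cap \mathcal{T}^{(2)}$. The strategy is to show that whenever $S$ fails to be normal, an ambient isotopy of $M$ strictly lowers this complexity; since both coordinates are nonnegative integers, the procedure terminates, and termination forces the final position to be normal.

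The first reduction eliminates closed curves. Pick an innermost closed curve $C \subset F$ in a face, bounding a disc $D \subset F$ whose interior is disjoint from $S$. By incompressibility of $S$, $C$ also bounds a disc $D' \subset S$ (using that $S$ is not a sphere to ensure $D' \neq S$), so $D \cup D'$ is a $2$-sphere bounding a ball in $M$ by irreducibility, and an isotopy across this ball reduces $c(S)$ while preserving $w(S)$. The second reduction eliminates arcs with both endpoints on a single edge $e$. Among such arcs, pick one that is outermost on its face $F$: it cobounds with a subarc $\beta \subset e$ a disc $D \subset F$ whose interior meets $S$ only in arcs having an endpoint off $e$. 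If $e \subset \partial M$, then $D$ is a boundary-compression disc; boundary-incompressibility forces $\alpha$ to cobound with an arc on $\partial S$ a disc $D' \subset S$, and $D \cup D'$ is a disc in $M$ with boundary on $\partial M$, parallel to a disc in $\partial M$ by incompressibility of $\partial M$ (the hypothesis that $S$ is not a boundary-parallel disc preventing $D' = S$); pushing $S$ across this parallelism strictly reduces $w(S)$. If $e \subset \mathrm{int}\,M$, I would perform a finger move through $D$, pushing a strip of $S$ adjacent to $\alpha$ across $\beta$ to the far side of $F$; the two points $\alpha \cap e$ are removed, and the outermost choice of $\alpha$ ensures that no new intersections with $\mathcal{T}^{(1)}$ are created along the interior of $\beta$, so $w(S)$ drops by $2$.

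Once all arcs of $S \cap \mathcal{T}^{(2)}$ are normal, the terminal surface is normal: each component of $S \cap T$ in a tetrahedron $T$ must be a disc, because a closed component would be a sphere in the $3$-ball $T$ and hence a sphere component of $S$, which is excluded, while a non-disc component admits in $T$ a compressing disc that contradicts incompressibility after an innermost-disc cleanup; a disc component whose boundary has five or more normal arcs is ruled out by pigeonhole on the four faces of $T$, producing two normal arcs on a common face whose outermost pair would have been removed by the second reduction. The main obstacle I anticipate is the interior-edge case of the second reduction: one must verify across every combinatorial configuration of tetrahedra meeting $e$ that the finger move through $D$ is legal, does not self-cross, and truly decreases $w(S)$. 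The hypotheses that $S$ is neither a sphere nor a boundary-parallel disc are invoked precisely at the innermost-disc and outermost-arc steps to guarantee that the simplifying isotopy preserves the topological type of $S$ rather than collapsing it.
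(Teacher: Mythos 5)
Your overall strategy (Haken's normalisation: general position, then complexity-reducing isotopies driven by incompressibility, irreducibility and boundary-incompressibility, with weight as the leading term) is the same as the paper's, and your first two reductions correspond to the paper's steps. However, there is a genuine gap at the terminal stage. You claim that once every component of $S \cap \mathcal{T}^{(2)}$ is a normal arc and every piece $S \cap \Delta$ is a disc, each disc must be a triangle or square, ruling out boundaries with more than four normal arcs ``by pigeonhole \dots\ producing two normal arcs on a common face whose outermost pair would have been removed by the second reduction.'' This is false: your second reduction only removes arcs having \emph{both endpoints on a single edge}, i.e.\ non-normal arcs. Two disjoint \emph{normal} arcs of the same curve lying in a common face trigger none of your moves. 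Concretely, a disc piece whose boundary is a normal curve of length $8$ (an octagon --- exactly the almost normal piece that appears later in the paper) intersects each face in at most two normal arcs and survives both of your reductions, so your procedure can terminate at a surface that is not normal.

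The missing ingredient is the paper's steps (6)--(7): if a disc piece of $S \cap \Delta$ meets some \emph{edge} of $\Delta$ more than once (which is forced for any normal boundary curve of length greater than $4$, by the standard lemma that such a curve hits some edge at least twice), there is an \emph{edge compression disc} $E \subset \Delta$ with $E \cap \partial\Delta$ a sub-arc of that edge and $E \cap S$ the rest of $\partial E$. When the edge is interior, isotoping $S$ across $E$ strictly reduces the weight; when the edge lies in $\partial M$, one uses boundary-incompressibility (again) to replace a sub-disc of $S$ by $E$, also reducing the weight. Adding this move, and rerunning your termination argument with weight as the dominant measure (plus suitable secondary measures, since some of the compressions inside a single tetrahedron leave $(w,c)$ unchanged), repairs the proof and brings it in line with the paper's argument. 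Your anticipated worry about the legality of the finger move for interior edges is a real but routine bookkeeping matter, handled in detail in Matveev's treatment; the octagon issue, by contrast, is a structural omission.
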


The idea behind the proof of these theorems is as follows. In Theorem \ref{Thm:NormalDisc}, let $S$ be a compression disc for $\partial M$. In Theorem \ref{Thm:EssentialNormal}, $S$ is the given surface. First place $S$ in general position with respect to the triangulation $\mathcal{T}$. It then misses the vertices of $\mathcal{T}$ and intersects the edges in a finite collection of points. The number of points is the \emph{weight} of $S$, denoted $w(S)$. This is the primary measure of the complexity of $S$. Each modification that will be made to $S$ will not increase its weight, and many modifications will reduce it. In fact, the weight of $S$ is the most significant quantity in a finite list of other measures of complexity. Each modification will reduce some quantity in this list and will not increase the more significant measures of complexity. Thus, eventually the modifications must terminate, at which stage it can be deduced that the resulting surface is normal.

In outline, the normalisation procedure is as follows. See \cite[Section 3.3]{Matveev} for a more thorough treatment.
\begin{enumerate}
\item Suppose that in some tetrahedron $\Delta$, $S \cap \Delta$ is not a collection of discs. Then $S \cap \Delta$ admits a compression disc $D$ in the interior of $\Delta$.
\item Since $S$ is incompressible, $\partial D$ bounds a disc $D'$ in $S$.
\item When $M$ is irreducible, $D \cup D'$ bounds a ball in $M$, and there is an isotopy that moves $D'$ to $D$.
\item Even when $M$ is reducible, we may remove $D'$ from $S$ and replace it by $D$.
\item This process reduces the measures of complexity and so at some point we must reach a stage where the intersection between $S$ and each tetrahedron is a collection of discs.
\item Suppose that one of these discs intersects an edge of a tetrahedron $\Delta$ more than once. If the interior of the edge lies in the interior of $M$, then there is an isotopy that can be performed that reduces the weight of the surface. This moves $S$ along an \emph{edge compression disc}, which is a disc $E$ in $\Delta$ such that $E \cap \partial \Delta$ is both an arc in $\partial E$ and a sub-arc of an edge of $\Delta$, and where $E \cap S$ is the remainder of $\partial E$.
\item If the above edge lies in $\partial M$, then the disc $E$ forms a potential boundary-compression disc for $S$. However, $S$ is boundary-incompressible and so we may replace a sub-disc of $S$ by $E$. This reduces the weight of $S$.
\item Therefore eventually we reach the stage where $S$ intersects each tetrahedron in a collection of discs and each of these discs intersects each edge of the tetrahedron at most once. It is then normal.
\end{enumerate}

We will see in Section \ref{Sec:AlmostNormal} that other surfaces, particularly certain Heegaard surfaces, may be placed into a variation of normal form, called almost normal form, and that this has some important algorithmic consequences.

\section{The matching equations and fundamental surfaces}
\label{Sec:MatchingFundamental}

One of Haken's key insights was to encode a normal surface in a triangulation $\mathcal{T}$ by counting its number of triangles and squares of each type in each tetrahedron. 

\begin{definition}
The \emph{vector} $(S)$ associated with a normal surface $S$ is the $(7|\mathcal{T}|)$-tuple of non-negative integers that counts the number of triangles and squares of each type in each tetrahedron.
\end{definition}

The vector of a properly embedded normal surface $S$ satisfies some fairly obvious conditions:
\begin{enumerate}
\item The co-ordinates have to be non-negative integers. 
\item Any two squares of different types within a tetrahedron necessarily intersect, and so this imposes constraints on $(S)$. These assert that for each pair of distinct square types within a tetrahedron, at least one of the corresponding co-ordinates of $(S)$ is zero. These are called the \emph{compatibility conditions} (or the \emph{quadrilateral conditions}).
\item For each face $F$ of $\mathcal{T}$ with tetrahedra on both sides, the intersection $S \cap F$ consists of normal arcs. These come in three different types. The number of arcs of each type can be computed from the number of the triangles and squares of particular types in one of the adjacent tetrahedra. Similarly, the number of arcs of this type can be computed from the number of triangles and squares in the other adjacent tetrahedron. Thus these numbers of triangles and squares satisfy a linear equation. There is one such equation for each normal arc type in each face with tetrahedra on both sides. These are called the \emph{matching equations}. (See Figure \ref{Fig:MatchingEquations}).
\end{enumerate}

\begin{figure}
  \includegraphics[width=3in]{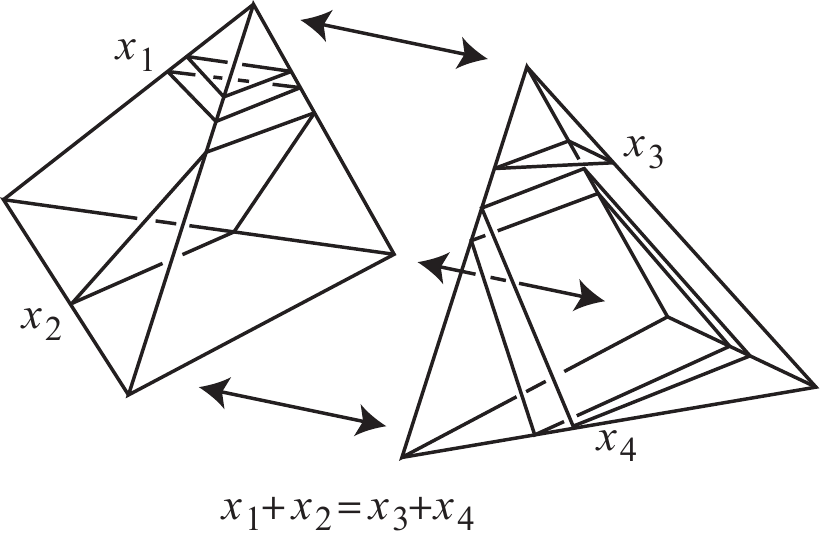}
  \caption{A matching equation}
  \label{Fig:MatchingEquations}
\end{figure}

The following key observation is due to Haken \cite[Theorem 3.3.27]{Matveev}.

\begin{lemma}
\label{Lem:OneOneSurfaceVector}
There is a one-one correspondence between properly embedded normal surfaces up to normal isotopy and vectors in $\mathbb{R}^{7|\mathcal{T}|}$ satisfying the above three conditions.
\end{lemma}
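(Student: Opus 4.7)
The forward direction is essentially by inspection. Given a properly embedded normal surface $S$, the components of $S \cap \Delta$ for each tetrahedron $\Delta$ are normal discs, so the entries of $(S)$ are nonnegative integers. Two squares of different types inside $\Delta$ meet transversely in an arc, so an embedded $S$ can have at most one nonzero square coordinate in $\Delta$; this is the compatibility condition. For each interior face $F$ shared by tetrahedra $\Delta_1$ and $\Delta_2$, the normal arcs of $S \cap F$ fall into three types, and the count of each type can be read off from either side as the sum of the disc coordinates of $S$ in $\Delta_i$ that contribute an arc of that type to $F$; equating the two expressions gives the matching equations. Finally, a normal isotopy preserves each simplex and sends normal discs to normal discs of the same type, so the vector only depends on the normal isotopy class.

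For the reverse direction, let $v$ be a vector satisfying the three conditions; I want to construct a normal surface realising it. First I carry out a local construction in each tetrahedron $\Delta$. By compatibility, at most one of the three square coordinates of $\Delta$ is nonzero. Near each vertex $w$ of $\Delta$, place the required number of triangles cutting off $w$ as nested parallel copies. In the central region between the four triangle stacks, place the prescribed number of parallel squares of the unique allowed type. A direct check shows these discs can be arranged to be pairwise disjoint and properly embedded in $\Delta$.

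Next I glue across faces. On each interior face $F$ between tetrahedra $\Delta_1$ and $\Delta_2$, the discs constructed in $\Delta_i$ cut out a family of normal arcs on $F$, grouped into three types. The matching equation guarantees that for each type, the number of arcs is the same on the two sides. Because any family of pairwise disjoint normal arcs of a single type in a triangle is unique up to normal isotopy of that triangle, there is a normal isotopy of $F$ taking the arcs induced by $\Delta_1$ to those induced by $\Delta_2$. This isotopy extends to a normal isotopy of $\Delta_1$ (supported in a collar of $F$) that slides the boundary points of the discs of $\Delta_1$ along the edges of $F$ until they agree with those from $\Delta_2$. Performing such isotopies face by face (and similarly attaching boundary triangles to boundary faces of $M$) produces a properly embedded normal surface $S$ with $(S) = v$. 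Uniqueness up to normal isotopy is then clear: if $S$ and $S'$ have the same vector, the disc collections $S \cap \Delta$ and $S' \cap \Delta$ are identical up to normal isotopy of $\Delta$ (parallel families are standard), and these tetrahedron-by-tetrahedron isotopies can be patched together along faces, exactly as in the gluing step, to give a global normal isotopy.

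The main obstacle is the face-gluing step. One must translate the purely numerical matching equations into the geometric statement that the two sides of $F$ can be made to coincide by a normal isotopy, and check that the required isotopy of endpoints along each edge $e$ of $F$ is consistent with the analogous gluings on the \emph{other} faces incident to $e$. The key consistency point is that the matching equations, summed appropriately around $e$, force the contributions of each face incident to $e$ to agree from its two sides; this lets one parametrise the endpoints along $e$ in a single linear order and match them up. Once this bookkeeping is done, the rest of the argument is a routine normal-isotopy argument within each simplex.
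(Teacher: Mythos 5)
Your argument is correct. The paper does not actually prove this lemma --- it attributes it to Haken and cites Matveev's Theorem 3.3.27 --- so there is no in-text proof to compare against; what you have written is essentially the standard argument. The only delicate point is the one you yourself flag: because you build the disc systems in each tetrahedron first and then glue face by face, you must check coherence around each edge of the triangulation. Your resolution is sound: the matching equations do make the number of intersection points of the putative surface with each edge well defined, since each face incident to the edge reports the same arc counts (hence the same endpoint count on that edge) from its two sides, and the tetrahedra around the edge are linked in a cycle (or chain) through those faces. A marginally cleaner bookkeeping, and the one usually written down, builds the surface from the bottom up: first fix the well-defined number of points on each edge of $\mathcal{T}$; then observe that these points together with the arc counts determine the arc system in each face uniquely up to normal isotopy, because disjointness forces the nesting pattern near each corner; then fill each resulting normal curve of length three or four on the boundary of a tetrahedron with its unique normal disc. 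With that ordering, both surjectivity and injectivity (two normal surfaces with the same vector are normally isotopic) follow from the same three-step rigidity, with no face-by-face patching to coordinate, but your version, with the consistency check you describe, reaches the same conclusion.
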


It is therefore natural to try to understand and exploit this structure on the set of normal surfaces. One is quickly led to the following definition.

\begin{definition}
A properly embedded normal surface $S$ is the \emph{normal sum} of two properly embedded normal surfaces $S_1$ and $S_2$ if $(S) = (S_1) + (S_2)$. We write $S = S_1 + S_2$.
\end{definition}

This has an important topological interpretation. We can place $S_1$ and $S_2$ in general position, by performing a normal isotopy to one of them. Then they intersect in a collection of simple closed curves and properly embedded arcs. It turns out that $S_1 + S_2$ can be obtained from $S_1 \cup S_2$ by cutting along these arcs and then regluing the surfaces in a different way. For example, consider a simple closed curve of $S_1 \cap S_2$, and suppose that this is curve is orientation-preserving in both $S_1$ and $S_2$. Then one removes from $S_1$ and $S_2$ annular regular neighbourhoods of this curve, and then reattaches disjoint annuli to form $S_1 + S_2$. In each case, we remove a subsurface from $S_1$ and $S_2$ (consisting of annuli, M\"obius bands and discs) and then we reattach a surface with the same Euler characteristic. So we obtain the following consequence.

\begin{lemma}
The normal sum $S_1 + S_2$ satisfies $\chi(S_1 + S_2) = \chi(S_1) + \chi(S_2)$.
\end{lemma}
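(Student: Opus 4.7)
The plan is to exploit the defining identity $(S_1+S_2) = (S_1)+(S_2)$ by computing $\chi$ as a linear functional of the normal coordinate vector. Given any properly embedded normal surface $S$ in $M$, equip $S$ with the CW structure whose $2$-cells are the normal discs $S\cap\Delta$ (for $\Delta$ a tetrahedron of $\mathcal{T}$), whose $1$-cells are the normal arcs $S\cap F$ (for $F$ a $2$-face of $\mathcal{T}$), and whose $0$-cells are the weight points of $S \cap \mathcal{T}^{(1)}$. The compatibility conditions, together with the elementary combinatorics of triangles and squares, ensure that this is a bona fide CW decomposition of $S$: within each tetrahedron the normal discs are pairwise disjoint, and along each face and edge the incidences match up correctly.

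Writing $V(S), E(S), F(S)$ for the numbers of $0$-, $1$-, and $2$-cells, we have $\chi(S) = V(S) - E(S) + F(S)$. Each of these three counts is visibly a linear functional of $(S)$. Indeed, $F(S)$ is just the sum of all coordinates of $(S)$. For $E(S)$, the number of normal arcs of a given type in a face $F$ can be read off directly from the triangle and square counts in an adjacent tetrahedron (and the two adjacent tetrahedra agree, when $F$ is in the interior of $M$, by the matching equations). For $V(S)$, the weight of $S$ on an edge $e$ is, up to the harmless constant factor given by the number of tetrahedra containing $e$, the sum over incident tetrahedra $\Delta$ and disc types in $\Delta$ meeting $e$ of the corresponding coordinates of $(S)$.

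Since $(S_1+S_2) = (S_1) + (S_2)$ by definition, linearity yields $V(S_1+S_2) = V(S_1)+V(S_2)$, $E(S_1+S_2) = E(S_1)+E(S_2)$, and $F(S_1+S_2) = F(S_1)+F(S_2)$, whence $\chi(S_1+S_2) = \chi(S_1) + \chi(S_2)$ on taking the alternating sum. The main step requiring care is verifying that the subdivision of $S$ by intersection with $\mathcal{T}$ genuinely is a CW decomposition: this needs the compatibility conditions to rule out intersecting square types within a tetrahedron, so that the normal discs are embedded and pairwise disjoint and meet the face boundaries exactly in the prescribed normal arcs. Combined with Lemma~\ref{Lem:OneOneSurfaceVector}, this is the essential input. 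A more topological alternative, matching the cut-and-paste description given just before the lemma, is to exhibit $S_1+S_2$ as obtained from $S_1 \cup S_2$ by excising and re-attaching subsurfaces (annuli, M\"obius bands, and rectangles), each of Euler characteristic zero; but the cellular route above seems cleaner.
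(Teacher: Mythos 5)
Your proof is correct. It is worth noting how it sits relative to the paper: the paper's primary derivation of this lemma is the geometric cut-and-paste description given just before the statement --- one removes from $S_1 \cup S_2$ regular neighbourhoods of the intersection curves and arcs (annuli, M\"obius bands and discs) and reattaches pieces of the same Euler characteristic, so $\chi$ is unchanged --- whereas your argument is the linear-algebraic one, which the paper records immediately after the lemma as ``another way of proving this lemma''. Your version is carried out in more detail than the paper's one-sentence sketch: you make explicit that the triangle--square decomposition is a genuine cell structure on $S$, that $F(S)$ is the coordinate sum, that $E(S)$ is determined by the arc counts in each face (consistently, by the matching equations), and that $V(S)$ is the weight, each a linear functional of $(S)$, so additivity of vectors gives additivity of $\chi$. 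The one quibble is that your appeal to the compatibility conditions is not really where the work lies: for a surface $S$ that is already given as a properly embedded normal surface, its normal discs are automatically disjoint, and the compatibility conditions are instead what guarantees (via Lemma~\ref{Lem:OneOneSurfaceVector}) that the sum vector is realised by a surface at all --- which is part of the hypothesis that $S_1+S_2$ exists. Each route buys something: the cut-and-paste proof explains geometrically how $S_1+S_2$ is assembled from $S_1\cup S_2$ (which the paper reuses elsewhere), while your linearity proof yields the algorithmically important consequence, noted in the paper, that $\chi(S)$ can be computed in polynomial time from the digits of the coordinates of $(S)$.
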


There is another way of proving this lemma, by observing that the Euler characteristic of a normal surface $S$ is a linear function of its vector $(S)$. This is because its decomposition into squares and triangles gives a cell structure on $S$, and the numbers of vertices, edges and faces of this cell structure are linear functions of the co-ordinates of $(S)$. This alternative proof has an important consequence: one can compute $\chi(S)$ in polynomial time as a function of the number of digits of the co-ordinates of $(S)$.


The following is also immediate, because $S_1 + S_2$ and $S_1 \cup S_2$ intersect the 1-skeleton of the triangulation at exactly the same points.

\begin{lemma}
\label{Lem:WeightAdditive}
The normal sum $S_1 + S_2$ satisfies $w(S_1 + S_2) = w(S_1) + w(S_2)$.
\end{lemma}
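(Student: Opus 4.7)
The plan is to proceed in parallel with the earlier discussion of Euler characteristic: I will argue that $w(S)$ is a linear function of the coordinate vector $(S)$, so that additivity of $w$ under normal sum follows immediately from the defining identity $(S_1+S_2) = (S_1) + (S_2)$.

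First, I would write the weight as
\[ w(S) = \sum_{e} |S \cap e|, \]
where the sum ranges over the edges $e$ of $\mathcal{T}$ and $S$ is in normal position so it misses the vertices. Next, I would examine a single edge $e$ of a single tetrahedron $\Delta$: each normal disc type $\tau$ in $\Delta$ either contains exactly one point of $e$ on its boundary (the endpoint of a normal arc) or is disjoint from $e$, and this is determined purely by the type $\tau$. Writing $a_{\tau,e} \in \{0,1\}$ for this incidence indicator, we obtain
\[ |S \cap e \cap \Delta| = \sum_{\tau} a_{\tau,e}\, x_\tau, \]
where $x_\tau$ is the coordinate of $(S)$ corresponding to normal disc type $\tau$ in $\Delta$. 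Summing over the tetrahedra containing $e$ and then over all edges shows $w(S)$ is a nonnegative integer linear combination of the coordinates of $(S)$. Substituting $(S_1+S_2) = (S_1)+(S_2)$ then yields $w(S_1+S_2) = w(S_1)+w(S_2)$.

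As an alternative (and essentially the remark signposted in the text just before the lemma), one can give a direct geometric argument: after a normal isotopy placing $S_1$ and $S_2$ in general position, the intersection $S_1 \cap S_2$ consists of properly embedded arcs and loops lying in the interiors of tetrahedra and in the interiors of 2-faces of $\mathcal{T}$, and so $S_1 \cap S_2$ is disjoint from the 1-skeleton. The cut-and-paste construction producing $S_1+S_2$ from $S_1 \cup S_2$ modifies the surfaces only in regular neighbourhoods of $S_1 \cap S_2$, hence does not alter the set of intersections with any edge of $\mathcal{T}$. Consequently $(S_1+S_2) \cap e = (S_1 \cup S_2) \cap e$ for every edge $e$, and taking cardinalities gives the claimed identity.

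There is no serious obstacle here; the only point requiring a moment's thought is the transversality claim in the geometric argument, namely that the intersection curves of $S_1$ with $S_2$ can be arranged to miss the 1-skeleton. This is immediate because both $S_1$ and $S_2$ meet the 1-skeleton only in the discrete set of normal-arc endpoints on edges, and a small normal isotopy of one surface moves those finitely many points off those of the other. The linear-algebra version avoids even this subtlety, and is the version I would write up.
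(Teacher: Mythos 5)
Your proposal is correct, and in fact it contains the paper's own argument verbatim as your ``alternative'': the paper justifies the lemma in one line by noting that $S_1+S_2$ and $S_1\cup S_2$ meet the 1-skeleton in exactly the same points, since the cut-and-paste producing $S_1+S_2$ takes place in a neighbourhood of $S_1\cap S_2$, which (after a normal isotopy) is disjoint from the edges. Your preferred write-up, showing that $w(S)$ is a linear function of the vector $(S)$, is a genuinely parallel route, and it has the same virtue the paper exploits for Euler characteristic: it shows $w$ is computable in polynomial time from the coordinates and makes additivity an immediate consequence of $(S_1+S_2)=(S_1)+(S_2)$, with no transversality discussion at all. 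One small correction to that version: after establishing $|S\cap e|=\sum_\tau a_{\tau,e}x_\tau$ within a single tetrahedron $\Delta$ containing $e$, you should not then sum this over \emph{all} tetrahedra containing $e$ --- that would count each intersection point once per incident tetrahedron and compute $\sum_e d(e)\,|S\cap e|$ rather than $w(S)$. Instead, either fix one tetrahedron per edge (or divide by the edge degree $d(e)$) and then sum over edges; the resulting expression is still a nonnegative rational linear functional of $(S)$, and the additivity conclusion goes through unchanged.
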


By a careful analysis of normal summation, the following result can be obtained. See Matveev \cite[Theorems 4.1.13 and 4.1.36 and the proof of Theorem 6.3.21]{Matveev}.

\begin{theorem}
\label{Thm:EssentialSummand}
Let $M$ be a compact orientable irreducible 3-manifold with a triangulation. 
\begin{enumerate}
\item
Suppose that $M$ has compressible boundary, and let $D$ be a compression disc for $\partial M$ that is normal and that has least weight in its isotopy class. Then if $D$ is a normal sum $S_1 + S_2$, then neither $S_1$ nor $S_2$ can be a sphere or a boundary-parallel disc.
\item
Suppose that $M$ has (possibly empty) incompressible boundary. Let $S$ be a connected incompressible boundary-incompressible surface properly embedded in $M$ that is not a sphere, a disc, a projective plane  or a boundary-parallel torus. Suppose that $S$ is normal and that it has least weight in its isotopy class. Then if $S$ is a normal sum $S_1 + S_2$, then $S_1$ and $S_2$ are incompressible and boundary-incompressible, and neither is a sphere, disc, projective plane or boundary-parallel torus.
\end{enumerate}
\end{theorem}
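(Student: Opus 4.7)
The plan is to use the geometric description of normal summation given just before the theorem: after a normal isotopy, $S_1 \cap S_2$ is a $1$-manifold of normal arcs and simple closed curves, and $S=S_1+S_2$ is obtained from $S_1\cup S_2$ by cutting along these curves and performing the \emph{regular switch} at each one (the other, irregular, switch would reconstitute $S_1\cup S_2$, possibly non-embeddedly). The strategy for both parts is identical: assume for contradiction that a summand $S_i$ contains a forbidden component, use the hypotheses on $M$ (irreducibility, incompressibility of $\partial M$) to find a $3$-dimensional region in $M$ that the offending component bounds or cobounds, then use this region to isotope $S$ itself to a normal surface of strictly smaller weight in the same isotopy class. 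Weight additivity (Lemma \ref{Lem:WeightAdditive}) is what lets us conclude the decrease is strict, and this contradicts the hypothesis that $S$ has least weight in its isotopy class.

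For part (1), suppose $D=S_1+S_2$ and some component $\Sigma\subset S_1$ is a sphere. Irreducibility provides a ball $B$ with $\partial B=\Sigma$; an innermost-disc argument on $\Sigma\cap S_2$ (using that each innermost loop bounds a disc in $B$) shows that one can modify the pair $(S_1,S_2)$ within their normal isotopy classes so that $\Sigma$ is disjoint from $S_2$, after which $\Sigma$ is a separate sphere summand of $D$. Since $\Sigma$ bounds a ball, $D$ is normally isotopic to a surface with the vector $(D)-(\Sigma)$, contradicting minimal weight. If instead some component $\Delta\subset S_i$ is a boundary-parallel disc, the product region between $\Delta$ and $\partial M$ plays the role of $B$, and an analogous innermost/outermost argument pushes $\Delta$ off $S_j$ and then across the product region, again strictly decreasing weight. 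This handles (1).

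For part (2) the same template rules out sphere, disc, and boundary-parallel torus components of $S_i$ (projective planes cannot occur since $M$ is orientable): spheres bound balls by irreducibility, discs are boundary-parallel by incompressibility of $\partial M$, and a boundary-parallel torus cobounds a product region one can push through. To show $S_i$ is incompressible, assume $S_1$ has a compression disc $E$; put $E$ in general position with $S_2$, apply innermost disc / outermost arc arguments to $E\cap S_2$ to reduce to the case $E\cap S_2=\emptyset$, and then observe that under the regular switch $E$ survives as a compression disc (or a trivial disc producing an isotopy that strictly reduces weight) for $S=S_1+S_2$, contradicting either the incompressibility or the least-weight assumption on $S$. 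Boundary-compressibility is handled identically, with outermost-arc arguments on $E\cap(S_2\cup\partial M)$.

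The main obstacle is the bookkeeping in the last step: turning a compression/boundary-compression of a single summand $S_i$ into a genuine weight-reducing isotopy of the sum $S$. One must track how the disc $E$ meets the curves of $S_1\cap S_2$ under the regular switch, verify that innermost/outermost reductions can be performed without raising the weight of $S_1$ or $S_2$ (invoking Theorem \ref{Thm:EssentialNormal} to keep everything normal), and then check that the resulting modification of $S$ is an honest ambient isotopy rather than merely a homotopy. This is where the real work of Matveev's argument lies; once this machinery is in place, the sphere, disc, and torus cases follow by simpler specializations of the same technique.
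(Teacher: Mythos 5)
The paper itself does not prove Theorem~\ref{Thm:EssentialSummand}: it is quoted with a citation to \cite{Matveev} (Theorems 4.1.13 and 4.1.36 there), so your sketch has to be judged against that standard argument. Your overall frame --- contradict the least-weight hypothesis by a cut-and-paste argument, using irreducibility and boundary incompressibility to supply balls and product regions, with Lemma~\ref{Lem:WeightAdditive} giving strictness --- is the right one, but the steps you actually propose would fail, and the part you defer as ``bookkeeping'' is the core of the proof.

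Concretely: (i) you cannot ``modify the pair $(S_1,S_2)$ within their normal isotopy classes so that $\Sigma$ is disjoint from $S_2$.'' Two normal surfaces in normal position have their intersection forced combinatorially by their vectors (e.g.\ distinct square types in one tetrahedron must cross), so normal isotopy cannot remove trace curves; and since $D$ (resp.\ $S$) is connected, the trace curves of any nontrivial decomposition are automatically non-empty, so the picture of a ``separate sphere summand'' whose vector one simply subtracts never occurs. The genuine argument must work with an innermost trace curve, perform an exchange inside the ball or product region, and verify that the result is a normal surface isotopic to $D$ of strictly smaller weight. (ii) In part (2), reducing a compressing disc $E$ for $S_1$ to one with $E\cap S_2=\emptyset$ by innermost-disc arguments is circular: pushing innermost circles of $E\cap S_2$ off uses precisely the incompressibility (or a least-weight surrogate) of $S_2$ that is being proved. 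Even granting $E\cap S_2=\emptyset$, $\partial E$ lies on $S_1$ and may cross the trace curves, and $S_1$ is not a subsurface of $S=S_1+S_2$ --- only its patches are --- so $E$ does not ``survive as a compression disc for $S$'' without the global analysis (minimizing the number of trace circles, ruling out disc patches, etc.) that constitutes the Jaco--Oertel/Matveev proof. (iii) Excluding projective-plane summands ``since $M$ is orientable'' is incorrect: orientable $3$-manifolds can contain one-sided projective planes; the correct exclusion is that an irreducible orientable $3$-manifold containing $P^2$ is $\mathbb{RP}^3$, which admits no surface satisfying the hypotheses --- the same reasoning the paper uses in the proofs of Theorem~\ref{Thm:Fundamental} and Theorem~\ref{Thm:ListSurfacesForSimpleManifold}. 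As it stands, then, your text is a plausible plan whose decisive steps are either circular or rely on moves (disjoining the summands) that are impossible, so it does not yet constitute a proof.
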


\begin{definition}
A normal surface is \emph{fundamental} if it cannot be written as a sum of non-empty normal surfaces.
\end{definition}

Clearly, any normal surface can be expressed as a sum of fundamental surfaces. The following results demonstrate the importance of fundamental surfaces. Part (1) of the theorem is due to Haken \cite{HakenNormal}; part (2) is due to Jaco and Oertel \cite[Theorem 2.2]{JacoOertel} (see also \cite[Theorems 4.1.13 and 4.1.30]{Matveev}).

\begin{theorem} \label{Thm:Fundamental}
Let $M$ be a compact orientable irreducible 3-manifold with a triangulation.
\begin{enumerate}
\item If $M$ has compressible boundary, then there is a compressing disc that is normal and fundamental.
\item If $M$ is closed and contains a properly embedded orientable incompressible surface other than a sphere, then it contains one that is the boundary of a regular neighbourhood of a fundamental surface.
\end{enumerate}
\end{theorem}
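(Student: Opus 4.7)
The plan is to prove both parts via a minimality argument: select a least-weight normal representative of the object sought, decompose it as a normal sum of fundamental surfaces, and use Theorem \ref{Thm:EssentialSummand} together with the weight-additivity of Lemma \ref{Lem:WeightAdditive} to conclude either that the original representative was itself fundamental, or that some fundamental summand supplies the required object.

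For part (1), I would first invoke Theorem \ref{Thm:NormalDisc} to obtain a normal compression disc, and among all such discs choose one, $D$, of least weight. I claim $D$ must already be fundamental. If not, write $(D) = (S_1) + (S_2)$ with nonempty $S_1, S_2$. Lemma \ref{Lem:WeightAdditive} gives $w(S_i) < w(D)$, and Theorem \ref{Thm:EssentialSummand}(1) forbids each $S_i$ from being a sphere or a boundary-parallel disc. Since $\chi$ is additive under normal sum and $\chi(D) = 1$, some summand must contain a component of positive Euler characteristic. After using the irreducibility of $M$ to dispose of any sphere components, and Theorem \ref{Thm:EssentialSummand}(1) applied iteratively to strip away boundary-parallel disc components, one is left with a genuine compression-disc component $D'$ of weight at most $w(S_i) < w(D)$, contradicting the minimality of $D$.

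For part (2), I would take the hypothesized orientable incompressible non-sphere $S$, place it in normal form via Theorem \ref{Thm:EssentialNormal}, choose one of least weight in its isotopy class, and decompose it as a normal sum of fundamental surfaces $(S) = \sum n_i (F_i)$. Iterated application of Theorem \ref{Thm:EssentialSummand}(2) ensures that each $F_i$ is incompressible and boundary-incompressible and is not a sphere, disc, projective plane, or boundary-parallel torus. Since $M$ is orientable, each $\partial N(F_i)$ is automatically an orientable closed surface, so the task reduces to finding an $F_i$ with incompressible $\partial N(F_i)$. If some $F_i$ is two-sided, then $\partial N(F_i)$ consists of two parallel copies of $F_i$ and is incompressible. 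Otherwise every $F_i$ is one-sided, $\partial N(F_i)$ is the orientation double cover of $F_i$, and failure of incompressibility for every one of these covers would allow us to construct, by compression followed by renormalization, an orientable incompressible non-sphere of strictly smaller weight, contradicting the minimality of $S$.

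The main obstacle in both parts is that Theorem \ref{Thm:EssentialSummand} describes the summands $S_1, S_2$ as whole surfaces, while the conclusion we need concerns a particular connected component (a compression-disc component in part (1)) or a fundamental piece whose neighbourhood boundary is incompressible (part (2)). Bridging this gap requires careful bookkeeping of how components and boundary curves combine under the normal sum operation, plus repeated use of Theorem \ref{Thm:EssentialSummand} to strip trivial pieces from the decomposition. In the end the whole scheme is a termination argument: weight drops strictly under any nontrivial normal summation by Lemma \ref{Lem:WeightAdditive}, so the process must terminate at a fundamental surface with the desired properties.
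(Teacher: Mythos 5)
Your part (1) follows the paper's argument almost exactly: a least-weight normal compression disc, additivity of Euler characteristic to locate a positive-$\chi$ piece in any nontrivial splitting, Theorem \ref{Thm:EssentialSummand}(1) to rule out trivial pieces, and Lemma \ref{Lem:WeightAdditive} to contradict minimality. Two points, however. The ``obstacle'' you defer --- that Theorem \ref{Thm:EssentialSummand} speaks of whole summands while you need a connected component --- is resolved in one line by regrouping the sum: if $S'$ is a component of $S_1$, then $(D) = (S') + \bigl((S_1)-(S')+(S_2)\bigr)$, so $S'$ is itself a summand and the theorem applies to it directly; there is no need for iterative ``stripping'', and ``disposing of sphere components using irreducibility'' is not the right mechanism (you cannot delete a component without changing the sum) --- spheres are simply forbidden summands. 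More importantly, a connected surface of positive Euler characteristic is a sphere, a disc, \emph{or a projective plane}, and you never treat the projective plane case; the paper does, noting that the only orientable irreducible 3-manifold containing an embedded $\mathbb{RP}^2$ is $\mathbb{RP}^3$, which is closed, whereas here $\partial M \neq \emptyset$. Without that case the contradiction in (1) is incomplete.

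For part (2) (which the paper only asserts is ``similar''), the set-up works once ``iterated application'' of Theorem \ref{Thm:EssentialSummand}(2) is replaced by the same regrouping trick applied to the original least-weight $S$ with $S_1 = F_i$ and $S_2$ the rest; literal iteration is not justified, since the summands are not known to be least weight in their isotopy classes. The genuine gap is the endgame in the all-one-sided case. You chose $S$ of least weight \emph{in its isotopy class}, so exhibiting some other incompressible non-sphere surface of smaller weight contradicts nothing; a weight argument requires minimizing over all normal orientable incompressible non-sphere surfaces from the outset. Even then, ``compression followed by renormalization'' of a compressible $\partial N(F_i)$ is not known to yield an incompressible non-sphere surface (the double is separating and could compress entirely to spheres), and no strict weight comparison with $w(S)$ is available, since $w(\partial N(F_i))$ is roughly $2w(F_i)$, which can exceed $w(S)$. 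Proving incompressibility of the double of a one-sided fundamental summand is precisely the delicate point of the Jaco--Oertel argument (see \cite[Theorem 4.1.30]{Matveev}) and needs a genuine argument rather than an appeal to the minimality of $S$.
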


\begin{proof}
We focus on (1), as the proof of (2) is similar. Let $D$ be a compression disc that is normal and has least possible weight. Suppose that $D$ is a normal sum $S_1 + S_2$. Since $1 = \chi(D) = \chi(S_1) + \chi(S_2)$, we deduce that some $S_i$ has positive Euler characteristic. By focusing on one of its components, we may assume that $S_i$ is connected. It cannot be a sphere by Theorem \ref{Thm:EssentialSummand} (1). It cannot be a projective plane, since the only irreducible orientable 3-manifold containing a projective plane is $\mathbb{RP}^3$, which is closed. Hence, it must be a disc. This is not boundary parallel, by Theorem \ref{Thm:EssentialSummand} (1). Hence, it is also a compression disc. But by Lemma \ref{Lem:WeightAdditive}, it has smaller weight than $D$, which is a contradiction. Hence, $D$ must have been fundamental.
\end{proof}

We will give a proof of the following result in the next section.

\begin{theorem} \label{Thm:ConstructFundamental}
A triangulation of a compact 3-manifold supports only finitely many fundamental normal surfaces and there is an algorithm to list them all.
\end{theorem}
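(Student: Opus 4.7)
The plan is to translate the statement into integer programming on rational polyhedral cones, via Lemma~\ref{Lem:OneOneSurfaceVector}. A normal surface corresponds to a non-negative integer vector in $\mathbb{R}^{7|\mathcal{T}|}$ satisfying the matching equations (a system of homogeneous linear equations with small integer coefficients) together with the compatibility conditions. The compatibility conditions are not linear, so I would first handle them combinatorially: in each tetrahedron there are four options, namely choosing one of the three square types or allowing none. This gives at most $4^{|\mathcal{T}|}$ ``admissibility patterns'' $\sigma$, and every normal surface vector lies in the set $C_\sigma$ of non-negative real solutions of the matching equations that have zero coordinates in the disallowed square slots of $\sigma$. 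Each $C_\sigma$ is a rational polyhedral cone, and the set of all normal surface vectors is the union of their integer points.

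Next I would invoke Gordan's lemma: every rational polyhedral cone $C_\sigma$ has a finite \emph{Hilbert basis} $H_\sigma$, that is, a finite set of integer points in $C_\sigma$ such that every integer point in $C_\sigma$ can be written as a non-negative integer linear combination of elements of $H_\sigma$. If $S$ is any fundamental normal surface, then $(S)$ lies in some $C_\sigma$, and a Hilbert basis expression $(S)=\sum_i a_i h_i$ with $a_i\in \mathbb{Z}_{\geq 0}$ and $h_i\in H_\sigma$ exhibits $S$ as a normal sum of the surfaces corresponding to the $h_i$ (note that a sum of vectors in $C_\sigma$ still satisfies the compatibility conditions, since within each tetrahedron all summands are supported on a common allowed square type). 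Hence by fundamentality $(S)$ must itself lie in $H_\sigma$. The finite set $\bigcup_\sigma H_\sigma$ therefore contains all fundamental normal surface vectors, proving finiteness.

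For the algorithmic listing, I would make the Hilbert bases effectively computable. Any element $h$ of $H_\sigma$ is an integer point in $C_\sigma$ that is minimal in the coordinatewise partial order among non-zero points; a classical estimate from polyhedral combinatorics, obtained by Cramer's rule on the defining equations of $C_\sigma$, bounds the coordinates of such minimal points by an explicit function of $|\mathcal{T}|$ (the coefficients of the matching equations have absolute value at most $1$, so the relevant sub-determinants are easy to bound). I would therefore enumerate every non-negative integer vector with coordinates within this bound, discard those that fail the matching equations or the compatibility conditions, and then for each surviving vector test whether it can be written as a sum of two non-zero vectors from the remaining list. Those that cannot are exactly the fundamental surface vectors.

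The main obstacle, both conceptually and computationally, is the last step: producing an explicit a~priori bound on the coordinates of Hilbert basis vectors. I would sidestep any original work here by citing the standard theory of integer cones (for example Matveev's treatment in \cite{Matveev}); the crude bound provided by Cramer's rule is exponential in $|\mathcal{T}|$, which is enough for the qualitative claim of computability, even though it is very far from the true growth rate of fundamental surfaces in examples.
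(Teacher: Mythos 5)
Your proposal is correct, and in substance it runs close to the paper's own argument, with a difference mainly of packaging. The paper's route is to prove an explicit bound first (Theorem~\ref{Thm:WeightFundamental}): it uses the very same decomposition of the solution space into rational cones indexed by the choice of permitted square type in each tetrahedron, bounds the vertex (extreme ray) solutions via Cramer's rule and adjugates, and then writes any fundamental solution as a combination of at most $7|\mathcal{T}|$ vertex solutions with coefficients strictly less than $1$; finiteness and the listing algorithm are then immediate by enumerating vectors up to the weight bound. You instead obtain finiteness softly from Gordan's lemma (fundamental vectors lie in the Hilbert bases $H_\sigma$) and, for the algorithm, appeal to a ``classical estimate'' on the size of Hilbert basis elements. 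That estimate is indeed standard, but be aware that its standard proof is exactly the vertex-plus-Carath\'eodory argument just described, so you are in effect citing the paper's key lemma rather than replacing it; and Cramer's rule alone bounds extreme rays, not minimal integer points, so the coefficients-less-than-one step cannot be skipped. Two points you use implicitly are correct and worth making explicit: in these particular cones (a linear subspace intersected with the non-negative orthant and some coordinate hyperplanes) an integer point is irreducible in the monoid if and only if it is coordinatewise minimal among non-zero integer points, because the difference of comparable points again satisfies the matching equations, non-negativity and the compatibility pattern; and any summand of a vector on your candidate list is again on the list (it is coordinatewise smaller, hence within the bound and compatible), so your final decomposability test is both sound and complete. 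What your phrasing buys is a clean separation of qualitative finiteness (Gordan) from the quantitative bound; what it gives up is the explicit bound itself, which the paper wants anyway for its later efficiency results.
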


As a consequence of Theorems \ref{Thm:Fundamental} (1) and \ref{Thm:ConstructFundamental}, we get the following famous result of Haken \cite{HakenNormal}.

\begin{theorem}
\label{Thm:AlgorithmCompressible}
There is an algorithm to decide whether a compact orientable 3-manifold has compressible boundary. Hence, there is an algorithm to decide whether a given knot is the unknot.
\end{theorem}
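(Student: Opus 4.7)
The plan is to convert Theorem~\ref{Thm:Fundamental}(1) into an algorithm, using Theorem~\ref{Thm:ConstructFundamental} to make the search space finite. Given a triangulation $\mathcal{T}$ of $M$, I would first reduce to the case where $M$ is irreducible, since Theorem~\ref{Thm:Fundamental}(1) has that hypothesis. An essential $2$-sphere in $M$ is itself detectable by fundamental normal surfaces (by an argument wholly analogous to Theorem~\ref{Thm:Fundamental}), so one may algorithmically split $M$ along such spheres; the resulting pieces have boundary equal (up to the new sphere components, which are irrelevant for compressibility) to that of $M$, so it suffices to run the compressibility algorithm on each irreducible piece. Assume now $M$ is irreducible. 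Enumerate the finite list of fundamental normal surfaces using Theorem~\ref{Thm:ConstructFundamental}, and test each for being a compressing disc for $\partial M$.

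For a fundamental surface $S$ with vector $(S)$, the topological checks are combinatorial. The Euler characteristic $\chi(S)$ is a linear function of $(S)$. Connectedness is decided by building the adjacency graph whose vertices are the individual triangle and square discs enumerated by $(S)$ and whose edges record pairs of discs meeting along a common normal arc in a face of $\mathcal{T}$, then checking that this graph is connected. We keep the candidates $S$ that are connected, satisfy $\chi(S)=1$, and have $\partial S \subset \partial M$. For each such candidate disc $D$, we then test whether $\partial D$ is essential in $\partial M$ by cutting the (triangulated) surface $\partial M$ along the combinatorial curve $\partial D$ and checking that neither resulting component is a disc. If some $D$ passes all tests, output YES; otherwise, by Theorem~\ref{Thm:Fundamental}(1) no compressing disc exists anywhere in $M$, so output NO.

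For the unknot application, apply the algorithm to the exterior $X$ of the given knot $K \subset S^3$. Knot exteriors in $S^3$ are always irreducible, so the reduction step is trivial, and $\partial X$ is a single torus. A compressible torus boundary forces $X$ to be a solid torus, which by the uniqueness of the unknot (via Dehn's lemma applied to the compressing disc one finds) is equivalent to $K$ being the unknot. Hence the compressibility algorithm decides unknot recognition.

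The main obstacle I would expect is the irreducibility reduction that I outlined only in words: writing it out carefully requires enumerating fundamental $2$-spheres, using them to produce an actual connected-sum decomposition of the triangulated manifold, and controlling what happens to $\partial M$ and to the triangulation throughout — all of which is technical but does not require new ideas. For the unknot case, that obstacle evaporates entirely. The remaining work — enumerating the fundamental surfaces, computing $\chi$, checking connectedness, and deciding whether $\partial D$ bounds a disc on $\partial M$ — is mechanical and finite, although these steps are not efficient in any practical sense.
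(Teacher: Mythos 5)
Your core algorithm is exactly the paper's: enumerate the finitely many fundamental normal surfaces using Theorem~\ref{Thm:ConstructFundamental}, test each one for being a compressing disc (a connected surface with $\chi=1$ and non-empty boundary is automatically a disc, and your cut-and-check criterion for essentiality of $\partial D$ in $\partial M$ is correct), and justify a negative answer by Theorem~\ref{Thm:Fundamental}(1); the unknot application via irreducibility of the exterior and ``compressible torus boundary $\Leftrightarrow$ solid torus $\Leftrightarrow$ unknot'' is also the paper's.

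Where you diverge is the preprocessing step in which you split $M$ along essential spheres to reach the irreducible case, and this is the one genuine soft spot in your write-up. The part that is ``wholly analogous'' to Theorem~\ref{Thm:Fundamental} only gives you that a reducible manifold contains a fundamental normal sphere that is essential; it does not let you \emph{decide} which fundamental normal spheres are essential (i.e.\ do not bound a ball), nor does it tell you when the pieces produced by repeated splitting are irreducible so that the iteration may stop. Recognising whether an embedded $2$-sphere bounds a ball is essentially $3$-ball/$3$-sphere recognition (the Rubinstein--Thompson machinery of Section~\ref{Sec:AlmostNormal}), and an algorithmic connected-sum decomposition is a substantial theorem in its own right, not a routine technicality. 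The paper simply does not perform any such reduction: it applies Theorem~\ref{Thm:Fundamental}(1) directly, and to the extent one worries about the irreducibility hypothesis there, the remedy is to invoke the fundamental (or vertex) compressing-disc theorem in the generality in which it is proved in the literature (Haken, Matveev, Jaco--Tollefson) --- or to observe that a compressing disc can always be surgered off embedded spheres --- rather than to construct a sphere decomposition algorithmically. For the unknot statement your argument is complete as written, since the reduction step is vacuous there, exactly as you say.
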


\begin{proof} 
The input to the first algorithm is a triangulation of the 3-manifold $M$. The input to the second algorithm is either a triangulation of the knot exterior $M$ or a diagram of the knot, but in the latter case, the first step in the algorithm is to use the diagram to create a triangulation. Using Theorem \ref{Thm:ConstructFundamental}, one simply lists all the fundamental surfaces in $M$. For each one, the algorithm determines whether it is a disc. If it is, then the algorithm determines whether its boundary is an essential curve in $\partial M$. If there is such a disc, then $\partial M$ is compressible. If there is not, then by Theorem \ref{Thm:Fundamental} (1), $\partial M$ is incompressible.
\end{proof}

The following is an important extension of this result \cite[Theorem 6.3.17]{Matveev}.

\begin{definition}
A compact orientable 3-manifold is \emph{simple} if it is irreducible and has incompressible boundary and it contains no properly embedded essential annuli or tori.
\end{definition}

\begin{theorem}
\label{Thm:ListSurfacesForSimpleManifold}
There is an algorithm that takes, as its input, a triangulation of a compact orientable simple 3-manifold $M$ and an integer $k$, and it provides a list of all connected orientable incompressible boundary-incompressible properly embedded surfaces in $M$ with Euler characteristic at least $k$, up to ambient isotopy.
\end{theorem}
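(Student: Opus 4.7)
The plan is to combine normal surface theory with the extra rigidity afforded by simplicity. Any boundary-parallel connected incompressible, boundary-incompressible surface in $M$ is, up to isotopy, a copy of a component of $\partial M$, so these contribute only a finite list, of which the algorithm retains those with Euler characteristic at least $k$. For every other such surface $S$, Theorem \ref{Thm:EssentialNormal} (applicable because $M$ is irreducible with incompressible boundary) places $S$ in normal form. Simplicity then forces $\chi(S)<0$, since essential spheres, discs, annuli, and tori are all excluded. I fix a normal representative of $S$ of least weight in its isotopy class.

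Using Theorem \ref{Thm:ConstructFundamental}, I enumerate the fundamental normal surfaces $F_1,\dots,F_N$ and write $S=\sum_i n_i F_i$. Additivity of Euler characteristic gives $\chi(S)=\sum_i n_i \chi(F_i)$, and Theorem \ref{Thm:EssentialSummand}(2), applied at this least-weight representative, forces every summand $F_i$ to be incompressible and boundary-incompressible with no component that is a sphere, disc, projective plane, or boundary-parallel torus. Combined with simplicity, this forces $\chi(F_i)\le 0$ for every $F_i$ that appears, with the $\chi=0$ cases being boundary-parallel annuli (or their one-sided analogues). From $k\le\chi(S)<0$ one then concludes
$$\sum_{\chi(F_i)<0} n_i\,|\chi(F_i)|\ \le\ -k,$$
which bounds the multiplicities of the negative-Euler-characteristic summands.

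The principal obstacle is bounding the multiplicities of the zero-Euler-characteristic summands. Here the strategy is to show that if a least-weight representative contains too many copies of a boundary-parallel annular summand, then, using the product structure of the collar of $\partial M$ containing that summand together with an innermost-arc analysis of its intersection with the rest of $S$, one can exchange these copies for a normally isotopic surface of strictly smaller weight, contradicting minimality. This yields a computable bound on each $n_i$ with $\chi(F_i)=0$ in terms of $|k|$ and the triangulation. Together with the previous bound, only finitely many candidate vectors $(n_1,\dots,n_N)$ remain.

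Finally, for each candidate I construct the corresponding normal surface, and verify connectedness, orientability, and the Euler characteristic bound directly from its coordinate vector. Incompressibility and boundary-incompressibility are tested by searching for a fundamental normal compression or boundary-compression disc in its complement, as in the proof of Theorem \ref{Thm:AlgorithmCompressible}. To remove duplicates up to ambient isotopy, I would cut $M$ along each candidate and compare the resulting complementary pieces, each of which is Haken, via the solution to the homeomorphism problem (Theorem \ref{Thm:HomeoProblem}). Outputting the resulting list together with the boundary-parallel copies from the initial step gives the desired algorithm.
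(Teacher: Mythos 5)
Your overall strategy (least-weight normal representative, decomposition into fundamental surfaces, Theorem~\ref{Thm:EssentialSummand}(2) plus simplicity, a bound on the coefficients, then enumeration and verification) is the paper's, but the step you flag as ``the principal obstacle'' --- bounding the multiplicities of zero-Euler-characteristic summands --- is handled by an argument you only sketch and never justify, and this is exactly where your proof has a hole. In fact the obstacle does not exist: a boundary-parallel annulus admits an obvious boundary-compression disc through its region of parallelism, so it is boundary-compressible, and Theorem~\ref{Thm:EssentialSummand}(2) already guarantees that every summand of the least-weight representative is boundary-\emph{in}compressible. Hence boundary-parallel annuli cannot occur as summands at all; non-boundary-parallel incompressible annuli and tori are excluded by simplicity, and boundary-parallel tori are excluded by Theorem~\ref{Thm:EssentialSummand}(2) directly. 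The only remaining $\chi=0$ possibilities are M\"obius bands and Klein bottles (summands of an orientable surface need not be orientable), and each such fundamental summand occurs with multiplicity at most one, since otherwise its double $2S_i$ would be an annulus or torus summand, which we have just ruled out. With this observation the coefficient bound $\sum_i \lambda_i \leq -k + n$ follows at once, with no weight-exchange argument needed. As written, your proof rests its finiteness claim on an unproven ``innermost-arc exchange'' assertion, so it is incomplete; worse, the premise of that assertion contradicts what you yourself extracted from Theorem~\ref{Thm:EssentialSummand}(2) one sentence earlier.

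Two smaller points. First, your duplicate-removal step is not sound: cutting along two candidate surfaces and finding homeomorphic complements (via Theorem~\ref{Thm:HomeoProblem}) does not certify that the surfaces are ambient isotopic in $M$. Fortunately the theorem does not require the list to be duplicate-free --- it only requires every isotopy class to appear --- and the paper notes that eliminating repetitions needs additional input (Waldhausen's control on isotopic incompressible surfaces). Second, when you verify boundary-incompressibility of a candidate by cutting and searching for compression discs ``as in Theorem~\ref{Thm:AlgorithmCompressible}'', bear in mind that after cutting one must keep track of where the surface sat in the boundary, i.e.\ one needs the boundary-pattern refinement of that algorithm, as the paper points out.
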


In the above theorem, there is no requirement that different surfaces in the list are not isotopic. However, it is possible to arrange this with more work, using a result of Waldhausen \cite[Proposition 5.4]{Waldhausen} which controls the way that isotopic surfaces intersect each other.

\begin{proof}
Let $S_1, \dots, S_n$ be the fundamental normal surfaces in the given triangulation. Let $S$ be a connected, incompressible, boundary-incompressible surface with $\chi(S) \geq k$, other than a sphere, a disc or a boundary-parallel torus. By Theorem \ref{Thm:EssentialNormal}, it can be isotoped into normal form. Pick a least weight representative for it, also called $S$. Then $S$ is a normal sum $\lambda_1 S_1 + \dots + \lambda_n S_n$, where each $\lambda_i$ is a non-negative integer. By Theorem \ref{Thm:EssentialSummand} (2), any $S_i$ that is a sphere, disc or boundary-parallel torus occurs with $\lambda_i = 0$. The same is true for any $S_i$ that is compressible or boundary-compressible. By our hypothesis that $M$ is simple, any $S_i$ that is an annulus or torus therefore has $\lambda_i =0$. No $S_i$ can be a projective plane, as the only irreducible orientable 3-manifold containing a projective plane is $\mathbb{RP}^3$, and this contains no properly embedded orientable incompressible surfaces. It might be the case that some $S_i$ is a Klein bottle or M\"obius band, but in this case, $\lambda_i \leq 1$, as otherwise $2 S_i$ is summand and this is a torus or annulus. The remaining surfaces all have Euler characteristic at most $-1$. Hence, the sum of the coefficients $\lambda_i$ for these surfaces is at most $-k$. Therefore, also taking account of possible M\"obius bands and Klein bottles, we deduce that $\sum_i \lambda_i \leq -k + n$. 

Hence, there are only finitely many such surfaces and they may all be listed. If we wish only to list those that are actually incompressible and boundary-incompressible, then we can cut along each surface and verify whether it is incompressible and boundary-incompressible, using a variation of Theorem \ref{Thm:AlgorithmCompressible}. This actually requires the use of boundary patterns, which are discussed in Section \ref{Sec:Hierarchies}.
\end{proof}

One might wonder whether it is necessary to assume that the manifold $M$ is simple in the above theorem. However, if $M$ contains an essential annulus or torus, say, then it is possible to Dehn twist along such a surface and so the manifold might have infinite mapping class group. If there is a surface $S$ that intersects the annulus or torus non-trivially, then the image of $S$ under powers of this Dehn twist will, in general, form an infinite collection of non-isotopic surfaces all with the same Euler characteristic. Hence, in this case, the conclusion of Theorem \ref{Thm:ListSurfacesForSimpleManifold} does not hold.

For non-simple manifolds, it is natural to consider their canonical tori and annuli. Recall that a torus or annulus $S$ properly embedded in a compact orientable 3-manifold $M$ is \emph{canonical} if it is essential and, given any other essential annulus or torus properly embedded in $M$, there is an ambient isotopy that pulls it off $S$. Canonical annuli and tori are also called \emph{JSJ annuli and tori} due to the work of Jaco, Shalen and Johannson \cite{JacoShalen, Johannson}. The exterior of the canonical tori is the \emph{JSJ decomposition} of $M$. It was shown by Jaco, Shalen and Johannson  that, when $M$ is a compact orientable irreducible 3-manifold with (possibly empty) toral boundary, each component of its JSJ decomposition is either simple or Seifert fibred.

Again by a careful analysis of normal summation, the following was proved by Haken \cite{HakenHomeomorphism} and Mijatovi\'c \cite[Propositions 2.4 and 2.5]{MijatovicFibreFree}. See also Matveev \cite[Theorem 6.4.31]{Matveev}.

\begin{theorem}
\label{Thm:ConstructJSJ}
Let $M$ be a compact orientable irreducible 3-manifold with incompressible boundary and let $\mathcal{T}$ be a triangulation of $M$ with $t$ tetrahedra. Then there is an algorithm to construct the canonical annuli and tori of $M$. In fact, they may be realised as a normal surface with weight at most $2^{81t^2}$.
\end{theorem}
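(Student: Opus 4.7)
The plan is to realise the JSJ system as a least-weight normal surface in $\mathcal{T}$ and then bound its weight using fundamental surface theory. First, by definition, the canonical annuli and tori of $M$ are pairwise disjoint, essential (incompressible and boundary-incompressible), and are neither spheres, discs, nor boundary-parallel. Since $M$ is irreducible with incompressible boundary, Theorem~\ref{Thm:EssentialNormal} applies componentwise, so the entire JSJ system $T$ can be isotoped into normal form.

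I would fix a representative of $T$ of least weight and write it as a normal sum $T = \sum_i \lambda_i F_i$ of fundamental normal surfaces. Via a variant of Theorem~\ref{Thm:EssentialSummand}(2) adapted to annular boundary (the least-weight hypothesis prohibits compressible or boundary-compressible summands, as the normalisation moves from Section~\ref{Sec:NormalSurfaces} would otherwise reduce the weight of $T$), each $F_i$ with $\lambda_i > 0$ is itself essential in $M$. Additivity of Euler characteristic and the fact that $\chi(T) = 0$, together with the non-existence of essential spheres, discs and projective planes in $M$, then force $\chi(F_i) = 0$; hence each $F_i$ is a union of tori, annuli, Klein bottles and M\"obius bands (with the latter two types contributing with $\lambda_i \le 1$, since their doubles would otherwise appear as inessential torus or annulus summands).

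For the weight estimate, I would combine the Hass--Lagarias--Pippenger-type bound --- every fundamental normal surface has coordinates at most $2^{O(t)}$, hence weight at most $t \cdot 2^{O(t)}$ --- with a Carath\'eodory/Hilbert-basis argument controlling $\sum_i \lambda_i$ by $2^{O(t)}$. Since weight is additive under normal sum (Lemma~\ref{Lem:WeightAdditive}), this yields $w(T) \le 2^{O(t^2)}$; explicit bookkeeping of the constants, as in Mijatovi\'c's analysis, gives the precise exponent $81\, t^2$. The algorithm itself then lists all fundamental normal surfaces via Theorem~\ref{Thm:ConstructFundamental}, selects those that are essential annuli or tori (using the cut-and-test procedure behind Theorem~\ref{Thm:AlgorithmCompressible} together with boundary patterns, as discussed later in Section~\ref{Sec:Hierarchies}, to handle annuli), and searches among disjoint subsystems of them for the unique minimal system whose complement is a disjoint union of simple and Seifert-fibred pieces; this is the JSJ decomposition.

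The main obstacle is the explicit weight bound. It is not enough to observe that the canonical system decomposes into essential Euler-zero summands; one must simultaneously control the coefficients $\lambda_i$ and the weights of the $F_i$, while ruling out accidental boundary-compressions of annular summands that could appear even though $T$ itself is boundary-incompressible. Propagating the quantitative estimates through this delicate edge-compression and boundary-compression analysis --- essentially the content of Mijatovi\'c's Propositions~2.4 and~2.5 --- is what determines the constant in the exponent, and is by far the most technical step in the argument.
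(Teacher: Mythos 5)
First, note that the paper does not actually prove this statement: it is quoted with citations to Haken, to Mijatovi\'c's Propositions 2.4 and 2.5, and to Matveev's Theorem 6.4.31, so your proposal has to stand on its own merits. The qualitative part of your sketch (normalise the canonical system, take a least-weight representative, apply a version of Theorem~\ref{Thm:EssentialSummand}(2) to see that every fundamental summand is an essential surface of zero Euler characteristic) is fine. The genuine gap is the quantitative step. Your claim that a ``Carath\'eodory/Hilbert-basis argument'' controls $\sum_i \lambda_i$ by $2^{O(t)}$ is not justified and is not true in the generality in which you invoke it: Carath\'eodory only bounds the \emph{number of distinct} vertex or fundamental summands needed (at most $7t$), not the integer multiplicities $\lambda_i$. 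The Euler-characteristic bookkeeping that bounds $\sum_i\lambda_i$ in the proof of Theorem~\ref{Thm:ListSurfacesForSimpleManifold} works only because the manifold there is simple, so all summands with $\chi=0$ can be discarded; here the surface in question \emph{is} a union of tori and annuli, every summand has $\chi=0$, and Euler characteristic gives no control whatsoever on the coefficients. Nor does least weight by itself: a priori the least-weight representative of a canonical torus could involve a huge multiple of some fundamental summand, and ruling this out requires the exchange/swap analysis of the annuli and curves of intersection in the Haken sum, using crucially that the surface is \emph{canonical} (any other essential annulus or torus can be isotoped off it). That ``careful analysis of normal summation'' is precisely the content of the cited propositions of Mijatovi\'c and of Matveev's Theorem 6.4.31; deferring it, as your last paragraph does, defers the entire theorem, since the existence of the algorithm with the stated weight bound is exactly what is at stake.

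A secondary inconsistency: your algorithm enumerates only \emph{fundamental} surfaces and searches among disjoint subsystems of them, but your own decomposition analysis shows the canonical annuli and tori need only be \emph{sums} of essential $\chi=0$ fundamental surfaces, and indeed the target bound $2^{81t^2}$ exceeds the fundamental-surface weight bound $t^2 2^{7t+7}$ of Theorem~\ref{Thm:WeightFundamental}. So once the weight bound is established, the enumeration must range over all normal surfaces of weight at most $2^{81t^2}$ (which is algorithmic, as in Theorem~\ref{Thm:ConstructFundamental}), not just over fundamental ones; as written, the search could miss the canonical surfaces. You would also need to say how to certify that a candidate disjoint system is the canonical one (e.g.\ by checking that the complementary pieces are simple or Seifert fibred and that the system is minimal with this property), which is doable but is an additional algorithmic layer, again resting on the cut-and-test machinery with boundary patterns rather than following formally from what you have written.
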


\section{The exponential complexity of normal surfaces}
\label{Sec:Sec:ExponentialNormal}



\subsection{An upper bound on complexity}

As we have seen, the fundamental surfaces form the building blocks for all normal surfaces. The following important result \cite{HassLagariasPippenger} provides an upper bound on their weight.

\begin{theorem}
\label{Thm:WeightFundamental}
The weight of a fundamental normal surface $S$ satisfies $w(S) \leq t^2 2^{7t + 7}$, where $t$ is the number of tetrahedra in the given triangulation $\mathcal{T}$.
\end{theorem}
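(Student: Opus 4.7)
The plan is to exploit the bijection between normal surfaces and non-negative integer solutions of the matching equations, and then to bound the size of Hilbert-basis elements of the resulting monoid by a pigeonhole argument.

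By Lemma~\ref{Lem:OneOneSurfaceVector}, a normal surface $S$ corresponds bijectively to an integer vector $\mathbf{x} = (S) \in \mathbb{Z}_{\geq 0}^{7t}$ satisfying the matching equations $A\mathbf{x} = 0$ together with the quadrilateral compatibility conditions. The matrix $A$ is very sparse: its entries lie in $\{-1,0,1\}$, each row has exactly four non-zero entries (one for every normal disc type meeting a given arc type on a given internal face), and there are at most $m \leq 6t$ rows. Fixing a choice of which square type (at most one) is non-zero in each tetrahedron, a fundamental surface is precisely a Hilbert-basis element of the integer monoid $\{\mathbf{x} \in \mathbb{Z}_{\geq 0}^{7t} : A\mathbf{x} = 0\}$ restricted to that compatibility sub-cone.

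To bound such a Hilbert-basis element $\mathbf{x}$, order the $N := \sum_i x_i$ normal discs of $S$ arbitrarily as $d_1, \ldots, d_N$ and let $\mathbf{y}^{(k)}$ denote the vector counting $\{d_1, \ldots, d_k\}$, so $0 = \mathbf{y}^{(0)} \leq \mathbf{y}^{(1)} \leq \cdots \leq \mathbf{y}^{(N)} = \mathbf{x}$. If $0 \leq k_1 < k_2 \leq N$ satisfy $A\mathbf{y}^{(k_1)} = A\mathbf{y}^{(k_2)}$, then $\mathbf{z} := \mathbf{y}^{(k_2)} - \mathbf{y}^{(k_1)}$ is a non-zero non-negative integer vector with $\mathbf{z} \leq \mathbf{x}$, $\mathbf{z} \neq \mathbf{x}$, and $A\mathbf{z} = 0$. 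Since $\mathbf{z} \leq \mathbf{x}$ automatically inherits the compatibility conditions, $\mathbf{z}$ is itself a normal-surface vector and $\mathbf{x} = \mathbf{z} + (\mathbf{x}-\mathbf{z})$ is a non-trivial decomposition, contradicting fundamentality. Hence $k \mapsto A\mathbf{y}^{(k)}$ is injective into $\mathbb{Z}^m$.

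It remains to upper-bound the image. For each row $r$, $(A\mathbf{y}^{(k)})_r$ lies in $[-V_r, V_r]$ with $V_r = A_r^{+}\mathbf{x}$, because $A_r\mathbf{x}=0$ and $0 \leq \mathbf{y}^{(k)} \leq \mathbf{x}$. A naive product bound $\prod_r(2V_r+1)$ is too weak; the sparsity of $A$ (four $\pm 1$ entries per row) must be used so that the attainable discrepancy vectors form a set of size at most $2^{7t+O(1)}$. Injectivity then yields $N + 1 \leq 2^{7t+O(1)}$, and since each normal disc meets the 1-skeleton in at most four points, $w(S) \leq 4N$. A careful tracking of the polynomial-in-$t$ factors arising from the $7t$ coordinates and $\leq 6t$ matching equations then delivers $w(S) \leq t^2\,2^{7t+7}$.

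The main obstacle, as I see it, is the pigeonhole counting in the last step: the natural box-counting argument loses a factor of $t^t$ or so, and one needs to exploit both the sparsity of $A$ (only four non-zeros per row) and the fact that the discrepancies at arc types on a single face are strongly correlated in order to reach the desired exponential bound $2^{7t+O(1)}$ rather than the substantially weaker $(6t)^{3t}$ obtainable from Hadamard's inequality applied to a Cramer-style bound on the kernel of $A$.
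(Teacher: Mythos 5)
Your reduction step is fine: if two partial sums $\mathbf{y}^{(k_1)} \leq \mathbf{y}^{(k_2)}$ have equal image under $A$, then $\mathbf{z} = \mathbf{y}^{(k_2)} - \mathbf{y}^{(k_1)}$ and $\mathbf{x}-\mathbf{z}$ are both admissible normal vectors (the quadrilateral conditions do pass to anything below $\mathbf{x}$), so $\mathbf{x}$ is not fundamental --- except for the unavoidable collision $A\mathbf{y}^{(0)} = A\mathbf{y}^{(N)} = 0$, so at best you get injectivity on $\{0,\dots,N-1\}$. The genuine gap is the counting step, and as you yourself flag, it is not a cosmetic loss of polynomial factors: for an \emph{arbitrary} ordering of the discs the partial images $A\mathbf{y}^{(k)}$ can simply all be distinct (their entries range over an interval of length comparable to $N$ itself), so the pigeonhole gives no contradiction for large $N$ and hence no bound at all. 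To force the partial sums into a bounded box you would need to choose the ordering carefully (a Steinitz-type rearrangement of the columns of $A$ taken with multiplicity), and that yields a box of side $O(t)$ in dimension $\leq 6t$, i.e.\ $N \leq (Ct)^{6t}$ --- finite, and enough for Theorem \ref{Thm:ConstructFundamental}, but strictly weaker than the stated $t^2 2^{7t+7}$. There is no argument in sight, and I do not believe one exists along these lines, that the attainable discrepancy set has size $2^{7t+O(1)}$ independent of $N$; sparsity of $A$ does not buy you that.

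You also dismiss the Cramer--Hadamard route too quickly: that is exactly how the paper proves the stated bound, and sparsity is used there, not in a pigeonhole. The point is that one does not apply Cramer to arbitrary kernel elements but to the \emph{vertices} of the projectivized solution polytope $\mathcal{P}$ (the cone cut down by the quadrilateral conditions, intersected with $x_1+\dots+x_{7t}=1$): a vertex is the unique solution of a full-rank square system $Bv=(0,\dots,0,1)^T$ whose rows, apart from the normalization row, each have $\ell^2$ norm at most $4$, so Hadamard gives $|\det| \leq \sqrt{7t}\,4^{7t-1}$ and the integral vertex surfaces have coordinates singly exponential in $t$ --- not $(6t)^{3t}$. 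A fundamental surface is then written, by Carath\'eodory, as $\sum_i \mu_i (S_i)$ with at most $7t$ vertex surfaces $S_i$ and, crucially, each $\mu_i < 1$ (else $(S_i)$ splits off as a summand), which bounds each coordinate of $(S)$ by roughly $7t\cdot 2^{7t}$; since each disc meets the $1$-skeleton at most $4$ times, $w(S) \leq 4\sum_i x_i \leq t^2 2^{7t+7}$. So the missing ideas in your proposal are precisely the projectivization-plus-vertex structure and the Carath\'eodory decomposition with coefficients below $1$; the partial-sum pigeonhole cannot replace them at this level of precision.
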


Note that this immediately implies Theorem \ref{Thm:ConstructFundamental}, as one can easily list all the normal surfaces with a given upper bound on their weight.

The proof of Theorem \ref{Thm:WeightFundamental} relies on the structure of the set of all normal surfaces, which we now discuss.

Define the \emph{normal solution space} $\mathcal{N}$ to be the subset of $\mathbb{R}^{7t}$ consisting of points $v$ that satisfy the following conditions:
\begin{enumerate}
\item each co-ordinate of $v$ is non-negative;
\item $v$ satisfies the compatibility conditions;
\item $v$ satisfies the matching equations.
\end{enumerate}
Thus, the vectors of normal surfaces are precisely $\mathcal{N} \cap \mathbb{Z}^{7t}$ by Lemma \ref{Lem:OneOneSurfaceVector}. Now, $\mathcal{N}$ is a union of convex polytopes, as follows. Each polytope $\mathcal{C}$ is formed by choosing, for each tetrahedron of $\mathcal{T}$, two of its square types, whose co-ordinates are set to zero. Each polytope $\mathcal{C}$ is clearly a convex subset of $\mathbb{R}^{7t}$. Equally clearly, if a vector $v$ lies in $\mathcal{C}$, then so does any positive multiple of $v$. Thus, it is natural to consider the intersection $\mathcal{P}$ between $\mathcal{C}$ and $\{ (x_1, \dots, x_{7t}) : x_1 + \dots + x_{7t} = 1 \}$. Then $\mathcal{C}$ is a cone over $\mathcal{P}$, with cone point the origin. This set $\mathcal{P}$ is clearly compact and convex, and in fact it is a polytope. Its faces are obtained as the intersection between $\mathcal{P}$ and some hyperplanes of the form $\{ x_i = 0 \}$. In particular, each vertex is the unique solution to the following system of equations:
\begin{enumerate}
\item the matching equations;
\item extra equations of the form $x_i = 0$ for certain integers $i$;
\item $x_1 + \dots + x_{7t} = 1$.
\end{enumerate}
Because any such vertex is a unique solution to these equations, it has rational co-ordinates. (We will discuss this further below.) Hence, some multiple of this vertex has integer entries, and therefore corresponds to a normal surface. The smallest non-zero multiple is termed a \emph{vertex} surface.

\begin{proof}[Proof of Theorem \ref{Thm:WeightFundamental}] We first bound the size of the vector $(S)$ of any vertex surface $S$. By definition, this is a multiple of a vertex $v$ of one of the polytopes $\mathcal{P}$ described above. This was the unique solution to the matrix equation $B v = (0, 0, \dots, 0, 1)^T$ for some integer matrix $B$. Note that each row of $B$, except the final one, has at most $4$ non-zero entries and in fact the $\ell^2$ norm of this row is at most $4$. Since the solution is unique, $B$ has maximal rank $7t$, and hence some square sub-matrix $A$, consisting of some subset of the rows of $B$, is invertible. In other words, $v = A^{-1} (0, 0, \dots, 0, 1)^T$. Now, $A^{-1} = \mathrm{det}(A)^{-1} \mathrm{adj}(A)$. Here, $\mathrm{adj}(A)$ is the adjugate matrix, each entry of which is obtained by crossing out some row and some column of $A$ and then taking the determinant of this matrix. So, $\mathrm{det}(A) v = \mathrm{adj}(A) (0, 0, \dots, 0, 1)^T$ is an integral matrix and hence corresponds to an actual normal surface. We can bound the co-ordinates of this normal surface by noting that the determinant of a matrix has modulus at most the product of the $\ell^2$ norms of the rows of the matrix, and so each entry of $ \mathrm{adj}(A)$ has modulus at most $(\sqrt{7t})4^{7t-1}$. Hence, the vector $(S)$, which is the smallest non-zero multiple of $v$ with integer entries, also has this bound on its co-ordinates.

Now consider a fundamental surface $S$. It lies in one of the subsets $\mathcal{C}$ described above that is a cone on a polytope $\mathcal{P}$. Since $\mathcal{P}$ is the convex hull of its vertices $v_i$, we deduce that every element of $\mathcal{P}$ is of the form $\sum_i \lambda_i v_i$, where each $\lambda_i \geq 0$ and $\sum_i \lambda_i = 1$. In fact, we may assume that all but at most $7t$ of these $\lambda_i$ are zero, because if more than $7t$ were non-zero, we could use the linear dependence between the corresponding $v_i$ to reduce one of the $\lambda_i$ to zero. We deduce that every element of $\mathcal{C}$ is of the form $\sum_i \mu_i (S_i)$, where each $S_i$ is a vertex surface, each $\mu_i \geq 0$ and at most $7t$ of the $\mu_i$ are non-zero.  Clearly, if $S$ is fundamental, then each $\mu_i < 1$, as otherwise $(S)$ is the sum $(S_i) + ((S) - (S_i))$ and hence is not fundamental. So, each co-ordinate of $S$ is at most $(7t)^{3/2} 4^{7t-1}$. In fact, a slightly more refined analysis gives the bound $7t 2^{7t-1}$ (see the proof of \cite[Lemma 6.1]{HassLagariasPippenger}).

This gives a bound on the weight of $S$. Each co-ordinate correponds to a triangle or square of $S$ and hence contributes at most $4$ to the weight of $S$. Therefore, $w(S)$ is at most $4$ times the sum of its co-ordinates. This is at most $(49 t^2) 2^{7t+1}$, which is at most the required bound.
\end{proof}

\subsection{A lower bound on complexity}

Theorem \ref{Thm:WeightFundamental} gives an explicit upper bound on the number of triangles and squares in a fundamental normal surface. This is essentially an exponential function of $t$, the number of tetrahedra in the triangulation. One might wonder whether we can improve this to a sub-exponential bound, perhaps even a polynomial one. However, examples due to Hass, Snoeyink and Thurston \cite{HassSnoeyinkThurston} demonstrate that this is not possible. We present them in this subsection.

\begin{figure}
  \includegraphics[width=4in]{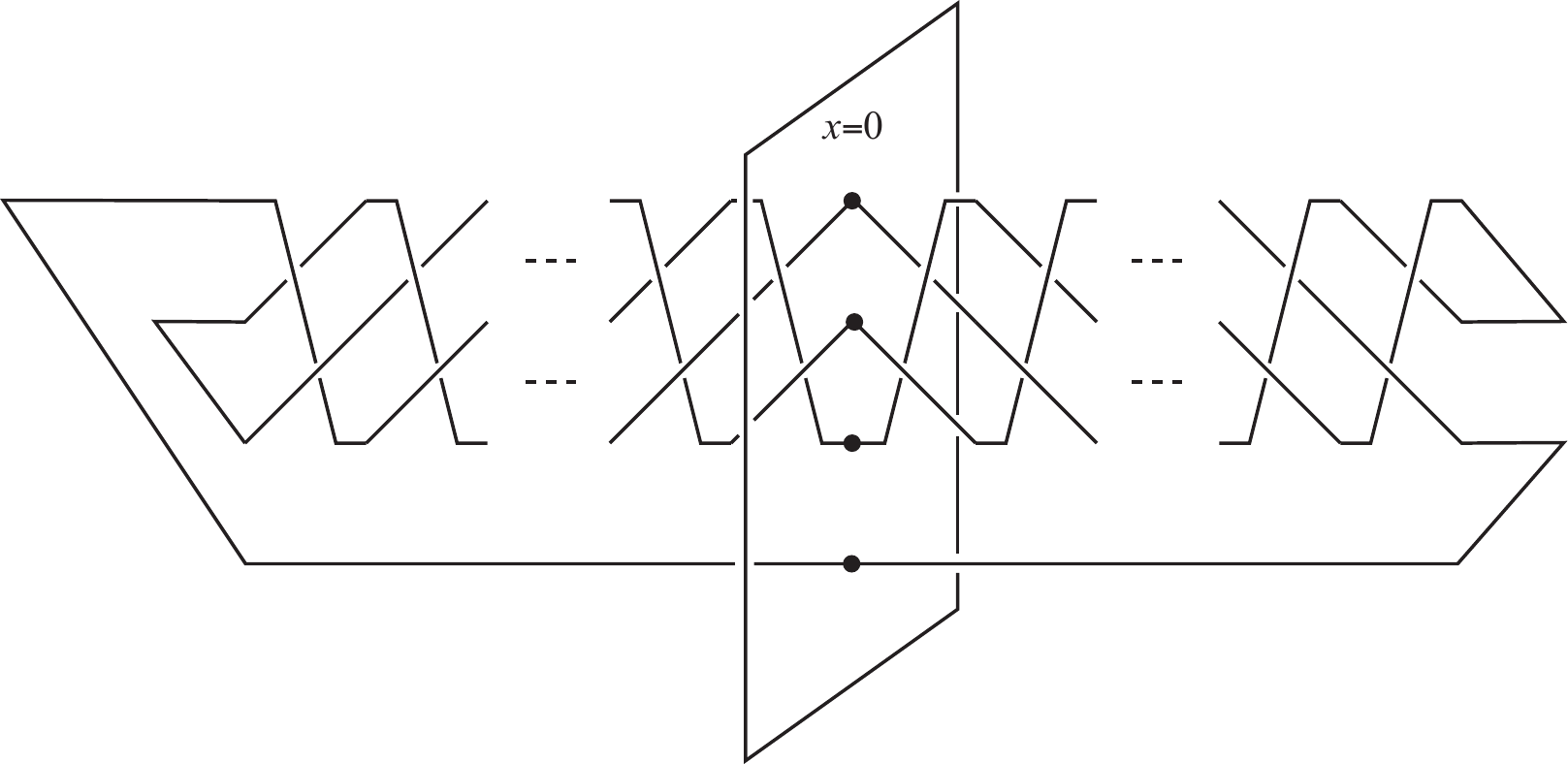}
  \caption{The PL unknot $K_n$}
  \label{Fig:HassSnoeyinkThurston}
\end{figure}

These examples are polygonal curves $K_n$ in $\mathbb{R}^3$, one for each natural number $n$. (See Figure \ref{Fig:HassSnoeyinkThurston}.) Each is composed of $10n+9$ straight edges. It is arranged much like one of the standard configurations of a 2-bridge knot. Thus, the majority of the curve lies in $\mathbb{R}^3$ like a 4-string braid, in the sense that it is transverse to the planes $\{ x = \textrm{constant} \}$. Here, we view the plane of the diagram as the $x-y$ plane, with the $z$-axis pointing vertically towards the reader. This braid is of the form $(\sigma_1 \sigma_2^{-1})^n (\sigma_2 \sigma_1^{-1})^n$, where $\sigma_1$ and $\sigma_2$ are the first two standard generators for the 4-string braid group. The left and right of the braid are capped off to close $K_n$ into a simple closed curve. Since the braid $(\sigma_1 \sigma_2^{-1})^n (\sigma_2 \sigma_1^{-1})^n$ is trivial, all these knots are topologically the same. In fact, it is easy to see that they are unknotted, and hence they bound a disc, which we may assume is a union of triangles that are flat in $\mathbb{R}^3$. The following result gives a lower bound on the complexity of such a disc.

\begin{theorem} 
\label{Thm:HassSnoeyinkThurston}
Any piecewise linear disc bounded by $K_n$ consists of at least $2^{n-1}$ flat triangles.
\end{theorem}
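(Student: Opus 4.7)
The plan is to sweep $\mathbb{R}^3$ by the vertical planes $P_c=\{x=c\}$ and count, at a well-chosen level, the components of the slice $D\cap P_c$. For generic $c$ in the braid region the intersection $P_c\cap K_n$ consists of the four braid strands, so after isotoping $D$ into general position with respect to the sweep, $D\cap P_c$ is a proper $1$-manifold in the disc $P_c$ whose four boundary points lie on $K_n$. The key elementary observation is that a flat triangle in $\mathbb{R}^3$ meets each generic vertical plane in either a single line segment or not at all; hence
\[
\#\{\text{triangles of } D\}\;\geq\;\#\{\text{components of }D\cap P_c\}
\]
for every generic $c$. It therefore suffices to find a single level $c$ at which the slice has at least $2^{n-1}$ components.

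Any simple closed curves in $D\cap P_c$ bound sub-discs of $D$ that may, via an innermost argument, be replaced without increasing the triangle count, so I may reduce to the case that $D\cap P_c$ is a chord diagram: a disjoint collection of arcs with endpoints on the four marked points of $K_n\cap P_c$. As $c$ sweeps left to right, this chord diagram is constant except at the critical levels corresponding to the $4n$ crossings of $K_n$, at each of which the chord diagram changes by a single band (saddle) move along the pair of strands involved in the crossing. At the extreme left and right the chord diagram is determined by the two caps of $K_n$ and is trivial.

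The crux of the argument is a doubling lemma for the first half of the braid: I would define a complexity $\mu$ of a chord diagram (for example, the geometric intersection number with a suitably chosen reference arc joining two of the four strands, or equivalently the entries of the induced vector on the curve complex of the $4$-punctured disc) and prove that for every possible chord diagram the passage through one letter $\sigma_1\sigma_2^{-1}$ satisfies $\mu\mapsto\mu'\geq 2\mu$. Since $\mu\geq 1$ at the start of the braid region, after the $n$ repetitions of $\sigma_1\sigma_2^{-1}$ the complexity of the slice exceeds $2^{n-1}$, and since the number of components of the chord diagram dominates $\mu$ up to a constant, this forces at least $2^{n-1}$ components at the midlevel, giving the claimed lower bound on the triangle count.

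The main obstacle is of course the doubling lemma itself. The band move induced by a crossing is a local operation on two adjacent strands, and its effect on intersection numbers depends on how the chord diagram sits relative to those strands; the careful bookkeeping needed is to show that no chord diagram can be in a configuration that escapes growth under both letters $\sigma_1$ and $\sigma_2^{-1}$ in succession. I would handle this by expressing the chord diagram as a vector of nonnegative integer weights (number of arcs of each homotopy type in the $4$-punctured disc) and showing that each letter acts by a nonnegative integer matrix; the composite $\sigma_1\sigma_2^{-1}$ then acts by a fixed matrix whose Perron-Frobenius eigenvalue can be checked to be at least $2$, from which the exponential lower bound follows by standard Perron-Frobenius arguments applied $n$ times.
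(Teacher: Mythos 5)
Your counting device (a flat triangle meets a generic plane $\{x=c\}$ in at most one segment, so the number of components of $D\cap\{x=c\}$ bounds the number of triangles from below) and your growth mechanism (the letter $\sigma_1\sigma_2^{-1}$ acts on arc systems in the $4$-punctured disc with Perron--Frobenius growth at least $2$ per application) are both sound and are essentially the paper's ingredients: there $\phi=\sigma_1\sigma_2^{-1}$ is pseudo-Anosov with dilatation the square of the golden ratio, and the slice is intersected with a line $L$ that a flat triangle can meet at most once. The gap is in the middle step. You assert that, after general position, the chord diagram $D\cap\{x=c\}$ changes only at the $4n$ levels corresponding to crossings of $K_n$, and there only by the band move dictated by the braid letter. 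That is false for an arbitrary spanning disc: the $x$-coordinate restricted to the \emph{interior} of $D$ is a Morse function with its own saddles and extrema, and at each interior saddle the isotopy class of the slice can change in a way that has nothing to do with the boundary braiding. A priori a disc could use such saddles to ``reset'' the chord diagram at every level and keep every slice simple, which would destroy your doubling argument; ruling this out is precisely the content of the theorem, not something general position gives you. (Your innermost-disc replacement of closed slice components has a similar flavour of difficulty --- replacing subdiscs need not preserve the flat-triangle structure or the count --- though closed components are harmless for the lower bound and could simply be ignored.)

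The paper's proof supplies exactly the missing control: one observes that a saddle of $x|_{E_n}$ whose slice graph has four boundary vertices cuts $K_n$ into four arcs, each of which must contain a critical point of $x|_{K_n}$; since $K_n$ has only four such critical points, there is at most \emph{one} saddle at which the slice isotopy class changes in an interesting way. Consequently at least one of the two halves of the braid region, $\{-c\le x\le 0\}$ or $\{0\le x\le c\}$, is free of interesting saddles, the slices at its two ends are related by $\phi^{\pm n}$ up to isotopy, and the pseudo-Anosov (equivalently, your Perron--Frobenius) growth then forces an exponentially complicated slice at $\{x=0\}$ or $\{x=c\}$. To complete your proposal you would need to add this saddle-counting lemma (or an equivalent argument bounding the number of slice-changing interior critical points), and also verify that the slice at the end of the saddle-free half is essential enough for the doubling to start, i.e.\ that $\mu\ge 1$ there.
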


We give an outline of the proof and refer the reader to \cite{HassSnoeyinkThurston} for more details. 

The first step is to exhibit a specific disc $D_n$ that $K_n$ bounds which is smooth in its interior. In the case of $K_0$, the disc $D_0$ is shown in Figure \ref{Fig:HassSnoeyinkThurston2}. It lies in the plane $\{ z = 0 \}$. Note that the intersection between $D_0$ and the plane $\{ x = 0 \}$ consists of two straight curves, which we denote by $\beta$.

\begin{figure}
  \includegraphics[width=3in]{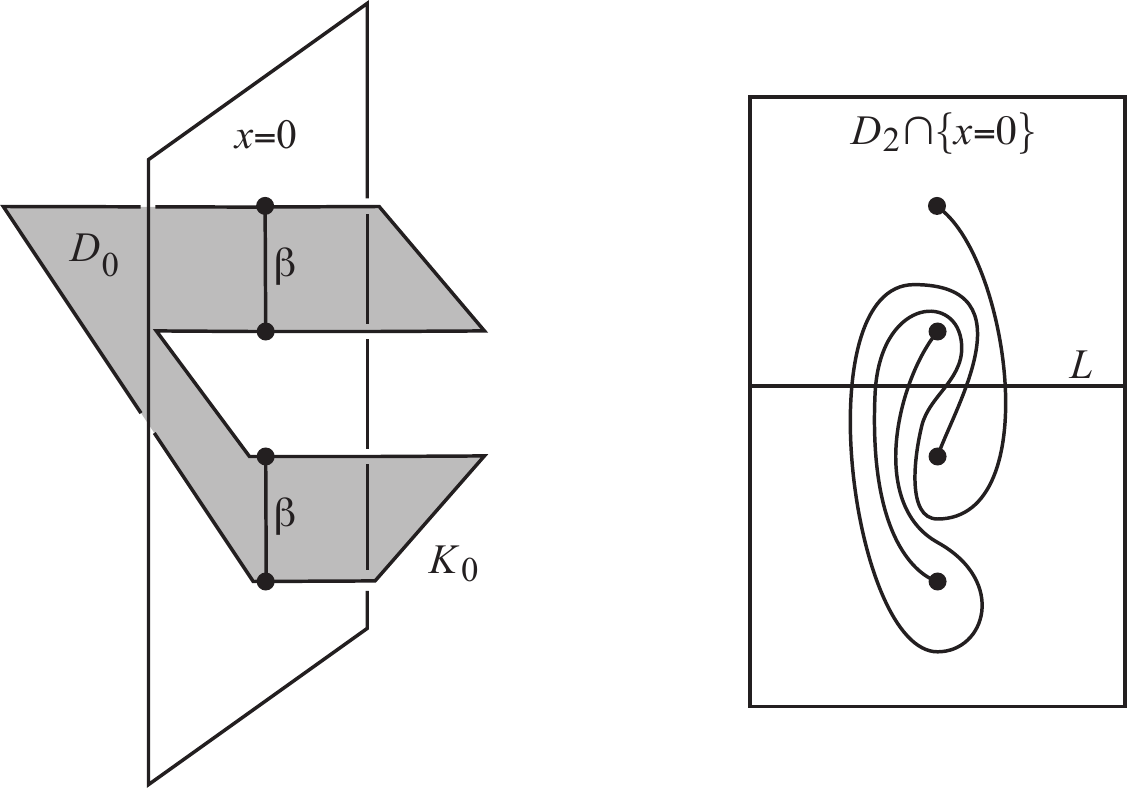}
  \caption{Left: $K_0$ and its spanning disc $D_0$; Right: $D_2 \cap \{ x = 0 \}$}
  \label{Fig:HassSnoeyinkThurston2}
\end{figure}

To obtain $K_n$ from $K_{n-1}$, the following homeomorphism is applied to $\mathbb{R}^3$. This preserves each of the planes $\{ x = \mathrm{constant} \}$. It is supported in a small regular neighbourhood of the plane $\{ x= 0 \}$ that separates the braid $(\sigma_1 \sigma_2^{-1})^{n-1}$ from the braid $(\sigma_2 \sigma_1^{-1})^{n-1}$. More specifically, a small positive real number $\epsilon_n$ is chosen and the homeomorphism is supported in $\{ - \epsilon_n  \leq x \leq \epsilon_n \}$.
At $\{ x = - \epsilon_n \} $, the homeomorphism is the identity. As $x$ increases from $-\epsilon_n$, the homeomorphism of the planes realises the braid generator $\sigma_1$, and then the braid generator $\sigma_2^{-1}$. Thus, the homeomorphism applied to the plane $\{ x = 0 \}$ is the map $\phi$ that in the mapping class group of $(\mathbb{R}^2, 4 \ \mathrm{points})$ represents the braid $\sigma_1 \sigma_2^{-1}$. Between $\{ x = 0 \}$ and $\{ x = \epsilon_n \}$, the above homeomorphisms are applied but in reverse order, so that by the time we reach $\{ x = \epsilon_n \}$, the homeomorphism is the identity.
Thus, this specifies a homeomorphism $\mathbb{R}^3 \rightarrow \mathbb{R}^3$ taking $K_{n-1}$ to $K_n$. We define $D_n$ to be the image of $D_{n-1}$ under this homeomorphism.

Now, within the plane $\{ x = 0 \}$, there is a straight line $L$ dividing the top two points of $K_n$ from the bottom two points. The key to understanding the complexity of $D_n$ is to see how many times it intersects this line. The intersection between $D_n$ and $\{ x = 0 \}$ is the image of the two arcs $\beta$ under the homeomorphism $\phi^n$. Now, $\phi$ is a pseudo-anosov homeomorphism with dilatation $\lambda > 1$ (which is in fact the square of the golden ratio). It is a standard result about pseudo-anosovs that as $n \rightarrow \infty$, the number of intersections between $\phi^n(\beta)$ and $L$ grows exponentially. In fact, it grows like at least $\lambda^n$ (see \cite[Theorem 14.24]{FarbMargalit}).

Although $D_n$ is smooth, any piecewise linear approximation to it will have this same property: the number of intersections between it and the line $L$ grows at least like $\lambda^n$. But a straight triangle intersects a straight line in either a line segment or at most one point. Hence, any piecewise-linear approximation to $D_n$ must have at least exponentially many straight triangles.

So far, we have only considered the disc $D_n$ and piecewise-linear discs that approximate it. The main part of the proof of Theorem \ref{Thm:HassSnoeyinkThurston} is to show that \emph{any} disc $E_n$ bounded by $K_n$ must intersect the line $L$ at least $2^{n-1}$ times. Now, $D_n$ has a special property: it is transverse to the planes $\{ x= \mathrm{constant} \}$ containing the braid  $(\sigma_1 \sigma_2^{-1})^n (\sigma_2 \sigma_1^{-1})^n$. Thus if $\{ x = 0 \} $ is the plane in the middle of the braid and $\{ x = c \}$ is the plane at one end, then $D_n \cap \{ x = 0 \}$ and $D_n \cap \{ x = c \}$ are related by $\phi^n$ up to isotopy. Thus, at least one of these intersections must be `exponentially complicated' and in the case of $D_n$, it is the plane $\{ x = 0 \}$ that is complicated.

Now, an arbitrary spanning disc $E_n$ need not be transverse to the planes $\{ x = \mathrm{constant} \}$. The co-ordinate $x$ can be viewed as a Morse function on $E_n$. This may have critical points and at the planes on either side of such a critical point, the isotopy classes of the intersection between $E_n$ and these planes may change. A key part of the argument of Hass, Snoeyink and Thurston establishes that, in fact, there can be only \emph{one} such saddle where the isotopy class changes in any interesting way. In particular, one of the regions $\{ -c \leq x \leq 0 \}$ and $\{ 0 \leq x \leq c\}$ does not contain such a saddle (say the latter) and hence the intersection between $E_n$ and $\{ x = 0 \}$ or $\{ x = c \}$ must be exponentially complicated. In fact, it must be $\{ x = 0 \}$ that has exponentially complicated intersection, but this point is not essential for the proof of their theorem.

We now explain briefly why there is at most one relevant saddle singularity of $E_n$. Suppose that a saddle occurs in the plane $\{ x = k \}$. We may assume that the saddles of $E_n$ occur at different $x$ co-ordinates. Hence, the intersection between $E_n$ and $\{ x = k \}$ is a graph with a single 4-valent vertex in the interior of $E_n$ and possibly some 1-valent vertices on the boundary. If there are $0$ or $2$ vertices on the boundary, then this is not an `interesting' saddle and the intersection between $E_n$ and the planes just to the left and right of $\{ x = k \}$
are basically the same. On the other hand, when there are $4$ vertices on the boundary, then these 4 vertices divide $K_n$ into $4$ arcs, each of which must contain a critical point of $K_n$ with respect to the function $x$. However, $K_n$ has only $4$ critical points. One can deduce that if there was more than one such saddle, then in fact $K_n$ would have to have more than $4$ critical points, which is manifestly not the case. This completes the sketch of Theorem \ref{Thm:HassSnoeyinkThurston}.

Note that this theorem provides a lower bound on the number of triangles and squares for normal discs, as follows. It is straightforward to build a triangulation $\mathcal{T}$ of the exterior of $K_n$, where the number of tetrahedra is a linear function of $n$ and each tetrahedron is straight in $\mathbb{R}^3$. Any normal surface in $\mathcal{T}$ can be realised as a union of flat triangles, possibly by subdividing each square along a diagonal into two triangles. Hence, Theorem \ref{Thm:HassSnoeyinkThurston} provides an exponential lower bound on the number of triangles and squares in any normal spanning disc in $\mathcal{T}$.

\section{The algorithm of Agol-Hass-Thurston}
\label{Sec:AHT}

As we saw in the previous section, the normal surfaces that we are interested in (such as a spanning disc for an unknot) may be exponentially complicated, as a function of the number of tetrahedra in our given triangulation. Clearly, this is problematic if one is trying to construct efficient algorithms. For example, suppose we are given a normal surface via its vector and we want to determine whether it is a disc. If the vector has exponential size, then we could not hope for our algorithm to build the surface in polynomial time in order to determine its topology. One can easily compute its Euler characteristic, since this is a linear function of the co-ordinates of the vector. So one can easily verify whether the Euler characteristic is $1$ and whether the surface has non-empty boundary. But this is not quite enough information to be able to deduce that the surface is a disc: one also needs to know that it is connected. A very useful algorithm, due to Agol-Hass-Thurston (AHT), allows us to verify this, even when the surface is exponentially complicated. The algorithm is very general and so has many other applications. In fact, it is fair to say that, by using the AHT algorithm cleverly, one can answer just about any reasonable question in polynomial time about a normal surface with exponential weight.

\subsection{The set-up for the algorithm}

Initially, we will consider just the problem of determining the number of components of a normal surface. This can be solved using a `vanilla' version of the AHT algorithm. We will then introduce some greater generality and will give some examples where this is useful.

One can think of the problem of counting the components of a normal surface $S$ as the problem of counting certain equivalence classes. Specifically, consider the points of intersection between $S$ and the 1-skeleton $\mathcal{T}^{1}$ of the triangulation $\mathcal{T}$. If two such points are joined by a normal arc of intersection between $S$ and a face of $\mathcal{T}$, then clearly they lie in the same component of $S$. In fact, two points of $S \cap \mathcal{T}^{1}$ are in the same component of $S$ if and only if they are joined by a finite sequence of these normal arcs. Thus, one wants to consider the equivalence relation on $S \cap \mathcal{T}^{1}$ that is generated by the relation `joined by a normal arc of intersection between $S$ and a face of $\mathcal{T}$'.

Now one can think of the edges of $\mathcal{T}$ as arranged disjointly along the real line. Then $S \cap \mathcal{T}^{1}$ is a finite set of points in $\mathbb{R}$, which we can take to be the integers between $1$ and $N$, say, denoted $[1,N]$. (See Figure \ref{Fig:AgolHassThurston1}.) In each face of $\mathcal{T}$, there are at most three types of normal arc, and each such arc type specifies a bijection between two intervals in $[1,N]$. This bijection is very simple: it is either $x \mapsto x + c$ for some integer $c$ or $x \mapsto c - x$ for some integer $c$.

Thus, the `vanilla' version of the AHT algorithm is as follows:

\emph{Input}: (1) a positive integer $N$;
\vspace{-0.05in}
\begin{enumerate}
\item[(2)] a collection of $k$ bijections between sub-intervals of $[1,N]$, called \emph{pairings}; 
each pairing is of the form $x \mapsto x+ c$ (known as \emph{orientation-preserving}) or $x \mapsto c - x$ (known as \emph{orientation-reversing}) for some integer $c$.
\end{enumerate}

\emph{Output}:
the number of equivalence classes for the equivalence relation generated by the pairings.

The running time for the algorithm is a polynomial function of $k$ and $\log N$. It is the $\log$ that is crucial here, because this allows us to tackle exponentially complicated surfaces in polynomial time.

In fact, one might want to know more than just the number of components of $S$. For example, one might be given two specific points of $S \cap \mathcal{T}^{1}$ and want to know whether they lie in the same component of $S$. This can be achieved using an enhanced version of the above algorithm, that uses `weight functions'. A \emph{weight function} is a function $w \colon [1,N] \rightarrow \mathbb{Z}^d$ that is constant on some interval in $[1,N]$ and is $0$ elsewhere. We will consider a finite collection of weight functions, all with the same value of $d$. The \emph{total weight} of a subset $A$ of $[1,N]$ is the sum, over all weight functions $w$ and all elements $x$ of $A$, of $w(x)$. For example, in the above decision problem that asks whether two specific points of $S \cap \mathcal{T}^1$ lie in the same component of $S$, we can set $d = 1$ and then use two weight functions, each of which is $1$ at one of the specified points of $S \cap \mathcal{T}^{1}$ and zero elsewhere. Thus the decision problem may be rephrased as: is there an equivalence class with total weight $2$? This can be answered using the following enhanced AHT algorithm:

\medskip
\emph{Extra input}: (3) a positive integer $d$;
\vspace{-0.05in}
\begin{itemize}
\item[(4)] a list of $\ell$ weight functions $[1,N] \rightarrow \mathbb{Z}^d$, each with range in $[-M,M]^d$.
\end{itemize}

\emph{Extra output}:
A list of equivalence classes and their total weights.

The running time of the algorithm is at most a polynomial function of $k$, $d$, $\ell$, $\log N$ and $\log M$.

Some of the uses of the AHT algorithm are as follows:

(1) \emph{Is a normal surface $S$ orientable?} To answer this, one counts the number of components of $S$ and the number of components of the surface with vector $2 (S)$. The latter is twice the former if and only if $S$ is orientable.

(2) \emph{How many boundary components does a properly embedded normal surface $S$ have?} Here, one considers just the points $\partial S \cap \mathcal{T}^{1}$ and just those pairings arising from normal arcs in $\partial S$. Then the vanilla version of AHT provides the number of components of $\partial S$.

(3) \emph{What are the components of $S$ as normal vectors?} Here, one sets $\ell$ and $d$ to be the number of edges of $\mathcal{T}$ and one defines the $i$th weight function $[1,N] \rightarrow \mathbb{Z}^d$ to take the value $(0, \dots, 0, 1, 0, \dots, 0)$, with the $1$ at the $i$th place, on precisely those points in $[1,N]$ that lie on the $i$th edge. So, the total weight of a component $S'$ just counts the number of intersection points between $S'$ and the various edges of $\mathcal{T}$. From this, one can readily compute $(S')$, as follows. The weights on the edges determine the points of intersection between $S'$ and the 1-skeleton. From these, one can compute the arcs of intersection between $S'$ and each face of $\mathcal{T}$. From these, one gets the curves of intersection between $S'$ and the boundary of each tetrahedron, and hence the decomposition of $S'$ into triangles and squares.

(4) \emph{What are the topological types of the components of $S$?} Using the previous algorithm one can compute the vectors for the components of $S$. Then for each component, one can compute its Euler characteristic, its number of boundary components and whether or not it is orientable. This determines its topological type.

\subsection{The algorithm}

This proceeds by modifying $[1,N]$, the pairings and the weights. At each stage, there will be a bijection between the old equivalence classes and the new ones that will preserve their total weights.

Although this is not how Agol-Hass-Thurston described their algorithm, it is illuminating to think of the pairings in terms of 2-complexes, in the following way.

We already view $[1,N]$ as a subset of $\mathbb{R}$. Each pairing can be viewed as specifying a band $[0,1] \times [0,1]$ attached onto $\mathbb{R}$, as follows. If the pairing is $[a,b] \rightarrow [c,d]$, then we attach $[0,1] \times \{ 0 \}$ onto $[a , b ]$ and we attach $[0,1] \times \{ 1 \}$ onto $[c , d ]$. There are two possible ways to attach the band: with or without a half twist, according to whether the pairing is orientation-preserving or reversing. (See Figure \ref{Fig:AgolHassThurston1}.)

If a pairing $[a,b] \rightarrow [c,d]$ satisfies $a \leq c$, then $[a,b]$ is the \emph{domain} of the pairing and $[c,d]$ is the \emph{range}. Its \emph{translation distance} is $c - a = d - b$. Its \emph{width} is $b - a + 1$, the number integers in its domain or its range.

\begin{figure}
  \includegraphics[width=1.8in]{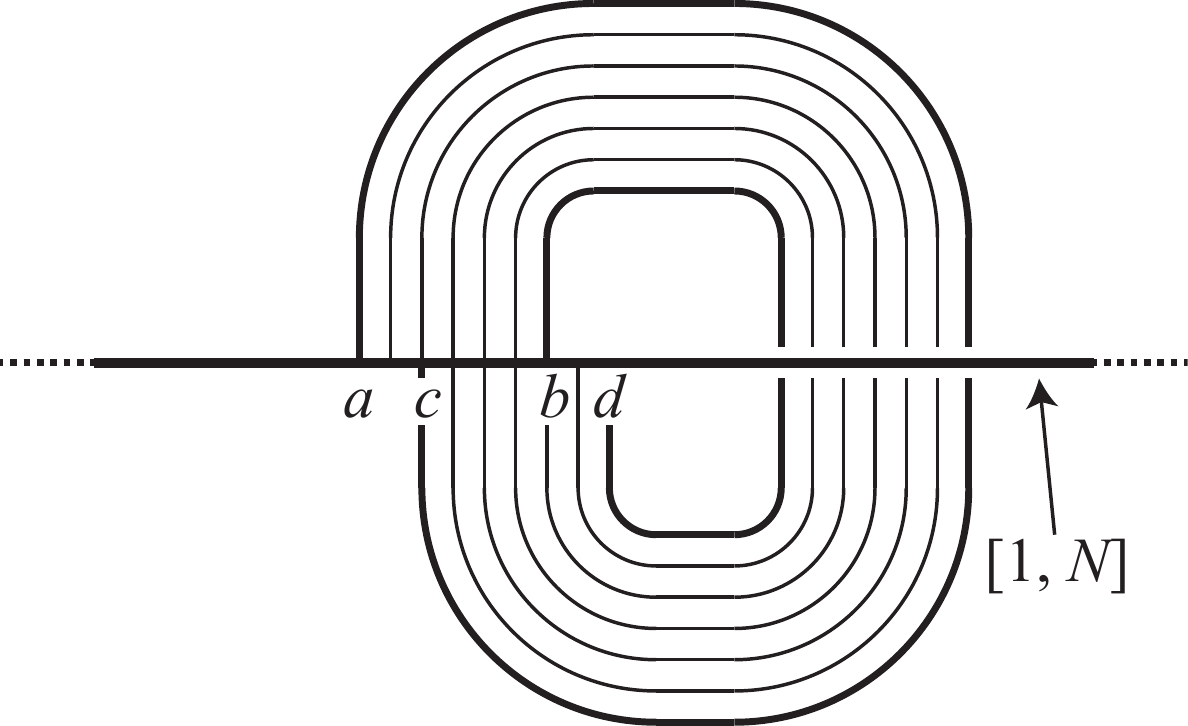}
   \hspace{.1in}
  \includegraphics[width=2in]{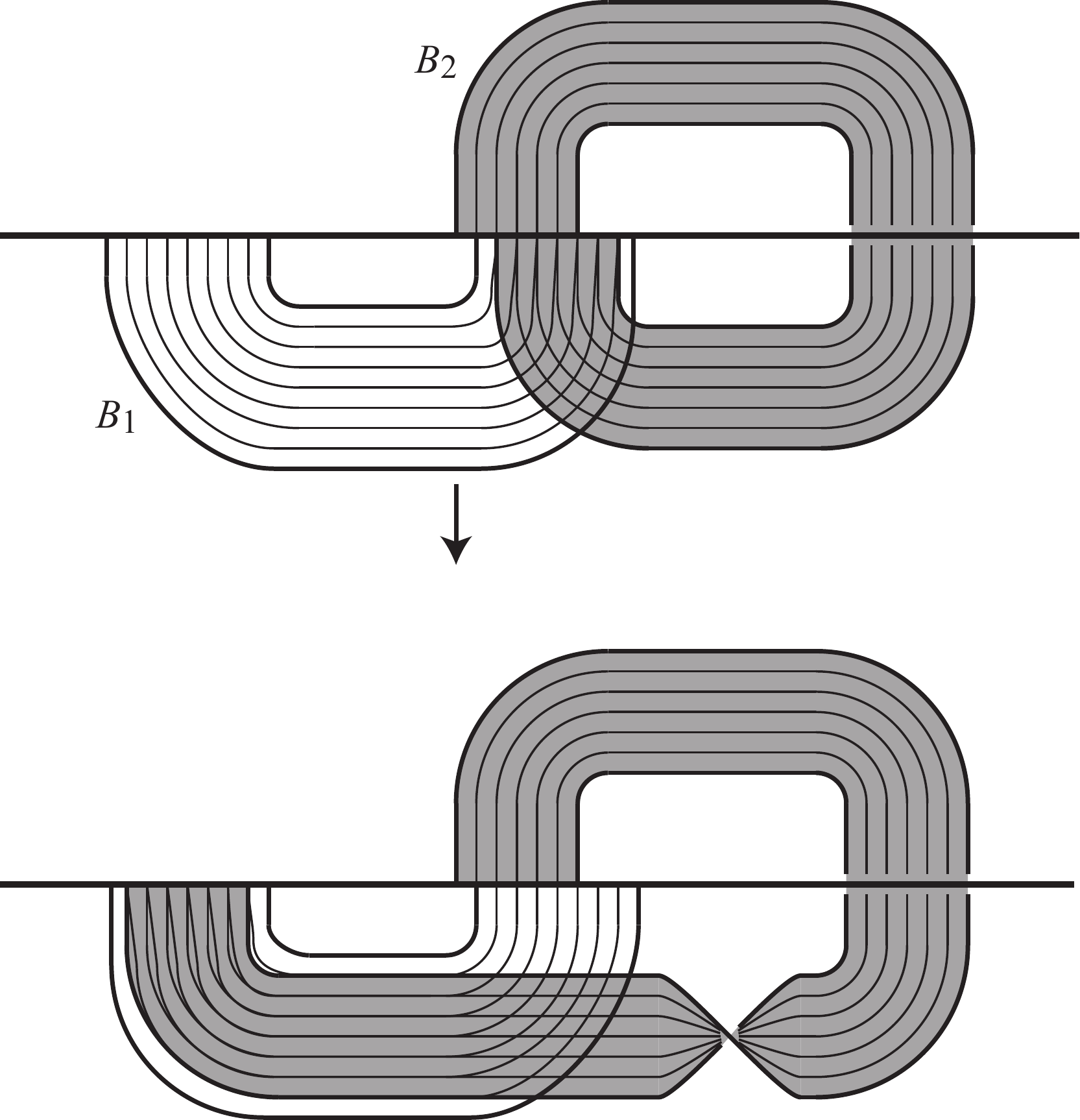}
  \caption{Left: Constructing a band from a pairing. Right: Transmission}
  \label{Fig:AgolHassThurston1}
\end{figure}

The modifications that AHT make can be viewed as alterations to these bands. The modifications will be chosen so that they reduce 
$4^k \prod_{i=1}^k w_i$,
where $k$ is the number of pairings and $w_1, \dots, w_k$ are their widths. Since the total running time of the algorithm is at most a polynomial function of $\log N$, it needs to be the case that at frequent intervals, this measure of complexity goes down substantially. In fact, after every $5k$ cycles through the following modifications, it will be the case that this quantity is scaled by a factor of at most $1/2$.

\emph{Transmission.} 
Suppose that one pairing $g_2$ has range contained within the range of another pairing $g_1$. Then one of the attaching intervals for the band $B_2$ corresponding to $g_2$ lies within an attaching interval for the band $B_1$ corresponding to $g_1$. Suppose also that the domain of $g_2$ is not contained in the range of $g_1$. Then modify $g_2$, by sliding the band $B_2$ along $B_1$, as shown in the right of Figure \ref{Fig:AgolHassThurston1}. On the other hand, if the domain of $g_2$ is contained in the range of $g_1$, then we can slide both endpoints of the band $B_2$ along the band $B_1$.

In fact, it might be possible to slide $B_2$ multiple times over $B_1$ if the domain and range of $g_1$ overlap. In this case, we do this as many times as possible, so as to move the domain and range of $B_2$ as far to the left as possible.

This process is used to move the attaching locus of the bands more and more to the left along the line $[1,N]$. Thus, the following modification might become applicable.

\emph{Truncation.}
Suppose that there is an interval $[c,N]$ that is incident to a single band. Then one can reduce the size of this band, or eliminate it entirely if the interval $[c,N]$ completely contains one of the attaching intervals of the band.

\begin{figure}
  \includegraphics[width=1in]{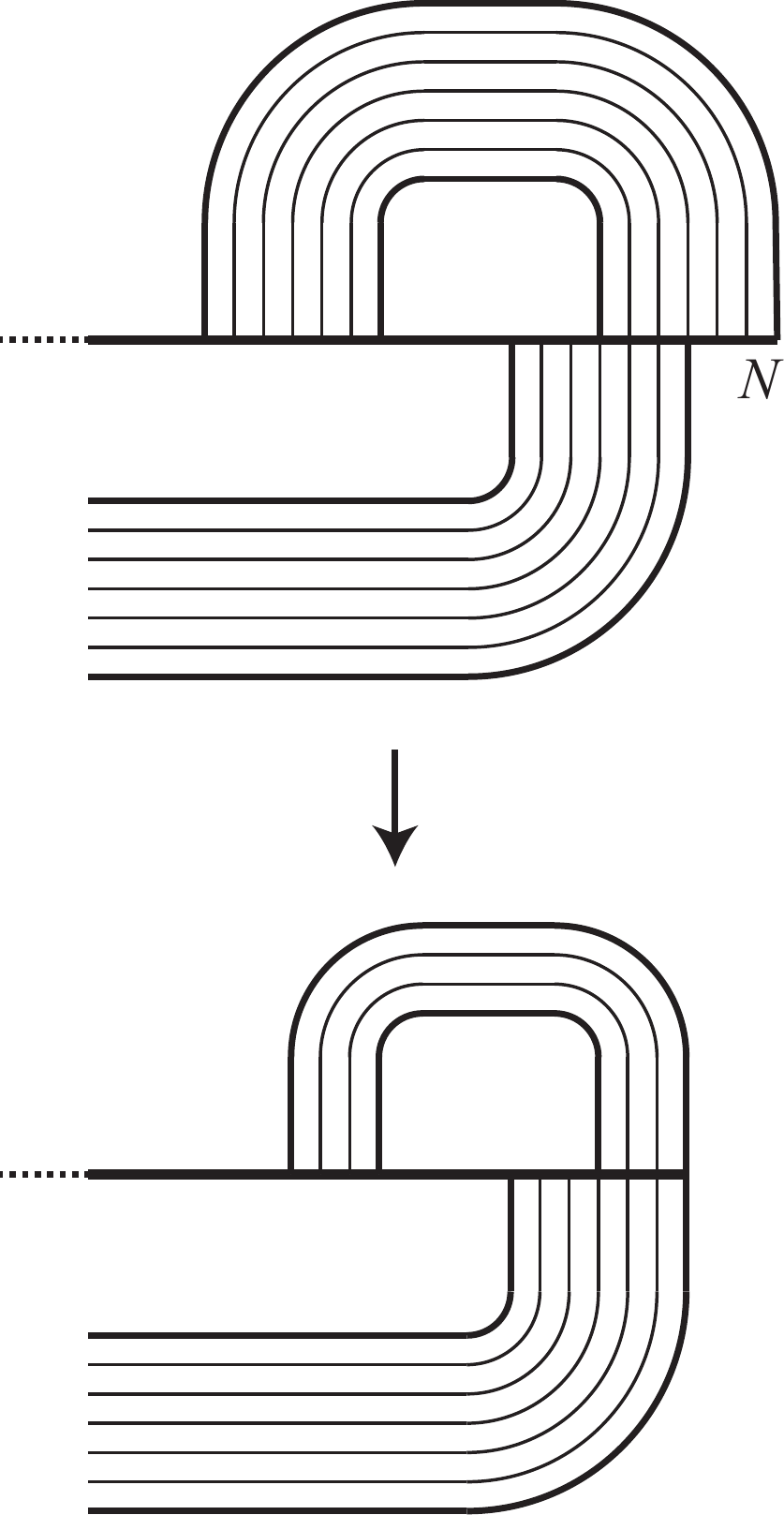}
   \hspace{.1in}
  \includegraphics[width=1.9in]{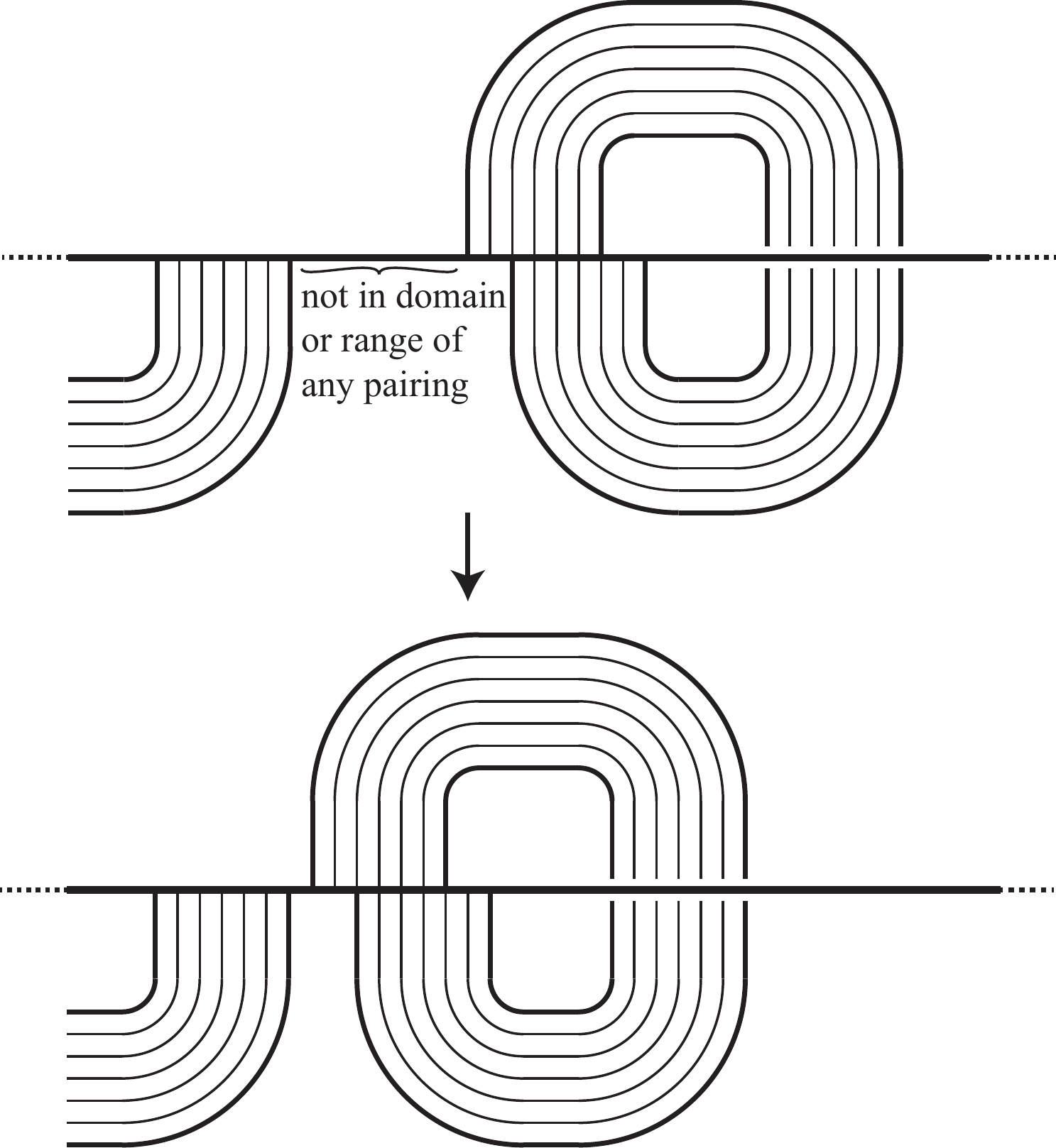}
   \hspace{.1in}
  \includegraphics[width=1.7in]{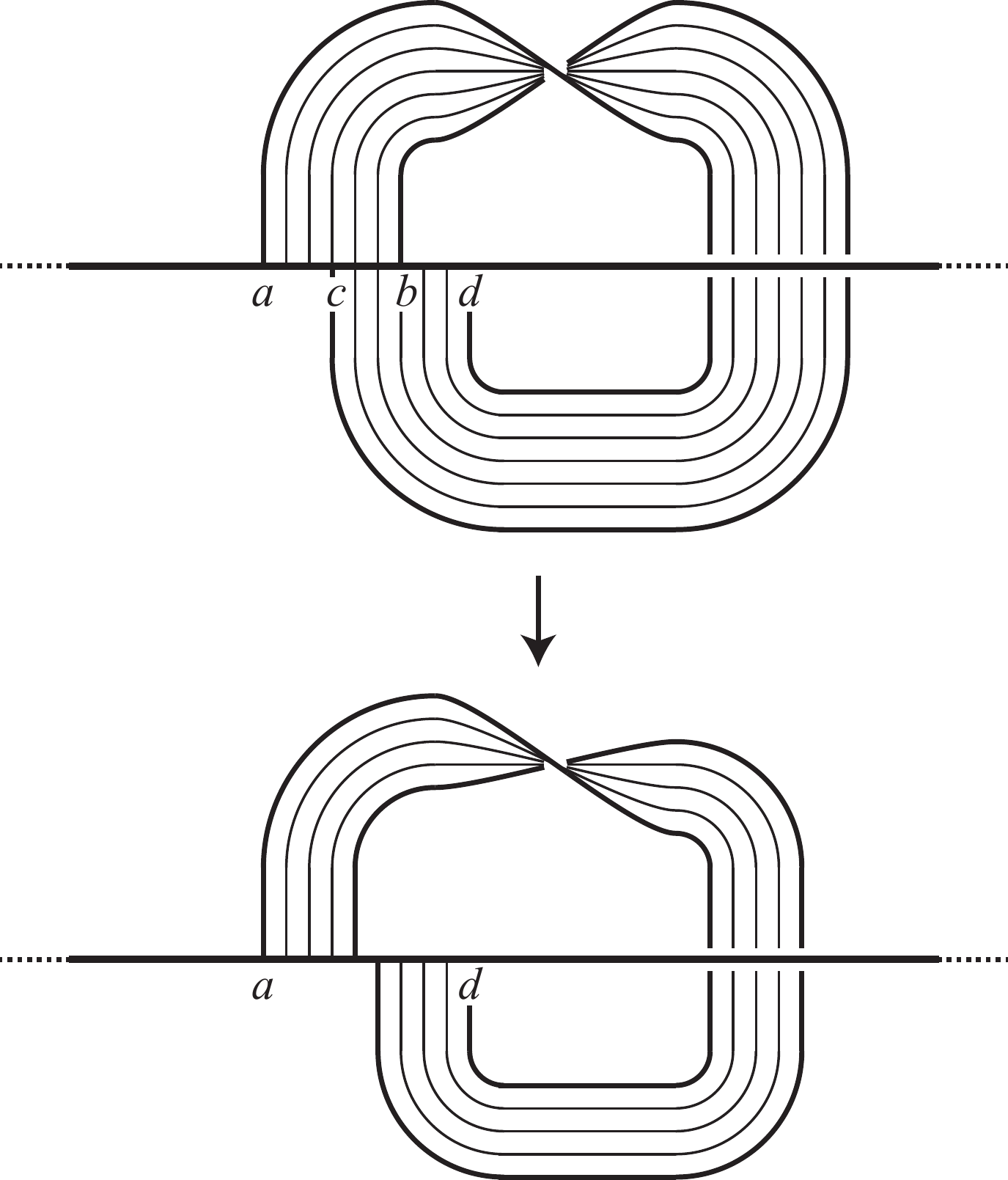}
  \caption{Left: Truncation. Middle: Contraction. Right: Trimming}
  \label{Fig:AgolHassThurston2}
\end{figure}

This process reduces the total width of the bands. It might also reduce the number of bands. Hence, the following might become applicable.

\emph{Contraction.}
Suppose that there is an interval in $[1,N]$ that is attached to no bands. Then each point in this interval is its own equivalence class. Thus, the procedure removes these points, and records them and their weights. 

\emph{Trimming.} 
Suppose that $g \colon [a,b] \rightarrow [c,d]$ is an orientation-reversing pairing with domain and range that overlap. Then trimming is the restriction of the domain and range of this pairing to $[a, \lceil (a+d)/2 \rceil - 1]$ and $[\lfloor (a+d)/2 \rfloor + 1, d]$ so that they no longer overlap.

As transmissions are performed, the attaching loci of the bands are moved to the left and so the two attaching intervals of a band are more likely to overlap. Under these circumstances, the pairing $g \colon [a,b] \rightarrow [c,d]$ is said to be \emph{periodic} if its domain and range intersect and it is also orientation-preserving. The combined interval $[a,d]$ is called a \emph{periodic interval}.

\emph{Period merger.}
When there are periodic pairings $g_1$ and $g_2$, then they can be replaced by a single periodic pairing, as long as there is sufficient overlap between their periodic intervals. Moreover, if $t_1$ and $t_2$ are their translation distances, then the new periodic pairing has translation distance equal to the greatest common divisor of $t_1$ and $t_2$.

Thus, with periodic mergers, it is possible to see very dramatic decrease in the widths of the bands. These also reduce the number of bands.

The AHT algorithm cycles through these modifications. The proof that it scales $4^k \prod_{i=1}^k w_i$ by a factor of at most $1/2$ after $5k$ cycles through the above steps is very plausible, but the proof in \cite{AgolHassThurston} is somewhat delicate.

\subsection{$3$-manifold knot genus is in NP}
\label{Sec:GenusNPHard}
One of the main motivations for Agol, Hass and Thurston to introduce their algorithm was to prove one half of Theorem \ref{Thm:3ManifoldKnotGenus}. Specifically, they used it to show that the problem of deciding whether a knot $K$ in a compact orientable 3-manifold $M$ bounds a compact orientable embedded surface of genus $g$ is in NP. The input is a triangulation $\mathcal{T}$ of $M$ with $K$ as a specified subcomplex, and an integer $g$ in binary. The first stage of the algorithm is to remove an open regular neighbourhood $N(K)$ of $K$, forming a triangulation $\mathcal{T}'$ of the exterior $X$ of $K$. If $K$ bounds a compact orientable surface of genus $g$, then it bounds such a surface, with possibly smaller genus, that is incompressible and boundary-incompressible in $X$. There is such a surface $S$ in normal form in $\mathcal{T}'$, by a version of Theorem \ref{Thm:EssentialNormal}. In fact, we may also arrange that $\partial S$ intersects each edge of $\mathcal{T}'$ at most once, by first picking $\partial S$ to be of this form and then noting that none of the normalisation moves in Section \ref{Sec:NormalSurfaces} affects this property. We may assume that $S$ has least possible weight among all orientable normal spanning surfaces with this boundary. By a version of Theorem \ref{Thm:EssentialSummand}, it is a sum of fundamental surfaces, none of which is a sphere or disc. In fact, it cannot be the case that two of these surfaces have non-empty boundary, because then $\partial S$ would intersect some edge of $\mathcal{T}'$ more than once. Hence, some fundamental summand $S'$ is also a spanning surface for $K$. It turns out that $S'$ must be orientable, as otherwise it is possible to find an orientable spanning surface with smaller weight. Note also that $\chi(S') \geq \chi(S)$ and hence the genus of $S'$ is at most the genus of $S$. The required certificate is the vector of $S'$. Since $S'$ is fundamental, we have a bound on its weight by Theorem \ref{Thm:WeightFundamental}. Using the AHT algorithm, one can easily check that it is connected, orientable and has Euler characteristic at least $1-2g$. One can also check that it has a single boundary curve that has winding number one along $N(K)$. 

\section{Showing that problems are NP-hard}
\label{Sec:NPHard}

In the previous section, we explained how the AHT algorithm can be used to show that the problem of deciding whether a knot in a compact orientable 3-manifold bounds a compact orientable surface of genus $g$ is in NP. Agol, Hass and Thurston also showed that this problem is NP-hard. Combining these two results, we deduce that the problem is NP-complete. In this section, we explain how NP-hardness is proved. Partly for the sake of variety, we will show that the following related problem is NP-hard, thereby establishing one half of the following result. This is minor variation of \cite[Theorem 1.1]{LackenbyConditionallyHard} due to the author.

\begin{theorem}
\label{Thm:LinkGenus}
The following problem is NP-complete. The input is a diagram of an unoriented link $L$ in the 3-sphere and a natural number $g$. The output is a determination of whether $L$ bounds a compact orientable surface of genus $g$.
\end{theorem}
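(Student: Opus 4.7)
The argument splits into showing membership in $\mathrm{NP}$ and showing $\mathrm{NP}$-hardness; the two together give $\mathrm{NP}$-completeness.

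For membership in $\mathrm{NP}$, I follow the template of \refsec{GenusNPHard}. Given a link diagram $D$ with $c$ crossings, first build in polynomial time a triangulation $\mathcal{T}'$ of the exterior $X_L$ with $O(c)$ tetrahedra. If $L$ bounds a compact orientable surface of genus at most $g$, then after endowing $L$ with a suitable orientation and passing to an essential Seifert surface, a version of \refthm{EssentialNormal} places it in normal form, which we take to have least weight in its isotopy class among such surfaces whose boundary hits each edge of $\mathcal{T}'$ at most once. Writing it as a sum of fundamental summands and arguing exactly as in \refsec{GenusNPHard}, some fundamental summand $S'$ is itself a connected orientable spanning surface of genus at most $g$. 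By \refthm{WeightFundamental}, the vector $(S')$ has $O(c)$ binary digits and so forms a polynomial-size certificate. The Agol--Hass--Thurston algorithm of \refsec{AHT} then verifies in polynomial time that $S'$ is connected and orientable, that $\partial S'$ is a longitude on each component of $L$, and that $\chi(S') = 2 - 2g - |L|$, where $|L|$ is the number of components of $L$.

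For $\mathrm{NP}$-hardness, the plan is a polynomial-time reduction from an $\mathrm{NP}$-complete satisfiability problem such as 3-SAT or ONE-IN-THREE 3-SAT. Given an instance $\Phi$ with $n$ variables and $m$ clauses, construct a link diagram $D_\Phi$ and an integer $g_\Phi$ such that $L_\Phi$ bounds an orientable surface of genus $g_\Phi$ if and only if $\Phi$ is satisfiable. The link $L_\Phi$ is assembled from three elementary pieces: a \emph{variable gadget} for each $x_i$ admitting two combinatorially distinguishable minimum-genus Seifert surfaces, one for each truth value; a \emph{clause gadget} for each $C_j$ that inflicts an extra genus penalty unless the truth values of its three literals satisfy the clause; and \emph{connector tangles} joining the gadgets so that the Euler characteristic of a global Seifert surface decomposes additively into local contributions. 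Taking $g_\Phi$ to be the sum of minimum local contributions over all gadgets, any satisfying assignment of $\Phi$ produces a Seifert surface of genus exactly $g_\Phi$ by choosing the intended surface in each variable gadget and gluing across the connectors. The construction is polynomial in $n + m$.

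The hard part is the reverse implication: every orientable spanning surface of genus at most $g_\Phi$ must arise in this way. The plan is to choose the gadgets so that their exteriors are simple in the sense of \refthm{ListSurfacesForSimpleManifold}, and joined by canonical annuli and tori whose existence is controlled by \refthm{ConstructJSJ}. Any least-genus Seifert surface may then be isotoped to meet these canonical separators essentially, and hence decomposes into one piece per gadget. Local additivity of Euler characteristic under this decomposition, combined with a classification inside each variable gadget of the minimum-genus spanning pieces (again via \refthm{ListSurfacesForSimpleManifold}), forces these pieces to encode a coherent truth assignment, and the clause gadgets then force that assignment to satisfy $\Phi$. The main obstacle is designing gadgets whose essential-surface theory is simultaneously tractable enough to give this classification and rich enough to encode the logical structure; this is where the bulk of the topological work lies, and typically requires verifying by hand, for each gadget, that its complement has the required JSJ structure and admits exactly the intended list of incompressible and boundary-incompressible spanning surfaces.
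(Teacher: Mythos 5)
Your NP membership argument is essentially the paper's: it follows \refsec{GenusNPHard} verbatim (least-weight normal representative, fundamental summand $S'$ via \refthm{WeightFundamental}, AHT verification of connectedness, orientability, Euler characteristic and boundary), and the paper itself disposes of this half of \refthm{LinkGenus} by saying it is ``essentially the same argument''. The minor imprecisions there (each coordinate of $(S')$ has $O(c)$ digits, so the certificate is polynomial-size but not $O(c)$ bits; ``longitude'' should be read as a single boundary curve of winding number one on each component) do not affect correctness.

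The NP-hardness half, however, has a genuine gap: you have written a plan, not a proof, and you say so yourself (``this is where the bulk of the topological work lies''). No variable gadget, clause gadget or connector is actually exhibited, and the reverse implication rests on three unestablished claims: that the gadget exteriors are simple with the intended JSJ structure (\refthm{ConstructJSJ} only constructs canonical tori, it does not make them exist or make them separate gadgets), that a least-genus spanning surface decomposes along these separators into local pieces whose contributions are additive and independently minimised, and that the local minimum-genus pieces in each variable gadget are classified by a two-element list encoding a truth value (\refthm{ListSurfacesForSimpleManifold} gives an algorithmic list for a \emph{given} manifold, not a structural classification you can design gadgets around). Until those gadgets are built and those lemmas proved, the reduction does not exist. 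It is worth noting that the paper's proof avoids all of this machinery with one trick: reducing from 1-in-3-SAT, it encodes a truth assignment as a choice of orientation of the link components (an orientation that any orientable spanning surface necessarily induces), arranges that a sentence is satisfied exactly when the corresponding batch of four parallel strands has a balanced orientation, and builds those strands as parallel copies of a fixed knot $K$ of genus at least $2m+n$. If a balanced orientation exists, an explicit surface of genus at most $2m$ is assembled from discs, annuli and bands; if not, every orientation exhibits the link as a satellite of $K$ with non-zero winding number, forcing genus at least $2m+1$. This single satellite/winding-number estimate replaces the entire programme of classifying essential surfaces in gadget complements, which is precisely the part your proposal leaves open.
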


Like the argument of Agol, Hass and Thurston, the method of proving that this is $\mathrm{NP}$-hard is to reduce this problem to an NP-complete problem, which is a variant of SAT, called 1-in-3-SAT. In SAT, one is given a list of Boolean variables $v_1, \dots, v_n$ and a list of sentences involving the variables and the connectives AND, OR and NOT. The problem asks whether there is an assignment of TRUE or FALSE to the Boolean variables that makes each sentence true. The set-up for 1-in-3-SAT is the same, except that the sentences are of a specific form. Each sentence involves exactly three variables or their negations, and it asks whether exactly one of them is true. An example is ``$v_1 \veebar \neg v_2 \veebar v_3$'' which means ``exactly one of $v_1$, $\mathrm{NOT} (v_2)$ and $v_3$ is true". Unsurprisingly, given its similarity to SAT, this problem is NP-complete \cite[p.~259] {GareyJohnson}

From a collection of 1-in-3-SAT sentences, one needs to create a diagram of a suitable link $L$. This is probably best described by means of the example in Figure \ref{Fig:LinkGenusNP}. Here, the variables are $v_1$, $v_2$ and $v_3$ and the sentences are
$$v_1 \veebar \neg v_2 \veebar v_3, \qquad v_1 \veebar \neg v_1 \veebar v_3, \qquad \neg v_1 \veebar \neg v_2 \veebar \neg v_3.$$
The associated link diagram is constructed from these as in Figure \ref{Fig:LinkGenusNP}. The parts of the diagram with a $K$ in a box are where the link forms a satellite of a knot $K$. This knot $K$ is chosen to have reasonably large genus: at least $2m+n$, where $m$ is the number of sentences and $n$ is the number of variables.

One can view this diagram as built as follows. Start with $n+1$ round circles in the plane of the diagram, where $n$ is the number of variables. Thus, each of the variables corresponds to one of the circles, and there is an extra circle. In Figure \ref{Fig:LinkGenusNP}, the first $n$ circles are arranged at the top of the figure and the extra circle is at the bottom. Also start with $4m$ components, where $m$ is the number of sentences. These are arranged into batches, each containing 4 link components that are 4 parallel copies of the knot $K$. Each sentence corresponds to a batch. Each sentence contains three variables. Given such a sentence, we attach a band from three out of the four components of the batch onto the three relevant variable components. If the negation of the variable appears in the sentence, we insert a half twist into the band. The fourth component of the batch is banded onto the extra component without a twist. 

The resulting link has $n+1$ components. We view an assignment of TRUEs and FALSEs to the variables as corresponding to a choice of orientation on the first $n$ components of the link. For example, in Figure \ref{Fig:LinkGenusNP}, the orientations shown correspond to the assignment of TRUE to $v_1$ and $v_2$ and the assignment of FALSE to $v_3$. The extra component is oriented in a clockwise way. These orientations determine orientations of the 4 strings within each batch. It should be evident that each sentence is true if and only if, within the corresponding batch, two of the strings are oriented one way and two of the strings are oriented the other. We call this a \emph{balanced} orientation. Thus, the given instance of 1-in-3-SAT has a solution if and only if the link has a balanced orientation.

\begin{figure}
  \includegraphics[width=4in]{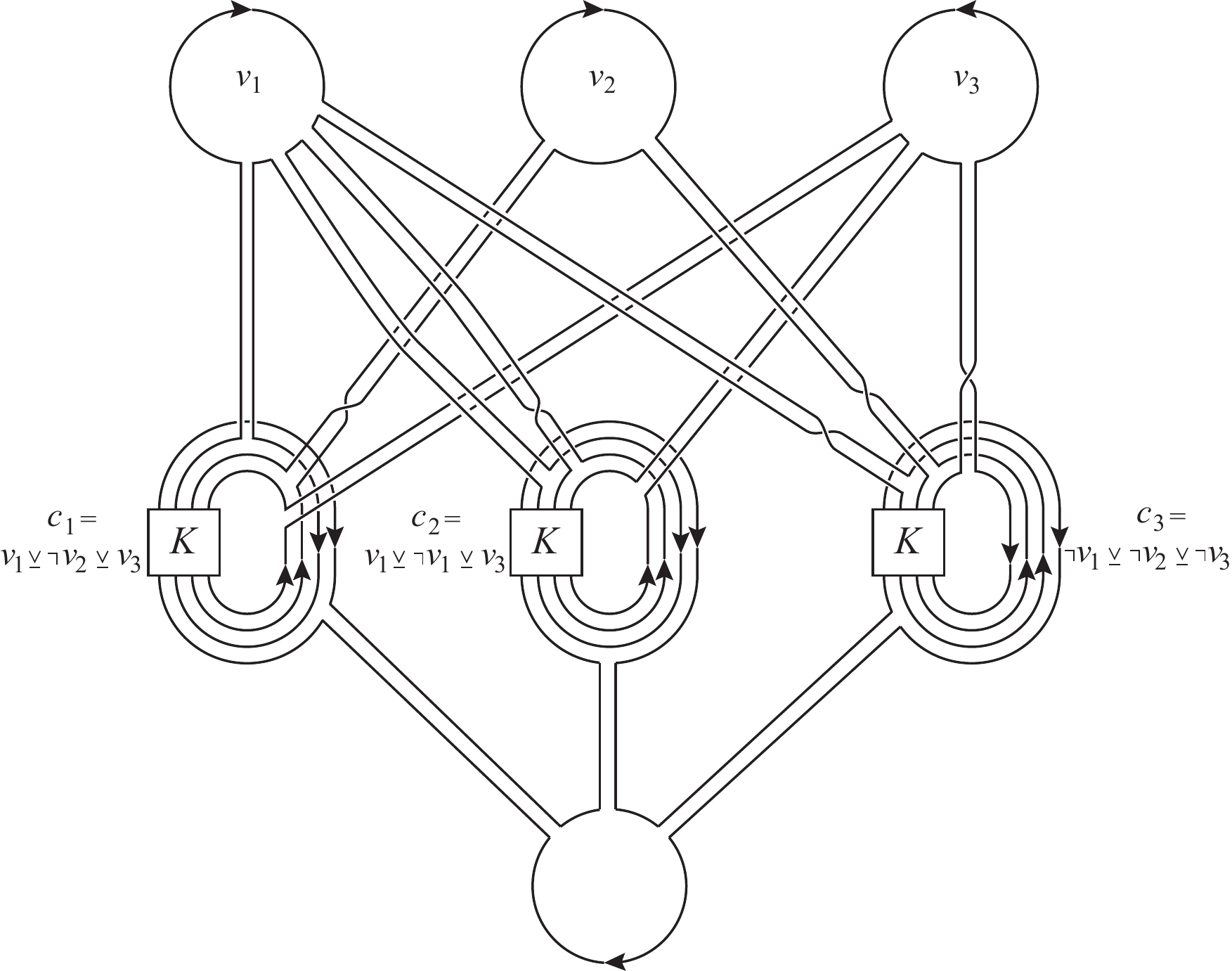}
  \caption{The link diagram obtained from an instance of 1-IN-3-SAT. The given orientation is a balanced one.}
  \label{Fig:LinkGenusNP}
\end{figure}

The NP-hardness of the decision problem in Theorem \ref{Thm:LinkGenus} is established by the following fact: the link $L$ has a balanced orientation if and only if it bounds a compact orientable surface of genus at most $2m$.

Suppose that it has a balanced orientation. Then it bounds the following surface. Start with a disc for each of the variable circles and the extra component. Within each batch, insert two annuli so that the boundary components of each annulus are oriented in opposite ways. Then attach bands joining the annuli to the discs. It is easy to check that the resulting surface is orientable and has genus at most $2m$.

Suppose that it does not have a balanced orientation. Then, for any orientation on the components of $L$, the resulting link is a satellite of the knot $K$ with non-zero winding number. Hence, by our assumption about the genus of $K$, the genus of any compact orientable surface bounded by $L$ is at least $2m+1$.

Thus, the NP-hardness of the problem in Theorem \ref{Thm:LinkGenus} is established. In fact, in \cite{LackenbyConditionallyHard}, a variant of this was established, which examined not the genus of a spanning surface but its Thurston complexity. (See Definition \ref{Def:ThurstonComplexity}.) But exactly the same argument gives Theorem \ref{Thm:LinkGenus}.

The fact that this problem is in NP is essentially the same argument as in Section \ref{Sec:GenusNPHard}.

\section{Hierarchies}
\label{Sec:Hierarchies}

So far, the theory that we have been discussing has mostly been concerned with a single incompressible surface. However, some of the most powerful algorithmic results are proved using sequences of surfaces called hierarchies.

\begin{definition} A \emph{partial hierarchy} for a compact orientable 3-manifold $M$ is a sequence of 3-manifolds $M = M_1, \dots, M_{n+1}$ and surfaces $S_1, \dots, S_{n}$, with the following properties:
\begin{enumerate}
\item Each $S_i$ is a compact orientable incompressible surface properly embedded in $M_i$.
\item Each $M_{i+1}$ is $M_i \cut S_i$.
\end{enumerate}
It is a \emph{hierarchy} if $M_{n+1}$ is a collection of 3-balls.
\end{definition}

The following was proved by Haken \cite{HakenHomeomorphism}.

\begin{theorem} If a compact orientable irreducible 3-manifold contains a  properly embedded orientable incompressible surface, then it admits a hierarchy. In particular, any compact orientable irreducible 3-manifold with non-empty boundary admits a hierarchy.
\end{theorem}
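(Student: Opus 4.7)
The plan is to build the hierarchy inductively: start with the given incompressible surface $S_1 \subset M_1 = M$, cut along it to obtain $M_2$, then find an incompressible surface $S_2 \subset M_2$ if possible, cut again, and continue until a union of $3$-balls is reached. Two things must be checked at each step: that $M_{i+1}$ remains compact, orientable and irreducible, and that, as long as $M_i$ is not already a disjoint union of $3$-balls, there exists a suitable incompressible surface $S_i \subset M_i$ from which to continue. Termination is the crux of the argument.

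I would first verify that cutting preserves irreducibility. Given a $2$-sphere $\Sigma \subset M_{i+1}$, view $\Sigma$ inside $M_i$ and put it in general position with $S_i$. Use an innermost-disc argument on $S_i$ together with the incompressibility of $S_i$ to replace intersection curves until $\Sigma$ is disjoint from $S_i$. Irreducibility of $M_i$ (by induction) then makes $\Sigma$ bound a ball disjoint from $S_i$, giving a ball in $M_{i+1}$. Orientability and compactness are obvious.

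Next, I would produce the surface $S_i$ whenever $M_i$ is not a union of balls. The base case $i=1$ is the hypothesis of the theorem, and for $i \geq 2$ the manifold $M_i$ has non-empty boundary (either $S_{i-1}$ had boundary, or it was closed and separating/non-separating, in either case contributing boundary to $M_i$). Split into two cases. If $\partial M_i$ is compressible, \refthm{NormalDisc} gives a compressing disc, which serves as $S_i$. If $\partial M_i$ is incompressible, apply Poincar\'e--Lefschetz duality (the ``half-lives-half-dies'' lemma): the kernel of $H_1(\partial M_i; \QQ) \to H_1(M_i; \QQ)$ has rank $\tfrac{1}{2} \dim H_1(\partial M_i; \QQ) > 0$ whenever $\partial M_i$ has a component of positive genus, so $H_2(M_i, \partial M_i; \QQ) \ne 0$. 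A non-zero class is represented by a properly embedded orientable surface, which I would maximally compress to obtain an incompressible, non-separating $S_i$; by irreducibility of $M_i$ and non-separation, $S_i$ is neither a sphere nor a boundary-parallel disc. (If every component of $\partial M_i$ is a sphere, then since $M_i$ is irreducible each bounds a ball, so $M_i$ is a union of balls and the process has already terminated.)

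The hard part is termination. The natural approach is to assign $M_i$ a complexity that is strictly monotone under the cutting operation. I would use a lexicographic pair whose first entry measures boundary complexity such as $-\chi(\partial M_i)$ and whose second entry is the Kneser--Haken number $h(M_i)$, namely the maximum cardinality of a collection of disjoint, pairwise non-parallel, closed incompressible surfaces in $M_i$, none of which is a sphere; Kneser--Haken finiteness bounds this in terms of any triangulation of $M$. Cutting along a compression disc strictly decreases $-\chi(\partial M_i)$, whereas cutting along a closed incompressible non-boundary-parallel surface leaves $\partial M_i$ untouched and strictly decreases $h$. The subtle and obstructive case is when $S_i$ has non-empty boundary and is not a disc: then $-\chi(\partial M_{i+1}) = -\chi(\partial M_i) - 2\chi(S_i)$ may actually grow, so the naive complexity is not monovariant. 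The fix, which is the main technical obstacle, is to organise the cutting using boundary patterns in the sense of Matveev~\cite{Matveev}, so that curves of $\partial S_i$ on $\partial M_i$ are recorded as part of the data and the honest complexity of the sutured/bounded-patterned manifold is shown to decrease at each step. Once this monovariant is in hand, the sequence $M_1, M_2, \ldots$ must terminate in a manifold $M_{n+1}$ admitting no incompressible non-boundary-parallel surface; combining irreducibility, incompressible boundary and the absence of essential surfaces forces each component of $M_{n+1}$ to be a $3$-ball, completing the hierarchy. The in-particular statement is immediate because whenever $\partial M \neq \emptyset$, the dichotomy above (compressible vs.\ incompressible boundary) already produces the required initial surface $S_1$.
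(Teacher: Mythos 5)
Your overall strategy---cut, check that irreducibility persists, produce the next surface either from a compressing disc or from the half-lives-half-dies argument, and terminate via a complexity---is the standard route (the paper itself offers no proof, simply attributing the theorem to Haken), and the first two steps are essentially fine. The genuine gap is exactly where you yourself locate it: termination. You concede that your lexicographic pair $(-\chi(\partial M_i), h(M_i))$ is not a monovariant once $S_i$ is a bounded surface other than a disc, and your proposed fix---``organise the cutting using boundary patterns \dots\ so that the honest complexity of the sutured/bounded-patterned manifold is shown to decrease at each step''---is a statement of intent, not an argument: no complexity is defined and no monotonicity is proved. But bounding the length of a partial hierarchy is the mathematical heart of Haken's theorem; it is usually achieved by insisting that each $S_i$ be not only incompressible but also boundary-incompressible (relative to the accumulated boundary pattern) and not boundary-parallel, and then invoking a Kneser--Haken-type finiteness theorem for such systems, proved via normal surfaces in a fixed triangulation or handle decomposition (this is the treatment in Matveev's book \cite{Matveev} that the paper points to). As written, your proof establishes the easy parts---persistence of irreducibility and the existence of a non-trivial surface whenever $M_i$ is not a union of balls---but defers precisely the step that makes the theorem non-trivial.

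Two further points need attention even within the cases you do treat. First, for closed $S_i$ the claimed strict decrease of $h$ under cutting is not immediate: a maximal family of disjoint, pairwise non-parallel incompressible closed surfaces in $M_{i+1}$ need not stay pairwise non-parallel when viewed in $M_i$, and unless boundary-parallel surfaces are excluded from the definition of $h$, surfaces parallel to the two scars of $S_i$ inflate the count in $M_{i+1}$; so $h(M_{i+1}) < h(M_i)$ itself requires the parallelism analysis that is part of the finiteness theorem. Second, your construction of $S_i$ via maximal compression of a homologically non-trivial surface guarantees incompressibility but not boundary-incompressibility, and the standard termination arguments need the latter (otherwise cutting along, say, boundary-compressible annuli need not make progress in any complexity). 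To complete the proof you should either prove, or explicitly quote, the bounded-length statement for partial hierarchies of incompressible, boundary-incompressible, non-boundary-parallel surfaces with boundary patterns; with that in hand, your concluding step (all boundary spheres plus irreducibility forces balls) and the ``in particular'' statement go through as you describe.
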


\begin{definition}
A compact orientable irreducible 3-manifold containing a compact orientable properly embedded incompressible surface is known as \emph{Haken}.
\end{definition}

Using hierarchies, Haken was able to prove the following algorithmic result \cite{HakenHomeomorphism}. (See Definition \ref{Def:FibreFree} for the definition of `fibre-free'.)

\begin{theorem}
\label{Thm:HakenHomeoProblem}
There is an algorithm to decide whether any two fibre-free Haken 3-manifolds are homeomorphic. 
\end{theorem}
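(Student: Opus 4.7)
The plan is to proceed by induction on the depth of a hierarchy, combining the enumeration of incompressible surfaces from \refthm{ListSurfacesForSimpleManifold} with a recognition procedure for 3-manifolds equipped with boundary patterns. Given two triangulated fibre-free Haken 3-manifolds $M_1$ and $M_2$, I would try to construct compatible hierarchies for them and, at the terminal stage, verify that the two resulting unions of 3-balls match in a combinatorially isomorphic way.

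First I would enlarge the class of inputs to triangulated compact orientable 3-manifolds equipped with a \emph{boundary pattern}, namely a distinguished subcomplex of $\partial M$ dividing it into ``old'' and ``new'' faces. This is forced on us by the recursion: after cutting $M_i$ along $S_i$, the new boundary components from the two copies of $S_i$ must be distinguished from the residue of the original $\partial M_i$, and all subsequent incompressible surfaces and homeomorphisms must respect this pattern. The recognition problem therefore has to be stated and solved in the category of Haken manifolds with boundary pattern, and the algorithm proceeds recursively in that category.

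The core step is to enumerate, for each $M_i$, a finite list of candidate first-level cutting surfaces $S_i$, which we take to be incompressible and boundary-incompressible with respect to the boundary pattern. A boundary-pattern version of \refthm{ListSurfacesForSimpleManifold}, based on the fundamental surfaces produced by \refthm{ConstructFundamental} together with \refthm{EssentialSummand}, furnishes such a list provided that the manifold contains only finitely many isotopy classes of candidate surfaces below a given complexity; this is precisely where the fibre-free hypothesis is used, since otherwise Dehn twists along fibred pieces, or reparameterisations of the interval fibres in a union of two twisted $I$-bundles, produce infinite families of non-isotopic incompressible surfaces with the same Euler characteristic. For each pair $(S_1,S_2)$ on the two lists one checks whether they agree as abstract surfaces with boundary pattern (a combinatorial test), and if so one forms $M_i \cut S_i$ and recurses on the resulting smaller inputs. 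The base case is a pair of unions of 3-balls with patterns, and here homeomorphism is decided by a finite search for a pattern-preserving simplicial bijection.

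The hard part will be proving that the recursion is sound. In one direction, if $M_1 \cong M_2$ one needs to know that some homeomorphism can be isotoped to send a hierarchy of $M_1$ drawn from our enumerated list to a hierarchy of $M_2$ also drawn from our list; this rests on the isotopy uniqueness of normal representatives of incompressible surfaces (using \refthm{EssentialNormal}) together with Waldhausen's theorem that homotopy equivalences of Haken manifolds respecting boundary patterns are isotopic to homeomorphisms. In the reverse direction, a matching hierarchy must be reassembled into a genuine homeomorphism, again by Waldhausen-style gluing applied inductively from the 3-ball level upward. Finally, termination requires a Kneser--Haken style complexity for Haken manifolds with boundary pattern that strictly decreases under each cut along an incompressible non-peripheral surface, and this is again where fibre-freeness at each stage of the recursion is essential to prevent the complexity from stalling.
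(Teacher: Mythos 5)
Your skeleton is recognisably Haken's approach as the paper presents it (boundary patterns, hierarchies built from normal-surface enumeration, terminal balls), but the two places where the real work lies are either misattributed or missing. For the forward direction, what makes the two enumerations on $M_1$ and $M_2$ match up is not ``isotopy uniqueness of normal representatives'' (normal representatives of an isotopy class are far from unique, and uniqueness is not the issue) nor Waldhausen's rigidity theorem; it is that the cutting surface at each stage is selected by homeomorphism-invariant rules: first the canonical (JSJ) tori and annuli, then, in a simple patterned piece, a surface of maximal Euler characteristic meeting the pattern as few times as possible, via a boundary-pattern version of \refthm{ListSurfacesForSimpleManifold}. This canonicity is exactly the content of the semi-canonical hierarchy of \refthm{SemiCanonical}. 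Relatedly, fibre-freeness is not what gives finiteness of candidate surfaces of bounded complexity --- that is what simplicity buys, after the canonical annuli and tori have been cut along; fibre-freeness is needed because when a piece fibres over the circle (or is a union of twisted $I$-bundles), cutting along the fibre produces an $I$-bundle whose pattern lies only over the boundary of the fibre, and none of the canonical cutting operations applies, so the semi-canonical construction stalls.

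The more serious gap is your reverse direction. Recursively verifying that $M_1 \cut S_1$ and $M_2 \cut S_2$ are pattern-preservingly homeomorphic, down to a pattern-preserving bijection of the terminal balls, does not certify that $M_1 \cong M_2$: one also needs the homeomorphism of the cut-open pieces to be compatible with the identifications by which the two copies of each $S_i$ are reglued, and the boundary pattern alone does not record that gluing data (different regluings of the same patterned balls yield non-homeomorphic manifolds). ``Waldhausen-style gluing applied inductively'' does not supply this compatibility, and enumerating pattern-preserving mapping classes of the pieces to test it is not in general a finite task. The paper's proof avoids the problem by associating to each hierarchy a cell structure on the manifold itself --- 1-skeleton $P \cup \partial S_1 \cup \dots \cup \partial S_n$, 3-cells the components of the terminal manifold, 2-cells recording precisely how the cut faces are identified --- and then searching for a cell-preserving homeomorphism between the cell structures arising from the two finite lists of hierarchies. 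That search is finite, and a cell-preserving homeomorphism is by construction a homeomorphism $M_1 \rightarrow M_2$, which is what makes the algorithm both sound and complete. Your proposal needs this (or an equivalent device for tracking the face identifications) to be a proof.
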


In fact, using the solution to the conjugacy problem for mapping class groups of compact orientable surfaces \cite{Hemion}, it is possible to remove the fibre-free hypothesis.

An important part of the theory is the following notion.

\begin{definition}
A \emph{boundary pattern} $P$ for a 3-manifold $M$ is a subset of $\partial M$ consisting of disjoint simple closed curves and trivalent graphs.
\end{definition}

The following extension of Theorem \ref{Thm:HakenHomeoProblem} also holds.

\begin{theorem} 
\label{Thm:AlgorithmBoundaryPattern}
There is an algorithm that takes, as its input, two fibre-free Haken 3-manifolds $M$ and $M'$ with boundary patterns $P$ and $P'$ and determines whether there is a homeomorphism $M \rightarrow M'$ taking $P$ to $P'$.
\end{theorem}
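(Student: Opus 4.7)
The plan is to adapt the hierarchy-based algorithm behind Theorem \ref{Thm:HakenHomeoProblem} so that it tracks the boundary patterns at every stage. The central observation is that if $S \subset M$ is a surface properly embedded in general position with respect to $P$, then $M \cut S$ naturally inherits a boundary pattern: one takes the restriction of $P$ to $\partial (M \cut S) \cap \partial M$, together with the two ``scar'' copies of $S$, each of which carries the trivalent graph obtained from $\partial S$ with vertices at the points $\partial S \cap P$. One then defines a partial hierarchy in the patterned category to be a sequence $(M_i, P_i)$, $i = 1, \dots, n+1$, with surfaces $S_i$ that are essential in $(M_i, P_i)$---meaning incompressible and boundary-incompressible via discs whose boundaries avoid $P_i$---and with $P_{i+1}$ defined as above; it is a hierarchy when the components of $M_{n+1}$ are $3$-balls.

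The algorithm then proceeds in two phases. First I would construct, for both $(M, P)$ and $(M', P')$, a computable list of all partial hierarchies up to some bounded length. Starting from a triangulation in which $P$ lies in the $1$-skeleton, an adaptation of Theorem \ref{Thm:ListSurfacesForSimpleManifold} to boundary patterns yields a finite set of candidate essential fundamental normal surfaces $S_1$. For each choice, I cut, re-triangulate $(M_2, P_2)$, and iterate. The fibre-free hypothesis, together with Theorem \ref{Thm:ConstructJSJ} applied to the pieces that arise, gives an effective bound on the hierarchy length and ensures that the process terminates in a collection of balls decorated by trivalent graphs on their boundary spheres.

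The second phase is matching. For each pair of full hierarchies produced for $(M, P)$ and $(M', P')$, one tests whether the terminal collections of patterned $3$-balls are equivalent under a homeomorphism of pairs---this reduces to an instance of planar graph isomorphism on the sphere, which is decidable---and then checks that a chosen equivalence reassembles consistently through the inverse cutting operations, yielding a homeomorphism $(M, P) \to (M', P')$. Conversely, a finiteness result in the spirit of Haken and Johannson guarantees that any homeomorphism $(M, P) \to (M', P')$ can be isotoped to take some enumerated hierarchy of $(M, P)$ to some enumerated hierarchy of $(M', P')$; thus the search terminates with success if and only if the two patterned manifolds are homeomorphic.

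The hard part is the last statement: there is no reason a priori that a homeomorphism $(M,P) \to (M',P')$ should carry any specific chosen hierarchy of $M$ to any specific hierarchy of $M'$. Haken's resolution is a careful normalisation theory that selects, at each stage, a ``canonical first surface'' from a computable finite list, unique up to the mapping class group of the current $(M_i, P_i)$. The fibre-free hypothesis is essential here, because in Seifert-fibred or surface-bundle pieces Dehn twists along essential tori or fibres generate infinite families of non-isotopic essential surfaces, and no finite candidate list can capture them. In the patterned setting the notions of essentiality, normalisation, and canonical surface must all be refined so that compression and boundary-compression discs, as well as the isotopies in the statement of Theorem \ref{Thm:EssentialSummand}, are required to be disjoint from the pattern; carrying this refinement through the induction, and verifying that the fibre-free hypothesis still suffices to rule out infinite families of patterned essential surfaces in each $M_i$, is the technical core of the argument.
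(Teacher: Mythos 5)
Your proposal is essentially the paper's own argument: enumerate patterned hierarchies built from fundamental normal surfaces via a boundary-pattern version of Theorem~\ref{Thm:ListSurfacesForSimpleManifold}, invoke the semi-canonicity statement (Theorem~\ref{Thm:SemiCanonical}) so that any homeomorphism $(M,P) \to (M',P')$ must match one of the finitely many enumerated hierarchies on each side, and finish with a finite combinatorial comparison, which the paper phrases as a search for a cell-preserving homeomorphism between the cell structures associated with the hierarchies; you also correctly identify the canonical choice of cutting surface at each stage as the technical core. Two corrections to your commentary, though: the fibre-free hypothesis is not what bounds the hierarchy's length (termination is a separate, delicate issue, and Theorem~\ref{Thm:ConstructJSJ} is not invoked for that purpose), and Seifert-fibred pieces are explicitly exempted in Definition~\ref{Def:FibreFree}---the genuine obstruction comes from a piece that fibres over the circle (or is a twisted $I$-bundle double), where after cutting along a fibre one is left with $S \times I$ carrying pattern only on its vertical boundary and none of the allowed canonical cuts applies, so the semi-canonical procedure gets stuck.
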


Boundary patterns are used in the proof of Theorem \ref{Thm:HakenHomeoProblem}. However they are also useful in their own right. For example, when $L$ is a link in the 3-sphere and $M$ is the exterior of $L$, then it is natural to assign a boundary pattern $P$ consisting of a meridian curve on each boundary component. Suppose that $(M,P)$ and $(M',P')$ are the 3-manifolds and boundary patterns arising in this way from links $L$ and $L'$. Then there is a homeomorphism between $(M,P)$ and $(M',P')$ if and only if $L$ and $L'$ are equivalent links. Thus, we obtain the following immediate consequence of Theorem \ref{Thm:AlgorithmBoundaryPattern}

\begin{theorem}
There is an algorithm to decide whether any two non-split links in the 3-sphere are equivalent, provided their exteriors are fibre-free.
\end{theorem}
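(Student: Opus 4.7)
The plan is to reduce link equivalence to the boundary-pattern-preserving homeomorphism problem solved by Theorem \ref{Thm:AlgorithmBoundaryPattern}. Given diagrams $D_1, D_2$ for non-split links $L_1, L_2 \subset S^3$ whose exteriors are fibre-free, the first step is to algorithmically convert each diagram into a triangulation $\mathcal{T}_i$ of the exterior $M_i = S^3 \setminus \mathrm{int}(N(L_i))$, arranged so that on each component of $\partial M_i$ a distinguished meridional curve appears as a subcomplex of the induced triangulation. Let $P_i \subset \partial M_i$ be the boundary pattern formed by the union of these meridians.

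Before invoking Theorem \ref{Thm:AlgorithmBoundaryPattern}, one must check that $M_1$ and $M_2$ are Haken. Non-splitness of $L_i$ gives irreducibility of $M_i$. The boundary $\partial M_i$ is incompressible unless $L_i$ is the unknot: a compressing disc for some torus $T_j \subset \partial M_i$ would have boundary a longitude (a meridional slope is excluded because the resulting sphere would meet $K_j$ once, and a homological calculation rules out other slopes), forcing $K_j$ to be an unknotted, totally unlinked component, which by non-splitness means $L_i$ is a single unknot. To handle this exception, run the unknot recognition algorithm of Theorem \ref{Thm:AlgorithmCompressible} on each diagram as a preliminary step: if both represent the unknot, return ``equivalent''; if exactly one does, return ``inequivalent''. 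In all remaining cases $M_1$ and $M_2$ are irreducible with non-empty incompressible boundary, hence Haken, and by hypothesis fibre-free, so Theorem \ref{Thm:AlgorithmBoundaryPattern} applies.

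Now invoke Theorem \ref{Thm:AlgorithmBoundaryPattern} on the pairs $(M_1, P_1)$ and $(M_2, P_2)$ to decide whether there is a homeomorphism $M_1 \to M_2$ carrying $P_1$ to $P_2$, and return its output. By the remark preceding that theorem, such a pair-homeomorphism exists if and only if $L_1$ and $L_2$ are equivalent links in $S^3$, which is the desired decision.

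The main obstacle is not the algorithmic step --- it is packaged inside Theorem \ref{Thm:AlgorithmBoundaryPattern} --- but verifying the dictionary just invoked. The ``only if'' direction requires extending a pair-homeomorphism $(M_1, P_1) \to (M_2, P_2)$ across the meridional Dehn fillings that reconstruct $(S^3, L_i)$ from $(M_i, P_i)$: this works because a homeomorphism of tori carrying meridian to meridian extends canonically across the attached solid tori and sends cores to cores. The ``if'' direction is immediate, since any ambient equivalence $(S^3, L_1) \to (S^3, L_2)$ restricts to such a pair-homeomorphism of exteriors.
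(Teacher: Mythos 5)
Your proposal is correct and follows essentially the same route as the paper, which obtains this result as an immediate consequence of Theorem \ref{Thm:AlgorithmBoundaryPattern} by equipping each link exterior with the boundary pattern of meridians and using the dictionary that a pattern-preserving homeomorphism of exteriors corresponds exactly to link equivalence. Your additional checks (irreducibility and boundary-incompressibility from non-splitness, with the unknot handled separately via Theorem \ref{Thm:AlgorithmCompressible}) are sound refinements of details the paper leaves implicit.
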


In fact, it is not hard to remove the non-split hypothesis from the above statement by first expressing a given link as a distant union of non-split sublinks. Also, as mentioned above, one can remove the fibre-free hypothesis, and thereby deal with all links in the 3-sphere.

A partial hierarchy determines a boundary pattern $P_i$ on each of the manifolds $M_i$, as follows. Either the initial manifold $M$ comes with a boundary pattern $P$ or $P$ is declared to be empty. The union $P \cup \partial S_1 \cup \dots \partial S_{i-1}$ forms a graph embedded in $M$. Then $P_i$ is defined to be the intersection between this graph and $M_i$. Provided $S_{i-1}$ is separating in $M_{i-1}$, provided $P_{i-1}$ is a boundary pattern and provided $\partial S_{i-1}$ intersects $P_{i-1}$ transversely and avoids its vertices, then $P_{i}$ is also a boundary pattern.

Note that in this definition, the surfaces are `transparent', in the following sense. Suppose that a boundary curve of $S_2$ runs over the surface $S_1$. Then we get boundary pattern in $\partial M_3$ on the other side of $S_1$, as well as at the intersection curves between the parts of $\partial M_3$ coming from $S_1$ and the parts coming from $S_2$. Thus, in total this curve of $\partial S_2$ gives rise to \emph{three} curves of $P_3$.

The key observation in the proof of Theorem \ref{Thm:AlgorithmBoundaryPattern} is that a hierarchy for $M$ induces a cell structure on $M$, as follows. The 1-skeleton is $P \cup \partial S_1 \cup \dots \cup \partial S_{n}$. The 3-cells are the components of $M_{n+1}$. The 2-cells arise where the components of $\partial M_{n+1} \cut P_{n+1}$ are identified in pairs and where the components of $\partial M_{n+1} \cut P_{n+1}$ intersect $\partial M$. There is a small chance that this is not a cell complex, because $\partial M_{n+1} \cut P_{n+1}$ might not be discs, but for essentially any reasonable hierarchy this will be the case. We say that this is the cell structure that is \emph{associated with the hierarchy}.

\begin{definition}
A hierarchy $H$ for $M$ is \emph{semi-canonical} if, given any triangulation of $M$, there is an algorithm to build a finite list of hierarchies for $M$, one of which is $H$.
\end{definition}

\begin{theorem}
\label{Thm:SemiCanonical}
Let $M$ be a fibre-free Haken 3-manifold with incompressible boundary. Then $M$ has a semi-canonical hierarchy.
\end{theorem}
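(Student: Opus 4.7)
The plan is to construct the hierarchy $H$ by means of an explicit topological recipe carried out level by level, and then to argue that, given any triangulation of $M$, the algorithm can enumerate a finite list of candidates for each successive cutting surface. The Cartesian product of these finite lists across all levels then yields the required finite family of hierarchies containing $H$. Note that $H$ itself is an intrinsic topological object; only the algorithmic enumeration depends on the given triangulation.

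First I would cut along the canonical JSJ annuli and tori of $M$. These are intrinsic to $M$, and by \refthm{ConstructJSJ} they can be realised as a normal surface of weight at most $2^{81 t^2}$ in any given triangulation $\mathcal{T}$; the algorithm simply enumerates normal surfaces up to this weight. Each resulting piece is either simple or Seifert fibred and inherits a boundary pattern from the traces of the JSJ surfaces. The fibre-free hypothesis on $M$ passes to the pieces: no simple piece fibres over $S^1$, which is exactly what will be needed later to guarantee termination. For a Seifert fibred piece, I would continue the hierarchy by a maximal system of essential vertical annuli (collapsing it to fibred solid tori), then by meridian discs, then by essential arcs in the resulting balls-with-pattern. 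These surfaces are intrinsic to the Seifert structure and lie in the finite list of bounded-weight normal surfaces in the induced triangulation. For a simple piece, I would invoke \refthm{ListSurfacesForSimpleManifold} with an Euler-characteristic threshold $k$ computable from the size of the induced triangulation; this outputs a finite list of all incompressible boundary-incompressible properly embedded surfaces with $\chi \geq k$, and the chosen next surface of $H$ in that piece will lie in the list.

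Iterating, each stage produces a finite list of candidates for the next cutting surface, and cutting along them produces new pieces with new boundary patterns to which the recipe applies. The principal obstacle is proving termination: I must show that after finitely many levels, every component of $M_{n+1}$ is a $3$-ball. This is where the fibre-free hypothesis becomes essential, because a piece that fibres over $S^1$ admits a circle fibration along which hierarchy surfaces can slip without reducing complexity. With fibre-freeness in place, one can assign a non-negative integer complexity (essentially a Kneser--Haken number refined to account for the boundary pattern) and verify that each cut strictly decreases it, following the classical Haken argument. A secondary obstacle is the propagation issue: one must check that cutting along the chosen surface keeps the new pieces within the class of fibre-free Haken manifolds with incompressible boundary (relative to the enlarged boundary pattern), so that \refthm{ListSurfacesForSimpleManifold} and \refthm{ConstructJSJ} continue to apply. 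Both the enumeration statement and the JSJ construction admit the necessary extensions to boundary-patterned manifolds; granted these, the induction closes and produces the semi-canonical hierarchy $H$.
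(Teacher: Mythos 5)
Your outline follows the same broad strategy as the actual construction (cut along canonical tori and annuli, then along maximal-Euler-characteristic essential surfaces in simple pieces), but it misses the two points that carry the real content of the theorem. First, the role of the fibre-free hypothesis is not to make some complexity decrease; the genuine danger is that the boundary-patterned procedure gets \emph{stuck}. After some cuts a component can be an $I$-bundle, for instance $S \times I$ with pattern $\partial S \times \partial I$, and there every admissible surface is either parallel into the boundary or a vertical annulus disjoint from the pattern, which one is not permitted to use, so there is no next surface at all. This configuration can appear arbitrarily deep in the hierarchy, not only among JSJ pieces, so the observation that no simple JSJ piece fibres over $S^1$ (and note that fibre-freeness also excludes unions of twisted $I$-bundles) does not dispose of it. The substantive claim one must prove is that if the procedure ever gets stuck in this way, then the original $M$ was not fibre-free; your proposal replaces this with an asserted Kneser--Haken-type decreasing-complexity argument ``following the classical Haken argument'', which does not engage with the actual obstruction, and the mechanism you cite (surfaces ``slipping'' along a circle fibration) is not what goes wrong.

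Second, semi-canonicity requires a single hierarchy $H$, defined intrinsically, that lies in a finite algorithmically constructed list for \emph{every} triangulation, and your recipe does not pin such an $H$ down. In a Seifert fibred piece, ``a maximal system of essential vertical annuli'' is neither unique nor drawn from finitely many isotopy classes (essential arcs in the base orbifold form an infinite family in general), so the algorithm cannot be guaranteed to list your chosen system. In a simple piece you only require the next surface to satisfy $\chi \geq k$ with $k$ computed from the induced triangulation, which makes the candidate list triangulation-dependent and leaves the actual choice of the surface in $H$ unspecified; the construction instead takes the finitely many surfaces of maximal Euler characteristic meeting the boundary pattern minimally (a boundary-pattern version of \refthm{ListSurfacesForSimpleManifold}), together with canonically chosen annuli and tori. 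One must also remember that later pieces are simple only in the pattern sense of \refdef{SimplePattern}, not absolutely simple, so essential annuli disjoint from or parallel into the pattern reappear and need their own cases. Without these ingredients the induction neither defines $H$ nor shows that the candidate list at each stage is finite.
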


The proof of Theorem \ref{Thm:AlgorithmBoundaryPattern} now proceeds as follows. Let $\mathcal{T}$ and $\mathcal{T}'$ be triangulations of fibre-free Haken 3-manifolds $M$ and $M'$. For simplicity, we will assume that they have incompressible boundaries. By Theorem \ref{Thm:SemiCanonical}, $M$ and $M'$ admit semi-canonical hierarchies $H$ and $H'$. If $M = M'$, then we may set $H = H'$. Thus, there is an algorithm to produce finite lists $H_1, \dots, H_m$ and $H'_1, \dots, H'_{m'}$ of hierarchies for each manifold, one of which is $H$ and one of which is $H'$. For each of these hierarchies, build the associated cell structure. If $M$ is homeomorphic to $M'$, there is therefore a cell-preserving homeomorphism from one of the cell structures for $M$ to one of these cells structures for $M'$. Conversely, if there is a cell-preserving homeomorphism, then clearly $M$ is homeomorphic to $M'$. Thus, the algorithm proceeds by searching for such a cell-preserving homeomorphism, which is clearly a finite task. 

We now explain, in outline, how the semi-canonical hierarchy in Theorem \ref{Thm:SemiCanonical} is constructed.

At each stage, we have a 3-manifold $M_i$ with a boundary pattern $P_i$, and we need to find a suitable surface $S_i$ to cut along. We suppose that $\partial M_i \setminus P_i$ is incompressible in $M_i$, which we can ensure as long as the initial manifold has incompressible boundary.  The choice of the surface $S_i$ depends on whether $(M_i, P_i)$ is simple, which is defined as follows.

\begin{definition}
\label{Def:SimplePattern}
Let $M$ be a compact orientable 3-manifold with a boundary pattern $P$. Then $(M, P)$ is \emph{simple} if the following conditions all hold:
\begin{enumerate}
\item $M$ is irreducible;
\item $\partial M \setminus P$ is incompressible;
\item any incompressible torus in $M$ is boundary parallel;
\item any properly embedded annulus disjoint from $P$ either is compressible, or admits a boundary compression disc disjoint from $P$, or is parallel to an annulus $A'$ in $\partial M$ such that $A' \cap P$ is a collection of disjoint core curves of $A'$.
\end{enumerate}
\end{definition}

The following procedure is then followed:
\begin{enumerate}
\item Suppose that $M_i$ contains an incompressible torus that is not boundary parallel. Then $S_i$ is either one or two copies of this torus, depending on whether the torus is separating or non-separating in the component of $M_i$ that contains it. 
\item Suppose that $M_i$ contains an annulus $A$ that is incompressible, disjoint from $P_i$ and not boundary-parallel, and that has the property that any other such annulus can be isotoped so that its boundary is disjoint from $\partial A$. Then $S_i$ is one or two copies of $A$, again depending on whether $A$ is separating or non-separating.
\item Suppose that $M_i$ contains an annulus $A$ disjoint from $P_i$ that is incompressible and that is parallel to annulus $A'$ in $\partial M_i$. Provided $A' \cap P_i$ is non-empty and does not just consist of core circles of $A'$, then $S_i$ is set to be $A$.
\item Suppose that $(M_i, P_i)$ has a simple component that is not a 3-ball. Suppose also that this component is not a solid torus with a longitude disjoint from $P_i$. Then by a version of Theorem \ref{Thm:ListSurfacesForSimpleManifold} for manifolds with boundary patterns, one may construct all incompressible boundary-incompressible surfaces in this component with maximal Euler characteristic and that intersect $P_i$ as few times as possible. 
We set $S_i$ to be one of these surfaces, or possibly two parallel copies of this surface, again depending on whether this surface is separating or non-separating.
\end{enumerate}

Of course, it is not clear why this procedure terminates. Moreover, this discussion is a substantial oversimplification. In particular, there are several more operations that are performed other than the ones described in (1) to (4) above. A careful and detailed account of the argument is given in Matveev's book \cite{Matveev}.

The reason for the fibre-free hypothesis is as follows. Suppose that $M$ fibres over the circle, for example, with empty boundary pattern. Then we might take the first surface $S_1$ to be a fibre. The next manifold $M_2$ is then a copy of $S_1 \times I$, with pattern $P_2$ equal to $\partial S_1 \times \partial I$. We note that none of the possibilities in the above procedure applies, and so there is no way to extend the partial hierarchy in a semi-canonical way. The manifold $(M_2, P_2)$ is not simple, but one is not allowed to cut along an annulus that is vertical in product structure and disjoint from the boundary pattern. One can show that if the above procedure is applied to a general Haken 3-manifold with incompressible boundary, and at some stage there is no possible $S_i$ to cut along, then in fact  the original manifold was not fibre-free.

The iterative nature of this algorithm makes it very inefficient. In order to construct the surface $S_i$, we need a triangulation $\mathcal{T}_i$ of $M_i$. This can be obtained from the triangulation $\mathcal{T}_{i-1}$ for $M_{i-1}$, as follows. First cut each tetrahedron along its intersection with $S_{i-1}$, forming a cell structure, and then subdivide this cell structure into a triangulation. The problem, of course, is that the number of normal triangles and squares of $S_{i-1}$ might be at least an exponential function of $|\mathcal{T}_{i-1}|$. Hence, $|\mathcal{T}_i|$ might be at least an exponential function of $|\mathcal{T}_{i-1}|$. Thus, the number of tetrahedra in these triangulations might grow like a tower of exponentials, where the height of the tower is the number of surfaces in the hierarchy. This is the source of the bounds in Theorem \ref{Thm:PachnerFibreFree} and Theorem \ref{Thm:UpperBoundRM}.

\section{The Thurston norm}
\label{Sec:ThurstonNorm}

We saw in Theorem \ref{Thm:3ManifoldKnotGenus} that the problem of deciding whether a knot in a compact orientable 3-manifold bounds a compact orientable surface with genus $g$ is NP-complete. Importantly, the 3-manifold is an input to the problem and so is allowed to vary. What if the manifold is fixed and only the knot can vary? Is the problem still NP-hard? We also saw in Theorem \ref{Thm:LinkGenus} that the problem of deciding whether a link in the 3-sphere bounds a compact orientable surface of genus $g$ is also NP-complete. Here, the background manifold is fixed. However, there is no requirement that the surface respects any particular orientation on the link. So the surface might represent one of many different homology classes in the link exterior. Indeed, a critical point in the proof was a choice of orientation of the link. What if we fix an orientation on the link at the outset and require the Seifert surface to respect this orientation? In particular, what if we focus on knots in the 3-sphere, rather than links, where there is a unique homology class (up to sign) for a Seifert surface?

Perhaps surprisingly, these restricted problems are almost certainly \emph{not} NP-complete. For example, we have the following result of the author \cite{LackenbyEfficientCertification}.

\begin{theorem}
\label{Thm:KnotGenusNPCoNP}
The problem of deciding whether a knot in the 3-sphere bounds a compact orientable surface of genus $g$ is in $\mathrm{NP}$ and $\mathrm{co}$-$\mathrm{NP}$.
\end{theorem}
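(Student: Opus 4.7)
The NP half will essentially reuse the Agol--Hass--Thurston argument from Section \ref{Sec:GenusNPHard}, specialised to $M = S^3$. Given a triangulation $\mathcal{T}$ of the knot exterior $X$, suppose the Seifert genus of $K$ is at most $g$. A minimum-genus Seifert surface may then be isotoped into normal form with $\partial S$ meeting each edge at most once, and decomposed as a normal sum $S = \sum_i \lambda_i S_i$ of fundamental surfaces. Exactly one summand $S'$ can have non-empty boundary (otherwise $\partial S$ would cross some edge more than once), and a least-weight argument forces $S'$ to be an orientable Seifert surface with $\chi(S') \geq \chi(S)$, hence of genus at most $g$. By Theorem \ref{Thm:WeightFundamental} the vector of $S'$ has bit-size polynomial in $|\mathcal{T}|$; this vector will be the NP certificate. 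The verifier will then use the AHT algorithm to confirm in polynomial time that $S'$ is connected, orientable, has a single boundary curve of meridional winding number $1$, and satisfies $\chi(S') \geq 1 - 2g$.

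For the co-NP half, the task is to produce a polynomial-size certificate that \emph{every} Seifert surface for $K$ has genus strictly greater than $g$, equivalently that the Thurston norm of the Seifert class in $H_2(X,\partial X;\mathbb{Z})$ exceeds $2g-1$. The plan will be to exhibit, as part of the certificate, a fundamental normal Seifert surface $S^*$ realising the Thurston norm, together with a structure certifying that $S^*$ is genuinely norm-minimising. Gabai's theory of taut sutured manifolds supplies the tool: $S^*$ is norm-minimising precisely when the sutured manifold $(X \cut S^*, \gamma)$, equipped with the sutures induced by $\partial X$ and $\partial S^*$, is taut and admits a taut sutured manifold hierarchy terminating in product sutured manifolds (essentially $3$-balls with suitable annular sutures). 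In principle this hierarchy certifies the lower bound on $x([S^*])$.

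The main obstacle will be encoding this hierarchy with only polynomial bit-size, because a naive iterative triangulation after each decomposition suffers from the tower-of-exponentials blow-up observed at the end of Section \ref{Sec:Hierarchies}. Following an outline of Agol \cite{AgolCoNP}, the plan will be to package all of the decomposing surfaces together into a single branched surface $B$ of polynomial complexity embedded in $X \cut S^*$, so that the successive decompositions of the hierarchy correspond to integer weight vectors carried by $B$, and the terminal product structures are read off from a bounded amount of combinatorial data about parallelity bundles on $B$. The verifier will perform polynomial-time checks that each prescribed decomposition is genuine and taut, handling the potentially exponential weights of carried surfaces with AHT-style bookkeeping, and will check that the terminal pieces have the required product form. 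Gabai's theorem then guarantees that a successful verification forces $x([S^*]) = -\chi(S^*) = 2g^* - 1 > 2g - 1$, so that the Seifert genus of $K$ strictly exceeds $g$; the NP $\cap$ co-NP status for the exact-genus formulation follows by running this procedure for $g$ and $g-1$. The genuine technical content of \cite{LackenbyEfficientCertification} lies precisely in proving that such a polynomial-size branched-surface certificate always exists.
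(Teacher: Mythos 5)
Your proposal follows essentially the same route as the paper: the NP half is exactly the Agol--Hass--Thurston fundamental-surface certificate of Section~\ref{Sec:GenusNPHard}, and the co-NP half certifies the exact genus via Gabai's taut sutured manifold hierarchies following Agol's outline, deferring the polynomial-size encoding to \cite{LackenbyEfficientCertification} --- which is precisely how the paper deduces the result from Theorem~\ref{Thm:ThurstonNormNP}. The only cosmetic difference is that the paper's sketch keeps the hierarchy small by decomposing along the vertical annuli of parallelity bundles so that each stage carries a triangulation of controlled size, rather than packaging all the decomposing surfaces into a single weighted branched surface, but in either formulation the hard existence statement is the same cited theorem.
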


We also saw in Theorem \ref{Thm:UnknotNPCoNP} that the problem of recognising the unknot is in NP and co-NP. Recall from Conjecture \ref{Con:NPCoNP} that such problems are believed not to be NP-complete.

Recent work of the author and Yazdi \cite{LackenbyYazdi} generalises Theorem \ref{Thm:KnotGenusNPCoNP} to knots in an arbitrary but fixed 3-manifold.

\begin{theorem}
\label{Thm:KnotGenusFixedManifold}
Let $M$ be a fixed compact orientable 3-manifold. Then the problem of deciding whether a knot in $M$ bounds a compact orientable surface of genus $g$ is in $\mathrm{NP}$ and $\mathrm{co}$-$\mathrm{NP}$.
\end{theorem}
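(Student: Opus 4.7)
The plan is to treat the NP and co-NP parts separately. The NP part is almost immediate from \refthm{3ManifoldKnotGenus}: the fixed-$M$ version is a restriction of the general problem, so if the input is a triangulation $\mathcal{T}$ of $M$ with $K$ as a subcomplex, one re-uses the certificate of \refsec{GenusNPHard}. Namely, the witness is the vector of a fundamental normal surface $S$ in the exterior $M_K$; its bit-complexity is polynomial in $|\mathcal{T}|$ by \refthm{WeightFundamental}, and the Agol--Hass--Thurston algorithm verifies in polynomial time that $S$ is a connected orientable Seifert surface for $K$ of genus at most $g$.

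The substance is the co-NP certification, for which the plan is to mirror the recent co-NP proofs for unknot recognition and knot genus in $S^3$ in \cite{LackenbyEfficientCertification}, but now accommodating the topology of an arbitrary fixed $M$. Re-cast the question in terms of the Thurston norm: if $\sigma \in H_2(M_K, \partial M_K;\mathbb{Z})$ is the class of a Seifert surface, then, after the usual adjustment making a norm-minimising representative connected, the Seifert genus of $K$ is essentially $(1 + \|\sigma\|_T)/2$, and hence a co-NP witness is required to certify a lower bound $\|\sigma\|_T \ge 2g$.

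Because $M$ is fixed, its JSJ decomposition, the Seifert or hyperbolic structure on each JSJ piece, and all associated geometric constants can be regarded as preprocessed data baked into the algorithm. A norm-minimising Seifert surface may be isotoped so as to meet the JSJ tori of $M_K$ in essential curves, and Thurston's additivity of $\|\cdot\|_T$ across tori reduces the required lower bound to a sum of lower bounds in the JSJ pieces. For Seifert fibered and $I$-bundle pieces, the Thurston norm is given by explicit combinatorial formulae and is computable in polynomial time. For hyperbolic pieces not containing $K$, the plan is to apply the main certification technique of \cite{LackenbyEfficientCertification}: using taut sutured manifold hierarchies combined with efficient normal-surface encodings, one produces a polynomial-size witness to a lower bound on the Thurston norm of a prescribed homology class, with all geometric constants depending only on the fixed hyperbolic structure.

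The main obstacle is the JSJ piece $V$ of $M$ that contains $K$. Here the meridian of $K$ becomes a boundary slope of $V$ that varies with the input, and one must show that the co-NP certification of \cite{LackenbyEfficientCertification} extends \emph{uniformly} over all meridional slopes on a fixed cusp of $V$: the sizes of the hierarchy surfaces and the lengths of the certificates must remain polynomial in the combinatorial length of the meridian (and hence in $|\mathcal{T}|$), while the hyperbolic geometry of $V$ itself stays fixed. Once this uniform version of the norm-bounding machinery is established, the piecewise certificates can be glued along the JSJ tori via Thurston additivity to yield a single polynomial-size co-NP witness, completing the proof.
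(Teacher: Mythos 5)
Your NP half is unproblematic: with the knot given as a subcomplex of a triangulation of $M$, the fixed-manifold problem is a restriction of the problem in \refthm{3ManifoldKnotGenus}, so the certificate of \refsec{GenusNPHard} applies verbatim. Note, though, that this survey does not actually prove \refthm{KnotGenusFixedManifold}: it is quoted from \cite{LackenbyYazdi}, and the methods explained in \refsec{ThurstonNorm} are those for the $S^3$ case which that paper generalises. So your co-NP argument has to stand on its own, and it has two genuine gaps.

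First, you reduce the genus of $K$ to the Thurston norm of a single class $\sigma \in H_2(M_K,\partial M_K)$ of a Seifert surface. For a general fixed $M$ this is false: Seifert surfaces for $K$ need not be homologous, and their classes sweep out a coset of the image of $H_2(M_K)$ in $H_2(M_K,\partial M_K)$ (for instance in $M = S^1 \times S^2$, or in any closed $M$ with $b_1(M)>0$, one can change the class by non-separating closed surfaces). The genus of $K$ is the minimum of the adjusted norm over this typically infinite family of classes, so a co-NP certificate must exclude low-genus surfaces in \emph{every} such class; this is precisely one of the new difficulties in passing from $S^3$, where the Seifert class is unique, and your proposal does not address it at all. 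Second, your JSJ reduction conflates the JSJ decomposition of the fixed manifold $M$ with that of the input-dependent exterior $M_K$. The piece containing $K$ is not a fixed hyperbolic manifold with one varying boundary slope: the manifold that matters there is $V \setminus N(K)$, further cut along its own JSJ tori, and its topology and geometry vary without bound as $K$ varies inside the fixed $V$, so no geometric constants can be preprocessed for it. For that piece your task is the original problem over again, and you explicitly leave the required ``uniform'' certification as an assumption, so the argument reduces the theorem to an unproved claim at least as hard as the theorem itself. (Even granting it, additivity of the Thurston norm along the JSJ tori involves minimising over the boundary curves on those tori, a further infinite minimisation your certificate would have to control.) The proof indicated by the paper, namely the generalisation in \cite{LackenbyYazdi} of the sutured-manifold-hierarchy certification behind \refthm{ThurstonNormNP} and \cite{LackenbyEfficientCertification}, has to confront these issues directly rather than routing through the JSJ decomposition of the fixed $M$.
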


We will discuss in this section the methods that go into the proof of Theorem \ref{Thm:KnotGenusNPCoNP}.

\subsection{The Thurston norm}

\begin{definition}
\label{Def:ThurstonComplexity}
The \emph{Thurston complexity} $\chi_-(S)$ for a compact connected surface $S$ is $\max \{ 0, -\chi(S) \}$. The Thurston complexity $\chi_-(S)$ of a compact surface $S$ with components $S_1, \dots, S_n$ is $\sum_i \chi_-(S_i)$.
\end{definition}

\begin{definition}
Let $M$ be a compact orientable 3-manifold. The \emph{Thurston norm} $x(z)$ of a class $z \in H_2(M, \partial M)$ is the minimal Thurston complexity of a compact oriented properly embedded surface representing $z$.
\end{definition}

Theorem \ref{Thm:KnotGenusNPCoNP} is a consequence of the following result.

\begin{theorem}
\label{Thm:ThurstonNormNP}
The following problem is in $\mathrm{NP}$. The input is a triangulation of a compact orientable 3-manifold $M$, a simplicial cocycle $c$ representing an element $[c]$ of $H^1(M)$ and a non-negative integer $n$ in binary. The size of the input is defined to be the sum of the number of tetrahedra in the triangulation, the number of digits of $n$ and the sum of the number of digits of $c(e)$ as $e$ runs over each edge of the triangulation. The output is an answer to the question of whether the Poincar\'e dual of $[c]$ has Thurston norm $n$.
\end{theorem}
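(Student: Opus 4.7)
The plan is to produce a polynomial-size certificate that the Poincaré dual of $[c]$ has Thurston norm exactly $n$. The certificate splits into two pieces: an \emph{upper bound certificate} supplying a properly embedded oriented surface $S$ representing $\mathrm{PD}[c]$ with $\chi_-(S) \leq n$, and a \emph{lower bound certificate} showing that no representative of $\mathrm{PD}[c]$ has Thurston complexity smaller than $n$.

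For the upper bound I would work inside the normal solution space $\mathcal{N}$ of $\mathcal{T}$. The vectors of oriented normal surfaces representing $\mathrm{PD}[c]$ form an affine slice of $\mathcal{N}$, and by an argument in the spirit of the proof of Theorem~\ref{Thm:WeightFundamental}, a Thurston-norm-minimising representative can be chosen so that its coordinate vector is a small non-negative rational combination of vertex vectors of the relevant rational polytope. Its bit-size is therefore polynomial in $|\mathcal{T}|$, $\log n$, and $\sum_e \log(1+|c(e)|)$. The verifier checks the matching equations and quadrilateral conditions, computes $\chi(S)$ as a linear function of the coordinates, and then uses the Agol--Hass--Thurston algorithm to compute the signed intersections of $S$ with a basis of cycles for $H_1(M)$, thereby confirming $[S]=\mathrm{PD}[c]$. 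Everything runs in polynomial time in the bit-length of the stored vector, even though $S$ itself may have exponential weight.

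The main obstacle is the lower bound. Following the strategy of \cite{LackenbyEfficientCertification}, one uses that a Thurston-norm-minimising $S$ may be chosen to be \emph{taut} and to admit a Gabai-style sutured manifold hierarchy starting from the sutured manifold obtained by cutting along $S$ and terminating in a disjoint union of product sutured manifolds; the existence of such a hierarchy certifies that $S$ is norm-realising. Thus the lower-bound certificate lists a sequence of decomposing surfaces $S_1, S_2, \dots, S_{k-1}$ in the successive sutured manifolds $(M_i,\gamma_i)$, each stored as a normal coordinate vector with respect to a triangulation that is recorded implicitly by book-keeping on the original $\mathcal{T}$ rather than materialised. At each stage the verifier uses the Agol--Hass--Thurston algorithm to check the topological conditions required of a taut decomposing surface (intersection numbers with the sutures, connectedness, witnesses of product structure for the final pieces) in time polynomial in the stored bit-length.

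The hard part will be keeping three quantities simultaneously polynomial: the length $k$ of the hierarchy, the bit-size of each normal vector $S_i$, and the combinatorics of the intermediate sutured manifolds, which must \emph{never} be made explicit as a triangulation in order to avoid the tower-of-exponentials blow-up of Haken's original method described in Section~\ref{Sec:Hierarchies}. The essential new ingredient, as in \cite{LackenbyEfficientCertification}, is a vertex-surface/polytope argument combined with AHT, ensuring both that each decomposing $S_i$ admits a polynomial-size description as a low-complexity combination of vertex surfaces of the appropriate taut normal polytope, and that the successive cutting operations can be tracked by polynomial-size bookkeeping data rather than fresh triangulations.
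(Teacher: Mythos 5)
Your overall architecture matches the paper's: the certificate is a taut normal representative $S$ of $\mathrm{PD}[c]$ together with a Gabai-style sutured manifold hierarchy beginning with $S$, whose tautness-pulls-back property (Theorem \ref{Thm:TautnessPullsBack}) certifies that $\chi_-(S)$ equals the norm, with AHT doing the polynomial-time verification of an exponentially heavy surface. But at the point you yourself flag as ``the hard part'' there is a genuine gap: you assert that the intermediate sutured manifolds can be ``recorded implicitly by book-keeping on the original $\mathcal{T}$'' and that a ``vertex-surface/polytope argument combined with AHT'' makes everything polynomial, without supplying the mechanism that actually prevents the tower-of-exponentials blow-up. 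The difficulty is concrete: after cutting along a normal surface of exponential weight, the cut-open manifold has no obvious polynomial-size description on which a verifier could check that the next decomposing surface is admissible and, at the end, that the pieces are taut balls; ``bookkeeping on $\mathcal{T}$'' is exactly what fails in Haken's setting, and invoking vertex surfaces again does not address it.

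The missing idea in the paper's argument is the parallelity bundle. In each tetrahedron at most $6$ pieces of $\Delta \cut S$ fail to lie between normally parallel discs, so $M \cut S$ is a bounded number ($\leq 6|\mathcal{T}|$) of blocks together with an $I$-bundle whose components' topology can be computed by AHT in polynomial time. The decisive point, special to sutured manifold theory and unavailable for Haken's boundary-pattern hierarchies, is that one \emph{is} allowed to decompose along the vertical annuli of this $I$-bundle (each has one boundary circle in $R_-$ and one in $R_+$, and such annulus decompositions preserve tautness under mild hypotheses) and then along vertical discs cutting the bundle into balls. After inserting these extra decompositions, the remaining manifold admits an explicit triangulation with a controlled number of tetrahedra, so the next stage of the hierarchy can again be given by a polynomial-size normal vector, and the whole hierarchy (of controlled length) is encoded and verified in polynomial time. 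Without this step, or some substitute for it, your proposal does not yield a verifiable polynomial-size certificate for the lower bound. (Two smaller omissions: the theorem as stated allows reducible $M$ and non-toral boundary, whereas the hierarchy argument as you and the sketch set it up needs $M$ irreducible with empty or toral boundary, so the general case needs additional handling; and Theorem \ref{Thm:SuturedHierarchiesExist} terminates in taut balls, not general product sutured manifolds, which is what the verifier should check.)
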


\begin{proof}[Proof of Theorem \ref{Thm:KnotGenusNPCoNP}] Let $D$ be a diagram of a knot $K$ with $c(D)$ crossings and let $g$ be a natural number. Let $g(K)$ be the genus of $K$, which therefore lies between $0$ and $(c(D)-1)/2$.  If $g \geq g(K)$, then Theorem \ref{Thm:3ManifoldKnotGenus} provides a certificate, verifiable in polynomial time, that $K$ bounds a compact orientable surface of genus $g$. So suppose that $g < g(K)$. From the diagram, we can construct a triangulation $\mathcal{T}$ of the exterior $M$ of $K$ where the number of tetrahedra is bounded above by a linear function of $c(D)$. We can also give a cocycle $c$ representing a generator for $H^1(M)$. Theorem \ref{Thm:ThurstonNormNP} provides a certificate that the Thurston norm of the dual of $[c]$ is $\max \{ 2g(K) - 1, 0 \}$ and hence that the genus of $K$ is $g(K)$. This establishes that $K$ does not bound a compact orientable surface of genus $g$.
\end{proof}

Exactly the same argument establishes that unknot recognition lies in co-NP, which is one half of Theorem \ref{Thm:UnknotNPCoNP}.

\subsection{Sutured manifolds}

Theorem \ref{Thm:ThurstonNormNP} is proved using Gabai's theory of sutured manifolds \cite{Gabai}.

\begin{definition} 
A \emph{sutured manifold} is a compact orientable 3-manifold $M$ with two specified subsurfaces $R_-$ and $R_+$ of $\partial M$. These must satisfy the condition that $R_- \cup R_+ = \partial M$ and that $R_- \cap R_+ = \partial R_- = \partial R_+$. The subsurface $R_-$ is transversely oriented into $M$, and $R_+$ is transversely oriented outwards. The curves $\gamma = R_- \cap R_+$ are called \emph{sutures}. The sutured manifold is denoted $(M, \gamma)$.
\end{definition}

\begin{definition}
A compact oriented embedded surface $S$ in $M$ with $\partial S \subset \partial M$ is said to be \emph{taut} if it is incompressible and it has minimal Thurston complexity in its class in $H_2(M, \partial S)$.
\end{definition}

A basic example of a taut surface is a minimal genus Seifert surface in the exterior of a knot. Indeed, when $M$ has toral boundary, then a compact oriented properly embedded surface $S$ is taut if and only if it is incompressible and has minimal Thurston complexity in its class in $H_2(M, \partial M)$.

\begin{definition}
A sutured manifold $(M, \gamma)$ is \emph{taut} if $M$ is irreducible and $R_-$ and $R_+$ are both taut.
\end{definition}

\begin{definition}
Let $(M, \gamma)$ be a sutured manifold and let $S$ be a properly embedded transversely oriented surface that intersects $\gamma$ transversely. Then $M \cut S$ inherits a sutured manifold structure $(M \cut S, \gamma_S)$ as follows. There are two copies of $S$ in $\partial (M \cut S)$, one pointing inwards, one outwards. These form part of $R_-(M \cut S, \gamma_S)$ and $R_+(M \cut S, \gamma_S)$. Also, the intersection between $R_-(M, \gamma)$ and $M \cut S$ forms the remainder of the the inward pointing subsurface. The outward pointing subsurface is defined similarly. This is termed a \emph{sutured manifold decomposition} and is denoted
$$(M, \gamma) \xrightarrow{S} (M \cut S, \gamma_S).$$
\end{definition}

\begin{definition}
A \emph{sutured manifold hierarchy} is a sequence of decompositions 
$$(M_1, \gamma_1) \xrightarrow{S_1} (M_2, \gamma_2) \xrightarrow{S_2} \dots \xrightarrow{S_n} (M_{n+1}, \gamma_{n+1})$$
where each $(M_i, \gamma_i)$ is taut, each surface $S_i$ is taut and $(M_{n+1}, \gamma_{n+1})$ is a collection of taut 3-balls.
\end{definition}

The fundamental theorems of sutured manifold theory are the following surprising results \cite[Theorems 2.6, 3.6 and 4.19]{Scharlemann}

\begin{theorem}
\label{Thm:TautnessPullsBack}
Let 
$$(M_1, \gamma_1) \xrightarrow{S_1} (M_2, \gamma_2) \xrightarrow{S_2} \dots \xrightarrow{S_n} (M_{n+1}, \gamma_{n+1})$$
be a sequence of sutured manifold decompositions with the following properties for each surface $S_i$:
\begin{enumerate}
\item no component of $\partial S_i$ bounds a disc in $\partial M_i$ disjoint from $\gamma_i$;
\item no component of $S_i$ is a compression disc for a solid toral component of $M_i$ with no sutures.
\end{enumerate}
Suppose that $(M_{n+1}, \gamma_{n+1})$ is taut. Then every sutured manifold $(M_i, \gamma_i)$ is taut and every surface $S_i$ is taut.
\end{theorem}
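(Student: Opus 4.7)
The plan is to proceed by reverse induction on $i$, with base case $i=n+1$ given by hypothesis. For the inductive step, assume $(M_{i+1},\gamma_{i+1})$ is taut; we then need to establish three things in $(M_i,\gamma_i)$: (a) $M_i$ is irreducible, (b) both $R_-(M_i)$ and $R_+(M_i)$ are Thurston-norm minimizing in their relative homology classes, and (c) $S_i$ is incompressible and has minimal Thurston complexity in its class in $H_2(M_i,\partial S_i)$.

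Each of the three failures is handled by the same strategy. Suppose toward contradiction that a witness surface $F\subset M_i$ exists: either an essential $2$-sphere refuting (a), a surface of strictly smaller Thurston complexity in the class of $R_\pm(M_i)$ refuting (b), or an incompressibility disc or a smaller-complexity representative of $[S_i]$ refuting (c). Isotope $F$ into general position with $S_i$ so that $F\cap S_i$ is a properly embedded $1$-manifold in both surfaces. One then simplifies $F\cap S_i$ by the standard innermost-disc and outermost-arc moves: innermost circles of $F\cap S_i$ bound discs in $S_i$ that can be used to compress $F$ (removing a disc and replacing it by a parallel copy), and outermost arcs on $S_i$ give $\partial$-compressions of $F$. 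Each such move does not increase $\chi_-(F)$. These moves can be carried out because, once one knows $S_i$ is incompressible (which one shows first, separately, by the compression-disc case of the argument using condition (2) to rule out the trivial case of a compressing disc for a sutureless solid torus), every innermost disc of $F$ on $S_i$ can be matched by a disc in $S_i$.

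After the reduction, cut $M_i$ along $S_i$ to form $M_{i+1}$ and look at $F':=F\cap M_{i+1}$. The core computation is that $F'$ can be assembled, together with parallel copies of $S_i$ and pieces of $R_\pm(M_i)\cap M_{i+1}$, into an oriented surface $\widehat F\subset M_{i+1}$ representing the class in $M_{i+1}$ of one of the reference surfaces $R_\pm(M_{i+1})$ (or an essential $2$-sphere, in case (a)). Because $\chi_-$ is additive over components and because the simplification of $F\cap S_i$ never created new components bounding discs in $\partial M_{i+1}\setminus\gamma_{i+1}$, one obtains $\chi_-(\widehat F)<\chi_-(R_\pm(M_{i+1}))$ (or a non-trivial $2$-sphere in $M_{i+1}$), contradicting tautness of $(M_{i+1},\gamma_{i+1})$.

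The main obstacle is precisely this last bookkeeping step, and this is where the technical hypotheses (1) and (2) enter decisively. Without (1), a boundary curve of $S_i$ could bound a disc in $\partial M_i\setminus\gamma_i$; such a curve produces an extraneous disc component upon cutting, which artificially inflates $\chi_-(R_\pm(M_{i+1}))$ relative to $\chi_-(R_\pm(M_i))$, so that even a legitimately smaller-complexity $F$ in $M_i$ fails to give a smaller-complexity $\widehat F$ in $M_{i+1}$. Without (2), a component of $S_i$ could be a non-separating meridional disc in a sutureless solid torus piece, and cutting along it would collapse a solid torus to a ball in a way that similarly obstructs the complexity comparison. Both conditions together guarantee that the complexity accounting $\chi_-(R_\pm(M_{i+1}))=\chi_-(R_\pm(M_i))+\chi_-(S_i)-(\text{trivial boundary contributions})$ holds with no trivial boundary contributions, so the strict inequality $\chi_-(F)<\chi_-(F_0)$ in $M_i$ transfers cleanly to a strict inequality in $M_{i+1}$, producing the required contradiction and closing the induction.
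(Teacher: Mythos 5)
The paper does not prove this theorem; it quotes it from Scharlemann (ultimately Gabai), so the relevant comparison is with those proofs, and your sketch has a genuine gap exactly at its central step. The innermost-disc/outermost-arc simplification cannot deliver what you need: such moves only remove curves or arcs of $F\cap S_i$ that are inessential in one of the two surfaces. After compressing $F$ (harmless for a norm-minimizing competitor) and invoking incompressibility of $S_i$, you are still left with intersection curves that are essential in \emph{both} $F$ and $S_i$, and these cannot be eliminated; there is no reason a competitor for $R_\pm(M_i)$, for $[S_i]$, or a reducing sphere can be isotoped to meet $S_i$ in a controlled pattern. Consequently $F'=F\cap M_{i+1}$ is obtained by cutting $F$ along essential curves, and your claim that $F'$, together with parallel copies of $S_i$ and pieces of $R_\pm(M_i)$, ``can be assembled'' into a surface $\widehat F$ in the class of $R_\pm(M_{i+1})$ with $\chi_-(\widehat F)<\chi_-(R_\pm(M_{i+1}))$ is precisely the hard content of the theorem, asserted rather than proved. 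The homology bookkeeping does not transfer naively: the $3$-chain in $M_i$ realizing $F\sim R_+(M_i)$ meets $S_i$, so after cutting there is no canonical class for the pieces and no a priori bound relating $\chi_-(\widehat F)$ to $\chi_-(F)+\chi_-(S_i)$; the additivity formula you write for $\chi_-(R_\pm(M_{i+1}))$ is likewise unproved (conditions (1) and (2) are indeed what rule out disc and sphere degenerations, but that, too, needs an argument). Even the incompressibility of $S_i$, which you defer to a ``separate'' case, is not immediate: the boundary of a compressing disc, essential in $S_i$, may be inessential in $R_\pm(M_{i+1})$, and excluding this uses irreducibility of $M_{i+1}$ together with condition (1).

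The known proofs take a genuinely different route precisely because of this obstacle. Gabai constructs, inductively up the hierarchy, a taut foliation of $(M_i,\gamma_i)$ from one on $(M_{i+1},\gamma_{i+1})$ in which $R_\pm(M_i)$ (and, via a doubling trick, $S_i$) appear as compact leaves, and then invokes Thurston's theorem that compact leaves of taut foliations minimize the Thurston norm. Scharlemann's combinatorial proof, the one cited here, replaces your innermost-disc reduction by the oriented double-curve sum: competitors are first put in ``conditioned'' form, then summed with copies of $S_i$ and of $R_\pm$; the key estimates are that these oriented cut-and-paste operations do not increase $\chi_-$ in the relevant classes and that the summed surface is compatible with the decomposition along $S_i$, so the comparison with $R_\pm(M_{i+1})$ can legitimately be carried out inside $M_{i+1}$. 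Either mechanism supplies exactly the step your proposal treats as bookkeeping; without one of them the induction does not close.
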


\begin{theorem}
\label{Thm:SuturedHierarchiesExist}
Let $(M, \gamma)$ be a taut sutured manifold and let $z \in H_2(M, \partial M)$ be a non-trivial class. Then there is a sutured manifold hierarchy for $(M, \gamma)$ satisfying (1) and (2) in Theorem \ref{Thm:TautnessPullsBack} and where the first surface $S_1$ satisfies $[S_1] = z$.
\end{theorem}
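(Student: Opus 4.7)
The plan is to construct the hierarchy inductively, decomposing at each stage along a carefully chosen taut surface, and to guarantee termination via a lexicographic complexity function on taut sutured manifolds. The watchword throughout will be that every decomposition must satisfy conditions (1) and (2) of Theorem \ref{Thm:TautnessPullsBack}, so that tautness is automatically preserved and we may apply Theorem \ref{Thm:TautnessPullsBack} itself to conclude tautness of each $(M_i, \gamma_i)$.

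First I would construct $S_1$. By definition of the Thurston norm, the class $z$ is represented by an oriented properly embedded surface of minimal Thurston complexity, which is automatically incompressible and hence taut. I would then adjust this surface within its class, without increasing $\chi_-$, so that conditions (1) and (2) hold: any boundary component that bounds a disc in $\partial M \setminus \gamma$ can be compressed away, since such a disc is null-homologous in $\partial M$; and any component that is a compression disc for a sutureless solid torus component of $M$ can simply be deleted, as such a disc carries no relative homology. This produces the required $S_1$, and Theorem \ref{Thm:TautnessPullsBack} applied to the single decomposition $(M,\gamma) \xrightarrow{S_1} (M_2, \gamma_2)$ makes $(M_2, \gamma_2)$ taut.

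Next I would iterate, for which two ingredients are needed. The first is a supply of nontrivial homology classes: one shows that any taut sutured manifold which is not a disjoint union of taut $3$-balls admits, on some component, a nontrivial class in $H_2$ relative to the boundary along which to decompose. This follows from a long exact sequence argument using the fact that $R_\pm(\gamma)$ is taut, and in particular contains neither sphere components nor boundary-parallel disc components. The second ingredient is Gabai's complexity $c(M,\gamma)$, a lexicographically ordered tuple whose leading term involves $\chi_-(R_\pm(\gamma))$ and whose secondary terms count exceptional components of $R_\pm$. A careful analysis of how $R_\pm$ transforms under a conditioned taut decomposition then yields the strict decrease $c(M_{i+1}, \gamma_{i+1}) < c(M_i, \gamma_i)$, forcing termination; at termination the manifold must be a disjoint union of taut $3$-balls, because otherwise the first ingredient would supply a further class and a further decomposition.

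The hard part will be verifying the strict decrease of the complexity. Without conditions (1) and (2) on $S_i$, the complexity can stagnate: a spurious boundary component of $S_i$ bounding a disc in $\partial M_i \setminus \gamma_i$, or a disc component compressing a sutureless solid torus, contributes nothing to the reduction of $\chi_-(R_\pm)$, and the basic bound $\chi_-(R_\pm(\gamma_{i+1})) \leq \chi_-(R_\pm(\gamma_i)) + \chi_-(S_i)$ can fail to be strict. Thus at every stage the surface $S_i$ must be simultaneously norm-minimising in a chosen nonzero class and conditioned in the sense of Theorem \ref{Thm:TautnessPullsBack}, and showing that these properties can always be arranged together, component by component, is the technical heart of Gabai's argument.
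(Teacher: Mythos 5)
Note first that the paper does not prove this theorem at all: it is quoted from the literature (Gabai \cite{Gabai}, Scharlemann \cite{Scharlemann}), so your sketch has to be measured against the standard proof there. The central gap is that you apply Theorem \ref{Thm:TautnessPullsBack} in the wrong direction. That theorem assumes the \emph{final} manifold of a conditioned sequence of decompositions is taut and concludes that the earlier manifolds and surfaces are taut; it does not say that decomposing a taut sutured manifold along a norm-minimizing surface satisfying (1) and (2) produces a taut sutured manifold. The forward statement is false in general: for an arbitrary norm-minimizing representative of $z$, the surfaces $R_\pm$ of $(M_2,\gamma_2)$ may be compressible or fail to be norm-minimizing in $M \cut S_1$. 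Arranging tautness of the decomposition is precisely the technical heart of the existence theorem: in Gabai's argument the decomposing surface is modified (e.g.\ by double-curve sums with copies of $R_\pm$ and by careful positioning relative to $\gamma$, keeping $[S_1]=z$ and conditions (1) and (2)) so that the decomposed manifold is taut, and this must be done at every stage of the hierarchy. Your induction simply asserts tautness of $(M_{i+1},\gamma_{i+1})$ by citing the pull-back theorem, so the inductive step is unsupported. (A smaller error: your justification of condition (2) — that a compressing disc of a sutureless solid torus component ``carries no relative homology'' — is wrong, since such a disc generates $H_2(V,\partial V)\cong\mathbb{Z}$; the case is actually vacuous because a taut sutured manifold has no sutureless solid torus components, the boundary torus being a compressible component of $R_\pm$.)

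The termination argument also fails as stated. You take a lexicographic complexity whose leading term is $\chi_-(R_\pm(\gamma))$ and claim it strictly decreases under conditioned taut decompositions, but a sutured manifold decomposition adds two copies of $S_i$ to $R_\pm$ and typically \emph{increases} $\chi_-(R_\pm)$: cutting a knot exterior with two meridional sutures (so $\chi_-(R_\pm)=0$) along a minimal genus Seifert surface of genus $g\geq 1$ yields $\chi_-(R_\pm)=2g-1>0$. Hence no complexity with that leading term can be monotone, and conditions (1) and (2) do not repair this. The actual termination proofs (Gabai's Theorem 4.2, Scharlemann's Theorem 4.19) rest on a substantially more intricate complexity for sutured manifolds, in the spirit of Kneser--Haken finiteness for systems of disjoint non-parallel surfaces, combined with a disciplined choice of decomposing surface at every stage; it is not a consequence of Thurston-norm bookkeeping on $R_\pm$. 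Your first ingredient (a taut sutured manifold with $H_2(M_i,\partial M_i)=0$ on every component is a union of taut balls) is fine, but the two load-bearing steps — pushing tautness forward and forcing termination — are exactly the ones your proposal gets wrong or leaves out.
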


Such a hierarchy can be viewed as certificate for the tautness of the first surface $S_1$ and hence for the Thurston norm of $[S_1]$ in the case where $\partial M$ is empty or toral. Note that the tautness of the final manifold $(M_{n+1}, \gamma_{n+1})$ is easily verified. This is because a 3-ball with a sutured manifold structure is taut if and only it has at most one suture.

\subsection{A certificate verifiable in polynomial time} Theorems \ref{Thm:TautnessPullsBack} and \ref{Thm:SuturedHierarchiesExist} imply that sutured manifold hierarchies can be used to establish the Thurston norm of a homology class in $H_2(M, \partial M)$, provided $\partial M$ is empty or toral and $M$ is irreducible. However, it is somewhat surprising that they can be used to form a certificate that is verifiable in polynomial time. Indeed, the discussion at the end of Section \ref{Sec:Hierarchies} suggests that it is hard to control the complexity of hierarchies.

However, sutured manifold hierarchies seem to be much more tractable than ordinary ones. As we will see, the reason for this is that there is an important distinction between the behaviour of sutures and boundary patterns when a manifold is cut along a surface.

Recall that the main source of the complexity of hierarchies is that a fundamental normal surface $S$ in a compact orientable 3-manifold $M$ may have exponentially many triangles and squares, as a function of the number of tetrahedra in the triangulation $\mathcal{T}$ of $M$. Thus, when we attempt to build a triangulation of $M \cut S$, we may need exponentially many tetrahedra. However, there are at most $5$ different triangle and square types that can coexist within a tetrahedron $\Delta$ of $\mathcal{T}$. Hence, all but at most $6$ components of $\Delta \cut S$ lie between parallel normal discs. These regions patch together to form an $I$-bundle embedded in $M \cut S$ called its \emph{parallelity bundle} \cite{LackenbyCrossing}. Thus, $M \cut S$ is composed of at most $6|\mathcal{T}|$ bits of tetrahedra with the parallelity bundle attached to them. Even when $S$ is exponentially complicated, it is possible to determine the topological types of the components of the parallelity bundle in polynomial time using the AHT algorithm. Hence, in fact, $M \cut S$ is not as complicated as it first seems.

It would be ideal if the next two stages of the hierarchy after $S$ consisted of the annuli that form the vertical boundary of the parallelity bundle, and then vertical discs in the $I$-bundle that decompose it to balls. Then the resulting manifold would have a simple triangulation. In the case of Haken's hierachies in Section \ref{Sec:Hierarchies}, this is not possible. It is not permitted to decompose along vertical annuli in an $I$-bundle that are disjoint from the boundary pattern. In fact, before an $I$-bundle can be decomposed in Haken's hierarchies, its horizontal boundary must first receive non-empty boundary pattern, from decompositions along surfaces elsewhere in the manifold. 

However, in the case of sutured manifolds, these sort of decompositions are allowed, under some fairly mild hypotheses. In particular, a decomposition along an incompressible annulus is always permitted, provided one boundary component lies in $R_-$ and one lies in $R_+$. It is therefore possible, after these decompositions, to obtain a triangulation of the resulting manifold with a controlled number of tetrahedra. In this way, we may build the entire sutured manifold hierarchy for $M$ and encode it in a way that makes it possible to verify in polynomial time that the final manifold consists of taut balls and that it satisfies (1) and (2) of Theorem \ref{Thm:TautnessPullsBack}. This is the basis for the author's proof \cite{LackenbyEfficientCertification} of Theorem \ref{Thm:ThurstonNormNP}.

\section{3-sphere recognition and Heegaard splittings}
\label{Sec:AlmostNormal}

In groundbreaking work \cite{Rubinstein}, Rubinstein proved the following fundamental algorithmic result.

\begin{theorem}
There is an algorithm to determine whether a 3-manifold is the 3-sphere.
\end{theorem}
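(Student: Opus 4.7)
The plan is to follow Rubinstein's almost normal surface approach (later clarified by Stocking and Thompson). The input is a triangulation $\mathcal{T}$ of a closed orientable 3-manifold $M$, and the algorithm must decide whether $M \cong S^3$. First I would test irreducibility of $M$ by an analogue of \ref{Thm:AlgorithmCompressible}: enumerate the fundamental normal surfaces and search for a normal 2-sphere that fails to bound a 3-ball. If such a sphere exists, then $M$ is reducible, hence not $S^3$; otherwise we may assume $M$ is irreducible, and (following Jaco--Rubinstein) we may further simplify $\mathcal{T}$ to a 0-efficient triangulation, which will make the subsequent arguments cleaner.

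Next I would introduce almost normal surfaces: a surface that intersects each tetrahedron in a union of normal discs, except in exactly one tetrahedron where the intersection contains either a single octagonal disc (meeting the faces in four normal arcs but meeting some edge twice) or two normal discs joined by an unknotted tube parallel to an edge. These correspond to integer points of a modified normal solution space with one extra coordinate, the ``almost normal'' coordinate, which is non-zero in only one tetrahedron. In particular the notion of a fundamental almost normal surface makes sense, their weights are bounded by the same kind of determinant estimate that underlies \ref{Thm:WeightFundamental}, and they may all be listed algorithmically.

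The central ingredient is the Rubinstein--Thompson theorem: if $M$ is closed orientable and homeomorphic to $S^3$, then any triangulation of $M$ contains a fundamental almost normal 2-sphere. This is proved by taking the standard genus-0 Heegaard splitting of $S^3$ and sweeping $M$ by its level surfaces; a thin-position argument, together with a careful analysis of saddle tangencies between the generic 2-sphere levels and the 2-skeleton of $\mathcal{T}$, produces a level 2-sphere that can be isotoped into almost normal form. One then reduces to a fundamental almost normal summand, noting that the unique octagon or tube coordinate cannot be split across two summands, so the fundamental piece inherits the almost normal type and, by an Euler characteristic tally, is still a 2-sphere. Conversely, in an irreducible $M$, any almost normal 2-sphere $S$ resolves into a pair of normal surfaces by performing the two compressions along the octagon, or by removing the unknotted tube; an Euler characteristic count shows each component of the resolution is a 2-sphere, irreducibility forces each to bound a 3-ball, and a short combinatorial argument then shows that $S$ itself bounds 3-balls on both sides, so $M \cong S^3$.

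The algorithm then proceeds in the obvious way: enumerate all fundamental almost normal surfaces in $\mathcal{T}$, use the Agol--Hass--Thurston machinery of \ref{Sec:AHT} to decide efficiently which of them are connected 2-spheres, and output ``yes'' precisely when such an almost normal 2-sphere is found inside an irreducible $M$. The main obstacle is the Rubinstein--Thompson existence result in the third step: the sweepout argument is delicate, requiring one to handle the generic tangency types between the sweepout levels and the 2-skeleton and to avoid Heegaard splittings that are weakly reducible, and the subsequent passage to a fundamental summand demands a normal-summation theory for almost normal surfaces subtle enough to guarantee that the octagon/tube is never destroyed by the decomposition.
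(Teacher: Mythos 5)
Your overall route (Rubinstein--Thompson via almost normal 2-spheres, thin position for existence, enumeration of fundamental/vertex almost normal surfaces for the search) is the same as the paper's, but two steps as you describe them have genuine gaps. First, your opening move --- ``search for a normal 2-sphere that fails to bound a 3-ball'' --- is circular: deciding whether an embedded 2-sphere bounds a ball is 3-ball recognition, which is equivalent in difficulty to the very problem you are solving, and no primitive for it is available at this stage. This is precisely why the actual algorithms never test irreducibility directly. Instead one uses the Jaco--Rubinstein crushing procedure: whenever a non-vertex-linking normal 2-sphere exists (detectable among vertex/fundamental surfaces), crush it, strictly reducing the number of tetrahedra, and recurse on the pieces, keeping track of the summands that crushing can discard; this terminates in one-vertex triangulations whose only normal sphere is vertex-linking, apart from the small exceptional manifolds ($S^3$, $\mathbb{RP}^3$, $L(3,1)$) which are handled separately. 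You mention 0-efficiency as optional ``cleanup,'' but in fact it is the mechanism that replaces the impossible ball-bounding test, and it is also a hypothesis you need later, not a convenience.

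Second, your converse direction is logically broken: from ``the resolved normal spheres bound balls (by irreducibility)'' you cannot conclude that the almost normal sphere $S$ bounds balls on \emph{both} sides --- irreducibility only gives a ball on one side of each sphere, and an almost normal 2-sphere in an irreducible manifold need not split it into two balls unless more is assumed. The correct argument, as in Theorem \ref{Thm:AlmostNormal2Sphere}, uses the hypotheses that the triangulation has a single vertex and that its only normal 2-sphere is the vertex-linking one: starting from the almost normal sphere one performs the canonical weight-reducing isotopies along edge compression discs, always in the same direction, so the region swept out is a product $S \times [0,1]$ terminating either at the vertex-linking sphere or at a tiny sphere inside a tetrahedron; assembling these product regions and small balls is what shows $M \cong S^3$. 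Relatedly, your ``Euler characteristic tally'' does not show that the fundamental (or vertex) almost normal summand is again a 2-sphere --- additivity of $\chi$ allows summands of genus $\geq 1$ balanced by spherical ones --- and ruling this out is exactly where 0-efficiency (no non-vertex-linking normal spheres, no non-vertex-linking normal discs) is used in the literature. So the skeleton of your algorithm is right, but both the preprocessing step and the correctness proof of the almost-normal criterion need the crushing/0-efficiency theory in an essential, not cosmetic, way.
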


This result is remarkable because the 3-sphere is quite featureless and so, unlike the case of unknot recognition, there is not an obvious normal surface to search for. Rubinstein's argument was enhanced and simplified by Thompson \cite{Thompson}. Both arguments relied on the theory of almost normal surfaces, which are defined as follows.

\begin{definition}
A surface properly embedded in a tetrahedron is
\begin{itemize}
\item an \emph{octagon} if it is a disc with boundary consisting of eight normal arcs;
\item a \emph{tubed piece} if it is an annulus that is obtained from two disjoint normal discs by attaching a tube that runs parallel to an edge of the tetrahedron.
\end{itemize}
An octagon or tubed piece is called an \emph{almost normal piece}.
\end{definition}

\begin{figure}
  \includegraphics[width=3.5in]{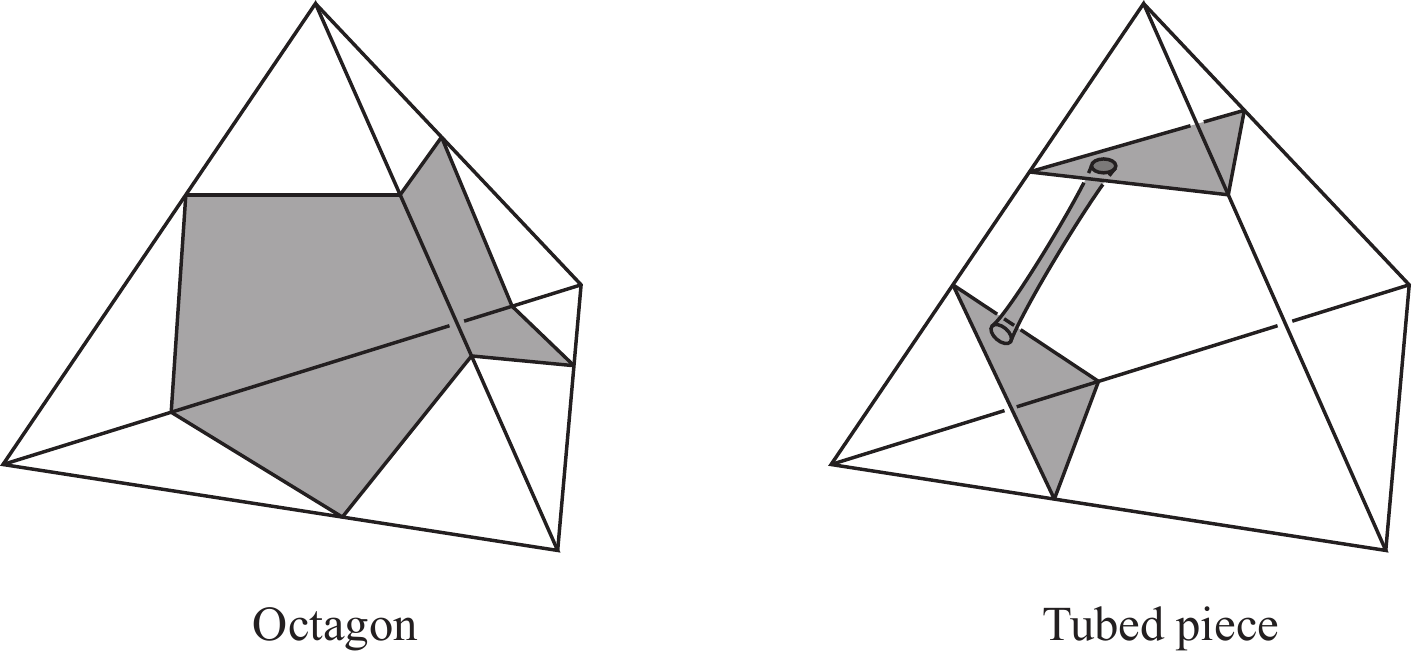}
  \caption{Almost normal pieces}
  \label{Fig:AlmostNormal}
\end{figure}

\begin{definition}
\label{Def:AlmostNormalSurface}
A surface properly embedded in a triangulated 3-manifold is \emph{almost normal} if it intersects each tetrahedron in a collection of disjoint triangles
and squares, except in precisely one tetrahedron where it consists of exactly one almost normal piece and possibly also some triangles and squares.
\end{definition}

The following striking result is the basis for the Rubinstein-Thompson algorithm.

\begin{theorem}
\label{Thm:AlmostNormal2Sphere}
Let $\mathcal{T}$ be a triangulation of a closed orientable 3-manifold $M$. Suppose that $\mathcal{T}$ has a single vertex and contains no normal spheres other the one consisting of triangles surrounding the vertex. Then $M$ is the 3-sphere if and only if $\mathcal{T}$ contains an almost normal embedded 2-sphere.
\end{theorem}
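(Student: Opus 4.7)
My plan is to prove the two directions separately: the reverse direction by direct resolution of the almost normal piece, and the forward direction by a sweep-out combined with the normalisation procedure of Section \ref{Sec:NormalSurfaces}, in the style of Rubinstein and Thompson.

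For $(\Leftarrow)$, let $F$ be an embedded almost normal 2-sphere, which separates the closed orientable manifold $M$ into two 3-manifolds $M_1$ and $M_2$. The almost normal piece $P$ of $F$, sitting in some tetrahedron $\Delta$, admits an explicit ``resolution'' into normal discs: for an octagon, one of two possible pairs of disjoint normal squares; for a tubed piece, a compression along the tube. Applying such a resolution to $F$ produces a normal surface $F'$ obtained from $F$ by a local surgery, and so $F'$ is a disjoint union of 2-spheres. By the hypothesis on $\mathcal{T}$, every component of $F'$ is the vertex-linking sphere $F_v$ and bounds a 3-ball in $M$. Because the resolution operation places normal discs on both sides of $P$, each of $M_1$ and $M_2$ contains a 2-sphere bounding a 3-ball. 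Combined with the irreducibility of $M$ (which follows from the hypothesis, since any essential 2-sphere would normalise to a normal sphere not isotopic to $F_v$), one concludes that each $M_i$ is a 3-ball, and therefore $M \cong S^3$.

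For $(\Rightarrow)$, assume $M = S^3$. Since $S^3$ admits a genus-$0$ Heegaard splitting, there is a sweep-out by embedded 2-spheres $\{S_t\}_{t \in [-1,1]}$ with $S_t$ collapsing to the vertex $v$ as $t \to -1$ and collapsing to a chosen interior point $p$ of some tetrahedron as $t \to +1$. Place the sweep-out in general position with respect to $\mathcal{T}$. For each $t$, attempt to apply the weight-reducing normalisation procedure of Section \ref{Sec:NormalSurfaces} to $S_t$. If normalisation terminates, the outcome is a normal 2-sphere, which by hypothesis must be either $F_v$ or empty. Near $t = -1$, the sphere is small around $v$ and normalises to $F_v$; near $t = +1$, the sphere lies in a single tetrahedron disjoint from the 1-skeleton and normalises to the empty surface. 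Hence the normalisation outcome must change at some critical parameter $t_0 \in (-1,1)$.

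The main obstacle is to extract an actual almost normal 2-sphere from this failure of consistent normalisation. Heuristically, at the transition the normalisation procedure encounters two competing weight-reducing isotopies that cannot be performed simultaneously: two compression discs whose interiors meet in a single tetrahedron form either an octagonal saddle (when they meet in an arc) or a tubed piece (when they are disjoint and parallel). Putting $S_{t_0}$ into a ``maximally normal'' position, performing every normalisation move except this one obstruction, yields the required almost normal 2-sphere. Making this precise is the technical heart of the Rubinstein-Thompson argument and is cleanly handled via a thin-position or min-max functional on sweep-outs. The hypothesis ruling out non-trivial normal spheres is exactly what guarantees that the thin level is almost normal rather than normal or topologically degenerate.
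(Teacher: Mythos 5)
Your reverse direction contains a genuine gap, in two places. First, the claimed local resolution of the octagon does not exist: an octagon cannot be traded for ``a pair of disjoint normal squares''. Two squares of different types in the same tetrahedron necessarily intersect (this is exactly the compatibility condition of Section~\ref{Sec:MatchingFundamental}), and no disjoint union of triangles and squares has boundary equal to the single normal curve of length eight bounding the octagon, so there is no embedded surface obtained from $F$ by a surgery supported inside that one tetrahedron whose intersection with it is normal. (In normal arc coordinates the octagon is indeed the sum of two quadrilateral types, but the geometric realisation of that sum is immersed, not embedded.) The only simplification available for an octagon is an isotopy across an edge compression disc, which pushes part of $F$ into adjacent tetrahedra and must then be iterated; one is forced into the full normalisation process, not a one-step resolution. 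Second, even in the tubed case, where compressing the tube genuinely produces a normal surface consisting of two spheres, each by hypothesis vertex-linking, your concluding inference is a non sequitur: from ``each of $M_1$ and $M_2$ contains a 2-sphere bounding a 3-ball'' together with irreducibility of $M$ one cannot conclude that each $M_i$ is a ball --- every 3-manifold contains small spheres bounding balls, and an irreducible $M \neq S^3$ contains separating spheres with exactly one complementary side a ball. What must be shown is that \emph{both} sides of $F$ itself are balls, and the local surgery discards exactly the information needed for this. The actual argument supplies it as follows: starting from the almost normal piece, one performs the weight-reducing isotopies always to the same side, so that the region swept out is a product $S^2 \times [0,1]$; the process terminates either at the vertex-linking normal sphere, which bounds the ball around the vertex, or at a tiny sphere contained in a single tetrahedron, which bounds a ball; hence each side of $F$ is a ball and $M \cong S^3$.

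Your forward direction is the right strategy and matches the known argument in spirit (Rubinstein's sweep-out, or Thompson's variant in which one removes a ball around the vertex and puts the 1-skeleton in thin position with respect to the radial height function), but, as you acknowledge, the key step --- that at the transition the level sphere can be compressed in the complement of the 1-skeleton to an almost normal sphere, rather than merely exhibiting two ``competing'' discs --- is precisely the technical heart, and asserting that the two obstructing discs form an octagonal saddle or a tube is the statement that needs proof, not a heuristic that can be cited. So the reverse direction needs to be redone along the one-sided normalisation lines above, and the forward direction needs the thin-position analysis of a thickest level to be carried out.
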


The hypothesis that $\mathcal{T}$ has a single vertex and that it has a unique normal 2-sphere may sound restrictive, but in fact one may always build such a triangulation for a closed orientable irreducible 3-manifold $M$ unless $M$ is one of three exceptional cases: the 3-sphere, $\mathbb{RP}^3$ and the lens space $L(3,1)$. (See \cite{Burton:Crushing} for example.)

Both directions of Theorem \ref{Thm:AlmostNormal2Sphere} are remarkable. Suppose first that $\mathcal{T}$ contains an almost normal 2-sphere. One of the features of an almost normal surface $S$ is that it admits an obvious isotopy that reduces the weight of $S$. This isotopy moves the surface off $S$ along an edge compression disc (see (6) at the end of Section \ref{Sec:NormalSurfaces} for the definition of an edge compression disc). When the surface contains an octagon, then there are two choices for the isotopy, one in each direction away from $S$. When the surface contains a tubed piece, then there is just one possible direction. We then continue to apply these weight reducing isotopies, all going in the same direction. It turns out this continues to be possible until the resulting surface is normal or is a small 2-sphere lying in a single tetrahedron (see \cite{Schleimer} or \cite{Mijatovic3Sphere}). In the case where we get a normal 2-sphere, then this is, by hypothesis, the boundary of a regular neighbourhood of the vertex of the triangulation. Since we only ever isotope in the same direction, the image of the isotopy is homeomorphic to $S \times [0,1]$. Thus, we deduce that the manifold is $S \times [0,1]$ with small 3-balls attached. Hence, it is a 3-sphere.

Suppose now that $M$ is the 3-sphere. Then one removes a small regular neighbourhood of the vertex to get a 3-ball $B$. This has a natural height function $h \colon B \rightarrow [0,1]$, just given by distance from the origin of the ball. The key to the argument is to place the 1-skeleton of $\mathcal{T}$ into `thin position' with respect to this height function. This notion, which was originally due to Gabai \cite{Gabai3}, is defined as follows. We may assume that the restriction of $h$ to $\mathcal{T}^1$ has only finitely critical points, which are local minima or local maxima. Let $0 < x_1 < \dots < x_n < 1$ be their values under $h$. Then the number of intersection points between $\mathcal{T}^1$ and sphere $h^{-1}(t)$ for $t \in (x_i, x_{i+1})$ is some constant $c_i$. This remains true if we set $x_0 = 0$ and $x_{n+1} = 1$. We say that the 1-skeleton $\mathcal{T}^1$ is in \emph{thin position} if $\sum_i c_i$ is minimised. It turns out in this situation, a value of $c_i$ that is maximal gives rise to a surface $h^{-1}(t)$ that is nearly almost normal. More specifically, there is a sequence of compressions in the complement of the 1-skeleton that takes it to an almost normal surface.


Theorem \ref{Thm:AlmostNormal2Sphere} was the basis for the following result of Ivanov \cite{Ivanov} and Schleimer \cite{Schleimer}.

\begin{theorem}
Recognising whether a 3-manifold is the 3-sphere lies in $\mathrm{NP}$.
\end{theorem}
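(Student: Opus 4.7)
The plan is to apply Theorem~\ref{Thm:AlmostNormal2Sphere} and supply, as the NP certificate for $M \cong S^3$, the coordinate vector of an embedded almost normal $2$-sphere in a suitably prepared triangulation of $M$. First I would preprocess the input triangulation $\mathcal{T}$. The three exceptional manifolds $S^3$, $\mathbb{RP}^3$ and $L(3,1)$ mentioned after Theorem~\ref{Thm:AlmostNormal2Sphere} admit fixed combinatorial models of very small size, so the verifier can dispense with them by direct inspection. Otherwise, a Jaco--Rubinstein-style crushing and simplification procedure produces a one-vertex triangulation $\mathcal{T}'$ of $M$ whose only normal $2$-sphere is the vertex link, with $|\mathcal{T}'|$ polynomial in $|\mathcal{T}|$. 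The construction of $\mathcal{T}'$ can itself be presented as part of the certificate and verified combinatorially, so we may assume that the input is already such a $\mathcal{T}'$.

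The certificate proper is a pair $(\Delta_0,\alpha)$ designating one tetrahedron and one almost normal piece type inside it, together with a vector $v \in \mathbb{Z}_{\ge 0}^{7|\mathcal{T}'|}$ of normal coordinates encoded in binary. The verifier checks: that $v$ satisfies the compatibility conditions (no square coexists with $\alpha$ in $\Delta_0$, and at most one square type elsewhere); that $v$ satisfies the matching equations, modified along the faces of $\Delta_0$ adjacent to $\alpha$; that the resulting surface $S$ is embedded (a local combinatorial condition ensuring the octagon or tube is realised inside $\Delta_0$); that $\chi(S) = 2$, which is a linear function of $v$ computable directly from its binary representation; and that $S$ is connected. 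Connectedness is certified by a call to the Agol--Hass--Thurston algorithm, which runs in time polynomial in $\log w(S)$ and $|\mathcal{T}'|$, even though $w(S)$ may be exponential in $|\mathcal{T}'|$. Soundness and completeness of this scheme are exactly the content of Theorem~\ref{Thm:AlmostNormal2Sphere}.

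The main obstacle is to bound the bitlength of the certificate, i.e.\ to guarantee that when $M \cong S^3$ there is an embedded almost normal $2$-sphere whose coordinates are at most exponential in $|\mathcal{T}'|$. For each fixed $(\Delta_0,\alpha)$ the matching equations, compatibility conditions, the equation fixing the $\alpha$-coordinate to $1$, and the constraint $\chi = 2$ cut out an affine slice of the normal solution space, and the Cramer's-rule/adjugate-matrix argument used in the proof of Theorem~\ref{Thm:WeightFundamental} shows that every vertex of this slice has coordinates of magnitude at most an exponential in $|\mathcal{T}'|$. It remains to produce such a vertex that actually corresponds to an embedded almost normal $2$-sphere. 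Starting from any least-weight embedded almost normal $2$-sphere $S$, I would express its normal part as a non-negative integer combination of fundamental normal surfaces; any summand that is itself a $2$-sphere must, by our standing assumption on $\mathcal{T}'$, be the vertex link, and can be removed from $S$ without destroying the sphere condition (using Lemma~\ref{Lem:WeightAdditive} and the linearity of $\chi$). Other possible summands are ruled out by Theorem~\ref{Thm:EssentialSummand}-style arguments combined with $\chi(S) = 2$, reducing $S$ to a vertex solution of the polytope above. This yields a certificate of polynomial bitlength verifiable in polynomial time, placing the problem in $\mathrm{NP}$.
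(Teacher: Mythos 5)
Your overall strategy (certify $M \cong S^3$ by exhibiting an almost normal $2$-sphere in a prepared triangulation and verifying it with the Agol--Hass--Thurston algorithm) is exactly the Ivanov--Schleimer route that the paper sketches, but the proposal has a genuine gap at precisely the point the paper flags as the delicate one (``they had to deal with possible presence of normal 2-spheres also''). Soundness of your scheme rests on the hypotheses of Theorem~\ref{Thm:AlmostNormal2Sphere}: the triangulation must have one vertex and \emph{no} normal $2$-sphere other than the vertex link. That second condition is a statement quantified over all normal surfaces, and you give the verifier no polynomial-time way to check it. Saying that the Jaco--Rubinstein crushing procedure ``can itself be presented as part of the certificate and verified combinatorially'' begs the question: the verifier can confirm that each individual crush was performed along a genuine normal $2$-sphere, but it cannot confirm that the process stopped because no non-vertex-linking normal sphere remains, nor (without further certificates) that the summands discarded during crushing were all $3$-spheres. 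Without $0$-efficiency in hand, an almost normal $2$-sphere in $\mathcal{T}'$ certifies nothing --- such spheres can exist in triangulations of manifolds that are not $S^3$ --- so a dishonest prover could make the verifier accept. Certifying what crushing does to the manifold and how the normal-sphere issue is neutralised is the real content of the Ivanov and Schleimer proofs and is missing here. (The remark that $S^3$, $\mathbb{RP}^3$ and $L(3,1)$ have small standard models does not help either, since the input triangulation is arbitrary; these cases only matter for soundness and must be handled inside the same bookkeeping.)

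The ``main obstacle'' paragraph also does not work as written. If $S = S' + V$ with $V$ the vertex-linking sphere, then $\chi(S') = \chi(S) - 2 = 0$, so one cannot ``remove the vertex link summand without destroying the sphere condition''; and Theorem~\ref{Thm:EssentialSummand} concerns incompressible, boundary-incompressible surfaces of least weight in an irreducible manifold, so it says nothing about $2$-spheres in $S^3$, which are compressible on both sides. The statement you actually need --- that a $0$-efficient triangulation of $S^3$ contains an (octagonal) almost normal $2$-sphere which is a vertex of the appropriate quadrilateral--octagon solution space, and hence has coordinates of at most exponential magnitude and polynomial bit-length --- is a genuinely nontrivial theorem (due to Casson and Jaco--Rubinstein, and the workhorse of Schleimer's argument); it does not follow from the Cramer's-rule bound of Theorem~\ref{Thm:WeightFundamental} together with the summand-subtraction sketch you give. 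The verification steps you list (compatibility and modified matching equations, linearity of $\chi$, connectedness via AHT) are fine and match the paper, but without a correct treatment of these two points the certificate is neither sound nor of certified size.
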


The certificate was essentially just the almost normal 2-sphere, although they had to deal with possible presence of normal 2-spheres also.

In the above implication that if $M$ is the 3-sphere, then it contains an almost normal 2-sphere, the fact that it is a 2-sphere plays very little role. What matters is that the 3-sphere has a `sweepout' by 2-spheres, starting with a tiny 2-sphere that encircles the vertex of $\mathcal{T}$ and ending with a small 2-sphere surrounding some other point. Such sweepouts arise in another natural situation: when $M$ is given via a Heegaard splitting. 

\begin{definition} A \emph{compression body} $C$ is either a handlebody or is obtained from $F \times [0,1]$, where $F$ is a (possibly disconnected) compact orientable surface, by attaching 1-handles to $F \times \{ 1 \}$. The \emph{negative boundary} is the copy of $F \times \{ 0 \}$ or empty (in the case of a handlebody). The \emph{positive boundary} is the remainder of $\partial C$. A \emph{Heegaard splitting} for a compact orientable 3-manifold $M$ is an expression of $M$ as a union of two compression bodies glued by a homeomorphism between their positive boundaries. The resulting \emph{Heegaard surface} is the image of the positive boundaries in $M$.
\end{definition}

A handlebody can be viewed as a regular neighbourhood of a graph, as follows. It is a 0-handle with 1-handles attached. The graph is a vertex at the centre of the 0-handle together with edges, each of which runs along a core of a 1-handle. It is known as a \emph{core} of the handlebody. Similarly, a compression body that is not a handlebody is a regular neighbourhood of its negative boundary together with some arcs that start and end on the negative boundary and run along the cores of the 1-handles. These are known as \emph{core arcs} of the compression body. Thus, given any Heegaard splitting for $M$, there is an associated graph $\Gamma$ in $M$, which is the cores of the two compression bodies. Then $M \setminus (\Gamma\cup \partial M)$ is a copy of $S \times (-1,1)$, where $S \times \{ 0 \}$ is the Heegaard surface. Thus, the projection map $S \times (-1,1) \rightarrow (-1,1)$ extends to a function $h \colon M \rightarrow [-1,1]$ and we can place $\mathcal{T}^1$ into thin position with respect to this height function.

Using these methods, Stocking \cite{Stocking} proved the following result, based on arguments of Rubinstein.

\begin{theorem}
Let $\mathcal{T}$ be a triangulation of a compact orientable 3-manifold $M$. Let $S$ be a Heegaard surface for $M$ that is strongly irreducible, in the sense that any compression disc for $S$ on one side necessarily intersects any compression disc for $S$ on the other. Suppose that it is not a Heegaard torus for the 3-sphere or the 3-ball. Then there is an ambient isotopy taking $S$ into almost normal form.
\end{theorem}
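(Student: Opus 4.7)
The plan follows the sweepout plus thin-position strategy outlined for the $3$-sphere theorem. First, build a sweepout from the Heegaard splitting. Let $\Sigma_-, \Sigma_+$ be spines of the two compression bodies (each containing its negative boundary together with core arcs). There is a continuous $h\colon M \to [-1,1]$ with $h^{-1}(\pm 1) = \Sigma_\pm \cup \partial M$ and such that for $t \in (-1,1)$, the level $S_t := h^{-1}(t)$ is an embedded surface ambient isotopic to $S$. After a preliminary isotopy of $\Sigma_\pm$ (equivalently, a reparametrisation of the sweepout), arrange that $h|_{\mathcal{T}^{1}}$ is Morse and that the multiset of widths $c_i = |S_t \cap \mathcal{T}^{1}|$, taken on the complementary intervals between critical values and sorted in decreasing order, is lexicographically minimal. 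Pick a thick level $t_0$, by which I mean a regular value at which the width function attains a local maximum, and write $S_0 = S_{t_0}$.

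Second, run the normalisation procedure of Section~\ref{Sec:NormalSurfaces} on $S_0$ inside $M \setminus \mathcal{T}^{1}$. Each non-trivial step is realised by pushing $S_0$ across an edge-compression disc $E \subset \Delta$ in some tetrahedron $\Delta$, with $\partial E$ the union of a sub-arc of an edge of $\Delta$ and an arc on $S_0$. Any such $E$ can be pushed slightly off $S_0$ into either the upper compression body $V_+$ or the lower one $V_-$, whereupon it becomes a compression disc $D_\pm$ for $S_0$ on one of the two sides. Now invoke strong irreducibility: if two disjoint edge-compression discs for $S_0$ existed at $t_0$, one on each side, then the associated disjoint $D_-$, $D_+$ would be compression discs for $S_0 \simeq S$ on opposite sides, a contradiction. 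Hence at a thick level, all available edge-compressions push $S_0$ consistently to a single side.

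Third, show that the one-sided conclusion is incompatible with $t_0$ being thick, except in a very constrained configuration. If every available edge-compression points, say, upward, then performing them in parallel provides an isotopy just above $t_0$ that strictly decreases every width $c_i$, contradicting the thin-position minimality, \emph{unless} exactly one of the candidate edge-compression moves is obstructed by the local geometry of a single distinguished tetrahedron $\Delta_0$. The standard classification of Morse tangencies between a level surface of $h$ and $\partial \Delta_0$ shows that this one obstruction arises precisely as either a saddle joining two arc-systems on $\partial \Delta_0$ (producing an octagon) or a tube running parallel to an edge of $\Delta_0$ and connecting two normal discs (producing a tubed piece). After all other edge-compressions are carried out, $S_0 \cap \Delta$ is a union of triangles and squares for every $\Delta \neq \Delta_0$, and $S_0 \cap \Delta_0$ contains one almost normal piece plus triangles and squares, matching Definition~\ref{Def:AlmostNormalSurface}.

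The step I expect to be the main obstacle is the rigorous passage from ``only one-sided edge-compressions are available at a thick level'' to ``there is an almost normal piece in a single tetrahedron'', since one must track the one-parameter family $\{S_t\}$ across $t_0$ and verify that the width function truly fails to decrease at exactly one critical event of a very specific combinatorial shape. The exclusion of the Heegaard torus for $S^3$ and $B^3$ is used in this final step to rule out the degenerate configuration in which the sweepout collapses the Heegaard surface entirely onto a spine, so that no thick level actually exists to produce the almost normal piece.
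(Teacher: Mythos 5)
Your overall plan (build the sweepout from the splitting, put $\mathcal{T}^1$ in thin position with respect to it, and analyse a thick level) is the Rubinstein--Stocking strategy that the paper sketches, but the step where you invoke strong irreducibility is not valid, and it is the only place the hypothesis enters your argument. An edge-compression disc $E$ has boundary consisting of a sub-arc of an edge of $\mathcal{T}$ together with an arc on $S_0$; pushing $E$ slightly off $S_0$ does not produce a compression disc for the Heegaard surface, because a compression disc must be bounded by an essential simple closed curve on $S_0$ with interior disjoint from $S_0$, and nothing guarantees that the curve you obtain from $\partial E$ is closed on $S_0$, let alone essential. So strong irreducibility says nothing about two disjoint edge-compression discs on opposite sides. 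In the Rubinstein--Thompson--Stocking scheme the exclusion of disjoint upper and lower edge-compression discs at a thick level comes from the width-minimality itself (a Gabai-style thinning move), and needs no hypothesis on the splitting at all.

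Where strong irreducibility is genuinely needed --- and where your sketch is silent --- is in controlling honest compressions of the level surfaces. A thick level may meet tetrahedra in pieces that are not discs; in the $3$-sphere case the paper's sketch permits itself ``a sequence of compressions in the complement of the 1-skeleton'', but for a Heegaard surface you cannot compress, since the theorem demands an ambient isotopy of $S$ itself. One needs a labelling/continuity argument over the whole family $\{S_t\}$, using the fact that no level can admit disjoint compressing discs on opposite sides, to find a level at which the intersection with every tetrahedron consists of discs and no compression is required, so that the almost normal surface produced really is isotopic to $S$ rather than to a surface obtained from $S$ by compression. As written, your proof never makes correct use of strong irreducibility, so it would apply verbatim to an arbitrary Heegaard surface, which is more than the method can deliver; relatedly, your step 3 dichotomy (``one-sided edge-compressions allow thinning unless exactly one obstructed move, necessarily of octagon or tube type'') and your explanation of why the Heegaard torus of $S^3$ and $B^3$ must be excluded are asserted rather than derived, and these are precisely the delicate points of Stocking's argument.
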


This alone is not enough to be able to solve many algorithmic problems about Heegaard splittings. This is because there seems to be no good way of ensuring that the almost normal surface $S$ is a bounded sum of fundamental surfaces. However, the author was able to use it to prove the following result \cite{LackenbyHeegaard}, in combination with methods from hyperbolic geometry.

\begin{theorem}
There is an algorithm to determine the minimal possible genus of a Heegaard surface for a compact orientable simple 3-manifold with non-empty boundary.
\end{theorem}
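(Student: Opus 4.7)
The plan is to enumerate a bounded set of candidate Heegaard surfaces and check each one. First, I would extract from the given triangulation $\mathcal{T}$ a crude upper bound $G$ on the Heegaard genus, for instance by taking the Heegaard splitting associated to the dual handle decomposition, which has genus at most linear in $|\mathcal{T}|$. The task is then, for each $g \in \{0,1,\dots,G\}$, to decide whether $M$ admits a Heegaard splitting of genus $g$, and to return the smallest $g$ for which the answer is yes.

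The strategy is to reduce to enumerating strongly irreducible Heegaard splittings in almost normal form, and then to use hyperbolic geometry to bound the search. By Casson and Gordon, any Heegaard splitting that is not strongly irreducible is weakly reducible; for a simple manifold (hence a Haken manifold with no essential tori or annuli), the Scharlemann--Thompson untelescoping of a weakly reducible splitting decomposes $M$ along orientable incompressible surfaces, each of a topological type $M$ actually admits. Since $M$ is simple, only finitely many isotopy classes of such incompressible surfaces of bounded Euler characteristic exist and can be listed algorithmically by \refthm{ListSurfacesForSimpleManifold}. For each such decomposition, the problem reduces to finding strongly irreducible Heegaard splittings of the pieces with prescribed genus. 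By Stocking's theorem, each such strongly irreducible Heegaard surface can be isotoped into almost normal form, so it suffices to enumerate almost normal surfaces in each piece and test, using the AHT algorithm of \refsec{AHT}, whether each has the right genus and is actually a Heegaard surface (i.e.\ both sides are compression bodies, which can be checked via essential disc detection as in \refsec{NormalSurfaces}).

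The input from hyperbolic geometry enters to bound the complexity of the almost normal surfaces one must enumerate. Since $M$ is simple with non-empty boundary, by Thurston's geometrisation theorem $M$ admits a complete finite-volume hyperbolic structure with totally geodesic boundary and/or cusps; this structure can be found algorithmically from $\mathcal{T}$ using, for instance, the solution to the homeomorphism problem together with Mostow rigidity. A minimal-genus strongly irreducible Heegaard surface can be realised, via Pitts--Rubinstein-style sweepout arguments, as a geometrically controlled surface whose area is bounded in terms of its genus and the geometry of $M$. After refining $\mathcal{T}$ so that its $1$-skeleton is geodesically efficient relative to this hyperbolic structure, this area bound translates to an explicit bound $W$ on the weight of the almost normal representative. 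The algorithm then enumerates all almost normal surfaces of weight at most $W$ in each piece and verifies, using AHT together with the hierarchy machinery of \refsec{Hierarchies}, whether each is a strongly irreducible Heegaard surface of the required genus.

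The main obstacle will be bridging the geometric area bound to a combinatorial weight bound: a surface of small hyperbolic area can in principle have very large normal weight in a triangulation whose edges are not geodesically short, so one must carefully modify $\mathcal{T}$ to match the hyperbolic geometry, and handle the additional technicalities introduced by cusps or by totally geodesic boundary when straightening the triangulation and truncating the surface near the thin parts of $M$.
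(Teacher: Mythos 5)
Your overall architecture (untelescoping to strongly irreducible splittings, Stocking's almost normal form, input from the hyperbolic structure) matches the high-level description in the text, but the proposal stops short of a proof at exactly the point that the paragraph preceding the theorem identifies as the real difficulty: there is no known way to guarantee that an almost normal representative of a Heegaard surface is a bounded sum of fundamental surfaces, i.e.\ no computable weight bound $W$ to search up to. Your proposed bridge --- a Pitts--Rubinstein area bound of the form $2\pi|\chi|$ plus a ``geodesically efficient'' refinement of $\mathcal{T}$ --- does not supply such a $W$. Bounded hyperbolic area simply does not control normal weight: long stacks of parallel normal triangles and squares (for instance pieces descending into a cusp, into the thin collar of a short geodesic, or into a collar of the totally geodesic boundary) cross the $1$-skeleton arbitrarily many times while contributing negligible area, and no refinement of the triangulation produced algorithmically can remove this phenomenon, since the thin parts are exactly where one cannot make every edge quantitatively efficient. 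Moreover, the isotopy taking a minimal or sweepout surface to an almost normal surface is not weight-controlled. So the step ``enumerate all almost normal surfaces of weight at most $W$'' has no justified $W$, and your closing paragraph concedes this; that concession is the gap, not a technicality.

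For comparison, the cited proof does not bound the weight at all. It works with a partially flat angled ideal triangulation of the simple manifold (this is where the hyperbolic geometry enters), and a combinatorial Gauss--Bonnet argument for the angle structure bounds only the number of squares and octagons of an almost normal representative in terms of its Euler characteristic; the unbounded families of parallel triangles are then analysed separately as $I$-bundle (parallelity) regions, so the surface is recovered from a bounded amount of data even though its weight is not bounded. Two further points in your reduction would also need repair, though they are fixable: after untelescoping you must recover the genus of $M$ by amalgamation, minimising over all generalized splittings (it is not enough to find splittings of the pieces ``with prescribed genus''), and the pieces obtained by cutting along the incompressible surfaces need not be simple, so one needs a relative version of the machinery (\refthm{ListSurfacesForSimpleManifold} applies to $M$ itself to list the incompressible surfaces, since their Euler characteristics are bounded by the genus bound, but not to the cut-open pieces). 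The essential missing idea, however, is the mechanism replacing the nonexistent weight bound.
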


\section{Homomorphisms to finite groups}
\label{Sec:Homomorphisms}

In recent years, homomorphisms from 3-manifold groups to finite groups have been used to prove some important algorithmic results. The following theorem of Kuperberg \cite{KuperbergKnottedness}, which was proved using these techniques, still remains particularly striking.

\begin{theorem} 
\label{Thm:KnottednessGRH}
The problem of deciding whether a knot in the 3-sphere is non-trivial lies in $\mathrm{NP}$, assuming the Generalised Riemann Hypothesis.
\end{theorem}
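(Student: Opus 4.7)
The plan is to place knottedness in NP by exhibiting, for each non-trivial knot $K$ in $S^3$, a short certificate that the knot group $G = \pi_1(S^3 \setminus K)$ has a non-abelian finite quotient. Note first that $K$ is non-trivial if and only if $G$ is non-abelian: the abelianisation of $G$ is always $\mathbb{Z}$, so $G$ abelian forces $G \cong \mathbb{Z}$, which by the loop theorem and sphere theorem implies $K$ is the unknot. Geometrisation (via Thurston--Perelman) then implies that every non-trivial knot group has a non-abelian finite quotient; the task is to produce one whose order is polynomially bounded and whose correctness can be verified in polynomial time.

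First I would build a triangulation $\mathcal{T}$ of the exterior $X = S^3 \setminus K$ with $|\mathcal{T}|$ linear in the crossing number, and extract a finite presentation of $G$ of polynomial size. Then I would invoke geometrisation to produce a non-abelian representation $\rho \colon G \to \mathrm{PSL}_2(\overline{\mathbb{Q}})$. In the hyperbolic case this is the discrete faithful representation, whose matrix entries satisfy Neumann--Zagier-style gluing equations coming from $\mathcal{T}$; in the Seifert-fibred and satellite cases one obtains $\rho$ either from the Seifert structure (where finite dihedral or metacyclic quotients are abundant) or by descending to a hyperbolic or Seifert JSJ piece. In each case, standard height bounds for solutions of polynomial systems of controlled bit-complexity show that $\rho$ is defined over a number field $F$ whose degree, discriminant, and the heights of the matrix entries $\rho(g_i)$ on the generators $g_i$ of $G$ are all bounded in terms of $|\mathcal{T}|$.

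The next step is to reduce $\rho$ modulo a suitable prime ideal $\mathfrak{p}$ of the ring of integers of $F$ to obtain $\bar\rho \colon G \to \mathrm{PSL}_2(\mathbb{F}_q)$ with $q = N(\mathfrak{p})$. The prime $\mathfrak{p}$ must avoid the finitely many bad primes where $\rho$ fails to reduce integrally or where the image collapses onto an abelian subgroup; this amounts to a Frobenius condition in an auxiliary Galois extension of $F$. Under GRH, the effective Chebotarev density theorem of Lagarias--Odlyzko furnishes such a $\mathfrak{p}$ with $N(\mathfrak{p})$ bounded polynomially in the relevant discriminant, which, after the bounds of the previous step, is polynomial in the input size. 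The certificate is then the prime power $q$, the images of the generators of $G$ in $\mathrm{PSL}_2(\mathbb{F}_q)$, and a pair of generators whose commutator image is non-identity. Verification checks that every relator of the presentation maps to the identity in $\mathrm{PSL}_2(\mathbb{F}_q)$ and that the witness commutator is non-identity, both routine once $q$ is polynomially bounded.

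The main obstacle, and the heart of the argument, is the quantitative arithmetic bookkeeping: bounding the complexity of the trace field of a geometric representation in terms of a triangulation, controlling the heights of matrix entries so that reduction modulo a small prime makes sense, and invoking effective Chebotarev to locate a small prime at which non-abelianness survives reduction. GRH enters precisely at the Chebotarev step; without it, the smallest suitable prime norm could a priori be super-polynomial in the discriminant, and the certificate would no longer fit inside NP.
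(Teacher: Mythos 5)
The overall architecture of your certificate (a non-abelian quotient of the knot group into a matrix group over a small finite field, with GRH entering through an effective Chebotarev/Koiran-type bound on the prime) is indeed the shape of Kuperberg's argument. But there is a genuine gap at the step where you produce the non-abelian representation $\rho\colon \pi_1(S^3\setminus K)\to \mathrm{PSL}_2(\overline{\mathbb{Q}})$ "via geometrisation". In the satellite case you propose "descending to a hyperbolic or Seifert JSJ piece", but the fundamental group of a JSJ piece is a \emph{subgroup} of the knot group (the JSJ tori are $\pi_1$-injective), not a quotient, so a non-abelian representation of that piece gives no representation of the whole knot group; extending it across the other pieces is a nontrivial gluing problem with no general solution. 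Producing a non-abelian $\mathrm{SL}(2,\mathbb{C})$ (indeed $\mathrm{SU}(2)$) representation of \emph{every} non-trivial knot group is precisely the deep input here, and it is not known to follow from geometrisation alone: the paper supplies it by the instanton theorem of Kronheimer and Mrowka (Theorem \ref{Thm:KronheimerMrowka}). The analogous phenomenon appears in Zentner's 3-sphere result, where the hyperbolic and Seifert cases are easy but the splice (toroidal) case again requires instantons. Without this input your certificate simply may not exist to be exhibited for satellite knots.

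A secondary divergence: you propose to bound the degree, discriminant and heights of a \emph{specific} geometric representation read off from Neumann--Zagier-type gluing equations for the given triangulation, and then reduce it modulo a Chebotarev prime. This is more delicate than you acknowledge (the given triangulation of the exterior need not carry a geometric solution, and controlling the arithmetic of the discrete faithful representation in terms of $|\mathcal{T}|$ is substantial work), and it is also unnecessary. The paper instead applies Kuperberg's Theorem \ref{Thm:AffineAlgebraicQuotient}: the set of homomorphisms of the (polynomial-size) presented group into $G(\mathbb{C})=\mathrm{SL}(2,\mathbb{C})$ with non-abelian image is an affine variety over $\mathbb{Z}$ cut out by the relators; it is non-empty by Kronheimer--Mrowka, Koiran's GRH-quantified Nullstellensatz produces an algebraic point of controlled complexity, and GRH then yields a prime $p$ with $\log p$ polynomial in the presentation length at which the point reduces to a homomorphism into $G(\mathbb{Z}/p)$ with non-abelian image. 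So the arithmetic bookkeeping is done once and for all on the representation variety, with no geometric structure on the triangulation required.
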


This result has been superseded by Theorem \ref{Thm:UnknotNPCoNP}, which removes the conditionality on  the Generalised Riemann Hypothesis. However, the techniques that Kuperberg introduced remain important. In particular, they were used by Zentner to show that the problem of deciding whether a 3-manifold is the 3-sphere lies in co-NP, assuming the Generalised Riemann Hypothesis. Kuperberg's argument is explained in Section \ref{Subsec:Unknot3Sphere}.

\subsection{Residual finiteness}

\begin{definition} A group $G$ is \emph{residually finite} if, for every $g \in G$ other than the identity, there is a homomorphism $\phi$ from $G$ to a finite group, such that $\phi(g)$ is non-trivial.
\end{definition}

\begin{theorem} 
\label{Thm:ResiduallyFinite3Manifold}
Any compact orientable 3-manifold has residually finite fundamental group.
\end{theorem}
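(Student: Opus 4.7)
The plan is to reduce to the geometric pieces provided by Perelman's geometrisation theorem, handle each type separately, and then assemble. First, by the Kneser--Milnor prime decomposition, any compact orientable $3$-manifold $M$ is a connected sum of prime factors (together with possibly puncturing $3$-balls when $\bdy M \ne \emptyset$), so $\pi_1(M)$ is a free product of the fundamental groups of the prime factors. Since free products of residually finite groups are residually finite, by a classical theorem of Gruenberg, it suffices to establish the result when $M$ is prime. A prime compact orientable $3$-manifold is either $S^2 \times S^1$ (whose fundamental group $\ZZ$ is obviously residually finite) or irreducible, so I may assume $M$ is irreducible.

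For an irreducible $M$, I would appeal to geometrisation together with the JSJ decomposition to cut $M$ along a finite collection of incompressible tori (and annuli, if $\bdy M \ne \emptyset$) into pieces, each of which is either Seifert fibred or admits a complete hyperbolic structure of finite volume on its interior. The next step is to verify residual finiteness for each piece individually. A hyperbolic piece $N$ has $\pi_1(N)$ embedded as a finitely generated subgroup of $\PSL(2,\CC)$, and Mal'cev's theorem asserts that any finitely generated linear group is residually finite. A Seifert fibred piece has fundamental group that sits in a central extension
\[ 1 \to \ZZ \to \pi_1(N) \to \pi_1^{\mathrm{orb}}(B) \to 1, \]
where $\pi_1^{\mathrm{orb}}(B)$ is a $2$-orbifold group; such orbifold groups are themselves linear (Fuchsian or finite), hence residually finite, and a straightforward argument using the central $\ZZ$ promotes residual finiteness from the quotient to the total group.

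The main obstacle is the assembly step: having residually finite vertex groups in the graph-of-groups decomposition along the JSJ tori does not automatically make the amalgamated product $\pi_1(M)$ residually finite. The key input here is Hempel's theorem, whose proof relies on the \emph{peripheral subgroup separability} of Seifert and hyperbolic pieces, namely that given any finite-index subgroup $H$ of a boundary torus, one can find a finite-index subgroup of the whole piece whose intersection with the torus is contained in $H$. For a hyperbolic piece this is Long and Niblo's result, exploiting congruence quotients of the arithmetic or non-arithmetic Kleinian group; for a Seifert piece it follows by an elementary argument using the Seifert fibration. Given a non-trivial $g \in \pi_1(M)$, one chooses a finite quotient of the vertex group containing a conjugate of $g$ that kills $g$, then uses peripheral separability and Bass--Serre theory to build compatible finite quotients of the neighbouring vertex groups agreeing on the shared edge groups, proceeding along the JSJ graph until one obtains a finite quotient of $\pi_1(M)$ itself in which $g$ survives nontrivially. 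The careful compatibility argument along the edges of the JSJ graph is the most delicate part of the proof.
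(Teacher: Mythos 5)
Your proposal is correct and follows essentially the same route as the paper: prime decomposition plus the free-product result, geometrisation into Seifert fibred and hyperbolic pieces, Mal'cev's theorem for the linear (geometric) pieces, and Hempel's argument making the finite quotients compatible along the JSJ tori. The only quibble is historical: Hempel's original compatibility argument uses carefully chosen congruence-type quotients of the pieces rather than invoking peripheral separability \`a la Long--Niblo, but this does not affect the correctness of your outline.
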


This was proved by Hempel \cite{Hempel} for manifolds that are Haken, but it now applies to all compact orientable 3-manifolds, as a consequence of Hempel's work and Perelman's solution to the Geometrisation Conjecture \cite{Perelman1, Perelman2, Perelman3}. We will discuss the proof below.

Residual finiteness has long been known to have algorithmic implications. For example, when a finitely presented group $G$ is residually finite, it has solvable word problem. The argument goes as follows. Suppose we are given a finite presentation of $G$ and a word $w$ in the generators and their inverses. It is true for any group that if $w$ is the trivial element, then there is an algorithm that eventually terminates with a proof that it is trivial. For one may start enumerating all products of conjugates of the relations, and thereby start to list all words in the generators that represent the trivial element. If $w$ is trivial, it will therefore eventually appear on this list. On the other hand, if $w$ is non-trivial, then by residual finiteness, there is a homomorphism $\phi$ to a finite group such that $\phi(w)$ is non-trivial. Thus, one can start to enumerate all finite groups and all homomorphisms $\phi$ from $G$ to these groups and eventually one will find a $\phi$ such that $\phi(w)$ is non-trivial. By running these two processes in parallel, we will eventually be able to decide whether a given word represents the identity element.

Theorem \ref{Thm:ResiduallyFinite3Manifold} is fairly straightforward for any compact hyperbolic 3-manifold $M$. In this case, the hyperbolic structure gives an injective homomorphism $\pi_1(M) \rightarrow \mathrm{Isom}^+(\mathbb{H}^3) = \mathrm{SO}(3,1)$ and it is therefore linear. The following theorem of Malcev \cite{Malcev} then applies.

\begin{theorem} Any finitely generated linear group is residually finite.
\end{theorem}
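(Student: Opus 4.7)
The plan is to reduce residual finiteness of a finitely generated linear group to a classical fact from commutative algebra: every finitely generated commutative ring is residually a finite field at its nonzero elements. Let $G$ be a finitely generated subgroup of $GL_n(K)$ for some field $K$, with generators $g_1,\dots,g_m$. Let $R$ be the subring of $K$ generated over $\mathbb{Z}$ by the matrix entries of $g_1,\dots,g_m$ and of their inverses. Then $R$ is a finitely generated commutative ring, $R$ is a subring of a field (hence a domain with no nonzero nilpotents), and $G \leq GL_n(R)$.

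Now fix $g \in G$ with $g \neq I_n$, and choose a matrix entry $a$ of $g - I_n$ with $a \neq 0$ in $R$. The aim is to produce a ring homomorphism $\phi \colon R \to F$ onto a finite field with $\phi(a) \neq 0$: then the induced map $GL_n(R) \to GL_n(F)$ sends $G$ into the finite group $GL_n(F)$, and the image of $g$ is not the identity because the corresponding entry $\phi(a)$ of $\phi(g) - I_n$ is nonzero. Since $R$ is a domain, $a$ is a non-nilpotent, so the localisation $R[1/a]$ is a nonzero ring. It is still a finitely generated $\mathbb{Z}$-algebra (adjoining the single element $1/a$), so it possesses a maximal ideal $\mathfrak{n}$. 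Take $\phi$ to be the composition $R \hookrightarrow R[1/a] \twoheadrightarrow R[1/a]/\mathfrak{n}$; by construction $\phi(a)$ is a unit, hence nonzero, in the quotient.

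The main obstacle is the remaining step: showing that $F := R[1/a]/\mathfrak{n}$ is \emph{finite}. This is the arithmetic Nullstellensatz, the statement that any field finitely generated as a $\mathbb{Z}$-algebra is finite. I would derive it from Zariski's lemma, which says that if $E/k$ is a field extension with $E$ finitely generated as a $k$-algebra, then $E/k$ is a finite algebraic extension. Applied here, $F$ is a finitely generated $\mathbb{Z}$-algebra, so either the image of $\mathbb{Z}$ in $F$ is some $\mathbb{F}_p$ and then $F/\mathbb{F}_p$ is finitely generated as an $\mathbb{F}_p$-algebra, giving $F$ a finite extension of $\mathbb{F}_p$ and hence finite; or the map $\mathbb{Z} \to F$ is injective and $\mathbb{Q} \subset F$, in which case one shows $F$ is finitely generated as a $\mathbb{Q}$-algebra, so by Zariski's lemma $F$ is a number field, but then $F$ being finitely generated as a \emph{ring} over $\mathbb{Z}$ forces $\mathbb{Q}$ itself to be so, contradicting the fact that $\mathbb{Q}$ cannot be generated as a ring by finitely many elements (finitely many denominators only invert finitely many primes). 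Either way $F$ is finite, completing the proof.
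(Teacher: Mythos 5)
Your proposal is correct and follows essentially the same route as the paper's sketch: pass to the finitely generated ring $R$ generated by the matrix entries of the generators and their inverses, so that $G \leq GL_n(R)$, and then map $R$ onto finite rings that detect any prescribed nonzero element. Where the paper merely asserts that such an $R$ has enough finite-index ideals, you supply the standard justification (localise at a nonzero entry of $g - I_n$, pass to a maximal ideal, and use the fact that a field which is finitely generated as a $\mathbb{Z}$-algebra is finite), which is precisely how that assertion is proved.
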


It is instructive to consider a specific example before sketching the proof of the general theorem. Let $M$ be a Bianchi group, for example $\mathrm{PSL}(2,\mathbb{Z}[i])$. Now $\mathbb{Z}[i] = \mathbb{Z}[t] / \langle t^2 + 1 \rangle$. One may form finite quotients of this ring by quotienting by the ideal $\langle m \rangle$ for some positive integer $m$. The result is the finite ring $\mathbb{Z}_m[t] /  \langle t^2 + 1 \rangle$. Thus, we obtain a homomorphism
$$\phi_m \colon \mathrm{PSL}(2,\mathbb{Z}[i]) \rightarrow \mathrm{SL}(2,\mathbb{Z}_m[t] / \langle t^2 + 1 \rangle) / \{ \pm I \}.$$
Now consider any non-trivial element $g$ of $\mathrm{PSL}(2,\mathbb{Z}[i])$. This is not congruent to $\pm I$ modulo $m$ for all $m$ sufficiently large. Thus, for any such $m$, the image of $g$ under $\phi_m$ is non-trivial. We have therefore proved that $\mathrm{PSL}(2,\mathbb{Z}[i])$ is residually finite.

This generalises to any finitely generated group $G$ that is linear over a field $k$, as follows. One considers a finite generating set $g_1, \dots, g_t$ for $G$. Then $g_1^{\pm 1}, \dots, g_t^{\pm 1}$ correspond to matrices. The entries of these matrices generate a ring $R$ that is a subring of $k$. The group $G$ therefore lies in $\mathrm{GL}_n(R)$. It is possible to show that any such ring $R$ has a collection of finite index ideals $I_m$ such that $\bigcap_m I_m = \{ 0 \}$. Thus, given any non-trivial element $g$ of $G$, its image in the finite group $\mathrm{GL}_n(R/I_m)$ is non-trivial for some $m$.

The above argument was only for compact hyperbolic manifolds $M$. More generally, any compact geometric 3-manifold has linear fundamental group. However, it is not currently known whether the fundamental group of every compact orientable 3-manifold is linear. Instead, Hempel proved his theorem by using the decomposition of a compact orientable 3-manifold $M$ along spheres and tori into geometric pieces. The fundamental group of each piece has residually finite fundamental group. By considering carefully the finite quotients of these groups, Hempel was able to show that these homomorphisms could be chosen to be compatible along the JSJ tori. Hence, the prime summands of the manifold are residually finite. This gives the theorem because it is a general result that a free product of residually finite groups is residually finite.

\subsection{Unknot and 3-sphere recognition}
\label{Subsec:Unknot3Sphere}

Kuperberg's Theorem \ref{Thm:KnottednessGRH} was proved by using some of the above methods in a quantified way.  Let $M$ be the exterior of a non-trivial knot $K$ in the 3-sphere. Then $\pi_1(M)$ is non-abelian. This can be proved either by appealing to the Geometrisation Conjecture or by using Theorem 9.13 in \cite{HempelBook} that classifies the 3-manifold groups that are abelian. By the residual finiteness of $\pi_1(M)$, there is a finite quotient of $\pi_1(M)$ that is non-abelian. This can be seen by noting that if $g$ and $h$ are non-commuting elements of $\pi_1(M)$, then there is some homomorphism $\phi$ to a finite group such that $\phi([g,h])$ is non-trivial. The image of this homomorphism is therefore non-abelian. The key claim in Kuperberg's proof is that this non-abelian finite quotient of $\pi_1(M)$ can be chosen so that it has controlled size (as a function of the size of the given input, which might be a diagram of $K$ or a triangulation of the exterior of $K$). Thus, this quotient can used as a certificate, verifiable in polynomial time, of the non-triviality of $K$.

To get control over the size of this finite quotient of $\pi_1(M)$, Kuperberg uses the theory of linear groups. However, as mentioned above, it is not known that every 3-manifold group is linear; this is not known even for the exteriors of knots in the 3-sphere. But Kuperberg observed that we do not need the full strength of linearity to make the above argument work. All we need to know is that there is some non-abelian quotient of $\pi_1(M)$ that is linear. This is provided by the following result of Kronheimer and Mrowka \cite{KronheimerMrowka} that is proved using the theory of instantons.

\begin{theorem} 
\label{Thm:KronheimerMrowka}
Let $K$ be any non-trivial knot in the 3-sphere. Then there is a homomorphism $\pi_1(S^3 \setminus K) \rightarrow \mathrm{SU}(2)$ with non-abelian image.
\end{theorem}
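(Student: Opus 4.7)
The plan is to construct the required representation via gauge theory, following the singular instanton Floer homology approach of Kronheimer and Mrowka. First I would reformulate the problem in terms of a specific representation variety: consider the space $R^\natural(K)$ of conjugacy classes of homomorphisms $\rho \colon \pi_1(S^3 \setminus K) \to \mathrm{SU}(2)$ satisfying the constraint $\mathrm{tr}(\rho(\mu)) = 0$, where $\mu$ is a meridian. Reducible points of $R^\natural(K)$ all send $\mu$ to a fixed traceless element up to conjugacy and have abelian image, so it suffices to produce an \emph{irreducible} element of $R^\natural(K)$; any such representation automatically has non-abelian image, which is exactly what the theorem demands.

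Next I would introduce the singular instanton Floer homology group $I^\natural(K)$, which is a Morse-type invariant constructed from a Chern-Simons functional on a configuration space of $\mathrm{SU}(2)$ connections on the knot exterior having prescribed singular holonomy (conjugate to $\mathrm{diag}(i,-i)$) along $\mu$. After a small generic holonomy perturbation, the critical points of this functional are the irreducible elements of $R^\natural(K)$ together with a controlled contribution from the reducible locus, and the differential counts anti-self-dual connections on the cylinder interpolating between them. A direct computation gives $I^\natural(\text{unknot}) \cong \mathbb{Z}$, with the generator coming entirely from the reducible stratum.

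The hard part, and really the entire content of the theorem, is to show that $I^\natural(K)$ has rank strictly greater than one whenever $K$ is non-trivial, since then the chain complex must contain an irreducible generator that survives to a genuine element of $R^\natural(K)$. I would establish this non-vanishing by passing to a sutured version $SHI$ of the theory, where the key input is an excision principle for instanton Floer homology that allows one to cut and paste along taut surfaces. Combined with a taut sutured hierarchy in the sense of Section~\ref{Sec:Hierarchies} for a Seifert surface of $K$, and the fact that $SHI$ of a product sutured manifold is $\mathbb{Z}$, one propagates non-vanishing along the hierarchy to obtain $SHI(S^3 \setminus N(K), \Gamma) \neq 0$ for appropriate sutures $\Gamma$, and then relates this to $I^\natural(K)$. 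This is where the real analytic work lives: compactness and gluing for instantons with codimension-two singularities along $K$, transversality for perturbed moduli spaces, orientations, and the excision theorem itself.

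Once the rank bound $\mathrm{rk}\, I^\natural(K) \geq 2$ is secured, the conclusion is immediate. The extra generator beyond the reducible contribution must be an irreducible critical point of the perturbed Chern-Simons functional, and a standard limiting argument (shrinking the perturbation and extracting a Uhlenbeck limit) produces an actual flat singular connection, i.e.\ an irreducible homomorphism $\pi_1(S^3 \setminus K) \to \mathrm{SU}(2)$ with traceless meridian, whose image is therefore non-abelian.
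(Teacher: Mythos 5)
You should first be aware that the paper does not contain a proof of this statement at all: Theorem~\ref{Thm:KronheimerMrowka} is one of the survey's deep external inputs, attributed to Kronheimer and Mrowka \cite{KronheimerMrowka} with only the remark that it ``is proved using the theory of instantons''. So there is no in-paper argument to compare yours against, and within the survey the correct treatment is exactly that citation. What you have written is, in substance, an accurate summary of one of the genuine Kronheimer--Mrowka proofs, namely the later route through singular instanton knot homology and the sutured theory: the traceless-meridian representation variety, the computation $I^\natural(\mathrm{unknot})\cong\mathbb{Z}$, the rank bound for non-trivial knots obtained from sutured instanton homology via excision and a Gabai-type sutured hierarchy starting from a Seifert surface, and the comparison of the sutured invariant with $I^\natural(K)$. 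That route in fact yields the stronger conclusion that the meridian may be taken traceless; the argument originally attached to the bare statement above is a different instanton-theoretic one, passing through Dehn surgery on $K$, instanton Floer homology of the surgered manifolds, Gabai's taut foliations and symplectic fillings, and non-vanishing of Donaldson-type invariants. Either way, be clear that your text is a citation-level sketch rather than a checkable proof: the excision theorem, genus detection by the sutured theory, the relation between the sutured invariant and $I^\natural(K)$, and all of the compactness, transversality, gluing and orientation analysis for singular instantons are precisely the content of the cited papers and enter your outline as black boxes.

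One step worth tightening is the last one. You extract an irreducible flat singular connection as an Uhlenbeck limit of irreducible critical points of the perturbed functional as the perturbation shrinks, but such a limit could a priori be reducible or lose energy, so this does not immediately produce the desired representation. The standard way to close the argument is by contraposition: if $\pi_1(S^3\setminus K)$ admitted no irreducible $\mathrm{SU}(2)$ representation with traceless meridian, one shows that the critical set of the unperturbed functional consists of the single orbit coming from the abelian representation and that this orbit is nondegenerate, so no perturbation is needed and $I^\natural(K)\cong\mathbb{Z}$, contradicting the rank bound coming from the sutured theory. Rearranged this way, no limiting argument for perturbed flat connections is required.
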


Then once one has this linear representation, the existence of a homomorphism to a finite group with non-abelian image is a consequence. Assuming the Generalised Riemann Hypothesis, one can get the following control over the size of this group \cite[Theorem 3.4]{KuperbergKnottedness}.

\begin{theorem} 
\label{Thm:AffineAlgebraicQuotient}
Let $G$ be an affine algebraic group over $\mathbb{Z}$. Let $\Gamma$ be a group with a presentation where the sum of the lengths of the relations is $\ell$. Suppose that there is a homomorphism $\Gamma \rightarrow G(\mathbb{C})$ with non-abelian image. Then, assuming the Generalised Riemann Hypothesis, there is a homomorphism $\Gamma \rightarrow G(\mathbb{Z}/p)$ with non-abelian image, for some prime $p$ with $\log p$ bounded above by a polynomial function of $\ell$.
\end{theorem}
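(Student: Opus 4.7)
The plan is to reformulate the problem as showing that a certain open subscheme of the representation variety $\mathrm{Hom}(\Gamma, G)$, known to be non-empty over $\mathbb{C}$, has an $\mathbb{F}_p$-point for some prime $p$ with $\log p$ polynomial in $\ell$. This will be done by first producing a point over a number field of controlled complexity (via effective commutative algebra) and then reducing modulo a small split prime (via effective Chebotarev under GRH).

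First, I would encode $\mathrm{Hom}(\Gamma, G)$ as an affine scheme $V$ of finite type over $\mathbb{Z}$, namely the closed subscheme of $G^{s}$ (where $s$ is the number of generators of $\Gamma$) cut out by setting each word $r_{i}(X_{1}, \ldots, X_{s})$ equal to the identity. Because the sum of the lengths of the relations is $\ell$, the defining equations have total degree and coefficient bit-size bounded polynomially in $\ell$. The condition that the image be non-abelian is a finite union of open conditions $[X_{i}, X_{j}] \neq e$, carving out an open subscheme $U \subseteq V$ whose complement is defined by polynomials of similarly bounded complexity. By hypothesis, $U(\mathbb{C}) \neq \emptyset$.

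Next, I would use effective commutative algebra to find a number field $K$ with $[K:\mathbb{Q}]$ and $\log|\mathrm{disc}(K)|$ both polynomial in $\ell$, such that $U(K) \neq \emptyset$. The point is that $U_{\mathbb{Q}}$ is a non-empty scheme over $\mathbb{Q}$, so its coordinate ring has a maximal ideal, and the residue field at any closed point is a number field. To control its arithmetic complexity, one intersects $U$ with a generic affine-linear subspace of complementary dimension to reduce to a zero-dimensional scheme, and then applies arithmetic Bezout to bound the degree and height of its residue fields in terms of the input degrees and heights. This step is the technical heart of the argument and the main obstacle: a crude Nullstellensatz bound would be doubly exponential, so one must work with a refined effective arithmetic Bezout (along the lines of Bost--Gillet--Soul\'e or Philippon) to keep everything polynomial in $\ell$.

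Finally, given such a $K$, I would invoke the Lagarias--Odlyzko effective Chebotarev density theorem (whose GRH form produces the smallest prime in a given Frobenius class with $\log p$ polynomial in $[K:\mathbb{Q}]$ and $\log|\mathrm{disc}(K)|$) to produce a rational prime $p$ that splits completely in $K$, is of good reduction for our chosen $K$-point of $U$, and does not divide the finite list of auxiliary "bad" primes (leading coefficients, resultants distinguishing $U$ from its closed complement in $V$). The number of such bad primes and their bit-sizes are polynomial in $\ell$, so avoiding them only inflates the bound on $p$ polynomially. Reducing the chosen $K$-point modulo a prime of $\mathcal{O}_{K}$ over $p$ then yields a point of $U(\mathbb{F}_{p})$, i.e.\ a homomorphism $\Gamma \to G(\mathbb{Z}/p)$ whose image contains a non-commuting pair of generator images, and in particular is non-abelian, with $\log p$ polynomial in $\ell$ as required.
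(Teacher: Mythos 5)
Your proposal is correct and follows essentially the same route as the paper's (Kuperberg's) argument: view the non-abelian representations as an arithmetic variety of polynomially bounded degree and height, produce an algebraic point of controlled complexity, and then use GRH-effective Chebotarev to find a small prime at which it reduces to an $\mathbb{F}_p$-point. The only cosmetic differences are that the paper packages your ``generic linear section plus arithmetic Bezout'' step as a citation to Koiran's quantitative result (a point of the form $(g_1(\alpha),\dots,g_n(\alpha))$ with $\alpha$ a root of a small polynomial $h$, so that GRH is invoked simply to find a small prime where $h$ has a root), and it treats the non-abelian locus as an affine variety in extra variables rather than as an open subscheme handled by avoiding bad primes.
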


Rather than giving the precise definition of an affine algebraic group and the terminology $G(\mathbb{C})$ and $G(\mathbb{Z}/p)$, we focus on the key example of $\mathrm{SL}(2 ,\mathbb{C})$, which gives the general idea. Note that $\mathrm{SU}(2) \subset \mathrm{SL}(2 ,\mathbb{C})$ and hence Theorem \ref{Thm:KronheimerMrowka} also gives a homomorphism into $\mathrm{SL}(2 ,\mathbb{C})$ with non-abelian image.

Now, $\mathrm{SL}(2 ,\mathbb{C})$ is an algebraic subvariety of $\mathbb{C}^{4}$, since the condition that a matrix has determinant one is a polynomial equation. The coefficients of the polynomial are integers. We write this group as $G(\mathbb{C})$. Thus, for any positive integer $k$, one can define the group $G(\mathbb{Z}/k) = SL(2, \mathbb{Z}/k)$ as a subset of $(\mathbb{Z}/k)^{4}$ with the same defining equation.

An outline of the proof of Theorem \ref{Thm:AffineAlgebraicQuotient} is as follows. One considers all homomorphisms $\Gamma \rightarrow G(\mathbb{C})$. To define such a homomorphism, one need only specify where the generators of $\Gamma$ are mapped and check that each of the relations map to the identity. Thus, each homomorphism determines a point in $\mathbb{C}^{4t}$, where $t$ is the number of generators. The set of all such points is an algebraic subvariety because the relations in the group impose polynomial constraints. We are interested in homomorphisms into $G(\mathbb{C})$ with non-abelian image, and it is in fact possible to view this subset also as an algebraic variety in $\mathbb{C}^n$, for some $n > 4t$.

By assumption, this variety is non-empty. It is a well known fact that any affine variety in $\mathbb{C}^n$ defined using polynomials with integer coefficients contains a point whose coordinates are algebraic numbers. Koiran \cite{Koiran} quantified this result by expressing such a point as
$$(x_1, \dots, x_{n}) = (g_1(\alpha), \dots, g_n(\alpha)),$$
where $g_1, \dots, g_n$ are polynomials with integer coefficients and $\alpha$ is a root of an irreducible integer polynomial $h$, with control over the degree and the size of the coefficients of the polynomials.

It is now that the Generalised Riemann Hypothesis is used. It implies that the polynomial $h(x)$ also has a root $r$ in $\mathbb{Z}/p$ for some prime $p$ with bounded size. In fact, $\log p$ ends up being at most a polynomial function of $\ell$, the sum of the lengths of the relations of $\Gamma$. Thus, $(g_1(r), \dots, g_n(r))$ is a point in $(\mathbb{Z}/p)^n$ that corresponds to a homomorphism $\Gamma \rightarrow G(\mathbb{Z}/p)$ with non-abelian image.

\begin{proof}[Proof of Theorem \ref{Thm:KnottednessGRH}] Let $K$ be a non-trivial knot in the 3-sphere, given via its diagram or a triangulation $\mathcal{T}$ of its exterior. In the former case, we build a triangulation $\mathcal{T}$ of the exterior. This triangulation can easily be used to build a presentation of $\pi_1(S^3 \setminus K)$ with length $\ell$, which is at most a linear function of $|\mathcal{T}|$. By Theorem \ref{Thm:KronheimerMrowka}, there is a homomorphism $\pi_1(S^3 \setminus K) \rightarrow \mathrm{SL}(2, \mathbb{C}) = G(\mathbb{C})$ with non-abelian image. Hence, by Theorem \ref{Thm:AffineAlgebraicQuotient}, there is a homomorphism $\pi_1(S^3 \setminus K) \rightarrow G(\mathbb{Z}/p)$ with non-abelian image, where $\log p$ is at most a polynomial function of $\ell$. This homomorphism provides a certificate, verifiable in polynomial time of the non-triviality of $K$.
\end{proof}

Exactly the same proof strategy was used by Zentner \cite{Zentner} to show that 3-sphere recognition lies in co-NP, assuming GRH. In this case though, the major new input was the following result of Zentner.

\begin{theorem}
Let $M$ be a homology $3$-sphere other than the 3-sphere. Then $\pi_1(M)$ admits a homomorphism to $\mathrm{SL}(2, \mathbb{C})$ with non-abelian image.
\end{theorem}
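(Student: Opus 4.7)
The plan is to use geometrisation and the prime/JSJ decompositions to reduce to cases where an explicit geometric model is available, then to build an $\mathrm{SL}(2, \mathbb{C})$-representation in each case, with the main work being in the non-geometric (graph-like) case where one must glue representations along JSJ tori.

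First I would reduce to the irreducible case. Prime decomposition writes $M = N_1 \# \cdots \# N_k$, and because $M$ is a homology sphere, no $S^2 \times S^1$ summand occurs; so each $N_i$ is an integer homology sphere, and by hypothesis some $N_i$ is not $S^3$. Since $\pi_1(M)$ is the free product of the $\pi_1(N_i)$, any non-abelian representation of one factor extends to $\pi_1(M)$ by sending the other factors to the identity. So it suffices to find the representation when $M$ is prime, hence irreducible.

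Next, I would dispatch the geometric cases using classical structures. If $M$ is hyperbolic, the discrete faithful holonomy gives $\pi_1(M) \to \mathrm{PSL}(2,\mathbb{C})$; the obstruction to lifting to $\mathrm{SL}(2,\mathbb{C})$ lies in $H^2(M;\mathbb{Z}/2)$, which vanishes since $M$ is a homology sphere, and the lift is still faithful, hence non-abelian. If $M$ is spherical, then among homology spheres only the Poincar\'e sphere arises, whose fundamental group is the binary icosahedral group, a non-abelian subgroup of $\mathrm{SU}(2) \subset \mathrm{SL}(2,\mathbb{C})$. If $M$ is Seifert-fibered with hyperbolic base $2$-orbifold $\mathcal{O}$, the orbifold fundamental group injects into $\mathrm{PSL}(2,\mathbb{R})$; pulling back along $\pi_1(M) \twoheadrightarrow \pi_1^{\mathrm{orb}}(\mathcal{O})$ and lifting (again using that the obstruction in $H^2$ vanishes) yields a non-abelian representation. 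The flat geometries ($\mathbb{E}^3$, Nil, Sol, $\mathbb{H}^2\times\mathbb{R}$) either do not admit homology sphere examples other than $S^3$ or reduce to the Seifert case above.

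The main obstacle is the non-geometric case, where $M$ has a non-empty JSJ decomposition into hyperbolic and Seifert-fibered pieces glued along incompressible tori. For each piece $N_i$, the previous paragraph (or its relative version for manifolds with toral boundary) produces a non-abelian $\mathrm{SL}(2,\mathbb{C})$-representation of $\pi_1(N_i)$, and moreover a positive-dimensional family of such representations through standard Culler--Shalen character variety arguments: hyperbolic pieces carry a one-complex-dimensional deformation (Thurston), and Seifert pieces with sufficiently large base orbifold carry explicit one-parameter families coming from $\mathrm{PSL}(2,\mathbb{R})$-deformations of the base. The hard step is to choose representations $\rho_i$ on the pieces that are consistent on each JSJ torus $T$, so that they glue via van Kampen into a representation of $\pi_1(M)$. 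I would exploit that the restriction map from the character variety $X(N_i)$ to $X(T) \cong \mathbb{C}^2/\pm$ has image of positive dimension, and use a dimension count at each torus to find a common value; the integer homology sphere hypothesis is crucial here, because it constrains the peripheral images in $H_1$ and rules out degenerations that would force every global assembly to be abelian. Finally, non-abelianness of the glued representation follows because at least one $\rho_i$ is non-abelian and the gluing, being a pushout, does not kill it provided the common torus image is not central, a condition that is generic on the parameter families above.
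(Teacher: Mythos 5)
There is a genuine gap, and it sits exactly at the step you label ``the hard step.'' Your reduction to prime pieces, and your treatment of the hyperbolic, spherical and Seifert-fibred cases, are fine (the lifting of $\mathrm{PSL}(2,\mathbb{C})$- or $\mathrm{PSL}(2,\mathbb{R})$-representations to $\mathrm{SL}(2,\mathbb{C})$ using $H^2(M;\mathbb{Z}/2)=0$ is standard). But in the toroidal case, the claim that one can match representations across a JSJ torus ``by a dimension count'' does not work: the $\mathrm{SL}(2,\mathbb{C})$-character variety of a torus is $2$-dimensional, while the image of the restriction map from each adjacent piece is typically only a curve, and two curves in a surface need not intersect at all, let alone at a point where both preimage characters are irreducible and conjugate compatibly with the (meridian--longitude exchanging) gluing map. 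Nothing in the integer homology sphere hypothesis forces such an intersection, and no general Culler--Shalen-type argument is known that produces it. Indeed, if this gluing step could be carried out by elementary character-variety arguments, the theorem would not have required gauge theory; the existence of non-abelian $\mathrm{SL}(2,\mathbb{C})$-representations for spliced homology spheres was precisely the open problem that Zentner resolved.

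The paper's route is different and avoids this trap. Rather than gluing representations piece by piece, it invokes the theorem of Boileau, Rubinstein and Wang \cite{BRW}: a homology sphere $\neq S^3$ admits a degree one map onto either a hyperbolic manifold, a Seifert fibre space other than $S^3$, or the splice of two non-trivial knot exteriors. Since a degree one map induces a surjection on $\pi_1$, it suffices to treat these three model cases. The first two are handled essentially as in your proposal, but the splice case is resolved not by deformation/dimension arguments on character varieties, but by instanton techniques in the spirit of Kronheimer--Mrowka \cite{KronheimerMrowka}, which is the genuinely new input in Zentner's proof \cite{Zentner}. To repair your argument you would need either to prove the torus-gluing step (which is the hard theorem itself, in disguise) or to import the degree-one-map reduction together with the instanton result for splices.
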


The first step in the proof of this uses a theorem of Boileau, Rubinstein and Wang \cite{BRW}. This asserts that such a $3$-manifold admits a degree one map onto a hyperbolic 3-manifold, a Seifert fibre space other than $S^3$ or a space obtained by gluing together the exteriors of two non-trivial knots in 3-sphere, by identifying the meridian of each with the longitude of the other.  This latter space is called the \emph{splice} of the two knots. A degree one map between 3-manifolds induces a surjection between their fundamental groups. So it suffices to focus on the case where $M$ is one of the above three possibilities.
When $M$ is hyperbolic, it is an almost direct consequence of the definition that it admits a faithful homomorphism to $\mathrm{SL}(2, \mathbb{C})$. When $M$ is Seifert fibred, it is not hard to find  a representation into $\mathrm{SU}(2) \subset \mathrm{SL}(2, \mathbb{C})$ with non-abelian image. Thus, the difficult case is the splice of two knots. Zenter proves his theorem in this situation using instantons, as in the case of Theorem \ref{Thm:KronheimerMrowka}.

\section{Hyperbolic structures}
\label{Sec:HyperbolicStructures}

As stated in Theorem \ref{Thm:HomeoProblem}, the homeomorphism problem for compact orientable 3-manifolds is solved. All known solutions use the Geometrisation Conjecture, which asserts that any compact orientable 3-manifold has `a decomposition into geometric pieces'. The most important and ubiquitous pieces are the hyperbolic ones. Therefore in this section, we discuss the solution to the homeomorphism problem for hyperbolic 3-manifolds. The solution for general compact orientable 3-manifolds uses the techniques in this section, as well as an algorithmic construction of the decomposition of a manifold into its prime summands, and a construction of the pieces of its JSJ decomposition. Our presentation is based on Kuperberg's paper \cite{KuperbergAlgorithmic}.

\begin{theorem}
\label{Thm:HyperbolicHomeoProblem}
There is an algorithm that takes as its input triangulations of two closed hyperbolic $3$-manifolds and determines whether these manifolds are homeomorphic.
\end{theorem}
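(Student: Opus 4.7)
The plan is to invoke Mostow rigidity: a closed hyperbolic 3-manifold has a hyperbolic structure unique up to isometry, so two such manifolds are homeomorphic if and only if they are isometric. The problem therefore reduces to extracting, from each triangulation, a computable description of the hyperbolic structure, and then deciding whether the two resulting structures are isometric.

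First I would, from a triangulation $\calT$ of $M$, produce a discrete faithful representation $\rho\from \pi_1(M) \to \PSL(2,\CC)$ in exact algebraic form. A natural route is to pass (after subdividing, drilling a curve, or otherwise adjusting) to an ideal triangulation, solve Thurston's gluing and completeness equations to high numerical precision using certified interval arithmetic, and then upgrade the numerical solution to exact algebraic numbers by guessing minimal polynomials via lattice reduction and checking the guesses against the (polynomial) gluing equations. Since the trace field is a number field and Mostow rigidity pins $\rho$ down up to conjugacy, this yields a finite description of the hyperbolic structure as data in some explicitly computed number field $K$. The hypothesis that the input is hyperbolic can either be taken as given, or can itself be certified algorithmically by appealing to Perelman's geometrization theorem together with an algorithm for detecting the geometric pieces.

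Given representations $\rho_i\from \pi_1(M_i) \to \PSL(2, K_i)$ for $i=1,2$, the next step is to decide whether the two hyperbolic structures are isometric. I would do this by extracting a canonical combinatorial object from each---e.g.\ a Dirichlet domain centered at a canonically chosen basepoint such as the center of a maximal embedded ball, together with the finite side-pairing data---and then checking whether these combinatorial objects agree under some symmetry. Equality of any two resulting algebraic numbers can be decided in the compositum of $K_1$ and $K_2$, so each candidate match can be tested effectively. Alternatively, one can compare length spectra up to a finite cutoff and then verify a proposed isomorphism $\pi_1(M_1)\to\pi_1(M_2)$ by comparing traces of a generating set, all over the number field.

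The main obstacle---and the source of the enormous running-time bound in Kuperberg's theorem---is the passage from numerical data to exact algebraic data. Interval-Newton methods certify the existence of a true solution near a numerical one, but bounding, in terms of $|\calT|$, both the precision required for LLL to recover the correct minimal polynomials and the degree and height of those polynomials is very delicate; this is what forces the tower of exponentials. A secondary difficulty is controlling the combinatorial complexity of the canonical object used in the comparison phase, since without such a bound the comparison itself is not effective in a priori bounded time; one must show that the canonical Dirichlet domain (or Epstein--Penner-type canonical decomposition of a natural finite cover) has complexity bounded in terms of the algebraic complexity of $\rho$.
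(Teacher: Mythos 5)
Your route is genuinely different from the paper's. The paper never tests isometry or computes a canonical geometric invariant: it shows (\reflem{StraightExists}) that a geodesic \emph{material} triangulation always exists, by subdividing a Dirichlet domain; it finds one algorithmically (\refthm{FindStraight}) by enumerating Pachner-equivalent triangulations in parallel with candidate vertex configurations having algebraic coordinates, using the fact that a solvable polynomial system over $\mathbb{Z}$ has an algebraic solution; and it then proves (\refthm{PachnerBoundStraight}) a \emph{computable bound} on the number of Pachner moves between any two geodesic triangulations of the same closed hyperbolic manifold, by superimposing them and counting intersections of lifted tetrahedra in $\mathbb{H}^3$ via a volume argument. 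Homeomorphism (equivalently isometry, by Mostow) is then decided by a finite search through Pachner moves, so the negative answer is certified by exhaustion rather than by comparing canonical objects. Your plan, by contrast, is in the spirit of SnapPea/Manning: compute the discrete faithful representation exactly and compare geometric invariants; it would yield the isometry explicitly, but it takes on obligations the paper's proof avoids.

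Two of those obligations are genuine gaps as written. First, your extraction step routes through a geometric ideal triangulation solving Thurston's gluing and completeness (or Dehn-filling) equations; it is not known that such a triangulation exists in general (the existence of geometric ideal triangulations is open even for cusped manifolds, and for a closed manifold you must drill and fill, which compounds the problem), so ``pass to an ideal triangulation and solve the gluing equations'' is not guaranteed to terminate with a solution. This is repairable (e.g.\ Manning's algorithm, or the paper's enumeration of geodesic material triangulations), but the repair is exactly the nontrivial content of \refthm{FindStraight}. Second, the comparison phase is underjustified: comparing length spectra up to a cutoff can never certify a negative answer, since there exist isospectral non-isometric closed hyperbolic $3$-manifolds (Vign\'eras), and checking traces on a generating set does not verify that a candidate map is an isomorphism; the Dirichlet-domain alternative requires showing that your basepoint rule picks out a finite, isometry-invariant set and that the domain computation terminates with a certificate of completeness (say via Poincar\'e's polyhedron theorem), none of which you supply. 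Incidentally, in the paper's presentation the unquantified step is the search for the geodesic triangulation itself, not numerical-to-algebraic certification, so the source of the poor complexity bounds is not quite where you place it.
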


\begin{definition}
A  triangulation of a closed hyperbolic 3-manifold $M$ is \emph{geodesic} if each simplex is totally geodesic.
\end{definition}

\begin{lemma}
\label{Lem:StraightExists}
Any closed hyperbolic 3-manifold admits a geodesic triangulation.
\end{lemma}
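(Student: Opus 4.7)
The plan is to start with a smooth triangulation of $M$, subdivide until every simplex lies inside a geodesically convex ball, and then replace each simplex by the geodesic simplex spanning the same vertices. Since $M$ is a closed smooth $3$-manifold, the Cairns--Whitehead theorem produces a smooth triangulation $\mathcal{T}_0$. Iterated barycentric subdivision yields a triangulation $\mathcal{T}$ whose mesh is less than, say, one tenth of the injectivity radius of $M$, so that every closed star of a vertex of $\mathcal{T}$ lifts isometrically to a metrically convex ball in $\mathbb{H}^3$.

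Next I would lift $\mathcal{T}$ to a $\Gamma$-equivariant triangulation $\widetilde{\mathcal{T}}$ of $\mathbb{H}^3$, where $\Gamma = \pi_1(M)$. For each simplex $\tilde\sigma$ of $\widetilde{\mathcal{T}}$ with vertices $\tilde v_0, \ldots, \tilde v_k$, define its \emph{straightening} $\mathrm{str}(\tilde\sigma)$ to be the geodesic simplex on these vertices, built inductively as the geodesic join of $\tilde v_0$ with $\mathrm{str}(\tilde v_1, \ldots, \tilde v_k)$. Because all vertices of $\tilde\sigma$ lie inside a common convex ball, this is well-defined, continuous, and agrees on shared faces, and $\Gamma$-equivariance lets it descend to a map $\mathrm{str} \colon M \to M$ that fixes every vertex of $\mathcal{T}$ and sends each simplex onto a totally geodesic simplex. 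A small generic perturbation of the vertices of $\mathcal{T}$, keeping them inside their stars, ensures that no four vertices of any tetrahedron of $\mathcal{T}$ are hyperbolically coplanar, so every straightened tetrahedron has positive hyperbolic volume and $\mathrm{str}$ is a local diffeomorphism on the interior of each $3$-simplex.

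The main obstacle is showing that the piecewise-geodesic map $\mathrm{str}$ is a global homeomorphism, so that its image is actually a triangulation of $M$ and not merely an immersed cell structure. My plan is to use the geodesic homotopy $H_s$, $s \in [0,1]$, from the identity to $\mathrm{str}$, which on each simplex moves each point along the short geodesic to its straightened position. The homotopy lives inside the convex ball containing each star, so it is well-defined, continuous, and consistent on shared faces. For sufficiently small mesh, the differential of $H_s$ remains non-singular throughout, so each $H_s$ is a local diffeomorphism; since $M$ is closed, $H_s$ is then a covering map, and since $H_0$ is the identity the degree is one, so $H_s$ is a homeomorphism for every $s$. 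In particular $\mathrm{str} = H_1$ is a homeomorphism, and $\mathrm{str}(\mathcal{T})$ is the desired geodesic triangulation of $M$.
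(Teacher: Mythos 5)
Your overall strategy (straighten a sufficiently fine smooth triangulation and argue the straightening map is a homeomorphism) is a genuinely different route from the paper, but as written it has a gap at exactly the hard step. The claim that ``for sufficiently small mesh, the differential of $H_s$ remains non-singular throughout'' is not justified, and it is not true with only mesh-size control: what you need is uniform control on the \emph{shape} (thickness/fullness) of the simplices, not just their diameter. Iterated barycentric subdivision is the wrong tool for this, since it is well known to degrade simplex quality -- after many subdivisions the tetrahedra become arbitrarily thin, i.e.\ nearly degenerate at their own scale, no matter how small the mesh is. For such a thin tetrahedron the geodesic simplex on its four vertices can be drastically distorted relative to the original smooth simplex, the straightening need not be $C^1$-close to the identity, and adjacent straightened tetrahedra can fold over one another across a shared face. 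Your generic perturbation only rules out \emph{exact} coplanarity of the four vertices, so each straightened tetrahedron has positive volume, but positive volume does not give local injectivity of $\mathrm{str}$ near the $2$-skeleton: the map is only piecewise smooth there, and ``non-singular differential'' must be replaced by an orientation-consistency/link condition across faces and edges, which is precisely what can fail for poorly shaped simplices. So the covering-space/degree argument is fine once you know $H_s$ is a local homeomorphism, but that local statement is the content of the lemma and is left unproved. To repair the argument you would need a fine triangulation with uniformly bounded geometry (e.g.\ a Whitney-type or Delaunay-type construction) together with an estimate showing the straightening is $C^1$-close to the characteristic maps, which is substantially more work.

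For comparison, the paper sidesteps all of these issues: it takes a Dirichlet fundamental domain centred at a lift $\tilde p$ of a point $p \in M$, whose faces are already totally geodesic, projects its vertices, edges and faces to a cell complex in $M$, and then cones -- each $2$-cell from an interior vertex and the single $3$-cell from $p$. Since every cell is convex and the cone constructions are geodesic, the result is automatically an embedded geodesic triangulation, with no straightening map and no injectivity argument required.
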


\begin{proof} Pick a point $p$ in the manifold $M$, and let $\tilde p$ be a point in its inverse image in $\mathbb{H}^3$. Let $\mathcal{P}$ be the set of points in $\mathbb{H}^3$ that are closer to $\tilde p$ than to any covering translate of $\tilde p$. Its closure is a finite-sided polyhedron. The vertices, edges and faces of this polyhedron project to $0$-cells, $1$-cells and $2$-cells of a cell complex in $M$. The remainder of $M$ is a 3-cell. Now subdivide this to a triangulation. Do this by placing a vertex in the interior of each 2-cell and coning off. Then cone the 3-cell from $p$. The result is a geodesic triangulation of $M$.
\end{proof}

An alternative way of viewing a geodesic triangulation is as a recipe for building a hyperbolic structure on $M$. One realises each tetrahedron as the convex hull of four points in hyperbolic space that do not lie in a plane. One has to to ensure that the face identifications between adjacent tetrahedra are realised by isometries. Of course, the angles around each edge sum to $2 \pi$. In fact, by Poincar\'e's polyhedron theorem \cite{EpsteinPetronio}, these conditions are enough to specify a hyperbolic structure on $M$.

This observation is key to the proof of the following result.


\begin{theorem}
\label{Thm:FindStraight}
There is an algorithm that takes as its input a triangulation $\mathcal{T}$ of some closed hyperbolic $3$-manifold $M$ and provides a sequence of Pachner moves taking it to a geodesic triangulation. Moreover, it provides the lengths of the edges in this geodesic triangulation as logarithms of algebraic numbers.
\end{theorem}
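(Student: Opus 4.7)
The plan is to enumerate the orbit of $\mathcal{T}$ under Pachner moves and, for each triangulation encountered, to test by decidable methods of real algebraic geometry whether it admits a geodesic realization. By \reflem{StraightExists}, the manifold $M$ admits \emph{some} geodesic triangulation $\mathcal{T}^{\ast}$, and by Pachner's theorem (\refthm{Pachner}), $\mathcal{T}^{\ast}$ is connected to $\mathcal{T}$ by a finite sequence of Pachner moves. Hence a breadth-first search through the Pachner orbit of $\mathcal{T}$ must eventually reach a triangulation that passes the geodesic test; the sequence of moves used to reach it is our output.

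To test whether a candidate triangulation $\mathcal{T}'$ with $N$ tetrahedra admits a geodesic realization, I would encode the existence of such a realization as a first-order sentence in the language of ordered fields with coefficients in $\mathbb{Q}$. Concretely: assign to each tetrahedron six real variables representing $c_{ij} = \cosh \ell_{ij}$ for its edge lengths $\ell_{ij}$. The condition that these values come from an honest non-degenerate hyperbolic tetrahedron is a polynomial inequality on the associated Cayley–Menger matrix. The face identifications in $\mathcal{T}'$ force the six edge variables on the two sides of each face to agree in pairs, giving three polynomial equations per face. Finally, at every edge of $\mathcal{T}'$ the hyperbolic dihedral angles around it must sum to $2\pi$; since the cosine of each dihedral angle is a rational function of the $c_{ij}$ (via the Gram matrix), and the cosine addition formulae are polynomial, this sum condition becomes a polynomial equation in the $c_{ij}$. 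By Poincaré's polyhedron theorem, the existence of a real solution to this semi-algebraic system is equivalent to the existence of a geodesic realization of $\mathcal{T}'$. Tarski–Seidenberg (equivalently, cylindrical algebraic decomposition) supplies the decision procedure, and on a positive answer the same machinery locates a point of the solution variety with coordinates in a number field, specified by a minimal polynomial together with isolating interval data (compare the quantitative use of algebraic numbers in \refthm{AffineAlgebraicQuotient}).

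Once the $c_{ij}$ are known as real algebraic numbers, the edge lengths are recovered as
\[
\ell_{ij} \;=\; \log\!\bigl(c_{ij} + \sqrt{c_{ij}^{\,2}-1}\,\bigr),
\]
each of which is the logarithm of an algebraic number, as required. The main obstacle will be ensuring that a positive answer from the decision procedure actually corresponds to a genuine geodesic realization of the given manifold $M$ rather than a spurious formal solution: one must add inequalities ruling out degenerate or reverse-oriented tetrahedra, and then invoke Mostow rigidity to identify the resulting complete hyperbolic metric on the PL manifold $|\mathcal{T}'| \cong M$ with the prescribed hyperbolic structure. The other subtlety is bookkeeping: the number field in which the $c_{ij}$ live can grow from one candidate triangulation to the next, so the algorithm must carry around and update an explicit algebraic presentation of this field as it searches.
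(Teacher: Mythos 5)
Your proposal follows essentially the same route as the paper: use \reflem{StraightExists} to guarantee that a geodesic triangulation exists, Pachner's theorem to guarantee it is reachable by moves from $\mathcal{T}$, and a reduction of geodesic realizability of each candidate triangulation to a system of polynomial equations and inequalities over the reals, which (when solvable) has a solution with algebraic coordinates yielding edge lengths as logarithms of algebraic numbers. The only differences are implementational --- the paper dovetails an enumeration of algebraic-coordinate vertex placements in the upper half-space model across all triangulations in the Pachner orbit, whereas you run a per-triangulation Tarski--Seidenberg decision procedure on $\cosh$-of-edge-length/Gram-matrix variables --- and these do not change the substance of the argument.
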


\begin{proof} 
We start applying all possible Pachner moves to $\mathcal{T}$, creating a list of triangulations of $M$. If this procedure were left to run indefinitely, it would create an infinite list of all triangulations of $M$. For each triangulation $\mathcal{T}'$, we start to try to find a geodesic structure on it. Thus, for each tetrahedron of $\mathcal{T}'$, we consider possible arrangements of four points in the upper-half space model for $\mathbb{H}^3$. We only consider points with co-ordinates that are algebraic numbers. As algebraic numbers can be enumerated, one can consider all such arrangements in turn. For each such arrangement, we compute the lengths of the edges and the interior angles at the edges. For each edge length $\ell$ and angle $\alpha$, $e^\ell$ and $e^{i \alpha}$ are in fact algebraic functions of the co-ordinates of the points in $\mathbb{H}^3$. We then check whether whenever two tetrahedra are glued along an edge, then these edge lengths are the same. We also check whether the angles around each edge sum to $2 \pi$. This is possible since these conditions are polynomial equations of the variables. (In fact, it is convenient to use polynomial inequalities here also \cite{KuperbergAlgorithmic}.) This fact also explains why we may restrict to co-ordinates that are algebraic numbers. This is because whenever a system of polynomial equations and inequalities with integer coefficients has a real solution, then it has one with algebraic co-ordinates.

Thus, for each of these triangulations, if it can be realised as geodesic, then one such geodesic structure will eventually be found. Hence, this process terminates. \end{proof}

The importance of geodesic triangulations is demonstrated in the following result.

\begin{theorem}
\label{Thm:PachnerBoundStraight}
Let $\mathcal{T}_1$ and $\mathcal{T}_2$ be geodesic triangulations of a closed hyperbolic 3-manifold. Then there is a computable upper bound on the number of Pachner moves required to pass from $\mathcal{T}_1$ to $\mathcal{T}_2$, as a function of the lengths of the edges of $\mathcal{T}_1$ and $\mathcal{T}_2$.
\end{theorem}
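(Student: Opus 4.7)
The plan is to build a common geodesic refinement of the two triangulations in the hyperbolic manifold and then convert a refinement into a controlled sequence of Pachner moves. Let $\mathcal{T}_1,\mathcal{T}_2$ be the given geodesic triangulations of the closed hyperbolic $3$-manifold $M$. By Mostow rigidity, the hyperbolic metric on $M$ is unique up to isometry, so both $\mathcal{T}_1$ and $\mathcal{T}_2$ live inside the \emph{same} metric space. From the edge length data one can algorithmically develop each $\mathcal{T}_i$ to the universal cover $\mathbb{H}^3$, obtaining explicit geodesic realisations $\tilde{\mathcal{T}}_i$, and can also compute an explicit isometry aligning the two developing images. Moreover, all relevant geometric invariants---the volume of $M$, its diameter, and a positive lower bound on its injectivity radius---are computable functions of the edge lengths of either $\mathcal{T}_i$.

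First I would perturb one of the triangulations, say $\mathcal{T}_2$, within the (open) space of geodesic triangulations so that its $2$-skeleton is transverse to that of $\mathcal{T}_1$. Using barycentric/stellar subdivisions along the flow of the perturbation, this perturbation is realised by a controlled number of Pachner moves whose count is bounded by a computable function of the edge lengths. Once the two $2$-skeleta are transverse, each pair of totally geodesic triangles in $\mathbb{H}^3$ meets in at most a segment, and the number of such intersection segments in a fundamental domain is bounded by a computable function of $|\mathcal{T}_1|\cdot |\mathcal{T}_2|$ together with the edge lengths (one bounds how many translates of a face of $\mathcal{T}_2$ meet a given face of $\mathcal{T}_1$ using the diameter and the edge lengths). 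Intersecting $\mathcal{T}_1$ with $\mathcal{T}_2$ therefore yields a common cell decomposition $\mathcal{C}$ of $M$ whose combinatorial size is bounded explicitly. Each cell of $\mathcal{C}$ is a convex hyperbolic polytope with a computable bound on its number of faces, so $\mathcal{C}$ admits a standard coning subdivision into a geodesic triangulation $\mathcal{T}_3$ with $|\mathcal{T}_3|$ bounded by a computable function of the input edge lengths.

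Next I would argue that each $\mathcal{T}_i$ can be converted into $\mathcal{T}_3$ by a computable number of Pachner moves. Since $\mathcal{T}_3$ is a PL-subdivision of $\mathcal{T}_i$, this reduces to the known fact that subdividing a single $n$-simplex along a finite geodesic arrangement can be realised by a sequence of stellar subdivisions, and each stellar subdivision is expressible as an explicitly bounded number of Pachner moves; the total count is bounded in terms of $|\mathcal{T}_3|$ and $|\mathcal{T}_i|$. Concatenating the sequence $\mathcal{T}_1 \rightsquigarrow \mathcal{T}_3$ with the reverse of $\mathcal{T}_2 \rightsquigarrow \mathcal{T}_3$ produces the required Pachner sequence from $\mathcal{T}_1$ to $\mathcal{T}_2$, with length bounded by a computable function of the edge lengths.

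The hard part will be Step 1, the quantitative perturbation to achieve transversality of the two geodesic $2$-skeleta while keeping the number of Pachner moves computable: one must argue that the set of non-transverse geodesic positionings is a semi-algebraic subset of the space of geodesic triangulations of bounded complexity, and use this to exhibit an explicit transverse target reachable by a bounded sequence of subdivide-and-flip moves. Once that is in hand, the intersection count, the common refinement, and the subdivision-to-Pachner translation are all routine quantitative combinatorics applied to data that, by Theorem~\ref{Thm:FindStraight}, is given to us in algebraic form and is therefore fully computable.
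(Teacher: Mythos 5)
Your proposal is correct and follows essentially the same route as the paper: perturb one triangulation slightly to general position, superimpose the two geodesic triangulations, bound the number of cells of the common refinement by a counting/packing argument in $\mathbb{H}^3$ in terms of the edge lengths, cone off the cells to obtain a common geodesic triangulation $\mathcal{T}_3$, and convert each subdivision into a controlled number of Pachner moves. The one point where you diverge is in treating the transversality perturbation as the hard step requiring its own Pachner-move accounting; in fact it costs nothing, since the perturbed geodesic copy of $\mathcal{T}_2$ is ambient isotopic to $\mathcal{T}_2$ and the relevant edge-length bounds change only slightly, so no moves need to be charged to it.
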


\begin{proof} The idea is to find a common subdivision $\mathcal{T}_3$ of $\mathcal{T}_1$ and $\mathcal{T}_2$, and to bound the number of Pachner moves joining $\mathcal{T}_1$ to $\mathcal{T}_3$ and joining $\mathcal{T}_3$ to $\mathcal{T}_2$. This subdivision $\mathcal{T}_3$ is obtained by superimposing $\mathcal{T}_1$ and $\mathcal{T}_2$, to form a cell structure $\mathcal{C}$, and then subdividing this to a triangulation. Before we do this, we possibly perturb $\mathcal{T}_2$ a little, maintaining it as a geodesic triangulation, so that it is in general position with respect to $\mathcal{T}_1$. Each 3-cell of $\mathcal{C}$ is a component of intersection between a tetrahedron of $\mathcal{T}_1$ and a tetrahedron of $\mathcal{T}_2$. To obtain $\mathcal{T}_3$, we cone off the 2-cells and the 3-cells of $\mathcal{C}$ (as described in the proof of Theorem \ref{Lem:StraightExists}). Now, the 3-cells come in finitely many combinatorial types, since the intersection between any two tetrahedra in $\mathbb{H}^3$ is a polyhedron with a bounded number of faces. Thus, the number of tetrahedra in $\mathcal{T}_3$ is controlled by the number of 3-cells in each tetrahedron of $\mathcal{T}_1$, say. In fact, it is not hard to prove that the number of Pachner moves taking $\mathcal{T}_1$ to $\mathcal{T}_3$ is also controlled by this quantity. Thus, to prove the theorem, it is necessary to obtain an upper bound on the number of times a tetrahedron of $\mathcal{T}_1$ and a tetrahedron of $\mathcal{T}_2$ can intersect. The triangulations $\mathcal{T}_1$ and $\mathcal{T}_2$ lift to geodesic triangulations $\tilde{\mathcal{T}}_1$ and $\tilde{\mathcal{T}}_2$ of $\mathbb{H}^3$. Any two geodesic tetrahedra in $\mathbb{H}^3$ are either disjoint or have connected intersection. Thus, we need to control the number of tetrahedra of $\tilde{\mathcal{T}}_2$ that can intersect a single tetrahedron of $\tilde{\mathcal{T}}_1$.

Let $\Delta_1$ and $\Delta_2$ be geodesic tetrahedra in $\tilde{\mathcal{T}}_1$ and $\tilde{\mathcal{T}}_2$, and let $\ell_1$ and $\ell_2$ be the maximal length of their sides. Let $p_1$ and $p_2$ be points in the interiors of $\Delta_1$ and $\Delta_2$. Suppose that a covering translate of $\Delta_2$ intersects $\Delta_1$. Then the corresponding covering translate of $p_2$ is at a distance at most $\ell_1 + \ell_2$ from $p_1$. Hence, the translate of $\Delta_2$ that contains it lies within the ball of radius $\ell_1 + 2\ell_2$ about $p_1$. Let $V$ be the volume of this ball. Let $v$ be the volume of $\Delta_2$, which can be determined from the edge lengths of $\Delta_2$. Then the number of translates of $\Delta_2$ that can intersect $\Delta_1$ is at most $V/v$. Hence, the number of tetrahedra in $\mathcal{T}_3$ is at most a constant times $|{\mathcal{T}}_1| \, |{\mathcal{T}}_2| (V/v)$. 
\end{proof}

\begin{proof}[Proof of Theorem \ref{Thm:HyperbolicHomeoProblem}] Let $\mathcal{T}_1$ and $\mathcal{T}_2$ be the given triangulations of closed hyperbolic 3-manifolds $M_1$ and $M_2$. By Theorem \ref{Thm:FindStraight}, we can find a sequence of Pachner moves taking $\mathcal{T}_1$ and $\mathcal{T}_2$ to geodesic ones $\mathcal{T}'_1$ and $\mathcal{T}'_2$. Theorem \ref{Thm:FindStraight} also provides the lengths of their edges. Hence, by Theorem \ref{Thm:PachnerBoundStraight}, we have a computable upper bound on the number of Pachner moves relating $\mathcal{T}'_1$ and $\mathcal{T}'_2$, if $M_1$ and $M_2$ are homeomorphic. We search through all sequences of moves with at most this length. Thus, if $M_1$ and $M_2$ are homeomorphic, we will find a sequence of Pachner moves relating $\mathcal{T}'_1$ and $\mathcal{T}'_2$. If we do not find such a sequence, then we know that $M_1$ and $M_2$ are not homeomorphic.
\end{proof}

It would be very interesting to have a quantified version of Theorem \ref{Thm:HyperbolicHomeoProblem}, which would provide an explicit upper bound on the number of Pachner moves relating any two triangulations $\mathcal{T}_1$ and $\mathcal{T}_2$ of a closed hyperbolic 3-manifold. Theorem \ref{Thm:PachnerBoundStraight} provides such a bound for geodesic triangulations. However, there is no obvious upper to the number of Pachner moves used in the algorithm for converting a given triangulation to a geodesic one that is presented in the proof of Theorem \ref{Thm:FindStraight}. The arguments given in \cite{KuperbergAlgorithmic} might be useful here for general triangulations of closed hyperbolic 3-manifolds. However, any prospect of getting a bound that is a polynomial function of $|\mathcal{T}_1|$ and $|\mathcal{T}_2|$ still seems to be a long way off.

\bibliographystyle{amsplain}
\bibliography{algorithmic-references-initials}

\end{document}